\documentclass[12pt]{amsart}
\usepackage{amsmath}
\usepackage{color}
\usepackage{enumerate}	
\input xy
\xyoption{all}
\setlength{\topmargin}{-3mm}
\setlength{\oddsidemargin}{0mm}
\setlength{\evensidemargin}{0mm}
\setlength{\textheight}{23cm}
\setlength{\textwidth}{17cm}
\theoremstyle{plain}
  \newtheorem{thm}{Theorem}[section]
\newtheorem*{thm*}{Theorem}
  \newtheorem{lem}[thm]{Lemma}
  \newtheorem{cor}[thm]{Corollary}
  \newtheorem{obsvtn}[thm]{Observation}
  \newtheorem{prop}[thm]{Proposition}
  \newtheorem{fact}[thm]{Fact}
\newtheorem*{fact*}{Fact}
\theoremstyle{definition}
  \newtheorem{defn}[thm]{Definition}
 \newtheorem{assumption}[thm]{Assumption}
  \newtheorem{clm}[thm]{Claim}
  
  \newtheorem{qstn}[thm]{Question}

  \newtheorem{notation}{Notation\!\!}

\theoremstyle{remark}
  \newtheorem{rem}[thm]{Remark}
  \newtheorem{case}{Case}
  
  \newtheorem{stng}[thm]{Setting}

\newtheorem{acknowledgment}{Acknowledgment}

\numberwithin{equation}{section}
\newcommand{\ad}{\operatorname{ad}}
\newcommand{\bfi}{\bar{f_i}}
\newcommand{\breta}{\bar{\eta}}
\newcommand{\calCN}{{\mathcal C}{\mathcal N}}

\newcommand{\cf}{{\mathfrak f}}

\newcommand{\dotimes}{\underline{\otimes}}
\newcommand{\FF}{{\mathbb{F}}}
\newcommand{\gr}{\operatorname{gr}}
\newcommand{\grCk}{\operatorname{gr}^{JH}_0(Cok)}
\newcommand{\grlz}{\operatorname{gr}^{JH}_{l_0}}
\newcommand{\HH}{\operatorname{H}}
\newcommand{\HN}{\operatorname{HN}}
\newcommand{\Id}{\operatorname{Id}}
\newcommand{\JH}{\operatorname{JH}}
\newcommand{\JHCk}{\operatorname{JH}_0(Cok)}
\newcommand{\JHlz}{\operatorname{JH}_{l_0}}
\newcommand{\niB}{\nu_0^{-1}B}
\newcommand{\pur}{\operatorname{pur}}
\newcommand{\Rc}{R^{\wedge}}
\newcommand{\sF}{{\mathcal F}}
\newcommand{\sG}{{\mathcal G}}
\newcommand{\sH}{{\mathcal H}}
\newcommand{\sL}{{\mathcal L}}
\newcommand{\sLi}{{\mathcal L}^{\vee}}
\newcommand{\sO}{{\mathcal O}}
\newcommand{\sQ}{{\mathcal Q}}
\newcommand{\sR}{{\mathcal R}}
\newcommand{\sRKoG}{{\mathcal R}(K_X)/{\mathcal G}}
\newcommand{\tor}{\operatorname{tor}}

\newcommand{\Aa}{{\mathbb{A}}}
\newcommand{\Amp}{{\operatorname{Amp}}}

\newcommand{\CC}{{\mathbb{C}}}

\newcommand{\Coh}{{\operatorname{Coh}}}
\newcommand{\Cok}{\operatorname{Cok}}

\newcommand{\ext}{\operatorname{ext}}
\newcommand{\Ext}{\operatorname{Ext}}

\newcommand{\Hilb}{\operatorname{Hilb}}
\newcommand{\Hom}{\operatorname{Hom}}
\newcommand{\id}{\operatorname{id}}

\newcommand{\Ker}{\operatorname{Ker}}

\newcommand{\NN}{{\mathbb{N}}}

\newcommand{\Ox}{{\mathcal O}_X}

\newcommand{\Pic}{\operatorname{Pic}}
\newcommand{\PP}{{\bf{P}}}

\newcommand{\QQ}{{\mathbb{Q}}}

\newcommand{\rk}{{\operatorname{rk}}}
\newcommand{\RR}{{\mathbb{R}}}

\newcommand{\Sing}{\operatorname{Sing}}

\newcommand{\Spec}{\operatorname{Spec}}

\newcommand{\tr}{\operatorname{tr}}
\newcommand{\ZZ}{{\mathbb{Z}}}

\begin{document}
\bibliographystyle{amsplain}

\setcounter{thm}{0}
\title[Kodaira dimension and singularities of moduli]
{The Kodaira dimension and singularities 
of moduli \\
of stable sheaves on some elliptic surfaces}


%
\author{Kimiko Yamada}
\date{\today}
\email{yamada@xmath.ous.ac.jp}
\address{Department of applied mathematics,
Faculty of Science, 
Okayama University of Science, Japan}
\thanks{This work was supported by the Grants-in-Aid for 
Young Scientists (B), JSPS, No. 23740037. }
\subjclass{Primary~14J60, Secondary~14D20, 32G13, 14B05, 14Exx}
\keywords{Moduli of stable vector bundles, Elliptic surface, Singularities, Obstruction, Kodaira dimension}
\begin{abstract}
Let $X$ be an elliptic surface over ${\bf P}^1$ with $\kappa(X)=1$, and
$M=M(c_2)$ be the moduli scheme of rank-two stable sheaves $E$ on $X$ with 
$(c_1(E),c_2(E))=(0,c_2)$ in $\operatorname{Pic}(X)\times\mathbb{Z}$.
We look into defining equations of $M$ at its singularity $E$,
partly because if $M$ admits only canonical singularities, then
the Kodaira dimension $\kappa(M)$ can be calculated. We show the following. \par
(A) $E$ is at worst canonical singularity of $M$
if the restriction of $E_{\eta}$ 
to the generic fiber of $X$ has no rank-one subsheaf, and 
if the number of multiple fibers of $X$ is a few.\par
(B) We obtain that $\kappa(M)=\{1+\dim(M)\}/2$ and the Iitaka program of $M$ can be
described in purely moduli-theoretic way for $c_2\gg 0$,
when $\chi({\mathcal O}_X)=1$, $X$ has just two multiple fibers, and
one of its multiplicities equals $2$. \par
(C) On the other hand, 
when $E_{\eta}$ has a rank-one subsheaf, 
it may be insufficient to look at
only the degree-two part of defining equations to judge whether $E$ is at worst canonical singularity or not.
%
%
%
%
%
\end{abstract}
\maketitle
%
%
\setcounter{section}{0}
\section{Introduction}\label{sctn:update-intro}
%
For an ample line bundle $H$ on projective smooth surface $X$ over $\CC$,
there is the coarse moduli scheme $M(c_2)$ (or $M(c_2,H)$)
of rank-two $H$-stable sheaves $E$ with
Chern classes $(c_1,c_2)=(0, c_2)$ in $\Pic(X)\times \ZZ$ (\cite{Gi:moduli}). 
For every point $E\in M(c_2)$,
the completion ring $\sO_{M,E}^{\wedge}$ of $M(c_2)$ at $E$ gives
the formal universal moduli of the functor assigning deformation of $E$ over 
local Artinian $\CC$-algebra (e.g. \cite[Thm. 19.3]{Ha10:deform}).
\begin{qstn}
(1) Is $M(c_2)$ nonsingular or not? \\
(2) Suppose that $M(c_2)$ has a singular point $E$, that is, an obstructed sheaf.
How is the analytic structure of $M(c_2)$ at $E$, in other words,
the ring structure of $\sO_{M,E}^{\wedge}$? \\
(3) How is the birational structure of $M(c_2)$; for example, its Kodaira dimension $\kappa(M)$?
\end{qstn}
For example, it is known that $M(c_2)$ is nonsingular when $X$ is $K3$ surface or
$-K_X$ is ample.
In this paper, we shall work in the setting below.
\begin{stng}\label{stng:XandH}
$X$ is a minimal surface over $\CC$ whose
Kodaira dimension $\kappa(X)$ equals $1$ and
irregularity $q(X)$ equals $0$,
so there is an elliptic fibration $\pi:X \rightarrow \PP^1$.
Every singular fiber of $X$ is
either rational integral curve with one node $(I_1)$ or
multiple fiber with smooth reduction $(mI_0)$.
We denote the number of multiple fibers of $\pi$ by $\Lambda(X)$ and $d=\chi(\sO_X)$.
Fix an integer $c_2> 0$, and let $H$
be an ample line bundle which is $c_2$-suitable (Definition \ref{defn:suitable}).
\end{stng}

%
%
%
Any sheaf $E$ in $M(c_2)$ induces a rank-two vector bundle $E_{\eta}$ with degree $0$
on the generic fiber $X_{\eta}$, where $\eta=\Spec(k(\PP^1))$.
This can be classified into three cases (Fact \ref{fact:RestrGenericFib}): \par
\begin{enumerate}[{Case} I]
\item : $E_{\eta}$ has no sub line bundle with fiber degree $0$. 
\item :
$E_{\eta}$ has a sub line bundle with fiber degree $0$, but $E_{\eta}$ is not decomposable. 
\item :
$E_{\eta}$ is decomposable into line bundles with fiber degree $0$. 
\end{enumerate}
%
%
In this classification, sheaves of Case I appear most frequently in $M(c_2)$, because
Case I is an open condition by Lemma \ref{lem:I-open-property}.
Recall that, by the deformation theory of sheaves (Fact \ref{fact:moduli-lci}),
if $E$ is a singular point of $M(c_2)$ then for
$b=\dim\Hom(E,E(K_X))^{\circ}\neq 0$ and $D=\dim\Ext^1(E,E)-\dim\Hom(E,E(K_X))^{\circ}$, 
that is the expected dimension of $M(c_2)$, we have
\begin{equation}\label{eq:formal-moduli-intro}
 \sO_{M,E}^{\wedge}\simeq \CC[[t_1,\cdots, t_{D+b}]]/\langle F_1,\dots, F_b \rangle,
\end{equation}
where $F_i$ is a power series starting from degree-two terms.
\begin{thm}[Theorem \ref{thm:rk-geq-3ext-ver1},
Proposition \ref{prop:Ex-Sing}] \label{thm:rk-geq-2ext-1-Intro}
Suppose that
$E$ is a singular point of $M(c_2)$ and applies to Case I.
If $7(d+2)/4\geq \Lambda(X)$ or $2\geq \Lambda(X)$, then the following holds: \\
(1) Let $G$ be any nonzero $\CC$-linear combination of $F_1,\cdots F_b$ at \eqref{eq:formal-moduli-intro} and
we indicate $G$ as
\begin{equation}\label{eq:defn-h-R}
 G=t_1^2+\cdots+ t_R^2+O(3),
\end{equation}
where $O(3)$ are terms whose degrees are more than $2$, 
and $R$ is an integer depending on $G$. Then $R\geq 2b+1$. \\
(2) $E$ is a canonical singularity of $M(c_2)$. \\
Moreover, there actually exist such obstructed sheaves on $M(c_2)$ if $c_2\gg 0$.
\end{thm}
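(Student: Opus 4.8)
The plan is to extract from \eqref{eq:formal-moduli-intro} the exact meaning of the quadratic parts of the $F_i$, then prove (1) — the analytic core — before deducing (2) and the existence clause. By Fact~\ref{fact:moduli-lci} the tangent space at $E$ is $\Ext^1(E,E)^{\circ}$, of dimension $D+b$ (here $\Ext^0(E,E)^{\circ}=0$ as $E$ is stable), the obstruction space is $\Ext^2(E,E)^{\circ}$ of dimension $b$, and Serre duality gives $\Ext^2(E,E)^{\circ}\cong\Hom(E,E(K_X))^{\circ\vee}$ together with a perfect pairing $\Ext^1(E,E)^{\circ}\otimes\Ext^1(E,E(K_X))^{\circ}\to\CC$. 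The degree-two part of the Kuranishi map is the Yoneda square $v\mapsto v\cup v\in\Ext^2(E,E)^{\circ}$, so a nonzero combination $G$ as in \eqref{eq:defn-h-R} has quadratic part $q_\phi(v)=\tr\big(\phi\circ(v\cup v)\big)$ for the corresponding nonzero $\phi\in\Hom(E,E(K_X))^{\circ}$. Polarising and applying the perfect pairing, the symmetric form of $q_\phi$ is $(v,w)\mapsto\langle v,\phi_* w\rangle$, where $\phi_*$ is composition with $\phi$ (i.e. $\phi\circ(-)$); hence
\[
 R=\rk(q_\phi)=\rk\big(\phi_*\colon\Ext^1(E,E)^{\circ}\to\Ext^1(E,E(K_X))^{\circ}\big),
\]
and (1) becomes the cohomological claim that $\phi_*$ has rank $\ge 2b+1$ for every nonzero traceless $\phi$.

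To prove this I would use Case~I decisively. As $K_X$ is vertical, $K_X|_{X_\eta}\cong\sO_{X_\eta}$; since $E_\eta$ is stable, hence simple, a traceless $\phi$ restricts on the generic fibre to a trace-zero scalar, so $\phi_\eta=0$. Consequently $\IIm\phi\subseteq E(K_X)$ is a torsion sheaf supported on the finitely many non-generic fibres, while $\Ker\phi\subset E$ is a full-rank subsheaf with torsion quotient. Factoring $\phi_*$ through $\Ker\phi$ and $\IIm\phi$ and chasing the resulting long exact $\Ext$-sequences, I would bound the corank of $\phi_*$ by the cohomology of these fibre-supported sheaves, which is in turn governed by the $I_1$-fibres and by the $\Lambda(X)$ multiple fibres occurring in $K_X=\pi^{*}(\cdots)+\sum_i(m_i-1)F_i$. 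The goal is the estimate $\dim\operatorname{rad}(q_\phi)\le D-b-1$, i.e. $R\ge2b+1$, and the content of the hypothesis $7(d+2)/4\ge\Lambda(X)$ — with the trivial case $2\ge\Lambda(X)$ — is precisely to make this inequality hold once both $b$ and the corank are written in terms of $d=\chi(\Ox)$ and $\Lambda(X)$. This uniform non-degeneracy estimate is the main obstacle: it is where the elliptic structure and the bound on the number of multiple fibres genuinely enter.

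For (2) I would pass to the tangent cone of \eqref{eq:formal-moduli-intro}. Writing $F_i=Q_i+O(3)$, part (1) says every nonzero $\CC$-combination of the $Q_i$ has rank $\ge2b+1$; in particular $D+b\ge2b+1$, so $D\ge b+1$, and one checks the $Q_i$ form a regular sequence, so the projectivised tangent cone $\PP(C)=\{Q_1=\cdots=Q_b=0\}\subset\PP^{D+b-1}$ is a complete intersection of $b$ quadrics. Its singular points lie in $\bigcup_{\lambda\neq0}\PP(\operatorname{rad}Q_\lambda)$, which the rank bound forces to have dimension $\le D-3$; thus $\operatorname{codim}\Sing\PP(C)\ge2$ and, being Cohen--Macaulay, $\PP(C)$ is normal. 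By adjunction $\omega_{\PP(C)}\cong\sO(b-D)$ with $b-D\le-1$, so $\PP(C)$ is Fano-polarised and its affine cone $\Spec\gr\sO_{M,E}^{\wedge}$ is canonical, the vertex blow-up having discrepancy $D-b-1\ge0$. Since the tangent cone is canonical, so is the germ $\Spec\sO_{M,E}^{\wedge}$, which is exactly (2).

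Finally, for the existence clause (Proposition~\ref{prop:Ex-Sing}) I would exhibit, for $c_2\gg0$, genuine Case~I sheaves with $b\ge1$. Taking a multiple fibre $F$ of multiplicity $m\ge2$, whose part $(m-1)F$ lies in $K_X$, I would construct $E$ by an elementary modification along $F$ carrying a nonzero traceless $\phi\colon E\to E(K_X)$ supported on $F$, so that $\Hom(E,E(K_X))^{\circ}\neq0$ and $E$ is a singular point of $M(c_2)$. For $c_2\gg0$ such $E$ can be kept stable with $E_\eta$ stable — Case~I is open by Lemma~\ref{lem:I-open-property} — and a dimension count places them in $M(c_2)$; parts (1)--(2) then apply under the running hypothesis on $\Lambda(X)$. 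I expect this last step to be routine once the rank estimate of (1) is established, which remains the crux of the whole statement.
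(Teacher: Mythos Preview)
Your argument for (1) rests on the claim that $\phi_\eta=0$, and this is false. You reason that $E_\eta$ is stable hence simple, so a traceless endomorphism vanishes; but ``stable'' over the non-closed field $k(\PP^1)$ only gives that $\End(E_\eta)$ is a division algebra, not that it equals $k(\PP^1)$. In Case~I we have $E_{\bar\eta}\cong\sO(F)\oplus\sO(-F)$ with the two summands exchanged by the Galois involution of $k(C)/k(\PP^1)$, so $\End(E_\eta)\cong k(C)$ is a quadratic extension and its trace-zero part is one-dimensional over $k(\PP^1)$, not zero. Lemma~\ref{lem:DetNeq0-I} makes this explicit: any nonzero traceless $f$ in Case~I has $\det(f)\neq 0$, hence $f_\eta$ is an isomorphism, the opposite of what you assert. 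Consequently your picture of $\IIm\phi$ as a torsion sheaf supported on finitely many fibres collapses, and with it the whole corank estimate you sketch. The paper's route is entirely different: it uses $\det(f)\neq 0$ to build a double cover $\nu_0:Y_0\to X$ on which $\nu_0^*f$ diagonalises (Section~\ref{sctn:rkH1-caseA1}), decomposes the sheaves $\sF,\sG,\sR,\sQ$ arising from $\ad(f)$ and $f_+$, and after a long cohomological estimate (Propositions~\ref{prop:-D2>4d}, \ref{prop:h1-sRKoG}, \ref{prop:dimIm-2ext2-I}) obtains the inequality \eqref{eq:EstimateRk-2ext2-1} for $\rk H^1(\ad(f))$. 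Note also that the quantity to bound is the rank of $H^1(\ad(f)):\alpha\mapsto f\alpha-\alpha f$ (see Fact~\ref{fact:moduli-lci} and \eqref{eq:def-H1f}), not of the one-sided composition $f\circ(-)$ as you write; the identification $\tr(f\alpha\beta+f\beta\alpha)=\tr(\ad(f)(\alpha)\circ\beta)$ uses graded cyclicity of the trace.

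Your argument for (2) also has a gap independent of (1). Showing that the projectivised tangent cone $\PP(C)$ is normal and that the vertex discrepancy $D-b-1$ is nonnegative is not enough: you must also know that $\PP(C)$ itself has canonical singularities, and normality plus Gorenstein does not give this. The paper's Theorem~\ref{thm:2k+1-CanSing} handles this by a different mechanism---a deformation to the homogeneous case followed by an inductive blow-up along the Jacobian locus---rather than by a tangent-cone argument. Finally, the existence clause in the paper (Proposition~\ref{prop:Ex-Sing}) is not an elementary modification along a multiple fibre but an explicit construction via a Weierstrass model, a double cover $C\to\PP^1$, and push-forward of a line bundle from $Y=X\times_{\PP^1}C$.
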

Theorem \ref{thm:rk-geq-2ext-1-Intro}
has an important application Theorem \ref{thm:(2,m)}, as explained in Section \ref{sctn:update-backgrnd} below.\par
%
%
%
%
Next let us consider obstructed sheaves applying to Case II.
Sheaves of Case II appear second frequently in the above classification from Lemma \ref{lem:I-open-property}.
Here we consider the case where $d=1$ and $\Lambda(X)=2$;
one might say that such surfaces have the smallest number of singular fibers and multiple fibers
among all the elliptic surfaces of Kodaira dimension one,
since the number of singular fibers of type $(I_1)$ is known to be $12d$.
In this case, every obstructed sheaf $E$ in $M(c_2)$ satisfies $b=1$ at \eqref{eq:formal-moduli-intro},
that is, $(M(c_2),E)$ is always a hypersurface singularity.
\begin{equation}\label{eq:formal-moduli-hypersurf-intro}
  \sO_{M,E}^{\wedge}\simeq \CC[[t_1,\cdots, t_{D+1}]]/\langle F \rangle \qquad (F=t_1^2+\cdots+ t_R^2+O(3)).
\end{equation}
%
\begin{thm}[Proposition \ref{f_*-tor-nonzero}, \ref{prop:locfree-l(Z)-l(ZB)=1}, \ref{prop:example-Case2-R1}]\label{thm:R-geq-1}
Assume that $d=1$ and $\Lambda(X)=2$.\\
(1) For any locally-free obstructed sheaf $E$ applying to Case II,
it holds that $R\geq 1$ at \eqref{eq:formal-moduli-hypersurf-intro}.\\
(2) There actually exist locally-free obstructed stable sheaves of Case II satisfying $R=1$
for every $c_2\gg 0$, when $X$ fit one of conditions (1)--(4) in Proposition \ref{prop:example-Case2-R1}.
\end{thm}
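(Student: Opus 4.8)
The plan is to read $R$ off from the \emph{primary obstruction}, the degree-two part of the power series $F$ in \eqref{eq:formal-moduli-hypersurf-intro}, and to compute it through the elliptic fibration $\pi\colon X\to\PP^1$. Since $q(X)=0$ and $d=\chi(\sO_X)=1$ force $p_g(X)=0$, the trace maps $\tr\colon\Ext^i(E,E)\to\HH^i(\sO_X)$ vanish for $i=1,2$, so $\Ext^i(E,E)^{\circ}=\Ext^i(E,E)$; combined with $b=1$ (which holds here by the remark preceding the theorem) this gives $\Ext^2(E,E)\cong\CC$ and, by Serre duality on $X$, a nonzero homomorphism $\phi\colon E\to E(K_X)$, unique up to scalar, spanning $\Hom(E,E(K_X))=\Ext^2(E,E)^{\vee}$. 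By the deformation theory recalled in Fact \ref{fact:moduli-lci}, the degree-two part of $F$ is the Yoneda square $v\mapsto v\circ v$ followed by the chosen generator of $\Ext^2(E,E)^{\vee}$, so $R$ is its rank; by compatibility of the Yoneda product with Serre duality this quadratic form is $v\mapsto\tr\bigl((\phi\circ v)\circ v\bigr)$, with polarisation the symmetric pairing attached to $\phi_{*}\colon\Ext^1(E,E)\to\Ext^1(E,E(K_X))\cong\Ext^1(E,E)^{\vee}$. Thus part (1) is exactly the assertion that this pairing is nonzero, and part (2) pins its rank to $1$.

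For part (1) I would compute the relevant groups through the Leray spectral sequence of $\pi$. For a Case~II sheaf one has $E_{\eta}\cong L\otimes\mathcal{A}_{2}$ with $\mathcal{A}_{2}$ the rank-two Atiyah bundle on the genus-one generic fibre, so that $\dim\HH^0(\mathcal{E}nd(E_\eta))=\dim\HH^1(\mathcal{E}nd(E_\eta))=2$; hence $\pi_*\mathcal{E}nd(E)$ and $R^1\pi_*\mathcal{E}nd(E)$ are generically of rank two, $\Ext^2(E,E)=\HH^1(\PP^1,R^1\pi_*\mathcal{E}nd(E))$, and the primary obstruction is controlled by the cup product $\HH^1(\PP^1,\pi_*\mathcal{E}nd(E))\otimes\HH^0(\PP^1,R^1\pi_*\mathcal{E}nd(E))\to\HH^1(\PP^1,R^1\pi_*\mathcal{E}nd(E))$ coming from the bimodule structure of $R^1\pi_*\mathcal{E}nd(E)$ over $\pi_*\mathcal{E}nd(E)$, together with a secondary operation. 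The decisive input is Proposition \ref{f_*-tor-nonzero}: for a locally free Case~II sheaf the torsion contributed to these two sheaves on $\PP^1$ by the multiple fibres $m_1C_1$ and $m_2C_2$, and by the length-$c_2$ zero-cycle $Z$ in the sub-line-bundle sequence $0\to\mathcal{L}\to E\to I_Z\otimes\mathcal{M}\to0$ (where $\mathcal{L}_\eta\cong\mathcal{M}_\eta$ is the $2$-torsion bundle $L$, so that $c_1(\mathcal{L})^2=0$ and $l(Z)=c_2$), does not die under $\pi_*$. From this one produces a first-order deformation $v$ of $E$ — supported near a multiple fibre and combined with a horizontal direction — that fails to lift to second order; equivalently $\tr\bigl((\phi\circ v)\circ v\bigr)\neq0$, the local computation taking place on $C_i$, where $K_X|_{C_i}$ is a torsion line bundle of exact order $m_i$. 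This gives $R\ge1$.

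For part (2) I would build the examples directly. Choose line bundles $\mathcal{L},\mathcal{M}$ on $X$ with $c_1(\mathcal{L})+c_1(\mathcal{M})=0$, with $\mathcal{L}_\eta\cong\mathcal{M}_\eta$ equal to a $2$-torsion bundle $L$ on the generic fibre, and with $\Hom(E,E(K_X))\neq0$ for the sheaf $E$ constructed below; the data realising this — including the arithmetic constraints on $m_1,m_2$ and on $L$ that allow $L$ to lie in the image of $\Pic(X)\to\Pic(X_\eta)$ — are exactly conditions (1)--(4) of Proposition \ref{prop:example-Case2-R1}, and since $b\le1$ automatically, $\Hom(E,E(K_X))\neq0$ is precisely the condition that $E$ be obstructed. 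Take a general $Z$ with $l(Z)=c_2$ points and an extension class $e\in\Ext^1(I_Z\otimes\mathcal{M},\mathcal{L})$ whose local components at the points of $Z$ are units, so that $E$ is locally free by Proposition \ref{prop:locfree-l(Z)-l(ZB)=1} (whose proof along the way records $l(Z)-l(Z_B)=1$), and whose restriction to $X_\eta$ is the Atiyah class, so that $E_\eta\cong L\otimes\mathcal{A}_2$ is of Case~II. For $c_2\gg0$ the resulting $E$ is $H$-stable: by the $c_2$-suitability of $H$ the $c_1$ of any sub-line-bundle stays bounded while $c_2$ is forced large, a standard Bogomolov-type estimate. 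Finally $R=1$: the bound $R\ge1$ is part (1), and $R\le1$ holds because the rank of the obstruction pairing is at most $l(Z)-l(Z_B)=1$ by Proposition \ref{prop:locfree-l(Z)-l(ZB)=1}.

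The hard part is the explicit evaluation of $\tr\bigl((\phi\circ v)\circ v\bigr)$ in terms of the fibration data, and in particular separating the case $\phi_\eta\neq0$ — possible only when $K_X|_{X_\eta}$ is trivial, an arithmetic condition on $m_1,m_2$ — from the case $\phi_\eta=0$, in which $\phi$ is supported on finitely many fibres and the computation localises there. A secondary difficulty is proving that $b=1$ for the constructed $E$, i.e.\ that $\Hom(E,E(K_X))$ is exactly one-dimensional rather than $0$ or $\ge2$: this is what splits Proposition \ref{prop:example-Case2-R1} into its four cases. Lastly, the two inequalities $R\ge1$ and $R\le1$ must both be reduced to the single clean identity $l(Z)-l(Z_B)=1$.
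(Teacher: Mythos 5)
Your general framework is the right one --- by Fact \ref{fact:moduli-lci} and Lemma \ref{lem:f_*+f^*=0}, together with $q(X)=p_g(X)=0$, the number $R$ is the rank of the form $(\alpha,\beta)\mapsto\tr(f\circ\alpha\circ\beta+f\circ\beta\circ\alpha)$, i.e.\ of $f_*$ on $\Ext^1(E,E)$, and the paper does push everything down to relative $Ext$ sheaves over $\PP^1$ much as you propose. But at both decisive points your argument is circular: the theorem to be proved \emph{is} the conjunction of Propositions \ref{f_*-tor-nonzero}, \ref{prop:locfree-l(Z)-l(ZB)=1} and \ref{prop:example-Case2-R1}, so invoking Proposition \ref{f_*-tor-nonzero} as the ``decisive input'' for $R\ge1$, and Proposition \ref{prop:locfree-l(Z)-l(ZB)=1} for local freeness and for $R\le1$, leaves nothing proved. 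Moreover the substance you supply in their place mislocates the source of positivity. Writing $f^2=0$, $0\to F=\sO(D)\to E\to G=\sO(-D)\otimes I_Z\to0$ with $2D+K_X-B=0$, the actual lower bound is $R\ge l(Z)-l(Z\cap B)$, obtained by chasing elements of $\Hom(F,F(K_X)/G)$ supported on the zero-cycle $Z$ --- not deformations ``supported near a multiple fibre,'' which lie in exactly the part the argument discards. The whole difficulty is then to show $Z\not\subset B$ as schemes; this is where Case II enters, since non-splitting of \eqref{eq:exseq-f} on $X_\eta$ forces $\HH^0(\sO(2K_X-B+\cf)\otimes I_Z)=0$ (Lemma \ref{lem:easy-prpty-BZ}), whereas $Z\subset B$ would give $\tau_B\in I_Z$ and hence a nonzero section coming from the effective divisor $2K_X-2B+\cf$, using the numerical constraints on $B=\sum_i n_iF_i$. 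No step in your sketch produces this contradiction, so $R\ge1$ does not follow.

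For part (2) the inequality ``$R\le l(Z)-l(Z\cap B)$'' that you assert is not available; the paper instead establishes the exact formula $R=l(Z)-\hom(E,F(K_X)|_B)$ (Lemma \ref{lem:R=lZ-h(EF(K)B)}) and, for sheaves built so that $Z_1\cup Z_2\subset B$ scheme-theoretically with exactly one point of $Z$ off the multiple fibres, proves $\hom(E,F(K_X)|_B)\ge l(Z_1\cup Z_2)$ by Serre duality on the non-reduced curve $B$, whence $R\le1$. The existence of admissible pairs $(B,Z)$ --- compatible with $K_X-B\in2\Pic(X)$, $(K_X-B)\cdot\sO(1)\ge0$, $\HH^0(\sO(2K_X-B)\otimes I_{Z_1\cup Z_2})=0$ and the torsion orders of $\sO(F_i)|_{F_i}$ --- is precisely the parity case analysis behind conditions (1)--(4) of Proposition \ref{prop:example-Case2-R1}, and you leave it entirely open. (Incidentally, $b=1$ is automatic here, since for locally free $E$ the kernel $F$ is a line bundle and $\hom(E,E(K_X))^0\le h^0(\sO(B))=1$; the four cases do not arise from controlling $b$.)
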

When $c_2$ is sufficiently large, 
$M(c_2)$ is normal and its dimension equals $4c_2-3d$ by Proposition \ref{prop:irred}.
When an obstructed sheaf $E$ of $M(c_2)$ satisfies $R=1$,
the defining equation $F$ of $M(c_2)$ at $E$ \eqref{eq:formal-moduli-hypersurf-intro}
never equals its degree-two part. 
As a result, we can summarize as follows:
%
%
\begin{obsvtn}\label{obsvtn:high-deg}
Assume that $\kappa(X)=1$.
When we want to grasp $\sO_{M,E}^{\wedge}$ at \eqref{eq:formal-moduli-intro},
it may be insufficient to look only at the degree-two part of defining equations $F_i$.
It is possible that we have to look also into its higher-degree part.
\end{obsvtn}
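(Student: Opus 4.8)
\medskip

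The plan is to read off Observation~\ref{obsvtn:high-deg} from Theorem~\ref{thm:R-geq-1}(2) together with the normality of $M(c_2)$ recorded in Proposition~\ref{prop:irred}. Since the Observation is an existence-type (``this can occur'') assertion, it suffices to produce a single family of elliptic surfaces, and a single range of $c_2$, for which the degree-two part of a defining equation at a singular point does not by itself pin down the local ring; all the genuine work is already packaged into the cited statements, and only a short reductio remains.

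Concretely, I would first specialize to the hypothesis of Theorem~\ref{thm:R-geq-1}: take $X$ with $d=\chi(\sO_X)=1$ and $\Lambda(X)=2$ satisfying one of the conditions (1)--(4) of Proposition~\ref{prop:example-Case2-R1}, so that $\kappa(X)=1$ by Setting~\ref{stng:XandH}. In this situation $b=1$, every obstructed sheaf is a hypersurface singularity of the shape \eqref{eq:formal-moduli-hypersurf-intro} with exactly $d+1=2$ formal variables, and $F=t_1^2+\cdots+t_R^2+O(3)$. Next I would choose $c_2\gg 0$ large enough that \emph{both} Proposition~\ref{prop:example-Case2-R1} and Proposition~\ref{prop:irred} apply. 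By the former there is a locally-free obstructed stable sheaf $E\in M(c_2)$ of Case~II with $R=1$; by the latter $M(c_2)$ is normal, hence reduced, at $E$. Now run the reductio: if $F$ were equal to its own degree-two part, then $F=t_1^2$ and $\sO_{M,E}^{\wedge}\simeq\CC[[t_1,t_2]]/\langle t_1^2\rangle$, which is non-reduced, contradicting normality at $E$. Hence the tail $O(3)$ is nonzero; on its own the degree-two part $t_1^2$ of $F$ cuts out a non-reduced scheme, whereas $\sO_{M,E}^{\wedge}$ is reduced, so one is forced to inspect the higher-degree terms. This establishes the Observation for the presentation \eqref{eq:formal-moduli-intro} in the case $b=1$, and hence in general one cannot restrict attention to the quadratic parts of the $F_i$.

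The main obstacle is not in this deduction but in the two results it consumes: the construction in Proposition~\ref{prop:example-Case2-R1} of locally-free Case~II sheaves with $R=1$ rather than $R=2$ (it is the rank-one case that makes the bare quadric non-reduced in two variables), and the normality of $M(c_2)$ in Proposition~\ref{prop:irred}. Granting both, the one point that still needs attention is a compatibility check: the values $c_2$ for which Proposition~\ref{prop:example-Case2-R1} supplies an $R=1$ example must lie inside the range where Proposition~\ref{prop:irred} yields normality. As both are ``$c_2\gg 0$'' statements this is automatic, but it is worth recording explicitly, since it is precisely reducedness at $E$ that powers the reductio. One could also phrase a variant for singular points with $b\geq 2$, but the hypersurface case already exhibits the phenomenon, so nothing further is needed.
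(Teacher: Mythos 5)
Your proposal is correct and follows essentially the same route as the paper: the Observation is justified there by exactly this combination of Proposition~\ref{prop:example-Case2-R1} (locally-free Case~II obstructed sheaves with $R=1$ for $c_2\gg 0$) and the normality of $M(c_2)$ from Proposition~\ref{prop:irred}, forcing $F\neq t_1^2+O(3)$'s quadratic part alone. Only note that the number of formal variables in \eqref{eq:formal-moduli-hypersurf-intro} is $\dim\Ext^1(E,E)=D+1$ (the ``$d+1$'' there is a slip), not $2$, but this does not affect your reductio since $\CC[[t_1,\dots,t_{D+1}]]/\langle t_1^2\rangle$ is non-reduced in any number of variables.
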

%
%
%
\setcounter{subsection}{1}
\subsection{Birational geometric property of moduli scheme}\label{subsctn:birat}
Here we consider the birational structure of $M(c_2)$.
When $c_2$ is sufficiently large, $M(c_2)$ is normal
and then its $K$-dimension $\kappa(K,M)$ and its Kodaira dimension $\kappa(M)$ are defined
(Definition \ref{def:D-dim}(1)).
We want to calculate $\kappa(M)$ because it is one of the most important birational invariants.
We can calculate $\kappa(K,M)$ at Corollary \ref{cor:KodDim-Moduli} (ii)
using Friedman's work \cite{Fri:vb-reg-ellipt} as the key.
It would be very favorable if all singularities of $M(c_2)$ are {\it canonical singularities}
(Definition \ref{def:D-dim}),
because $\kappa(K,M)$ equals the Kodaira dimension $\kappa(M)$ in such a case.
This new course also has the advantage that
we can interpret the Iitaka program of $M$ (Fact \ref{fact:IitakaPgrm}(4)), that is a fundamental part
in birational geometry,
using moduli theory as pointed out
at Corollary \ref{cor:KodDim-Moduli} (ii).\par
Assume that $E$ is a hypersurface singularity of $M=M(c_2)$ as at \eqref{eq:formal-moduli-hypersurf-intro}.
If $R\geq 3$, then $(M,E)$ is a canonical singularity.
If $R=2$ and $c_2\gg 0$, then $(M,E)$ is a canonical singularity since $M(c_2)$ is normal.
If $R\leq 1$, then one can not judge whether $(M,E)$ is a canonical singularity from degree-two part of $F$.
%
%
%
Roughly speaking, 
Theorem \ref{thm:rk-geq-2ext-1-Intro} (2)
states that an obstructed sheaf $E$ of $M(c_2)$ of Case I
is always a canonical singularity
when $\Lambda(X)$ is small relatively to the number of singular fibers of type $(I_1)$.
As an application, we obtain the following theorem.
%
%
\begin{thm}[Theorem \ref{thm:(2,m)}]\label{thm:(2,m)-Intro}
 In Setting \ref{stng:XandH}, we suppose that $X$ has
 just two multiple fibers
 with multiplicities
$m_1=2$ and $m_2=m\geq 3$, and $d=\chi(\sO_X)=1$.\\
(i) If $M(c_2)$ is singular at a stable sheaf $E$, then 
$E$ always comes under Case I in Fact \ref{fact:RestrGenericFib},
 and consequently $E$ is always a canonical
singularity of $M(c_2)$.\\
(ii) We consider in Setting \ref{stng:XHS}.
If $c_2\geq 3$ and if $M(c_2)$ is compact (for example, $c_2$ is odd), then
the Kodaira dimension $\kappa(M(c_2))$ of $M(c_2)$ equals
$(\dim M(c_2)+1)/2$.
\end{thm}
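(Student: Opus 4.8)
The plan is to prove Theorem~\ref{thm:(2,m)-Intro} by combining the structural result of part~(i) with the $K$-dimension computation from Corollary~\ref{cor:KodDim-Moduli}(ii) and the general principle that canonical singularities are rational, hence do not affect the plurigenera. For part~(i), I would start from the classification in Fact~\ref{fact:RestrGenericFib} and show that under the hypotheses $m_1=2$, $m_2=m\geq3$, $d=1$, every singular point $E$ of $M(c_2)$ must fall into Case~I. The natural way to do this is to rule out Cases~II and~III: if $E_\eta$ has a sub line bundle of fiber degree $0$, then one can use the description of such sheaves via elementary modifications along fibers (following \cite{Fri:vb-reg-ellipt}), and the constraint coming from $c_1=0$, $d=1$, together with the arithmetic of the two multiplicities $2$ and $m$, should force the relevant $\Hom(E,E(K_X))^\circ$ to vanish, i.e. $E$ is unobstructed, contradicting singularity. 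Once $E$ is known to be in Case~I, part~(i) of Theorem~\ref{thm:rk-geq-2ext-1-Intro} applies: we need the numerical hypothesis $7(d+2)/4\geq\Lambda(X)$ or $2\geq\Lambda(X)$; here $\Lambda(X)=2$, so the second alternative holds outright, and therefore $E$ is a canonical singularity of $M(c_2)$.

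For part~(ii), the idea is that once $M(c_2)$ is normal (true for $c_2\gg0$ by Proposition~\ref{prop:irred}, and I would check $c_2\geq3$ suffices in Setting~\ref{stng:XHS}), has only canonical singularities (by part~(i)), and is compact (hypothesis, guaranteed e.g. when $c_2$ is odd so there are no strictly semistable sheaves), the Kodaira dimension $\kappa(M(c_2))$ coincides with the $K$-dimension $\kappa(K,M(c_2))$. This is exactly the mechanism flagged in Subsection~\ref{subsctn:birat}: for a normal projective variety with canonical singularities, plurigenera computed with the canonical (Weil) divisor agree with those of any resolution, so $\kappa=\kappa(K,\cdot)$. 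Then I would invoke Corollary~\ref{cor:KodDim-Moduli}(ii), which computes $\kappa(K,M(c_2))$ via Friedman's description of vector bundles on regular elliptic surfaces, to read off the value $(\dim M(c_2)+1)/2$. Since $\dim M(c_2)=4c_2-3d=4c_2-3$ by Proposition~\ref{prop:irred}, this is $(4c_2-2)/2=2c_2-1$, an explicit integer strictly between $0$ and $\dim M(c_2)$, so $M(c_2)$ is of intermediate Kodaira dimension.

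I expect the main obstacle to be part~(i): proving that no singular sheaf can be of Case~II or Case~III when $\Lambda(X)=2$, $d=1$ with the multiplicity pattern $(2,m)$. The delicate point is that Case~II sheaves \emph{do} occur as singular points on other elliptic surfaces (this is precisely the content of Theorem~\ref{thm:R-geq-1}, proved under $d=1$, $\Lambda(X)=2$ as well), so the vanishing of obstructions cannot be purely numerical in $d$ and $\Lambda$ alone — it must genuinely use that one multiplicity is exactly $2$. I would therefore look closely at how the sub line bundle of fiber degree $0$ in $E_\eta$ extends over the multiple fibers: the multiple fiber of multiplicity $2$ contributes a half-integer-type constraint to the degree of the extension of the subsheaf, and when combined with $\chi(\sO_X)=1$ this should obstruct the existence of a genuine deformation-obstructed sheaf, i.e. force $\Hom(E,E(K_X))^\circ=0$. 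Making this precise will require the explicit local-global analysis of $\pi_*$ and $R^1\pi_*$ of the relevant sheaves near the two multiple fibers, analogous to Propositions~\ref{f_*-tor-nonzero} and~\ref{prop:locfree-l(Z)-l(ZB)=1} but now used to produce a vanishing rather than a lower bound on $R$. Everything after part~(i) — normality, compactness, the identification $\kappa=\kappa(K,\cdot)$, and the final arithmetic — should be routine given the cited results.
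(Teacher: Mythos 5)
Your plan for part (ii) is essentially the paper's deduction (canonical plus compact gives $\kappa(M)=\kappa(K,M)$, and Corollary \ref{cor:KodDim-Moduli} gives $\kappa(K,M)=(\dim M+1-p_g)/2=(\dim M+1)/2$ since $p_g=0$), with one caveat: you should not lean on Proposition \ref{prop:irred} for normality and the dimension at $c_2\geq 3$, because that proposition needs $c_2\gg 0$; in the paper, l.c.i.\ and normality at the singular points come with Theorem \ref{thm:rk-geq-3ext-ver1} itself, while expected dimension and the $K$-dimension count come from Corollary \ref{cor:KodDim-Moduli}(i), which only needs $c_2>\max(2(1+p_g),\,2p_g+(2/3)\Lambda(X))$, i.e.\ $c_2\geq 3$ here. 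The genuine gap is part (i): your proposal never actually rules out Cases II and III for an obstructed $E$; it only asserts that the arithmetic of the multiplicities ``should force'' $\Hom(E,E(K_X))^{\circ}=0$ and defers the work, and the toolkit you nominate --- the relative $Ext^1_{\pi}$ analysis of Propositions \ref{f_*-tor-nonzero} and \ref{prop:locfree-l(Z)-l(ZB)=1} --- is aimed at the wrong target. Those arguments presuppose that an obstructed Case II sheaf exists and bound $R$ from below; they are not set up to prove non-existence, and they say nothing about Case III. So the heart of the theorem is missing rather than routine.

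For comparison, the paper proves (i) by contradiction on a nonzero traceless $f\colon E\to E(K_X)$, split according to $\det f$. If $\det f=0$, the filtration $0\to\Ker f\to E\to\IIm f\to 0$ and Definition \ref{defn:B} give $B=a_1F_1+a_2F_2$ and $2D+K_X-B=0$ with $D=c_1(\Ker f)$; from $\chi(\sO(D))=d=1$, stability and $p_g=0$ one gets $h^0(K_X-D)\neq 0$, writes $K_X-D=\lambda\cf+b_1F_1+b_2F_2$, and Fact \ref{fact:O(F)|F} (the order of $\sO(F_i)|_{F_i}$ is exactly $m_i$) turns $K_X-B+2D=0$ into the congruences $3+a_i+2b_i\equiv 0 \pmod{m_i}$, whose only admissible solution forces $a_1=1$, contradicting $(K_X-B)\cdot\sO(1)\geq 0$ precisely because $m_1=2$. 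If $\det f\neq 0$, Lemmas \ref{lem:detf-0-A} and \ref{lem:2D|Xeta=0}, the double cover $\nu\colon Y\to X$ attached to $\det f$ (smooth here since $2K_X=(m-2)F_2$), $D^2=0$ (Lemma \ref{lem:D2=0}), $h^2(\sO_Y(D))\neq 0$ (Claim \ref{clm:K-D>0}) and the $\ZZ/2$-equivariant relation \eqref{eq:G+sG} force a $\sigma$-invariant effective divisor in a class restricting nontrivially to the integral curve $\nu^{-1}(F_1)$, which is impossible --- again the hypothesis $m_1=2$ enters through the nontriviality of $\sL|_{F_1}$ and the integrality of $\nu^{-1}(F_1)$. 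You correctly sensed that the multiplicity $2$ must be used in an essential way (otherwise Theorem \ref{thm:R-geq-1} would be contradicted), but without carrying out an argument of this kind --- or some substitute that handles both the $\det f=0$ and $\det f\neq 0$ alternatives and also Case III --- part (i), and hence the theorem, is not proved.
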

\subsection{Contents of this paper}
In Section \ref{section:prelim}, we recall background materials, including some facts in birational geometry.
In Section \ref{sctn:K-dim}, we compare pluricanonical map $\Phi:M \dashrightarrow |mK|$ with
Friedman's map $\psi:M \dashrightarrow \PP^N$ constructed using moduli theory (Sect. \ref{sbsctn:EvenFibDeg}).
Consequently we calculate $\kappa(K,M)$ and
use $\psi$ to understand Iitaka program of $M$ at Corollary \ref{cor:KodDim-Moduli}.
In Section \ref{sctn:canonical}, 
we obtain a sufficient condition for singularities to be canonical at Theorem \ref{thm:2k+1-CanSing}.
This theorem itself is purely ring-theoretic. Applying it, we use
$H^1(\ad(f))$ defined at \eqref{eq:def-H1f} 
to know an obstructed sheaf $E$ is canonical singularity or not
at Theorem \ref{thm:suffcdtn-moduli-can}.
In Section \ref{sctn:rkH1-caseA1}, we try to estimate $H^1(\ad(f))$  for sheaves of Type I. 
In Section \ref{sctn:SigM-CaseA1}, we prove Theorem \ref{thm:rk-geq-2ext-1-Intro} (1)(2).
In Section \ref{sctn:ElptSurf-FewSingFib}, we prove Theorem \ref{thm:(2,m)-Intro}.
In Section \ref{sctn:eg-sing}, we show 
"actually exists" part of Theorem \ref{thm:rk-geq-2ext-1-Intro} 
at Proposition \ref{prop:Ex-Sing}.
In Section \ref{sctn:CaseII}, we consider sheaves of type II and show Theorem \ref{thm:R-geq-1}.
\subsection{Background and previous researches}\label{sctn:update-backgrnd}
First we mention obstructed {\it stable} sheaves.
Every torsion-free $H$-stable sheaf $E$ is unobstracted 
if  $K_X=0$ ($K3$ surfaces, Abelian surfaces) or $K_X\cdot H<0$ (\cite[Prop. 6.17.]{Frd:holvb}).
When $X$ are Enriques surfaces, the author \cite{Yamada:Enriques} showed that
there actually exist obstructed stable sheaves, and that every such sheaf gives hypersurface singularity
\eqref{eq:formal-moduli-hypersurf-intro} satisfying that $R\geq 2c_2-2\chi(\sO_X)$.
On moduli of sheaves on $K3$ surfaces,
singularities coming from {\it strictly semi-stable} sheaves on $K3$ surfaces are actively researched.
Here we just cite some references; for example, \cite{OG03:6dim}, \cite{KLS06}, \cite{Yos17:FM-K3},
\cite{Arb-Sac18}, \cite{BZ18}.\par
Now let $X$ be an elliptic surface, and consider the moduli scheme $M$ of rank-two stable sheaves with Chern classes
$(c_1,c_2)$. If $c_1\cdot\cf=2e+1$ is odd, then $M$ is non-singular by
\cite[Lem. 8.8.]{Frd:holvb} and birationally equivalent to $\Hilb^{\dim(M)/2}(J^{e+1}(X))$
and $\kappa(M)=\dim(M)/2$, for $\kappa(J^{e+1}(X))=1$ from \cite[p.80, l.33]{Fr:so3ellipt}.
Here, $J^{e+1}(X)$ indicates the relative Picard scheme of line bundles on the fibers of degree $e+1$.
%
%
About this case, we cite \cite{Fr:so3ellipt}, \cite{Br:FMtrans}, \cite{Yoshioka99-stableVB-ElliptSurf}.
On the other hand, in case where $c_1(E)=0$, 
$\kappa(M)$ is not known although its upper bound
was given at \cite[p.328]{Fri:vb-reg-ellipt}. In our case,
it is more difficult to grasp the geometry of $M$ 
rather than in case where $(r,c_1\cdot\cf)=1$.
This is because the restriction of any stable sheaf to
the generic fiber is not stable but strictly semistable in the former case,
though it is stable in the latter case.
When $p_g(X)\neq 0$, one can use induced two forms and Poisson structure on $M$ 
(\cite{Li:kodaira}, \cite{Mk:symplectic}, \cite{Ty:symplectic}).
However we can not adopt them in our case since $p_g(X)=0$.
Therefore we take another course as explained at Section \ref{subsctn:birat}.\par
\begin{acknowledgment}
The author would like to sincerely thank Professor Shihoko Ishii
for information about Elkik's work \cite{Elkik:DefRatSing},
especially for Remark \ref{rem:RatAnalytic}.
The author would like to sincerely thank Professor Masataka Tomari for
giving valuable advices about Theorem \ref{thm:2k+1-CanSing} and Remark \ref{rem:a-inv}.
\end{acknowledgment}
\begin{notation}
For a real number $r$, the symbol $\lceil r \rceil$ 
means the smallest integer that is not less than $r$,
and $\lfloor r \rfloor$ the largest integer that is not greater than $r$.
A bilinear form $B$ on a vector space $W$ is
a symmetric linear function from $W\otimes_k W$ to $k$.
All schemes are of locally finite type over $\CC$.
For Weil divisors, $\sim$ stands for the linear equivalence, and $\equiv$ the
numerical equivalence.
Let $X$ be an integral projective surface over $\CC$.
For coherent sheaves $F$ and $E$ on $X$, $h^i(E)$ means $\dim H^i(X,E)$ and
$\ext^i(E,F)$ means $\dim\Ext^i(E,F)$.
For a line bundle $L$ on $X$,
we denote the kernel of trace map ${\rm tr}: \Ext^i(E,E\otimes L) \rightarrow H^i(L)$
by $\Ext^i(E,E\otimes L)^{\circ}$, and its dimension by
$\ext^i(E,E\otimes L)^{\circ}$.
%
\end{notation}

%
%
%
%
%
%
\section{Background materials}\label{section:prelim}
%
\begin{defn}
Let $\bar{M}(c_2,H)$ be the coarse moduli scheme of S-equivalence classes of
$H$-semistable sheaves with
Chern classes $(r,c_1,c_2)=(2,0, c_2)\in \Pic(X)\times \ZZ$
(\cite{Gi:moduli}). It is projective over $\CC$, 
and contains $M(c_2, H)$ as an open subscheme.
\end{defn}

\subsection{Deformation theory of stable sheaves}
\begin{fact}\cite{Lau:Massey}\label{fact:moduli-lci}
%
Let $E$ be a stable sheaf on a non-singular projective surface.
Put $\dim\Ext^1(E,E)=D+b$ and $\dim\Ext^2(E,E)^{\circ}=b$, and
let $f_1, \dots, f_b$ be a basis of $\Hom(E,E(K_X))^{\circ}$.
Then the completion ring of moduli of sheaves at $E$ is isomorphic to
$\CC[[t_1,\dots, t_{D+b}]]/(F_1,\dots, F_b)$. Here
$F_i$ is a power series starting with degree-two term, which comes from
\begin{equation}\label{eq:Ff}
 F_{f_i}: \Ext^1(E,E) \otimes \Ext^1(E,E) \longrightarrow \Ext^2(E,E) 
\longrightarrow \CC  
\end{equation}
defined by $F_{f_i}(\alpha\otimes \beta)
=\operatorname{tr}(f_i\circ\alpha\circ\beta + f_i\circ\beta\circ\alpha)
=\operatorname{tr}(H^1(\ad(f_i))(\alpha)\circ\beta)$, 
where $H^1(\ad(f_i))$ is defined at \eqref{eq:def-H1f},
and its dual map
\begin{equation*}
 F^{\vee}_{f_i}: \CC \longrightarrow \Ext^1(E,E)^{\vee}\otimes 
\Ext^1(E,E)^{\vee} \longrightarrow \operatorname{Sym}^2(\Ext^1(E,E)^{\vee}).
\end{equation*}
\end{fact}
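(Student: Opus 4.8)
The plan is to identify $R:=\sO_{M,E}^{\wedge}$ with the base ring of the formal Kuranishi deformation of $E$ and to match its obstruction relations with the bilinear forms $F_{f_i}$ via Serre duality. First I would recall the standard input from the deformation theory of sheaves. Since $E$ is stable it is simple, so its deformation functor---taken with determinant fixed, which is no loss here because $q(X)=0$---has a pro-representing hull, and $R$ is that hull. Its tangent space is $\Ext^1(E,E)$, of dimension $D+b$, so any minimal presentation of $R$ uses exactly $D+b$ formal variables: $R\simeq\CC[[t_1,\dots,t_{D+b}]]/I$ with $I\subseteq\mathfrak m^2$. The obstructions lie in the traceless part $\Ext^2(E,E)^{\circ}=\Ker(\tr)$, which by Serre duality on $X$ is perfectly paired with $\Hom(E,E(K_X))^{\circ}$ through $(\phi,f)\mapsto\tr(f\circ\phi)\in\HH^2(K_X)=\CC$; in particular it is $b$-dimensional, which accounts for the $b$ relations $F_1,\dots,F_b$.

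The decisive step is to build the Kuranishi presentation while controlling its low-order terms. I would fix a differential graded Lie algebra (or $L_{\infty}$) model $A^{\bullet}$ of the traceless derived endomorphisms of $E$, so that $\HH^i(A^{\bullet})=\Ext^i(E,E)^{\circ}$ and the bracket induced on cohomology is the graded Yoneda composition. Solving the Maurer--Cartan equation of $A^{\bullet}$ formally---equivalently, running Laudal's matric Massey product calculus \cite{Lau:Massey}---yields power series $\kappa_1,\dots,\kappa_b\in\CC[[t_1,\dots,t_{D+b}]]$, the components, in a chosen basis of $\Ext^2(E,E)^{\circ}$, of the Kuranishi map $\kappa\colon\Ext^1(E,E)\dashrightarrow\Ext^2(E,E)^{\circ}$, such that $R\simeq\CC[[t_1,\dots,t_{D+b}]]/(\kappa_1,\dots,\kappa_b)$, each $\kappa_j\in\mathfrak m^2$, the quadratic part of $\kappa_j$ being the $j$-th component of the Yoneda square $\alpha\mapsto\alpha\circ\alpha\in\Ext^2(E,E)^{\circ}$, and the higher-degree parts the successive Massey products of $\alpha$. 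This is exactly Laudal's theorem, and it is the part that really carries the argument.

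It then remains to rewrite everything in the stated form. Let $f_1,\dots,f_b$ be the basis of $\Hom(E,E(K_X))^{\circ}$ dual, under the pairing above, to the chosen basis of $\Ext^2(E,E)^{\circ}$, and set $F_i:=\kappa_i$. Then $F_i(\alpha)=\tr\bigl(f_i\circ\kappa(\alpha)\bigr)$, and its degree-two term is the quadratic form $\alpha\mapsto\tr(f_i\circ\alpha\circ\alpha)$; polarising gives the symmetric bilinear form $F_{f_i}(\alpha\otimes\beta)=\tr(f_i\circ\alpha\circ\beta+f_i\circ\beta\circ\alpha)$, which graded cyclicity of $\tr$ identifies with $\tr\bigl(H^1(\ad(f_i))(\alpha)\circ\beta\bigr)$, i.e.\ with \eqref{eq:Ff}. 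Since a symmetric bilinear form on $\Ext^1(E,E)$ is the same datum as an element of $\Sym^2(\Ext^1(E,E)^{\vee})$, and $\CC[[t_1,\dots,t_{D+b}]]=\widehat{\Sym}(\Ext^1(E,E)^{\vee})$ once the $t_j$ are dual to a basis of $\Ext^1(E,E)$, this bilinear form is precisely the degree-two part $F^{\vee}_{f_i}$ of the power series $F_i$, which is the remaining assertion.

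The main obstacle is the middle paragraph: producing an explicit differential graded Lie algebra model of the traceless $R\Hom(E,E)$ and carrying out the formal Maurer--Cartan solution so that the quadratic term of the Kuranishi map is literally the cup/Yoneda product requires real care---particularly with the trace splitting $\Ext^i(E,E)=\Ext^i(E,E)^{\circ}\oplus\HH^i(\sO_X)$ and with the Koszul signs concealed in the cyclicity of $\tr$. Invoking \cite{Lau:Massey} as a black box makes this step immediate; a self-contained treatment is where essentially all of the work would lie.
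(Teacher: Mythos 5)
Your proposal is correct and follows essentially the same route as the paper, which states this result as a Fact and defers the decisive step---the Kuranishi/hull presentation whose quadratic part is the Yoneda square and whose higher-order terms are matric Massey products---to the very reference \cite{Lau:Massey} that you invoke, while the remaining bookkeeping (Serre duality pairing $\Ext^2(E,E)^{\circ}$ with $\Hom(E,E(K_X))^{\circ}$, polarization, cyclicity of the trace giving $\tr\bigl(H^1(\ad(f_i))(\alpha)\circ\beta\bigr)$) matches the paper's formulation. One caveat: your reduction to the fixed-determinant functor is harmless only because $q(X)=0$ in the paper's Setting, whereas for the statement as written (an arbitrary nonsingular projective surface) one should keep the full deformation functor, whose tangent space is all of $\Ext^1(E,E)$, and observe that the trace of the obstruction class is the obstruction to deforming $\det E$, which vanishes in characteristic zero, so the obstructions indeed lie in $\Ext^2(E,E)^{\circ}$.
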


\begin{defn}\label{defn:potentially-obstructed}
A stable sheaf $E$ on $X$ is {\it obstructed} if
$\ext_X^2(E,E)^0 = \hom_X(E,E(K_X))^0 \neq 0$.
\end{defn}

\subsection{Canonical singularities and birational classification}\label{sbsctn:birat}
\begin{defn}\label{def:D-dim}
(1)\cite[Sect.10.5]{Iitaka:AG}
Let $D$ be a $\QQ$-Cartier divisor on a complete normal variety $V$.
The {\it $D$-dimension} $\kappa(D,V)$ of $V$ is defined to be
\[ \kappa(D,V)=\max\{\dim\Phi_{|mD|}:V \dashrightarrow |mD| \bigm| 
 m\in \NN, mD \text{ is Cartier and } h^0(mD)\neq 0\}.\]
The {\it Kodaira dimension} $\kappa(V)$ of $V$ is
$\kappa(K_{\tilde{V}},\tilde{V})$, where $\tilde{V}$ is a non-singular complete variety
birationally equivalent to $V$. Kodaira dimension is birationally invariant.
Remark $\kappa(K,V)$ does not equal $\kappa(V)$ in general.\\
%
(2)(\cite[Def. 6.2.4.]{Ishii:text}) A normal singularity $(V,p)$ is a
{\it canonical singularity} if 
(a) the Weil divisor $rK_V$ is Cartier for some $r\in\NN$ and 
(b) if $f: W\rightarrow V$ is a resolution of singularities,
$E_1,\dots, E_r$ are its prime exceptional divisors and one denotes
$K_W= f^*(K_V) +\sum a_i E_i$, then $a_i\geq 0$. \par
When $V$ is complete and has only canonical singularities,
its $K$-dimension and its Kodaira dimension are equal, so
we need not consider desingularization $\tilde{V}$ of $V$
in calculating $\kappa(V)$. \\
%
(3)(\cite[Def. 6.2.10.]{Ishii:text})
Let $p$ be a singular point of a normal variety $V$.
$(V, p)$ is said to be {\it rational singularity} when the following holds:
Suppose $\tilde{V}$ is non-singular and
a proper morphism $f:\tilde{V}\rightarrow V$ 
is isomorphic on some open subsets in $V$ and $\tilde{V}$.
Then $R^i f_*{\mathcal O}_{\tilde{V}}=0$ for all $i>0$.
\end{defn}
\begin{fact}\label{fact:Can=Rat}
(1)(\cite[Cor. 6.2.15]{Ishii:text})
If the normal singularity $(V,p)$ satisfies that $K_V$ is Cartier,
then $(V,p)$ is a canonical singularity if and only if it is a rational singularity.\\
(2)(\cite{Elkik:DefRatSing})
Let $({\mathcal S},p)\rightarrow (C,0)$ be a flat deformation of
a rational singularity $({\mathcal S}_{k(0)},p)$, where ${\mathcal S}$ 
is of finite type over $\CC$.
By replacing ${\mathcal S}$ with an open neighborhood of $p$,
one can assume that ${\mathcal S}_{k(t)}$ has only rational singularities
when $t\in C$ is sufficiently close to $0$.
\end{fact}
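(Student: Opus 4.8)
For part (1), the plan is to route both implications through the dualizing-sheaf description of rational singularities. Fix a resolution $f\colon W\to V$ with prime exceptional divisors $E_1,\dots,E_r$ and, using that $K_V$ is Cartier, write $K_W=f^{*}K_V+\sum_i a_iE_i$; by definition $(V,p)$ is canonical exactly when $a_i\geq 0$ for all $i$. For ``rational $\Rightarrow$ canonical'': a rational singularity is Cohen--Macaulay, hence $V$ is Gorenstein, and then Grauert--Riemenschneider vanishing $R^{>0}f_{*}\omega_W=0$ together with Grothendieck duality turns rationality into the equality $f_{*}\omega_W=\omega_V$. Since $\omega_W=f^{*}\omega_V\otimes\sO_W(\sum_i a_iE_i)$ with $\sum_i a_iE_i$ exceptional, the projection formula gives $f_{*}\omega_W=\omega_V\otimes{\mathcal I}$, where ${\mathcal I}:=f_{*}\sO_W(\sum_i a_iE_i)$ is an ideal sheaf of $\sO_V$ cosupported at $p$; and the elementary fact that ${\mathcal I}=\sO_V$ forces every $a_j\geq 0$ (otherwise the germ $1$ violates $\operatorname{ord}_{E_j}(\cdot)+a_j\geq 0$) then yields canonicity. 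The reverse implication ``canonical $\Rightarrow$ rational'' is the classical statement that canonical singularities are rational; I would obtain it from relative Kawamata--Viehweg vanishing applied to the filtration $0\to\sO_W\to\sO_W(\sum_i a_iE_i)\to\sO_{\sum_i a_iE_i}(\sum_i a_iE_i)\to 0$ and Grauert--Riemenschneider on the middle term, the hypothesis $a_i\geq 0$ being precisely what makes the vanishing applicable.

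For part (2) --- Elkik's deformation theorem \cite{Elkik:DefRatSing} --- the plan has two stages. First, a rational singularity is Cohen--Macaulay, the non-Cohen--Macaulay locus of the total space ${\mathcal S}$ is closed, and by flatness it meets each fibre ${\mathcal S}_{k(t)}$ exactly in that fibre's non-CM locus; since it misses $({\mathcal S}_{k(0)},p)$, after shrinking ${\mathcal S}$ around $p$ every nearby fibre is CM near $p$, so by part (1) it suffices to propagate from ${\mathcal S}_{k(0)}$ to its neighbours the condition defining rationality, namely $R^{>0}\pi_{*}\sO_{\widetilde Y}=0$ for a resolution $\pi\colon\widetilde Y\to Y$ of a fibre $Y$. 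Second, I would resolve the total space, $\widetilde\pi\colon\widetilde{{\mathcal S}}\to{\mathcal S}$; by generic smoothness in characteristic zero, $\widetilde{{\mathcal S}}_{k(t)}\to{\mathcal S}_{k(t)}$ is a resolution for $t$ in a dense open of $C$, and over that locus one transports the vanishing along $\widetilde\pi$ by cohomology-and-base-change and semicontinuity. The obstruction is that this dense open need not contain $0$: the central fibre $\widetilde{{\mathcal S}}_{k(0)}$ of a generic total-space resolution is typically singular, hence not a resolution of ${\mathcal S}_{k(0)}$, so the hypothesis ``${\mathcal S}_{k(0)}$ rational'' is not visible in the family $\widetilde{{\mathcal S}}\to C$. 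Circumventing this is the technical core of Elkik's proof: one replaces the naive resolution by a construction --- or reformulates rationality via a deformation-stable, resolution-insensitive (derived / cotangent-complex type) condition --- in which the special fibre still controls rationality, and then completes the base-change and semicontinuity argument.

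The main obstacle therefore lies entirely in part (2): the mismatch between resolving the total space of a flat family and resolving its special fibre at the marked point $t=0$, which defeats the naive ``resolve and apply base change'' approach and forces a more intrinsic handle on rationality. Part (1), by contrast, is a formal sequence of deductions once one may quote Grauert--Riemenschneider vanishing, Grothendieck duality, the implication ``rational $\Rightarrow$ Cohen--Macaulay'', and relative Kawamata--Viehweg vanishing.
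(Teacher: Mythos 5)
This statement is quoted in the paper as a \emph{Fact}, with no in-paper proof: part (1) is cited from \cite[Cor. 6.2.15]{Ishii:text} and part (2) from \cite{Elkik:DefRatSing}, so there is no internal argument to compare yours against, only the question of whether your sketch would actually establish the cited results. For (1) your outline is the standard one and is sound: rationality gives Cohen--Macaulayness, hence (with $K_V$ Cartier) $\omega_V$ invertible; Grauert--Riemenschneider plus duality converts rationality into $f_*\omega_W=\omega_V$; the projection formula and invertibility of $\omega_V$ then give $f_*\sO_W(\sum_i a_iE_i)=\sO_V$, which forces all $a_i\geq 0$ since the section $1$ lies in the pushforward exactly when $\sum_i a_iE_i$ is effective. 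The converse ``canonical $\Rightarrow$ rational'' is indeed classical; your filtration-plus-vanishing sketch is thin (the standard proofs, e.g. \cite[Thm. 5.22]{KM:birat} or Elkik's original argument, need more than the one exact sequence you write down, since Cohen--Macaulayness of $V$ is not known in advance), but quoting this as a known theorem is legitimate here.

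The genuine gap is in part (2): you do not prove it, you only describe the naive ``resolve the total space and apply base change'' strategy, explain correctly why it fails (the central fibre of a resolution of ${\mathcal S}$ is not a resolution of ${\mathcal S}_{k(0)}$), and then defer the actual mechanism to Elkik. Two points deserve correction. First, your invocation of part (1) in the first stage is a non sequitur: part (1) relates canonical and rational singularities under a Cartier canonical class, whereas what you need for the nearby fibres is precisely the propagation of \emph{rationality}, which is the whole content of the theorem and is not reduced by knowing the fibres are Cohen--Macaulay. Second, the ``derived / cotangent-complex'' reformulation you gesture at is not how the difficulty is actually circumvented: Elkik works with the resolution-independent Kempf-type criterion ($Y$ Cohen--Macaulay and $f_*\omega_{\widetilde Y}=\omega_Y$) and exploits Grauert--Riemenschneider vanishing and duality for a resolution of the \emph{total space}, which give base-change control for $Rf_*\omega$ that $Rf_*\sO$ lacks; this is consistent with Remark \ref{rem:RatAnalytic} in the paper, which records that the inputs needed are Hironaka resolution and Grauert--Riemenschneider vanishing. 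Since the paper itself uses (2) as a black box, citing Elkik is acceptable practice, but as a self-contained proof proposal your part (2) is incomplete: the decisive step is named but not supplied.
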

\begin{rem}\label{rem:RatAnalytic}
Fact \ref{fact:Can=Rat} (2) holds
also when ${\mathcal S}$ and $C$ are analytic varieties.
The proof of Fact \ref{fact:Can=Rat} (2) proceeds
similarly when ${\mathcal S}$ is algebraic, since
Hironaka's theorem on resolution of singularities and
Grauert-Riemenschneider's vanishing theorem (\cite[Thm. 6.1.12]{Ishii:text})
hold in analytic category.
\end{rem}
%
%
%
%
%
Let us recall some methods and facts in birational geometry.
\begin{fact}\label{fact:IitakaPgrm}
(1) Let $V$ be a normal and proper variety such that
$K_V$ is $\QQ$-Cartier and nef, and that $V$ has only canonical singularities.
Then $V$ is a {\rm minimal model}.\\
%
(2) Let $D$ be a Cartier divisor on a projective scheme $V$ such
that $n_0 D$ is base-point free for some $n_0\in \NN$.
Then the ring $\oplus_{n\geq 0} H^0(V, \sO(n D))$ is finitely generated
over $\CC$, and 
the natural map $\Phi: V \rightarrow  \operatorname{Proj}\left(\oplus_{n\geq 0} H^0(V, \sO(n D))\right)$
is a dominant morphism.\\
%
(3)(Abundance Conjecture)
Let $V$ be a variety as in (1). Then it is conjectured that $|n_0K_V|$ will be 
base-point free for some $n_0\in\NN$.\\
%
(4)(Iitaka Fibration)
Let $V$ be a variety as in (1) such that 
$|n_0K_V|$ is base-point free for some $n_0\in\NN$.
Then 
\[\Phi:V \rightarrow V_{can}:=
 \operatorname{Proj}\left(\oplus_{n\geq 0} H^0(V, \sO(n K_V))\right) \]
satisfies the following:
$\Phi$ is a surjective morphism with connected fibers;
$mK_V=\Phi^*({\mathcal K})$
for some ample divisor ${\mathcal K}$ on $V_{can}$;
$\kappa(V)=\dim V_{can}$;
general fibers of $\Phi$ are normal varieties with $lK = 0$ for some $l\in\NN$.
We call $\Phi$ the {\rm Iitaka fibration} of $V$ onto its {\rm canonical model} $V_{can}$.
\end{fact}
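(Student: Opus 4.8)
This proposition is a digest of standard results --- Iitaka's theory of pluricanonical maps together with basic outputs of the minimal model program --- and part (3) is the Abundance Conjecture, which I would of course not attempt to prove. So the plan is only to assemble (1), (2) and (4) from classical sources such as \cite{Iitaka:AG} and \cite{Ishii:text}.

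For (1) I would simply unwind definitions: a minimal model is a normal proper variety with (at worst canonical, in the generality relevant here to computing Kodaira dimension) singularities whose canonical class is $\QQ$-Cartier and nef --- precisely the hypothesis. Equivalently one can invoke the Cone Theorem, noting that the nefness of $K_V$ leaves no $K_V$-negative extremal ray, hence no further divisorial contraction or flip to perform.

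For (2) the point I would use is that ``$n_0D$ base-point free'' makes $D$ semiample. First $|n_0D|$ defines an honest morphism $\phi\colon V\to\PP^N$; I would pass to its Stein factorization $V\xrightarrow{g}Y'\xrightarrow{h}Y\subseteq\PP^N$, so that $g_*\sO_V=\sO_{Y'}$ (connected fibers), $Y'$ is normal since $V$ is, and $h$ is finite. Then $\sO_V(n_0D)=g^*A$ with $A:=h^*\sO_Y(1)$ ample on $Y'$, and the projection formula gives $H^0(V,\sO(kn_0D))\cong H^0(Y',A^{\otimes k})$ for every $k\ge 0$. Thus $\bigoplus_k H^0(V,\sO(kn_0D))$ is the homogeneous coordinate ring of the polarized variety $(Y',A)$, hence a finitely generated graded $\CC$-algebra with $\Proj$ equal to $Y'$; Serre's theorem applied on $Y'$ then upgrades this to finite generation of the full ring $\bigoplus_n H^0(V,\sO(nD))$, whose $\Proj$ is again $Y'$. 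The ``natural map'' of the statement is then identified with $g$, whose image is closed by properness and dense by construction, so $g$ is surjective, in particular dominant.

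Finally, for (4) I would apply (2) to $D=K_V$ --- legitimate since $|n_0K_V|$ is assumed base-point free --- and combine it with (3). Because $V$ is normal the canonical ring $\bigoplus_n H^0(V,\sO(nK_V))$ is integrally closed, so $V_{can}:=\Proj$ of it is normal and the Stein factorization above carries no finite part; hence $\Phi\colon V\to V_{can}$ is surjective with connected fibers and $mK_V=\Phi^*\mathcal{K}$ for the ample $\mathcal{K}$ induced by $\sO_{V_{can}}(1)$. The equality $\kappa(V)=\dim V_{can}$ is then the definition of the Kodaira dimension via the section ring, using that canonical singularities give $\kappa(V)=\kappa(K_V,V)$ (Definition \ref{def:D-dim}). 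For a general fiber $F$ I would note that it is normal (general fibers of a fibration out of a normal variety in characteristic $0$ are normal), that adjunction for the fibration yields $\omega_F\cong\omega_V|_F$, and that restricting $mK_V=\Phi^*\mathcal{K}$ to $F$ trivializes it, whence $mK_F\sim 0$, i.e. $lK_F=0$ for a suitable $l$. The step I expect to be the genuine obstacle --- and which I would cite rather than reprove --- is finite generation of the section ring of a semiample divisor (Zariski, Mumford), alongside the Cone and Base-point-free theorems of the MMP; everything else is formal bookkeeping, and part (3) remains an open conjecture in the stated generality.
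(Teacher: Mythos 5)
This statement is labelled a Fact and the paper gives no proof of it at all---it simply points to \cite{Matsuki:UTX}, \cite{KM:birat} and \cite[Thm.~10.7]{Iitaka:AG}---so there is no internal argument to compare against; your sketch is precisely the standard assembly of those cited results (definition of minimal model via nef $K_V$, Zariski--Mumford finite generation and the semiample fibration for part (2), abundance left as a conjecture, and the formal bookkeeping for (4)), and it is correct as an outline. One cosmetic remark: in (4) the connectedness of the fibres of $\Phi$ already follows from your identification in (2) of $\Phi$ with the Stein factorization morphism $g$ (since $\operatorname{Proj}$ of the section ring is the intermediate variety $Y'$, not the image in $\PP^N$), so the detour through integral closedness of the canonical ring is unnecessary.
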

Here we just cite some references;
\cite{Matsuki:UTX}(e.g. p.1, pp.4-5, Thm. 3.3.6.),
\cite{KM:birat}(e.g. Def. 3.50, Thm. 3.11, Conj. 3.12),
\cite[Thm. 10.7]{Iitaka:AG}.
%
%
\subsection{Line bundles on moduli of sheaves}\label{sbsctn:LBmoduli}
(\cite[Sect. 8]{HL:text})
By Luna's \'{e}tale slice theorem,
there is an \'{e}tale covering $S\rightarrow M(c_2)$ and
a universal family ${\mathcal E}\in \Coh(X\times S)$ of $M(c_2)$.
For $u\in K(X)$,
denote $\chi(E\dotimes u)$, where $E$ is a member of $M(c_2)$,
by $\chi(u\cdot c_2)$.
If $\chi(u\cdot c_2)=0$,
then $\det R \pi_{1*}({\mathcal E}\dotimes \pi_2^*(u)) \in \Pic(S)$ 
descends to $\lambda(u)\in \Pic(M(c_2))$.
So we have 
$\lambda(u_1(c_2))\in\Pic(M(c_2))$,
where $k_X=[\Ox]-[K_X^{-1}] \in K(X)$, $x$ is a closed point on $X$, and
$u_1(c_2)=-rk_X+\chi(c_2 \cdot k_X)\cdot[{\mathcal O}_x]$.
Let $M_{gd}\subset M(c_2)$ be the good locus of $M(c_2)$ defined at
\eqref{eq:GoodLcs} below.
Then 
$K_{M_{gd}}=\lambda(u_1(c_2))|_{M_{gd}}$.
\subsection{Basics of elliptic surfaces}\label{subsct:basic}
Let $X$ be an elliptic surface over $\PP^1$.
By \cite[III.11.2 and V.12.2]{BPV:text}, 
$\chi(\sO_X)=d\geq 0$ and $R^1\pi_*(\sO_X)=\sO(-d)$.
When every singular fiber of $X$ is
either rational integral curve with one node $(I_1)$ or
multiple fiber with smooth reduction $(mI_0)$,
the number of singular fibers of type $(I_1)$ is $12d$
(\cite[p.177-178]{Frd:holvb}).
By Kodaira's canonical bundle formula \cite[Thm. V.12.1]{BPV:text},
if $\pi:X \rightarrow \PP^1$ is a relatively minimal elliptic surface such that its
multiple fibers are $m_iF_i$ with multiplicity $m_i$, then
\begin{equation}\label{eq:CanBdleFormula}
 K_X= \pi^*\sO_{\PP^1}(-2+d) + \sum_i (m_i-1) F_i.  
\end{equation}
Next let us recall the {\it Jacobian surface} $J(X)$ of $X$, which is an elliptic
surface over $\PP^1$ with a section.
From $X$ we get an analytic elliptic surface $X'\rightarrow \PP^1$
without multiple fibers,
by reversing the logarithmic transformation (\cite[Sect. V.13.]{BPV:text}).
$J(X)$ is the Jacobian fibration of $X'$ described
at \cite[Sect. V.9.]{BPV:text}. We can refer also to 
\cite[Def. I.3.15]{FM:4mfds}.
Remark that $\chi(\sO_{J(X)})$ agrees with $d=\chi(\sO_X)$
by \cite[Lem. I.3.17]{FM:4mfds}. \par
For fixed $d\geq 0$, there exists an elliptic surface $B$ over $\PP^1$
with a section such that $d=\chi(\sO_B)$ by \cite[Prop. I.4.3.]{FM:4mfds}.
Fix an elliptic surface $B$ over $\PP^1$ with a section,
$t_1,\dots,t_k \in\PP^1$ 
and line bundles $\xi_i$ on $\pi^{-1}(t_i)$ of order $m_i$.
Let $T$ denote the set of (analytic) elliptic surfaces $X$ such that 
$J(X)\simeq B$, 
$X$ has multiple fibers just over $t_i$ with multiplicities $m_i$, and
for some disk $\Delta_i \subset \PP^1$ centered at $t_i$,
$X|_{\Delta_i}$ equals to the logarithmic transformation 
of $B|_{\Delta_i}$ corresponding to $t_i$ and $\xi_i$
(\cite[Sect. V.13.]{BPV:text}, \cite[Sect. I.1.6.]{FM:4mfds}).
Then $T$ is non-empty and
its subset consisting of {\it algebraic} elliptic surfaces is
dense unless $B$ is a product elliptic surface
(\cite[Thm. I.6.12.]{FM:4mfds}).
By \cite[Thm. B.3.4.]{Ha:text},
compact complex algebraic surfaces are projective algebraic.
\begin{prop}\cite[p.169, Lem.5]{Frd:holvb}\label{prop:order-O(F)F}
Let $\cf=m F$ be a multiple fiber of $X$.
The the order of the line bundle $\sO(F)|_F$ is $m$.
\end{prop}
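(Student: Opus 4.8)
The plan is to determine the exact order of the line bundle $L := \sO_X(F)|_F$ in $\Pic(F)$, where $\cf = mF$ is a multiple fiber with smooth reduction $F$. First I would prove that $L^{\otimes m}$ is trivial. Since $\cf = mF$ is a fiber of $\pi$, we have $\sO_X(\cf) = \pi^*\sO_{\PP^1}(P)$ for the corresponding point $P \in \PP^1$, and restricting to $F$ (which maps to the single point $P$) gives $\sO_X(mF)|_F = \sO_X(\cf)|_F = \pi^*\sO_{\PP^1}(P)|_F = \sO_F$. Hence the order of $L$ divides $m$. This half is essentially formal from the canonical bundle formula setup in Section \ref{subsct:basic} and requires no real work.

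The substantive direction is showing the order is not a proper divisor of $m$: I must rule out $L^{\otimes k} \cong \sO_F$ for any $1 \le k < m$. The key input is the structure of the multiple fiber as an analytic neighborhood obtained by logarithmic transformation (recalled in Section \ref{subsct:basic}), under which a punctured analytic neighborhood of $\cf$ in $X$ is an $m$-fold cyclic cover of a punctured neighborhood of a smooth fiber in the original surface $B$, branched to order $m$ along the central fiber. Concretely, one works on a small disk $\Delta \subset \PP^1$ centered at $P$ with coordinate $s$, and $X|_\Delta \to B|_\Delta$ is the quotient construction where the pullback of $s$ is, up to units, an $m$-th power $\tau^m$ of a local coordinate $\tau$ on $\Delta$ downstairs; the divisor of $\pi^*s$ on $X|_\Delta$ is exactly $m F$ with $F = \{\tau = 0\}$ reduced. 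The point is that the cyclic structure forces the conormal bundle $\sO_X(-F)|_F$ to be the eigensheaf for a primitive $m$-th root of unity under the deck-transformation action, which directly pins down its order to be exactly $m$. I would make this precise by transporting the question to $J(X)$ (or to $B$): on the Jacobian side the corresponding fiber is smooth with trivial normal bundle, and the logarithmic transformation data $\xi_i$ of order $m_i$ is what produces a normal bundle of order exactly $m_i$ on $X$. Alternatively, and perhaps more cleanly, one can argue via $R^1\pi_*\sO_X$: using $R^1\pi_*(\sO_X) = \sO_{\PP^1}(-d)$ and comparing with the analytic local computation near $P$, the local monodromy/contribution of the multiple fiber detects the order of $L$ precisely.

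The main obstacle I expect is making the analytic-local logarithmic-transformation computation rigorous and self-contained: one has to carefully track how the normal bundle of $F$ in $X$ is glued, and verify that the relevant transition function is a genuine primitive $m$-th root of unity rather than merely an $m$-th root, i.e. that no smaller power trivializes $L$. This is where the hypothesis that $\xi_i$ has order exactly $m_i$ (as opposed to merely dividing $m_i$) enters and must be used; without it one only recovers "order divides $m$." Once that transition function is identified, concluding that $\operatorname{ord}(L) = m$ is immediate. Since this is a known result quoted from \cite[p.169, Lem.5]{Frd:holvb}, in the interest of brevity I would most likely cite the logarithmic-transformation description from \cite[Sect. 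V.13]{BPV:text} and \cite[Sect. I.1.6]{FM:4mfds} for the normal-bundle computation rather than reproduce it in full, and spell out only the divisibility direction and the reduction of the converse to that standard local model.
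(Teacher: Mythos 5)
The paper itself offers no argument for this proposition: it is quoted verbatim from \cite[p.169, Lem.~5]{Frd:holvb}, so there is no internal proof to match your plan against, and simply citing Friedman (as you suggest at the end) would be exactly what the paper does. Your divisibility half is correct and complete: $\sO_X(mF)=\sO_X(\cf)=\pi^*\sO_{\PP^1}(P)$ restricts trivially to $F$, so the order of $\sO(F)|_F$ divides $m$.

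The hard direction, however, has a genuine logical gap as sketched: circularity. For an arbitrary $X$ as in Setting \ref{stng:XandH}, the presentation of a neighborhood of $\cf$ as a logarithmic transform of $B|_{\Delta}$ with twisting data $\xi$ of order \emph{exactly} $m$ is not an available hypothesis --- extracting such a presentation from $X$, and in particular knowing that the relevant transition datum is a primitive $m$-th root (equivalently that $\xi$ has order $m$ rather than a proper divisor), is essentially the content of the proposition. In the standard references the logic runs the other way: one first proves that $\sO_F(F)$ has order exactly $m$ and then uses this to show that every multiple fiber with smooth reduction arises from a logarithmic transformation; so invoking \cite[Sect.~V.13]{BPV:text} or \cite[Sect.~I.1.6]{FM:4mfds} for the ``order exactly $m$'' step imports the conclusion. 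To close the gap you need an independent argument, e.g.: (i) the classical cohomological one, using the filtration sequences $0\to\sO_F(-kF)\to\sO_{(k+1)F}\to\sO_{kF}\to 0$ together with $h^0(\sO_{nF})=1$ (which follows from connectedness of fibers and torsion-freeness of $R^1\pi_*\sO_X$), showing that order $d<m$ would force $h^0$ of an infinitesimal neighborhood to jump; or (ii) base change by $t\mapsto t^m$ and normalize: locally $s=u\tau^m$ with $u$ a unit, so the normalization is \'etale over a neighborhood of $F$ and its central fiber is the cyclic $m$-cover of $F$ determined by $\sO_F(F)$; Zariski connectedness (Stein factorization) applied to this family, whose general fiber is connected, forces that cover to be connected, which happens only if the order is exactly $m$. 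Your alternative remark about detecting the order via $R^1\pi_*\sO_X$ points in direction (i) but is too vague as stated to constitute a proof.
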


%
%
\subsection{Stable sheaves on elliptic surfaces}\label{sbsctn:EvenFibDeg}
%
Let us recall some results in \cite{Fri:vb-reg-ellipt}.
\begin{defn}\cite[Def. 2.1]{Fri:vb-reg-ellipt}\label{defn:suitable}
On any elliptic surface $X$, a polarization $H$ is {\it $c_2$-suitable} if
$\operatorname{sign}(L\cdot f)=\operatorname{sign}(L\cdot H)$ 
for all $L\in\Pic(X)$ such that
$L^2\geq -c_2$ and $L\cdot f\neq 0$.
This implies that $H$ is not separated from the fiber class $f$
by any wall of type $(2,0,c_2)$ in the nef cone of $X$. 
Such a polarization exists by \cite[Lem. 2.3.]{Fri:vb-reg-ellipt}.
\end{defn}
In Setting \ref{stng:XandH},
we set $\eta=\Spec(k(\PP^1)),\ \bar{\eta}=\Spec((\overline{k(\PP^1)}))$,
and define $X_{\eta}=X \times_{\PP^1} \eta$, $X_{\bar{\eta}}=X\times_{\PP^1} \bar{\eta}$.
$X_{\bar{\eta}}$ is a smooth elliptic curve over $\bar{\eta}$.
Any sheaf $F$ on $X$ induces $F_{\eta}$ on $X_{\eta}$, 
and $F_{\bar{\eta}}$ on $X_{\bar{\eta}}$.
\begin{fact}\cite[Thm. 3.3]{Fr:so3ellipt}\label{fact:XeStable}
Let $E$ be a rank-two torsion-free sheaf with $c_1=0$ on $X$.
If $E_{\eta}$ is stable, then 
$E$ is stable with respect to any $c_2(E)$-suitable ample line bundle.
\end{fact}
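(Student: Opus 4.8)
The plan is to reduce Gieseker $H$-stability to slope ($\mu_H$-)stability and then contradict the hypothesis on $E_\eta$ using the defining property of a $c_2(E)$-suitable polarization. Since $\mu_H$-stability implies Gieseker $H$-stability (e.g. \cite{HL:text}), it suffices to prove that $E$ is $\mu_H$-stable for every $c_2(E)$-suitable ample $H$. Suppose not; then there is a rank-one subsheaf $F\subset E$ with $c_1(F)\cdot H\ge\mu_H(E)=0$, the last equality because $c_1(E)=0$. Replacing $F$ by its saturation $F'$ in $E$ only increases $c_1(F)\cdot H$, since $c_1(F'/F)$ is an effective class; so I may assume $F=F'$ is saturated, i.e. $Q:=E/F$ is torsion-free of rank one. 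On the smooth surface $X$ both $F$ and $Q$ are then isomorphic to a line bundle tensored with the ideal sheaf of a zero-dimensional subscheme. Write $D=c_1(F)$, so that $c_1(Q)=-D$ and $D\cdot H\ge 0$.

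First I would restrict the sequence $0\to F\to E\to Q\to 0$ to the generic fiber. Localization at the generic point of $\PP^1$ is exact, giving $0\to F_\eta\to E_\eta\to Q_\eta\to 0$ on $X_\eta$; moreover the zero-dimensional subschemes attached to $F$ and to $Q$ lie over finitely many closed points of $\PP^1$, hence miss the generic fiber, so $F_\eta$ and $Q_\eta$ are line bundles and $F_\eta$ is a sub-line-bundle of $E_\eta$ of degree $D\cdot f$. Because $E_\eta$ is stable with $\mu(E_\eta)=0$, this forces $D\cdot f<0$.

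Next I would play two numerical facts about $D$ against each other. From the exact sequence, $c_2(E)=c_1(F)\cdot c_1(Q)+c_2(F)+c_2(Q)=-D^2+\ell$, where $\ell\ge 0$ is the total length of the two zero-dimensional subschemes; hence $D^2\ge -c_2(E)$. On the other hand $D\cdot f<0$, so $D\cdot f\ne 0$. Thus the line bundle $\sO_X(D)$ meets the hypotheses of Definition \ref{defn:suitable}, which yields $\operatorname{sign}(D\cdot H)=\operatorname{sign}(D\cdot f)$; since $D\cdot f<0$ this means $D\cdot H<0$, contradicting $D\cdot H\ge 0$. Therefore no destabilizing subsheaf exists and $E$ is $\mu_H$-stable, hence $H$-stable.

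The argument is short, and its only conceptual step is the last one: the inequality $D^2\ge-c_2(E)$ is precisely what places $D$ in the range over which a $c_2(E)$-suitable polarization is, by design, not separated from the fiber class, forcing $D\cdot H$ and $D\cdot f$ to have the same sign. The remaining points are routine bookkeeping: that saturating $F$ does not decrease $c_1(F)\cdot H$, and that restriction to the generic fiber discards the zero-dimensional parts so that $F_\eta$ is genuinely a sub-line-bundle whose degree is controlled by the stability of $E_\eta$.
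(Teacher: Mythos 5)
Your proof is correct: reducing to $\mu_H$-stability, saturating the destabilizing subsheaf, using $c_2(E)=\ell-D^2$ to get $D^2\ge -c_2(E)$, and then invoking Definition \ref{defn:suitable} to force $\operatorname{sign}(D\cdot H)=\operatorname{sign}(D\cdot f)<0$ is exactly the standard suitability argument. The paper itself gives no proof (it cites Friedman's Theorem 3.3), and your argument is essentially the same as the cited one, so there is nothing to add.
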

\begin{fact}\cite[Lem. 2.5]{Fri:vb-reg-ellipt}\label{fact:RestrGenericFib}
Let $E$ be a rank-two sheaf on $X$ with Chern classes $(c_1,c_2)=(0,c_2)$, which is stable
with respect to a $c_2$-suitable ample line bundle $H$.
Then one of the following holds:\\
{\rm (Case I)}:
$E_{\eta}$ has no sub line bundle of degree zero.
In this case $E_{\eta}$ is stable, and $E_{\bar{\eta}}$ is decomposable as
\begin{equation*}
E_{\bar{\eta}} \simeq \sO_{X_{\bar{\eta}}}(F) \oplus \sO_{X_{\bar{\eta}}}(-F)\quad
\text{on}\quad X_{\bar{\eta}}\quad \text{with}\quad \deg(\sO_{X_{\bar{\eta}}}(F))=0,
\end{equation*}
and $\sO_{X_{\bar{\eta}}}(F)$ is not rational over $k(\PP^1)$. 
Let $C\rightarrow \PP^1$ be the double cover
corresponding to the stabilizer subgroup of 
$\sO_{X_{\bar{\eta}}}(F)$ in $\operatorname{Gal}(\overline{k}(\PP^1)/k(\PP^1))$.
Then $\sO_{X_{\bar{\eta}}}(F)$ is rational over $\eta'=\Spec(k(C))$ and
$E_{\eta'}$ is $\eta'$-isomorphic to 
$\sO_{X_{\eta'}}(F) \oplus \sO_{X_{\eta'}}(-F)$.\\
{\rm (Case II)}:
$E_{\eta}$ has a sub line bundle with fiber degree $0$, 
but $E_{\eta}$ is not decomposable. In this case, 
also $E_{\bar{\eta}}$ is not decomposable, and there is an exact sequence 
\begin{equation}\label{eq:rel-JH}
0 \longrightarrow \sO_{X_{\eta}}(F) \longrightarrow E_{\eta} \longrightarrow
\sO_{X_{\eta}}(F) \longrightarrow 0
\end{equation}
on $X_{\eta}$, where $\sO_{X_{\eta}}(F)$ is a line bundle of order $2$ on $X_{\eta}$.\\
{\rm (Case III)}:
$E_{\eta}$ is decomposable into line bundles with fiber degree $0$ on $X_{\eta}$. 
\end{fact}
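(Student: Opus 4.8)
The plan is to pass to the generic fiber $X_\eta$, classify $E_\eta$ there after base change to $\overline{k(\PP^1)}$, and descend the classification along $\Gamma:=\operatorname{Gal}(\overline{k(\PP^1)}/k(\PP^1))$. First I would record that $E_\eta$ is a rank-two locally free sheaf of degree $0$ on the smooth genus-one curve $X_\eta$ (by torsion-freeness of $E$ and flatness of $\pi$), and that $E_\eta$ is semistable: if a sub-line-bundle $L\subset E_\eta$ had $\deg L>0$, extending $L$ over an open subset of $\PP^1$ and saturating inside $E$ would produce a sub-line-bundle $N\hookrightarrow E$ with $N\cdot f>0$, and from $0\to N\to E\to I_Z\otimes N^{-1}\to 0$ one gets $N^2=\operatorname{length}(Z)-c_2\ge -c_2$; then $c_2$-suitability of $H$ (Definition \ref{defn:suitable}) would force $N\cdot H$ to have the sign of $N\cdot f$, i.e.\ $N\cdot H>0$, contradicting $H$-stability. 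With $\deg E_\eta=0$ this yields semistability; moreover $E_\eta$ is stable once it has no degree-zero sub-line-bundle over $k(\PP^1)$, since a Jordan--H\"older constituent of a strictly semistable rank-two bundle is exactly such a sub-line-bundle.

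Next I would invoke Atiyah's classification of bundles on elliptic curves: over $\overline{k(\PP^1)}$, since $\det E_{\bar\eta}\simeq\sO_{X_{\bar\eta}}$ (as $c_1(E)=0$), the semistable rank-two degree-zero bundle $E_{\bar\eta}$ is isomorphic either to a direct sum $L\oplus L^{-1}$ of degree-zero line bundles, or to a non-split self-extension of a $2$-torsion line bundle $L$ by itself. The descent mechanism is that, $E_\eta$ being defined over $k(\PP^1)$, the intrinsically-defined set of degree-zero sub-line-bundles of $E_{\bar\eta}$ is $\Gamma$-stable, and a $\Gamma$-fixed member of it descends to a sub-line-bundle of $E_\eta$ over $k(\PP^1)$. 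The trichotomy then reads: $E_\eta$ decomposable over $k(\PP^1)$ (Case III, both summands of degree $0$ by semistability); $E_\eta$ indecomposable with a degree-zero sub-line-bundle $M$ over $k(\PP^1)$ (Case II); and $E_\eta$ indecomposable without one (Case I). In Case II the quotient $E_\eta/M\simeq M^{-1}$ is also defined over $k(\PP^1)$, the extension is governed by $\operatorname{Ext}^1(M^{-1},M)=H^1(X_\eta,M^{\otimes 2})$, which vanishes unless $M^{\otimes 2}\simeq\sO_{X_\eta}$; hence $M^{\otimes 2}\simeq\sO_{X_\eta}$ and $E_\eta$ is a non-split self-extension of $M$, giving \eqref{eq:rel-JH}, after which one verifies that $M$ has order $2$. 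In Case I, $E_{\bar\eta}$ can be neither a self-extension nor of the form $L\oplus L$ (its distinguished degree-zero sub-line-bundle would then be $\Gamma$-fixed, hence rational), so $E_{\bar\eta}\simeq\sO_{X_{\bar\eta}}(F)\oplus\sO_{X_{\bar\eta}}(-F)$ with $\sO_{X_{\bar\eta}}(F)$ of degree $0$ and not rational over $k(\PP^1)$; then $\Gamma$ acts non-trivially on $\{\sO_{X_{\bar\eta}}(F),\sO_{X_{\bar\eta}}(-F)\}$, so the stabilizer of $\sO_{X_{\bar\eta}}(F)$ is an index-two subgroup, cutting out a quadratic extension $k(C)/k(\PP^1)$, i.e.\ a double cover $C\to\PP^1$; over $\eta'=\Spec k(C)$ the line bundle $\sO_{X_{\bar\eta}}(F)$ becomes rational and $E_{\eta'}\simeq\sO_{X_{\eta'}}(F)\oplus\sO_{X_{\eta'}}(-F)$.

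The step I expect to be the main obstacle is the Galois descent: one must make precise that the relevant collections of degree-zero sub-line-bundles of $E_{\bar\eta}$ are intrinsic (hence $\Gamma$-stable) and carry enough structure to descend once they are $\Gamma$-fixed --- not merely that their isomorphism classes are $\Gamma$-invariant --- and one must separate the three combinatorial possibilities cleanly, in particular identifying the double cover $C$ in Case I with the cover attached to the stabilizer subgroup. A lesser subtlety is pinning down the order of $\sO_{X_\eta}(F)$ in Case II as exactly $2$, not $1$: this is the one place where $H$-stability of $E$, and not merely semistability of $E_\eta$, is genuinely used, since a numerically trivial sub-line-bundle of $E$ would violate Gieseker stability (compare Proposition \ref{prop:order-O(F)F} and Fact \ref{fact:XeStable}).
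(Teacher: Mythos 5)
First, a point of comparison: the paper does not prove this statement at all --- it is imported verbatim as a Fact with the citation \cite[Lem.\ 2.5]{Fri:vb-reg-ellipt}, so the ``paper's proof'' is the reference to Friedman. Your outline (semistability of $E_{\eta}$ from $c_2$-suitability, Atiyah's classification over $\overline{k(\PP^1)}$, then Galois descent) is indeed the route such a proof must take, and the semistability step and the Case II reduction to $M^{\otimes 2}\simeq\sO_{X_\eta}$ are fine.

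There is, however, a genuine gap at the step you dismiss as a ``lesser subtlety'': excluding the trivial bundle in Case II, i.e.\ showing the line bundle has order exactly $2$. Your justification is that $\sO_{X_\eta}\hookrightarrow E_\eta$ would give a numerically trivial sub-line-bundle of $E$, contradicting Gieseker stability. That is not what one gets. A nonzero section of $E_\eta$ only says $\pi_*E\neq 0$, hence there is a nonzero map $\pi^*\sO_{\PP^1}(a)\to E$; the saturation of its image is $\sO_X(V)$ with $V$ a \emph{vertical} divisor class, $V\sim a\cf+\sum_j b_jF_j$ with $0\le b_j<m_j$, and $H$-stability forces only $V\cdot H<0$, i.e.\ $a\le -1$ --- which is perfectly consistent. (Indeed your argument would equally ``rule out'' stable sheaves with $E_\eta\simeq\sO_{X_\eta}\oplus\sO_{X_\eta}$, which Case III allows.) So no contradiction arises at this level: ruling out $\lambda\simeq\sO_{X_\eta}$ in the non-split case requires a finer argument, playing the negativity $V\cdot H<0$ against the requirement that the extension $0\to\sO_X(V)\to E\to\sO_X(-V)\otimes I_W\to 0$ remain non-split on the generic fibre; this is a relative-duality/pushforward computation of exactly the kind the paper itself carries out later in its Case II analysis (Lemma \ref{prop:ext1(EE)decomp}(a) and Lemma \ref{lem:easy-prpty-BZ}(b)), and it is presumably where Friedman's proof does its real work. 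Secondly, the descent step you flag is a real issue in the split cases: when $E_{\bar\eta}\simeq L\oplus L$ there is no distinguished subsheaf, only a $\Gamma$-invariant isomorphism class, and descending $L$ to $X_\eta$ (and, in Case I, descending $\sO_{X_{\bar\eta}}(F)$ to $X_{\eta'}$) needs the vanishing of the Brauer obstruction --- available here because $k(\PP^1)=\CC(t)$ and $k(C)$ are $C_1$ fields (Tsen's theorem), but this must be invoked; only in the non-split case can one argue via uniqueness of the degree-zero subsheaf, which is then $\Gamma$-stable as a subsheaf and descends. Given that the statement is quoted from Friedman, the intended justification is the citation; a self-contained proof would have to fill these two points.
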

\begin{lem}\label{lem:I-open-property}
Let $U_1$ (resp. $U_{12}$) be the subset of $M(c_2)$ consisting of sheaves $E$
which apply to Case I (resp. Case I or Case II) in Fact \ref{fact:RestrGenericFib}.
Then both $U_1$ and $U_{12}$ are open in $M(c_2)$.
\end{lem}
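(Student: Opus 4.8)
The plan is to exhibit $U_1$ and $U_{12}$ as complements of closed subsets, using that ``having a sub line bundle of fiber degree $0$'' is a closed condition on the base of a family. Recall from Fact \ref{fact:RestrGenericFib} that $E$ lies outside $U_{12}$ precisely when $E_\eta$ is decomposable into two line bundles of fiber degree $0$ (Case III), and $E$ lies outside $U_1$ precisely when $E_\eta$ has a sub line bundle of fiber degree $0$ (Case II or Case III). So it suffices to show that each of these is a closed condition on $M(c_2)$. By the \'etale slice description in Section \ref{sbsctn:LBmoduli}, there is an \'etale cover $S\to M(c_2)$ carrying a universal family $\sE\in\Coh(X\times S)$; since being open can be checked \'etale-locally, I may work with $\sE$ over $S$ and the induced family $\sE_\eta$ over $X_\eta\times_\eta (S_\eta)$, where $S_\eta=S\times_{\PP^1}\eta$ (more precisely, restrict $\sE$ to $X\times S$, pull back along the generic point of each relevant component, and work over the function field $k(\PP^1)$).

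First I would handle the non-$U_{12}$ locus (Case III). Here $E_\eta\simeq L_1\oplus L_2$ with $\deg L_i=0$ and $L_1\otimes L_2\simeq\det E_\eta=\sO_{X_\eta}$, so $E_\eta$ is a \emph{semistable} sheaf of degree $0$ on the smooth genus-one curve $X_\eta$ which is moreover \emph{not stable} (it has a quotient of the same slope). By standard semicontinuity, for a flat family of pure sheaves of fixed Hilbert polynomial the locus over which a fibre fails to be stable is closed; applying this to the family $\sE_\eta$ over $S_\eta$ shows that the preimage of Case III is closed in $S_\eta$, hence its image under the (finite, surjective) base change $S_\eta\to S$ corresponds to a closed subset of $S$, and this descends to a closed subset of $M(c_2)$ whose complement is $U_{12}$. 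One must be slightly careful that ``fail to be stable'' at the generic fibre is the same as ``decomposable of fiber degree $0$'' --- this is exactly the content of Fact \ref{fact:RestrGenericFib}, which exhausts the three cases, so Case II is the only alternative to Cases I and III, and Case II is ruled out by the requirement that the destabilising subsheaf split off as a direct summand; equivalently one detects Case III by $\hom_{X_\eta}(E_\eta,E_\eta)\ge 2$, which is an upper-semicontinuous quantity, so the locus where it is $\ge 2$ is closed.

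For $U_1$: by Fact \ref{fact:RestrGenericFib}, $E$ lies outside $U_1$ iff $E_\eta$ has a sub line bundle $M$ with $\deg M=0$, equivalently (twisting by the relevant degree-$0$ line bundle so that $M$ becomes effective/trivial of degree $0$) iff some twist $E_\eta\otimes N$ with $N$ of degree $0$ has a nonzero section, i.e.\ $h^0(X_\eta, E_\eta\otimes N)\neq 0$ for some degree-$0$ line bundle $N$ on $X_\eta$. The cleanest way to make this a closed condition is to pass to the relative Picard scheme: form $\mathrm{Pic}^0(X_\eta/\eta)$, pull the family $\sE_\eta$ and the Poincar\'e bundle back to $X_\eta\times \mathrm{Pic}^0\times S_\eta$, and use that the locus in $\mathrm{Pic}^0\times S_\eta$ over which $h^0$ jumps is closed (semicontinuity), then take its image under the proper projection to $S_\eta$ --- this is closed since $\mathrm{Pic}^0(X_\eta/\eta)$ is proper over $\eta$. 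Its complement in $S_\eta$, hence (descending as above) the corresponding open subset of $M(c_2)$, is $U_1$.

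The main obstacle I anticipate is purely bookkeeping: passing correctly between $M(c_2)$, the \'etale slice $S$, the generic-fibre base change $S_\eta$ over the non-closed point $\eta$, and the auxiliary relative Picard scheme, and checking that the closed subsets produced there genuinely descend to closed subsets of $M(c_2)$ (one uses that $S\to M(c_2)$ is \'etale, hence open, so images of opens are open, and that $S_\eta\to S$ has closed image of the complement because the relevant morphisms are finite/proper). The geometric input --- semicontinuity of $\hom$ and of $h^0$ in flat families, and properness of $\mathrm{Pic}^0$ --- is entirely standard; the only genuinely case-specific ingredient is Fact \ref{fact:RestrGenericFib}, which translates the three combinatorial ``Cases'' into the numerical/cohomological conditions (``not stable'', ``$\hom\ge 2$'', ``has a degree-$0$ sub line bundle'') that are visibly semicontinuous.
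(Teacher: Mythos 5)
Your reduction of the lemma to semicontinuous conditions contains two genuine gaps, one in each half. For $U_{12}$: neither of your proposed detections isolates Case III. ``Fails to be stable on $X_\eta$'' is the complement of Case I (Case II sheaves are strictly semistable on the generic fibre as well), and the fallback criterion $\hom_{X_\eta}(E_\eta,E_\eta)\ge 2$ does not help either: in Case II, $E_\eta$ is a non-split self-extension of the order-two line bundle $\sO_{X_\eta}(F)$, and the composite $E_\eta\twoheadrightarrow\sO_{X_\eta}(F)\hookrightarrow E_\eta$ is a nonzero nilpotent endomorphism, so $\hom(E_\eta,E_\eta)=2$; worse, even in Case I one has $\End(E_\eta)\otimes_{k(\PP^1)}\overline{k(\PP^1)}\simeq\End\bigl(\sO_{X_{\bar\eta}}(F)\oplus\sO_{X_{\bar\eta}}(-F)\bigr)$, which has dimension at least $2$, so $\End(E_\eta)$ is (at least) a quadratic extension of $k(\PP^1)$ and your condition holds at \emph{every} point of $M(c_2)$. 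Thus the closed set you produce is not the complement of $U_{12}$. The paper instead, having already shown the Case II\,$\cup$\,III locus is closed, takes a flat family of such sheaves and separates III from II over $X_{\bar\eta}$ by a $\Hom$-condition against the auxiliary varying sheaves $L\oplus L^{-1}$, $L\in\Pic^0(X_{\bar\eta})$; some test against an auxiliary family, not an intrinsic count of endomorphisms of $E_\eta$, is unavoidable here.

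For $U_1$: the relative-$\Pic^0$ argument detects jumping at arbitrary (closed or geometric) points of $\Pic^0(X_\eta/\eta)$, i.e.\ sub line bundles of $E_\eta\otimes_{k(\PP^1)}\kappa$ for finite extensions $\kappa$ of $k(\PP^1)$. But by Fact \ref{fact:RestrGenericFib}, a Case I sheaf also splits as $\sO(F)\oplus\sO(-F)$ after the quadratic extension $\eta'\to\eta$, so every Case I point lies under the jumping locus (at the non-rational closed point of $\Pic^0(X_\eta/\eta)$ given by the Galois orbit $\{\sO(F),\sO(-F)\}$), and the projected image is essentially all of $M(c_2)$, not $M(c_2)\setminus U_1$. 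What distinguishes Case I is the existence of a sub line bundle \emph{rational over} $\eta$, and rationality is invisible to the semicontinuity-plus-properness mechanism. (Two further technical points: since $X$ has multiple fibres, $X_\eta$ has no rational point, so a Poincar\'e bundle on $X_\eta\times\Pic^0$ need not exist without a further base change; and $S_\eta\to S$ is not finite, so descending closedness from $S_\eta$ to $S$ requires a separate spreading-out argument.) The paper's proof avoids the generic fibre entirely for this half: $E\notin U_1$ iff $\Hom(F,E)\neq 0$ for some rank-one sheaf $F=\sO(D)\otimes I_W$ on the surface $X$ with $D\cdot\cf=0$, $0\le -D^2\le c_2$ and $l(W)\le c_2$; the boundedness input from \cite[Lem.~2.1]{Qi:birational} bounds $D\cdot H$, so such $F$ form a bounded family over a finite-type parameter space, and the non-$U_1$ locus is closed by the standard semicontinuity argument there. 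If you want to salvage your route, you must either encode Galois-descent (rationality) conditions on the destabilising sub line bundle, or, as the paper does, replace sub line bundles of $E_\eta$ by rank-one subsheaves of $E$ on $X$ itself.
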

\begin{proof}
$E\in M(c_2)$ applies to Case II or III if and only if it holds that $\Hom(F,E)\neq 0$ for some sheaf
$F=\sO(D)\otimes I_W$, where
$D$ is a divisor on $X$ such that $D\cdot \cf=0$ and $0\leq -D^2 \leq c_2(E)$ and $W$ is a closed subscheme of $X$
such that $0\leq l(W) \leq c_2(E)$.
By \cite[Lem. 2.1]{Qi:birational}, $|\sO(D)\cdot \sO(1)|\leq dc_2$, where $d$ is a constant depending only on $\sO(1)$.
Thus such sheaves $F$ form a bounded family, so $M(c_2)\setminus U_1$ is closed.
Next, let ${\mathcal E}\rightarrow T$ be a $T$-flat family of sheaves in $M(c_2)$ applying to Case II or III.
It induces a $T$-flat family ${\mathcal E}_{\bar{\eta}}$ of semistable sheaves on $X_{\bar{\eta}}$.
For $t\in T$, ${\mathcal E}_{\bar{\eta}}\otimes k(t)$ applies to Case III if and only if
$\dim_{\bar{k}(\PP^1)} \Hom(L\oplus L^{-1}, {\mathcal E}_{\bar{\eta}}\otimes_T k(t))\geq 2$
for some $L\in\Pic^0(X_{\bar{\eta}})$. Thus $M(c_2)\setminus U_{12}$ is closed.
\end{proof}
\begin{fact}\cite[Cor. 3.13]{Fri:vb-reg-ellipt}\label{fact:GenericVB-CaseI}
If $c_2> \max( 2p_g+1, 2p_g+(2/3)\Lambda(X))$, then general $H$-stable sheaves
in $M(c_2)$ are of type Case I.
\end{fact}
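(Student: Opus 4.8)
Proof proposal for Fact \ref{fact:GenericVB-CaseI} (the statement: if $c_2 > \max(2p_g+1, 2p_g + (2/3)\Lambda(X))$, then general $H$-stable sheaves in $M(c_2)$ are of type Case I).

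The plan is to reduce the statement to a dimension estimate on the complement of the Case I locus. Write $U_1\subset M(c_2)$ for the set of sheaves of type Case I; it is open by Lemma \ref{lem:I-open-property}, so $M(c_2)\setminus U_1$, the locus of sheaves of type Case II or Case III, is closed. On the other hand, by the deformation theory of Fact \ref{fact:moduli-lci} together with Riemann--Roch on $X$, every irreducible component of $M(c_2)$ has dimension at least the expected value $\ext^1(E,E)^\circ-\ext^2(E,E)^\circ=4c_2-3d$ (recall $q(X)=0$, so $\ext^1(E,E)=\ext^1(E,E)^\circ$, and $p_g=d-1$). Hence it suffices to prove that $\dim\big(M(c_2)\setminus U_1\big)<4c_2-3d$: then the bad locus contains no component of $M(c_2)$, so $U_1$ is dense in $M(c_2)$ and a general stable sheaf is of type Case I. (If $M(c_2)=\emptyset$ there is nothing to prove.)

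To run the count I would parametrize $M(c_2)\setminus U_1$ exactly as in the proof of Lemma \ref{lem:I-open-property}: each such $E$ carries a nonzero map $\sO(D)\otimes I_W\to E$ with $D\cdot\cf=0$, $0\le -D^2\le c_2$ and $0\le l(W)\le c_2$, and after saturating one gets an exact sequence $0\to\sO(D)\otimes I_W\to E\to Q\to 0$ with $Q$ a rank-one sheaf of $c_1(Q)=-D$, whose torsion-free quotient has the form $\sO(D')\otimes I_{W'}$ and with a fibre-supported torsion sheaf in between. Since $H$ is $c_2$-suitable we have $D\cdot H\le 0$, so by the Hodge index theorem $D$ ranges over a bounded region of $\NS(X)$, and as $q(X)=0$ there are only finitely many such classes $D$; the remaining discrete data, $w=l(W)$, $w'=l(W')$ and the type of the fibre torsion, are bounded through $c_2=-D^2+w+w'+(\text{contribution of the torsion})$.

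For fixed discrete invariants the corresponding $E$'s fibre over a family of pairs $(\sO(D)\otimes I_W,\,Q)$ of dimension at most $2w+2w'+O(1)$, with fibres contained in projective spaces of extensions, so the stratum has dimension at most
\[
2w+2w'+O(1)+\big(\ext^1(Q,\sO(D)\otimes I_W)-1\big).
\]
I would bound the last term by $-\chi(Q,\sO(D)\otimes I_W)+\hom(Q,\sO(D)\otimes I_W)+\ext^2(Q,\sO(D)\otimes I_W)$, compute $\chi$ by Riemann--Roch using the canonical bundle formula \eqref{eq:CanBdleFormula} (note $D\cdot\cf=0$ forces $D\cdot F_i=0$, hence $D\cdot K_X=0$, which keeps the leading terms clean), and control $\hom$ and $\ext^2$ by Serre duality together with the stability of $E$ and positivity of $H$. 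Substituting $w+w'=c_2+D^2$ and summing over the finitely many strata, the dimension of $M(c_2)\setminus U_1$ comes out as $3c_2+D^2$ plus a correction linear in $p_g$ and in $\Lambda(X)$, maximised at $D^2=0$; requiring it to be $<4c_2-3d$ is then precisely $c_2>\max\!\big(2p_g+1,\,2p_g+(2/3)\Lambda(X)\big)$, using $p_g=d-1$.

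The hard part is sharpening this estimate enough to recover the exact constants, in two places. First, one must bookkeep the fibre-supported torsion summand of $Q$, which is invisible to a naive "extension of two line bundles" count but enters both the dimension of the family of quotients and the Chern-class relation. Second, and more delicately, one has to pin down the divisor classes $D$ supported on the reductions $F_i$ of the multiple fibres, together with the order condition on $\sO_{X_\eta}(F)$ in Case II; this is the mechanism by which $\Lambda(X)$ feeds into the estimate via the $\sum_i(m_i-1)F_i$ term of $K_X$, and it is exactly this bookkeeping that produces the coefficient $2/3$. The remainder is routine Riemann--Roch and boundedness.
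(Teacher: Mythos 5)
You are trying to prove something the paper itself does not prove: Fact \ref{fact:GenericVB-CaseI} is quoted verbatim from \cite[Cor. 3.13]{Fri:vb-reg-ellipt}, so the only meaningful comparison is with Friedman's argument, and your outline does follow its general shape (Lemma \ref{lem:I-open-property} to make the Case I locus open, the lower bound $\dim \geq 4c_2-3d$ for every component from Fact \ref{fact:moduli-lci} with $q(X)=0$, and then a dimension count on the locus of sheaves admitting a fibre-degree-zero sub-line bundle). That reduction is fine as far as it goes.

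The genuine gap is that the decisive step is asserted rather than performed. The entire content of the statement is the explicit threshold $\max(2p_g+1,\,2p_g+(2/3)\Lambda(X))$, and your write-up reaches it only by declaring that the stratum dimension ``comes out as $3c_2+D^2$ plus a correction linear in $p_g$ and in $\Lambda(X)$'' and that the resulting inequality ``is then precisely'' the stated bound; no value of $\ext^1(Q,\sO(D)\otimes I_W)$, no bound on $\hom$ or $\ext^2$, and no maximisation over the discrete data is actually carried out, and you yourself flag that the two places where the constants are produced (the fibre-supported torsion of $Q$, and the classes $D=\sum_i a_iF_i$ supported on the multiple fibres together with the order-two condition of Case II, which is where the coefficient $2/3$ originates) are left undone. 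In Friedman's paper this bookkeeping is exactly the nontrivial part of the proof of Cor.\ 3.13, so as it stands your text is a plausible plan, not a proof. Two smaller soft spots: finiteness of the classes $D$ does not follow from ``$D\cdot H\leq 0$, $D^2\geq -c_2$ and Hodge index'' alone (you need a two-sided bound on $D\cdot H$, e.g.\ the Qin-type estimate already invoked in the proof of Lemma \ref{lem:I-open-property}); and the strata must be counted for Case II and Case III separately, since the extension-space estimate differs when $E_\eta$ splits.
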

By \cite[Sect. 7]{Fri:vb-reg-ellipt},
if $X$ is generic and $c_2>\max(2(1+p_g), 2p_g(X)+(2/3)\Lambda(X))$, 
then there is such a dense open set $M_0$ of $M(c_2)$ as follows.
$M_0$ is contained in
the good locus $M_{gd}$ of $M(c_2)$ defined by
\begin{equation}\label{eq:GoodLcs}
 M_{gd}=\left\{ [E]\in M(c_2,H) \bigm| \ext^2(E,E)^0= 0 \right\}.
\end{equation}
Every sheaf $E$ in $M_0$ is locally-free and corresponds Case I in
Fact \ref{fact:RestrGenericFib}, and so it induces a double cover 
$C\rightarrow \PP^1$. $T=X\times_{\PP^1} C$ is non-singular, and gives
a double cover $\nu: T \rightarrow X$.
The decomposition of $E_{\eta'}$ at Case I extends to
an exact sequence on $T$
\begin{equation}\label{eq:relJH}
 0 \longrightarrow {\mathcal O}_T(D) \longrightarrow \nu^*E \longrightarrow
{\mathcal O}_T(-D)\otimes I_Z \longrightarrow 0,
\end{equation}
and $E$ is isomorphic to $\nu_* {\mathcal O}_T(-D)$.\par
%
The Jacobian surface $J(X)$ has a natural involution defined by $\times (-1)$,
and its quotient is
a smooth ruled surface $\FF_{2k} \rightarrow \PP^1$
called the $k$-th Hirzebruch surface \cite[p. 140]{BPV:text}, where $k=p_g+1$.
The divisor $D$ at \eqref{eq:relJH}
induces a morphism $C \rightarrow J(T)$.
The composition of this and
$J(T) \rightarrow J(X) \rightarrow \FF_{2k}$ is
invariant under $\operatorname{Gal}(k(C)/k(\PP^1))$-action, so it induces
a morphism $\PP^1 \rightarrow \FF_{2k}$, which is a section
$A$ of $\FF_{2k}\rightarrow \PP^1$.
This $A$ belongs to a linear system 
$|\sigma+(2k+r)l|\simeq \PP^{2c_2-2p_g-1}$ of $\FF_{2k}$, 
where $\sigma$ is a section of $\FF_{2k}$ with $\sigma^2=-2k$.
Thereby we get a morphism 
$\psi:M_0 \rightarrow \PP^{2c_2-2p_g-1}$ sending $E$ to $A$.
In fact, this is a surjective map onto a nonempty open subset 
$U\subset \PP^{2c_2-2p_g-1}$.
For detail, refer \cite[p.328]{Fri:vb-reg-ellipt}.
%
We recall statements in \cite[Sect. 7]{Frd:holvb}.
There are a $U$-flat subscheme ${\mathcal C}\subset U\times J(X)$ 
and ${\mathcal T}={\mathcal C}\times_{\PP^1} X$ such that
$U$-flat family ${\mathcal T}\rightarrow {\mathcal C}$
parametrizes $T=C\times_{\PP^1}X \rightarrow C$ above.
For some normal finite cover $U'\rightarrow U$,
there is a line bundle
${\mathcal O}({\mathcal D})$ on ${\mathcal T}'={\mathcal T}\times_U U'$
which parametrizes ${\mathcal O}_T(D)$ at \eqref{eq:relJH}.
In the relative Picard scheme $\Pic({\mathcal T}'/U')$, there is 
a subscheme $\Pic^s({\mathcal T}'/U')$ as follows by \cite[p. 329]{Fri:vb-reg-ellipt}.
Denote the fiber of ${\mathcal T}' \rightarrow {\mathcal C}\times_{U} U'$ and
$\Pic^s({\mathcal T}'/U') \rightarrow U'$ over $u'\in U'$ by
$T \rightarrow C$ and $\Pic^s(T)$.
By \cite[Lem. 7.4]{Fri:vb-reg-ellipt},
$\Pic^0(T)$ is isomorphic to $\Pic^0(C)$, and
$\Pic^s(T)$ is a principal homogeneous space under a group $\Pic^{\tau}(T)$,
which has a natural exact sequence
\begin{equation}\label{eq:Pic-tau}
 0 \longrightarrow \Pic^0(T) \longrightarrow \ \Pic^{\tau}(T) \longrightarrow G 
 \longrightarrow 0, 
\end{equation}
where $G$ is a finite subgroup of $H^2(T,\ZZ)_{\text{tors}}$.
We have a vector bundle over $X\times \Pic^s({\mathcal T}'/U')$
\begin{equation}\label{eq:mathcal-V}
 {\mathcal V}=q_*(\pi_1^*{\mathcal O}_{{\mathcal T}'}(-{\mathcal D})\otimes 
{\mathcal L}^{-1}),
\end{equation}
where ${\mathcal L}$ is the Poincare bundle of $\Pic^s({\mathcal T}'/U')$,
$\pi_1: {\mathcal T}'\times_{U'} \Pic^s({\mathcal T}'/U') \rightarrow {\mathcal T}'$
is natural projection, and 
$q: {\mathcal T}'\times_{U'} \Pic^s({\mathcal T}'/U') \rightarrow
X\times \Pic^s({\mathcal T}'/U')$
is induced by ${\mathcal T}'\rightarrow {\mathcal T}\rightarrow U\times X$.
\begin{fact}\cite[Cor.7.3]{Fri:vb-reg-ellipt}
${\mathcal V}$ gives an {\it isomorphism}
$\Pic^s({\mathcal T}'/U')/\sim\ \rightarrow M_0$,
where $\sim$ is the equivalence relation on $U'$ defined by $U'\rightarrow U$.
\end{fact}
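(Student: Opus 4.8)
\emph{Sketch of proof.} This is \cite[Cor.~7.3]{Fri:vb-reg-ellipt}; we indicate the argument. The plan is to read off from ${\mathcal V}$ a classifying morphism into $M_0$, show that it is constant along $\sim$, construct a set-theoretic inverse, and upgrade the resulting bijection to an isomorphism using smoothness of both sides.

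First I would identify the fibres of ${\mathcal V}$. For a point $\zeta=(u',\xi)$ of $\Pic^s({\mathcal T}'/U')$, with $T\to C$ the fibre of ${\mathcal T}'$ over $u'$, $\nu:T\to X$ the induced double cover and ${\mathcal O}_T(D)$ the restriction of ${\mathcal O}_{{\mathcal T}'}({\mathcal D})$, formula \eqref{eq:mathcal-V} gives ${\mathcal V}|_{X\times\{\zeta\}}\cong\nu_*\bigl({\mathcal O}_T(-D)\otimes\xi^{-1}\bigr)$. This is locally free of rank two with Chern classes $(0,c_2)$: from $\nu_*{\mathcal O}_T={\mathcal O}_X\oplus{\mathcal M}^{-1}$, where ${\mathcal M}^{\otimes 2}={\mathcal O}_X(\text{branch divisor of }\nu)$, Grothendieck--Riemann--Roch expresses $c_1$ and $c_2$ of $\nu_*$ of a fibre-degree-zero line bundle in terms of $D$, and the class of $D$ together with the definition of $\Pic^s$ is arranged precisely so these take the prescribed values. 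Its restriction to $X_\eta$ has no sub line bundle of fibre degree zero, since ${\mathcal O}_{X_{\bar\eta}}(F)$ is not rational over $k(\PP^1)$; hence this restriction is stable, so by Fact~\ref{fact:XeStable} the sheaf is $H$-stable. It is locally free and of Case~I, so it lies in the good locus \eqref{eq:GoodLcs}, and in fact in $M_0$ by the construction recalled before the Fact. As $q$ is finite flat and ${\mathcal V}$ is the pushforward along it of a line bundle, ${\mathcal V}$ is flat over $\Pic^s({\mathcal T}'/U')$, so the coarse moduli property of $M(c_2)$ yields a morphism $\Psi':\Pic^s({\mathcal T}'/U')\to M_0$.

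Next I would show $\Psi'$ factors through $\sim$ and build an inverse. The only ambiguities in forming ${\mathcal V}$ --- the finite cover $U'\to U$, the bundle ${\mathcal O}_{{\mathcal T}'}({\mathcal D})$, and the Poincar\'e bundle ${\mathcal L}$ --- change ${\mathcal V}$ only by tensoring with a line bundle pulled back from the base, which leaves the induced point of the coarse moduli space unchanged; concretely $\zeta_1\sim\zeta_2$ forces the underlying $T$ and line bundle ${\mathcal O}_T(-D)\otimes\xi^{-1}$ to agree, so $\Psi'(\zeta_1)=\Psi'(\zeta_2)$, and since $\Pic^s({\mathcal T}'/U')/\sim$ is the geometric quotient by the finite \'etale equivalence relation induced by $U'\to U$, we obtain $\Psi:\Pic^s({\mathcal T}'/U')/\sim\ \to M_0$. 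For the inverse, given $E\in M_0$, Case~I of Fact~\ref{fact:RestrGenericFib} produces a canonical double cover $C\to\PP^1$, hence $T$ and $\nu$, and --- following the construction recalled before the Fact --- a section $A=\psi(E)$ of $\FF_{2k}$, i.e.\ a point of $U$; choose $u'\in U'$ above it. Sequence \eqref{eq:relJH} exhibits $E\cong\nu_*{\mathcal O}_T(-D)$ with ${\mathcal O}_T(D)\subset\nu^*E$ the distinguished fibre-degree-zero sub line bundle, well defined up to the deck transformation of $T/X$; comparing ${\mathcal O}_T(-D)$ with the reference ${\mathcal O}_{{\mathcal T}'}(-{\mathcal D})|_{u'}$ gives a class $\xi$, whose two deck-transformation choices determine line bundles in distinct $\Pic^{\tau}(T)$-torsors, so exactly one of them lies in $\Pic^s(T)$. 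Thus $\zeta=(u',\xi)$ is well defined modulo $\sim$, and $E\mapsto[\zeta]$ is a morphism $M_0\to\Pic^s({\mathcal T}'/U')/\sim$ inverse to $\Psi$ on closed points.

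Finally I would promote the bijection $\Psi$ to an isomorphism. Both sides are smooth: $\Pic^s({\mathcal T}'/U')$ is open and closed in the relative Picard scheme of the smooth family ${\mathcal T}'\to U'$ over the smooth base $U'$, and the geometric quotient by the free finite \'etale relation $\sim$ remains smooth; $M_0$ is smooth because $\ext^2(E,E)^{0}=0$ there kills the obstruction in Fact~\ref{fact:moduli-lci}. They have the same dimension: $\dim U=2c_2-2p_g-1$, while $C\to\PP^1$ is branched where $A$ meets the branch curve of $J(X)\to\FF_{2k}$, which works out to $g(C)=2c_2-p_g-2$, so $\dim\bigl(\Pic^s({\mathcal T}'/U')/\sim\bigr)=\dim U+g(C)=4c_2-3p_g-3=4c_2-3d$, which equals $\dim M(c_2)$ by Proposition~\ref{prop:irred}. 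A bijective morphism between smooth varieties over $\CC$ is an isomorphism (Zariski's main theorem, $M_0$ being normal); alternatively one checks that the Kodaira--Spencer map of ${\mathcal V}$ is everywhere an isomorphism, applying adjunction for the finite flat $\nu$ to \eqref{eq:relJH}, the term $\Hom_T({\mathcal O}_T(D),{\mathcal O}_T(-D))=H^0({\mathcal O}_T(-2D))$ vanishing.

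The hard part is the inverse construction, i.e.\ making the pair $(u',\xi)$ --- in particular the line bundle class $\xi$ on $T$ --- a genuine invariant of $E$: one must pin down the fibre-degree-zero sub line bundle of $\nu^*E$ in \eqref{eq:relJH} canonically up to the deck transformation, verify that its two conjugates split between the two relevant $\Pic^{\tau}(T)$-torsors so that $\Pic^s(T)$ meets each isomorphism class exactly once, and keep careful track of the twist by ${\mathcal M}$. Granted the set-theoretic inverse together with the smoothness and dimension statements, the isomorphism assertion is then formal.
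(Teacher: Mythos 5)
The paper offers no proof of this statement: it is quoted as a Fact from Friedman \cite[Cor.~7.3]{Fri:vb-reg-ellipt}, so there is no internal argument of the paper to compare yours with. Judged on its own terms, your sketch follows the natural route (a classifying morphism to $M_0$ from the flat family ${\mathcal V}$, descent through $\sim$, an inverse built from the Case~I spectral construction of Section \ref{sbsctn:EvenFibDeg}, and a bijective-morphism-onto-a-normal-target argument), and your dimension bookkeeping ($\dim U=2c_2-2p_g-1$, fibres of dimension $g(C)=2c_2-p_g-2$, total $4c_2-3d$) is consistent with what the paper uses later in Corollary \ref{cor:KodDim-Moduli}.

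As a proof, however, the sketch has genuine gaps, and they sit exactly where the mathematical content of Friedman's corollary lies. You explicitly defer the well-definedness of the inverse: pinning down the fibre-degree-zero sub line bundle of $\nu^*E$ in \eqref{eq:relJH} up to the deck transformation, and showing that the two conjugate choices are distributed so that $\Pic^s(T)$, modulo the identification coming from $U'\rightarrow U$, meets each isomorphism class of sheaves exactly once --- this is precisely injectivity of the map, i.e.\ the heart of the statement, and ``granted the set-theoretic inverse, the isomorphism assertion is formal'' is close to granting the theorem; note also that your claim that the two choices are separated by the $\Pic^{\tau}(T)$-torsor structure is asserted rather than argued, whereas the quotient by $\sim$ is the device the statement itself provides for absorbing that ambiguity. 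Two further steps are asserted without justification: smoothness of $\Pic^s({\mathcal T}'/U')/\sim$ is not automatic, since the relative Picard scheme of a family of \emph{surfaces} need not be smooth over the base (the obstruction lives in $H^2(\sO_T)$; one must use the identification $\Pic^0(T)\simeq\Pic^0(C)$ from \eqref{eq:Pic-tau}), and $U'\rightarrow U$ is only said to be a normal finite cover, so ``free finite \'etale relation'' needs an argument. One can sidestep smoothness of the source by invoking Zariski's main theorem with only the normality of $M_0$, but then you need irreducibility (or a component-by-component analysis) of the source, which is not obvious because $\Pic^s(T)$ is a torsor under $\Pic^{\tau}(T)$, a group with finitely many components. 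So your outline is a plausible reconstruction of the expected strategy, but it does not yet substitute for the citation.
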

%
\section{The $K$-dimension of moduli scheme of sheaves}\label{sctn:K-dim}
\begin{stng}\label{stng:XHS}
An elliptic surface $X$ with $\kappa(X)=1$ is generic, 
that is, it lies outside of a countable union of proper subvarieties of the
parameter space.
Fix a compact polyhedral cone ${\mathcal S}$ in the closure of the ample 
cone $\Amp(X)$ of $X$ such that ${\mathcal S}\cap \partial \Amp(X) \subset \RR K_X$.
%
$H$ is $c_2$-suitable and belongs to ${\mathcal S}$.

\end{stng}
\begin{lem}\label{lem:GoodLcs}
Assume that the good locus $M_{gd}$ of $M(c_2,H)$ defined at \eqref{eq:GoodLcs}
satisfies that 
$\operatorname{codim}(\bar{M}(c_2,H)\setminus M_{gd}, \bar{M}(c_2,H))\geq 2$;
this assumption holds
if ${\mathcal S}$ and $H$ satisfy Setting \ref{stng:XHS} and
if $c_2$ is sufficiently large w.r.t. $X$ and ${\mathcal S}$.
Then $\bar{M}(c_2, H)$ is of expected dimension, locally normal, l.c.i., 
and $M(c_2,H)$ is dense in it.
The canonical class $K_{\bar{M}}$ of $\bar{M}(c_2,H)$ is $\QQ$-Cartier and 
\[ K_{\bar{M}} = \lambda(u_1((2,0,c_2)))=\lambda(-2k_X) \quad
\text{in}\  \Pic(\bar{M}(c_2,H))_{\QQ},\]
where $\lambda(-2k_X)$ appeared in Subsection \ref{sbsctn:LBmoduli}.
\end{lem}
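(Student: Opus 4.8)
The plan is to (i) check that the displayed codimension hypothesis is genuinely satisfied in Setting \ref{stng:XHS} once $c_2\gg 0$; (ii) use Fact \ref{fact:moduli-lci} on the good locus $M_{gd}$ to read off smoothness and the expected dimension there, and then let the codimension bound propagate these to all of $\bar M$; and (iii) compute $K_{\bar M}$ by transporting $K_{M_{gd}}$ along the open immersion $M_{gd}\hookrightarrow\bar M$, whose complement has codimension $\ge 2$.

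For (i), I would decompose $\bar M\setminus M_{gd}$ into the strictly semistable boundary $\bar M\setminus M(c_2,H)$, the locus of stable sheaves not of Case~I in Fact \ref{fact:RestrGenericFib}, and the obstructed locus $\{E\in M(c_2,H):\ext^2(E,E)^0\neq 0\}$. Each piece parametrises sheaves that, after a twist, contain a subsheaf $\sO(D)\otimes I_W$ of bounded type --- exactly the mechanism in the proof of Lemma \ref{lem:I-open-property}, and for the obstructed locus the analysis of \cite[\S 7]{Fri:vb-reg-ellipt} --- so by boundedness (\cite[Lem.~2.1]{Qi:birational}) and the genericity of $X$ the dimension of each piece grows strictly more slowly in $c_2$ than $\dim M(c_2,H)=4c_2-3d$; hence each has codimension $\ge 2$ for $c_2$ large relative to $X$ and ${\mathcal S}$. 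I expect this uniform estimate to be the main obstacle, since it is where genericity of $X$, the size of $c_2$, and Friedman's structure theory all enter.

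For (ii), on $M_{gd}$ one has $b=\ext^2(E,E)^0=0$ by \eqref{eq:GoodLcs}, so Fact \ref{fact:moduli-lci} gives $\sO_{M,E}^{\wedge}\simeq\CC[[t_1,\dots,t_D]]$ with $D=\ext^1(E,E)$; Riemann--Roch gives $\chi(E,E)=4d-4c_2$, which with $\hom(E,E)=1$ and $q(X)=0$ (so $h^1(\sO_X)=0$, $h^2(\sO_X)=d-1$) yields $D=4c_2-3d$, the expected dimension. Thus $M_{gd}$ is smooth of pure dimension $4c_2-3d$. Since $\operatorname{codim}(\bar M\setminus M_{gd},\bar M)\ge 2$, no component of $\bar M$ can lie in $\bar M\setminus M_{gd}$, so every component meets $M_{gd}$ and hence has dimension exactly $4c_2-3d$, and $M_{gd}$, a fortiori $M(c_2,H)$, is dense in $\bar M$. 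At any stable $E$, Fact \ref{fact:moduli-lci} presents $\sO_{M,E}^{\wedge}$ as a quotient of a regular local ring of dimension $D+b$ by the $b$ power series $F_i$; knowing the dimension is $D$ forces the $F_i$ to be a regular sequence, so $\sO_{M,E}^{\wedge}$ is a complete intersection and $M(c_2,H)$ is l.c.i.\ and Cohen--Macaulay. The singular locus of $\bar M$ lies in $(\bar M\setminus M(c_2,H))\cup(M(c_2,H)\setminus M_{gd})$, of codimension $\ge 2$, giving Serre's $R_1$; together with Cohen--Macaulayness on $M(c_2,H)$ and the standard Luna-slice description of $\bar M$ near strictly semistable points (\cite[\S 4,\,\S 8]{HL:text}) this gives $S_2$ and the l.c.i.\ property, so $\bar M$ is normal and l.c.i. (When $c_2$ is odd, $\bar M=M(c_2,H)$ and the boundary step is vacuous; for $c_2$ even it is the delicate case.)

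For (iii), \S\ref{sbsctn:LBmoduli} gives $K_{M_{gd}}=\lambda(u_1((2,0,c_2)))|_{M_{gd}}=\lambda(-2k_X)|_{M_{gd}}$, and $\lambda(-2k_X)$ is the restriction of a genuine line bundle on $\bar M$, the determinant-of-cohomology line bundle on the semistable Quot scheme descended to $\bar M$ (it descends since $\chi(u_1((2,0,c_2))\cdot c_2)=0$). Since $\bar M$ is normal and $\bar M\setminus M_{gd}$ has codimension $\ge 2$, restriction induces $\operatorname{Cl}(\bar M)\xrightarrow{\ \sim\ }\operatorname{Cl}(M_{gd})=\Pic(M_{gd})$, under which the Weil divisor class $K_{\bar M}$ maps to $K_{M_{gd}}$; hence $K_{\bar M}=\lambda(u_1((2,0,c_2)))=\lambda(-2k_X)$ in $\Pic(\bar M)_{\QQ}$, and this being Cartier, $K_{\bar M}$ is $\QQ$-Cartier.
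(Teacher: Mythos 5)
Your parts (ii) and (iii) --- smoothness of $M_{gd}$ of pure dimension $4c_2-3d$ via Fact \ref{fact:moduli-lci}, propagation of dimension, density, l.c.i.\ and normality through the codimension bound, and the identification of $K_{\bar M}$ with $\lambda(-2k_X)$ by restricting to $M_{gd}$ and using $\operatorname{Cl}(\bar M)\simeq\operatorname{Cl}(M_{gd})$ together with $\chi(u_1(c_2)\cdot c_2)=0$ --- are consistent with what the paper's proof leaves implicit, so I have no quarrel there beyond the fact that the behaviour at strictly semistable points is only gestured at (as it is in the paper).

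The genuine gap is in your step (i), which is exactly where the paper's proof has its actual content: the verification that the codimension hypothesis really holds in Setting \ref{stng:XHS} for $c_2\gg 0$. You assert that each bad piece has dimension growing strictly more slowly than $4c_2-3d$ ``by boundedness and the genericity of $X$,'' and you yourself flag this as the main obstacle; but boundedness gives no such estimate for the obstructed locus $\{E\in M(c_2,H):\ext^2(E,E)^0\neq 0\}$. That estimate is a theorem of Li, the very one quoted later in the proof of Proposition \ref{prop:irred} (the number of moduli of such sheaves is at most $3c_2+C\sqrt{c_2}+C$), and the constant there depends on the chosen polarization. This dependence is the crux here, because the $c_2$-suitable polarization $H$ is not fixed: it moves inside ${\mathcal S}$ as $c_2$ grows, so ``$c_2$ sufficiently large with respect to $H$'' is circular unless the bound is made uniform over ${\mathcal S}$. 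The paper resolves this by fixing one polarization $H_0\in{\mathcal S}$, applying Li's codimension bound \cite{Li:kodaira} to $\bar M(c_2,H_0)$, and then invoking \cite[Lem. 2.4]{Yam:flip}: for $c_2$ large with respect to ${\mathcal S}$ the moduli schemes $M(c_2,L)$ for $L\in{\mathcal S}$ are mutually isomorphic in codimension one, which transports the codimension-two bound from $H_0$ to the suitable $H$. Your proposal contains no substitute for either ingredient, so the hypothesis you go on to use is not established. (A minor point: the locus of stable sheaves not of Case I is not part of $\bar M\setminus M_{gd}$, so it need not be estimated at all; the relevant loci are only the strictly semistable boundary and the obstructed stable locus.)
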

\begin{proof}
Fix a polarization $H_0\in {\mathcal S}$. By \cite{Li:kodaira},
if $c_2$ is sufficiently large w.r.t. $H_0$, then 
$\operatorname{codim}(\bar{M}(c_2,H_0)\setminus M_{gd}, \bar{M}(c_2,H_0))\geq 2$.
If $c_2$ is sufficiently large w.r.t. ${\mathcal S}$,
then $M(c_2,L)$ are mutually isomorphic in codimension one for every 
$L\in {\mathcal S}$ by \cite[Lem. 2.4.]{Yam:flip}, and thus
the first statement is valid.
Then the second statement follows from Subsection \ref{sbsctn:LBmoduli},
since $\chi(c_2\cdot k_X)=0$.
\end{proof}
On the Gieseker-Maruyama compactification $\bar{M}'(c_2)$ of $M(c_2)$,
$\sO(n_0 \lambda(-2k_X))$ is base-point free for some $n_0\in\NN$ by 
\cite[p.224]{HL:text}, and so there is a morphism 
$\Phi_{\lambda}: \bar{M}'(c_2) \rightarrow N'(c_2):=
\operatorname{Proj}\left(\oplus_{n\geq 0} H^0(\bar{M}, \sO(n\lambda(-2k_X)))\right)$
by Fact \ref{fact:IitakaPgrm} (2).
\begin{prop}\label{prop:plurican}
Let $M_0$ be the open subset of $M(c_2)$ in Section \ref{sbsctn:EvenFibDeg},
$\psi: M_0 \rightarrow U$ the morphism recalled at Subsection \ref{subsct:basic},
$\bar{M}'(c_2)$ an irreducible component of $\bar{M}'(c_2)$,
$N'(c_2)=\Phi_{\lambda}(\bar{M}'(c_2))$, and $M'_0=\bar{M}'(c_2)\cap M_0$.
By the Stein factorization,
we factor $\psi:M'_0\rightarrow U$ into $g\circ \tilde{\psi}$, where
$\tilde{\psi}: M'_0 \rightarrow V$ is a projective morphism with connected fibers,
and $g:V \rightarrow U$ is finite.
Then $V$ is normal and
there is a quasi-finite morphism $j: V \rightarrow N'(c_2)$ such that 
\begin{equation}\label{eq:Phi-psi}
\xymatrix{
M'_0 \ar@{^{(}-}[rr] \ar[dr]^{\tilde{\psi}} \ar[d]_{\psi} & & 
 \bar{M}'(c_2) \ar[d]^{\Phi_{\lambda}} \\
U & V \ar[l]_{g} \ar[r]^{j} & N'(c_2) }
\end{equation}
is commutative.
When $\bar{M}'(c_2)$ is integral, let
$n(j): V \rightarrow n(N'(c_2))$ denote the normalization of $j$.
Then $n(j)$ is an open immersion.
%
%
\end{prop}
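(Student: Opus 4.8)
The plan is to deduce this from Zariski's Main Theorem, together with the elementary fact that a finite, birational morphism onto a normal integral scheme is an isomorphism. First I would assemble what is already in hand. Since $\bar{M}'(c_2)$ is integral, its dense open subset $M'_0$ is integral; as $\tilde{\psi}\colon M'_0\to V$ is proper with connected fibres, $V$ is integral, and it is normal by the previous part of the proposition. Likewise $N'(c_2)=\Phi_{\lambda}(\bar{M}'(c_2))$ is integral, $\tilde{\psi}(M'_0)$ is dense in $V$, and $\Phi_{\lambda}(M'_0)$ is dense in $N'(c_2)$, so commutativity of \eqref{eq:Phi-psi} shows that $j$ is dominant. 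Being also quasi-finite, $j$ induces a finite field extension $k(N'(c_2))\hookrightarrow k(V)$; hence, $V$ being normal, $j$ factors through the normalisation $n(N'(c_2))\to N'(c_2)$, yielding a quasi-finite (separated, finite type) morphism $n(j)\colon V\to n(N'(c_2))$ of normal integral varieties.

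The essential point is that $j$, hence $n(j)$, is birational, i.e.\ generically injective. For this I would use that the morphism $\Phi_{\lambda}$ attached to the base-point-free class $\lambda(-2k_X)$ has connected fibres (equivalently, it realises its own Stein factorisation onto $N'(c_2)$, the latter being the $\Proj$ of a section ring), so that the generic fibre of $\Phi_{\lambda}|_{M'_0}$ is connected. Combined with $\Phi_{\lambda}|_{M'_0}=j\circ\tilde{\psi}$ and the fact that $\tilde{\psi}$ has connected, nonempty fibres, the fibre of $\Phi_{\lambda}|_{M'_0}$ over the generic point $\xi$ of $N'(c_2)$ is the disjoint union, indexed by $j^{-1}(\xi)$, of the corresponding fibres of $\tilde{\psi}$; its connectedness forces $j^{-1}(\xi)$ to be a single point. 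Since we are in characteristic zero, a dominant generically injective morphism of integral varieties is birational, so $k(V)=k(N'(c_2))$. I expect this comparison --- pinning down that $j$ has degree one onto its image, which couples connectedness of the fibres of $\Phi_{\lambda}$ to the fact that $\tilde{\psi}$ is the Stein factorisation of $\psi$ --- to be the main obstacle. (If one reads "normalisation of $j$" as the normalisation of $N'(c_2)$ inside the function field $k(V)$, then birationality is built in and this step is unnecessary.)

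Finally I would apply Zariski's Main Theorem to the quasi-finite separated morphism $n(j)$: it factors as an open immersion $V\hookrightarrow W$ followed by a finite morphism $W\to n(N'(c_2))$, and $W$ may be taken integral with $V$ dense in it. Then $W\to n(N'(c_2))$ is finite and birational (its function field agrees with $k(V)=k(N'(c_2))$) onto the normal integral variety $n(N'(c_2))$, hence an isomorphism; therefore $n(j)\colon V\to n(N'(c_2))$ is an open immersion, as claimed.
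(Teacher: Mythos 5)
Your argument only addresses the final clause of the proposition. You take as given that $V$ is normal and that a quasi-finite $j$ with $\Phi_{\lambda}|_{M'_0}=j\circ\tilde{\psi}$ exists (``by the previous part of the proposition''), but those are conclusions of this very proposition, and their proof is the bulk of the work in the paper. To produce $j$ at all one must show that the line bundle $\lambda(-2k_X)$ is trivial on every fibre of $\tilde{\psi}$ — i.e.\ on the connected components $Z$ of $\Pic^s(T)$, torsors under $\Pic^0(C)$ — which the paper does by the explicit computation combining the canonical bundle formula \eqref{eq:kx}, the expression \eqref{eq:KM|Z} in terms of the Poincar\'e bundle, Claim \ref{clm:1pt}, and the vanishing of the Euler characteristics $R(k)$; only then do the sections of $n\lambda(-2k_X)$ descend along $\tilde{\psi}$ (via base change and $\tilde{\psi}_*\sO_{M'_0}=\sO_V$) to define $j$ and make \eqref{eq:Phi-psi} commute. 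Quasi-finiteness of $j$ is likewise not formal: it rests on the separation property of the determinant line bundle (\cite[p.224]{HL:text}) — sections of $N\lambda(-2k_X)$ separate sheaves whose restrictions to the general fibre are not S-equivalent — which yields the sandwich $\psi^{-1}(\psi(s))\supset\Phi^{-1}(\Phi(s))\cap M'_0\supset\tilde{\psi}^{-1}(\tilde{\psi}(s))$ of \eqref{eq:3fibers} and hence $j^{-1}(y)\subset g^{-1}(u)$ finite. Normality of $V$ (from smoothness of $M'_0$ and the Stein property) is also left unproved. Since the commutativity of \eqref{eq:Phi-psi} is precisely the coupling of the pluricanonical map with Friedman's map that the proposition is about, omitting its construction is a genuine gap, not a bookkeeping issue.

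Two remarks on the part you did write. Your key step, generic injectivity of $j$, is essentially sound but its justification is shaky as stated: connectedness of the fibres of $\Phi_{\lambda}$ on the compact $\bar{M}'(c_2)$ does not by itself give connectedness of the fibre of $\Phi_{\lambda}|_{M'_0}$, since an open subset of a connected fibre can be disconnected; what saves the statement is either irreducibility of $M'_0$ (so the generic fibre of the restriction is irreducible) or, as in the paper, the fact that the $\tilde{\psi}$-fibres are proper and are connected components of $\psi$-fibres by \eqref{eq:3fibers}, hence open and closed inside the $\Phi$-fibre. Granting birationality, your endgame — Zariski's Main Theorem plus ``finite birational onto a normal variety is an isomorphism'' — is correct and is a legitimate, arguably cleaner, alternative to the paper's route, which instead proves $n(j)$ is everywhere injective using connectedness of all fibres of $n(\Phi)$ and then cites \cite[Thm. 17.9.1]{EGA:IV-4} and \cite[Cor. 4.4.9.]{EGA:III-1}.
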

\begin{proof}
We shorten $\Phi_{\lambda}$ to $\Phi$.
Let us show that $\lambda(-2k_X)|_Z=0$ for any fiber $Z$ of $\psi'$.
Let $F_j$ run
over the set of all multiple fibers of $X$ and $m_j$ is the multiplicity of $F_j$.
By \eqref{eq:CanBdleFormula}, one can verify in $K(X)$ that
\begin{equation}\label{eq:kx}
 k_X=\sO_X-\sO(-K_X)=(d-2)\sO_f+\sum_j \sum_{i=0}^{m_j-2} \sO(-iF_j)|_{F_j},
\end{equation}
since 
$\sO_f\cdot \sO_{F_j}=0$ and $\sO_{F_j}\cdot \sO_{F_k}=0$ when $j\neq k$.
Because of Section \ref{sbsctn:EvenFibDeg}, Lemma \ref{lem:GoodLcs} and
\eqref{eq:kx},
we have a morphism $\iota: \Pic^s({\mathcal T}'/U') \rightarrow M_0$,
a sheaf ${\mathcal V}$ on $X\times \Pic^s({\mathcal T}'/U')$ defined at
\eqref{eq:mathcal-V}.
Now we pick a point $u'\in U'$ over $u\in U$, and restrict these to the fiber over $u'$;
we shall abbreviate ``$(\cdot)\times_{U'}k(u')$'' to ``$(\cdot)_{u'}$''.
Denote ${\mathcal T}'_{u'}$ by $T$, that has a double covering map
$\nu:T\rightarrow X$.
Then we can regard $\Pic^s({\mathcal T}'/U')_{u'}=\Pic^s(T)$ as a subscheme
of $M_0$, and $Z$ as a connected component of $\Pic^s(T)$.
${\mathcal V}_{u'}=\nu_*(\sO_T(-D)\otimes {\mathcal L}|_{u'}^{-1})$
is a sheaf on $X\times \Pic^s(T)$, and
\begin{multline}\label{eq:KM|Z}
 \lambda(-2k_X)|_{\Pic^s(T)}= 2(2-d)\det R\pi_{2*} (\sO_f\otimes {\mathcal V}_{u'}) \\
 - \sum_j \sum_{i=0}^{m_j-2} 2\det R\pi_{2*}(\sO_{F_j}(-iF_j)\otimes {\mathcal V}_{u'}) 
\end{multline}
by virtue of \eqref{eq:kx}. 
Now we take any multiple fiber $F$ of $X$.
Because of the definition of $M_0$ at \cite[p.328]{Fri:vb-reg-ellipt},
$\nu: T \rightarrow X$ is unramified near $F$ and thereby
it holds either that
(i) $\nu^{-1}(F)$ is splitting as $F^{(1)}\coprod F^{(2)}$, or that
(ii) $\nu^{-1}(F)$ is connected and $\nu: \nu^{-1}(F)\rightarrow F$ is \'{e}tale.
We consider in Case (i); the proof goes similarly in Case (ii).
A line bundle ${\mathcal L}_{u'}|_{F^{(k)}}$ on $F^{(k)}\times \Pic^s(T)$
gives a morphism $\tau_k:\Pic^s(T) \rightarrow \Pic(F^{(k)})$.
\begin{clm}\label{clm:1pt}
The set $\tau_k(Z)$ is a point. Thus
${\mathcal L}|_{F^{(k)}\times Z}$ is isomorphic to 
$\pi_1^*(L_k) \otimes \pi_2^*(\sO_Z(G_k))$,
where $L_k$ and $\sO(G_k)$ are line bundles on $F^{(k)}$ and $Z$, respectively.
\end{clm}
As reviewed in Subsection \ref{sbsctn:EvenFibDeg},
${\mathcal L}_{u'}$ is a Poincare bundle of $\Pic^s(T)$, and $Z$ is
a principal homogeneous space under
a group $\Pic^0(T)$, that is isomorphic to $\Pic^0(C)$.
Thereby ${\mathcal L}_{s_1}|_{F^{(k)}} \simeq {\mathcal L}_{s_2}|_{F^{(k)}}$ for
$s_1,\ s_2\in Z$, and then we get Claim \ref{clm:1pt}.
Because of this claim, it holds that
for the projection $\pi_2: F\times Z \rightarrow Z$
\begin{multline*}
 \det R\pi_{2*}(\sO_F(-iF)\otimes {\mathcal V}|_Z)=\\
 \otimes_{k=1}^2 \det R\pi_{2*}\bigl(\pi_{1}^* \bigl(\sO_X(-iF)\otimes 
\sO_T(-D)|_{F^{(k)}}\otimes L_k \bigr) \otimes \pi_2^*(\sO_Z(G_k))\bigr)=
 \otimes_{k=1}^2 R(k)\sO_Z(G_k),
\end{multline*}
where $R(k)=\chi(F^{(k)}, \sO_X(-iF)\otimes \sO_T(-D)|_{F^{(k)}}\otimes L_k)$.
However, the degree of $\sO_X(-iF)\otimes \sO_T(-D)|_{F^{(k)}}\otimes L_k$
is zero by its construction, so $R(k)=0$.
From \eqref{eq:KM|Z}, we get $\lambda(-2k_X)|_Z=0$.
Fibers of $\tilde{\psi}$ are connected Abelian varieties with the same dimension
as mentioned in Sect. \ref{sbsctn:EvenFibDeg}.
From the base change theorem \cite[Thm. III.12.9]{Ha:text}
and the fact $\lambda(-2k_X)|_Z=0$ for fibers $Z$ of $\tilde{\psi}$,
one can check that $\tilde{\psi}_*(\sO(\lambda(-2k)))$ is a line bundle on $V$,
$\tilde{\psi}_*(\sO(\lambda(-2k)))\otimes_V k(t) \simeq H^0(\tilde{\psi}^{-1}(t),\lambda(-2k)|_{k(t)})$
for $t\in V$, and a natural map
$\tilde{\psi}^{*}\tilde{\psi}_* (\lambda(-2k_X)) \rightarrow \lambda(-2k_X)|_{M_0}$ 
is isomorphic.
As a result, $\lambda(-2k_X)|_{M'_0}\simeq \tilde{\psi}^* L_V$ 
with a line bundle $L_V$ on $V$, and
$H^0(M'_0, n\lambda(-2k_X)|_{M'_0})=H^0(M'_0, \tilde{\psi}^*(nL_V))=
H^0(V,\tilde{\psi}_* \sO_{M'_0}\otimes nL_V)=H^0(V,nL_V)$.
Thereby a finite-dimensional subspace $W(n) \subset H^0(V,nL_V)$ defines a rational
map $|W(n)|: V \dashrightarrow \PP^N$ such that
$|W(n)|\circ \tilde{\psi}:M'_0 \rightarrow V \dashrightarrow \PP^N$ equals to
the restriction of $|n\lambda(-2k_X)|:\bar{M}'(c_2) \rightarrow \PP^N$ to $M'_0$.
When $n$ is sufficiently large and divisible,
one can check that $|W(n)|$ gives such a morphism $j$ as \eqref{eq:Phi-psi} 
is commutative.\par
%
Suppose that $s_1,\ s_2\in M'_0$ satisfies that $\psi(s_1)\neq \psi(s_2)$.
By the definition of $\psi$ (\cite[p.307, (4.5)]{Fri:vb-reg-ellipt}),
the restriction of $E_{s_1},\ E_{s_2}$ to general fiber of $X\rightarrow \PP^1$
are semistable but not S-equivalent.
Then \cite[p.224]{HL:text} deduces that
some element of $\Gamma(\bar{M}'(c_2),N\lambda(-2k_X))$ separates $s_1$ and $s_2$,
and thus $\Phi(s_1)\neq \Phi(s_2)$. 
Accordingly, it holds for any $s\in M'_0$ that
\begin{equation}\label{eq:3fibers}
 \psi^{-1}(\psi(s)) \supset \Phi^{-1}(\Phi(s)) \supset  \tilde{\psi}^{-1}(\tilde{\psi}(s)). 
\end{equation}
$\tilde{\psi}^{-1}(\tilde{\psi}(s))$ is a connected component
of $\psi^{-1}(\psi(s))$ by the definition of $\tilde{\psi}$, so $j$ is quasi-finite.
Since $M'_0$ is non-singular, $\tilde{\psi}$ factors as $n(\tilde{\psi}):M'_0\rightarrow n(V)$
and natural morphism $\iota:n(V)\rightarrow V$.
By the construction of $\tilde{\psi}$, $\sO_V= \tilde{\psi}_*\sO_{M'_0}= 
\iota_*( n(\tilde{\psi})_*\sO_{M'_0}) \supset \iota_* \sO_{n(V)} \supset \sO_V$.
This deduces that $\iota_* \sO_{n(V)}=\sO_V$, so $\iota$ is isomorphic
and $V$ is normal.
When $\bar{M}'(c_2)$ is integral, general fiber of $\Phi$ is
connected by \cite[Thm. 10.3]{Iitaka:AG}, and then
$\Phi^{-1}(\Phi(s)) \supset  \tilde{\psi}^{-1}(\tilde{\psi}(s)) $ for general $s\in M'_0$
by \eqref{eq:3fibers}, so $j$ is generically injective.
$N'(c_2)$ is integral so
its normalization $n(N'(c_2))$ is defined, and $j$ and $\Phi$ induce their normalization
$n(j): V\rightarrow n(N'(c_2))$ and $n(\Phi):\bar{M}'(c_2)\rightarrow n(N'(c_2))$
such that $n(j)\circ \tilde{\psi}: M'_0 \rightarrow n(N'(c_2))$ equals $n(\Phi)|_{M'_0}$.
Every fiber of $n(\Phi)$ is connected from \cite[Thm. 10.3]{Iitaka:AG}
and \cite[Cor. 4.3.12]{EGA:III-1}, and thereby $n(j)$ is injective.
By \cite[Thm. III.10.7]{Ha:text} $n(j)$ is generically smooth, so 
$n(j)$ is birational by \cite[Thm. 17.9.1]{EGA:IV-4} and then
$n(j)$ is an open immersion by \cite[Cor. 4.4.9.]{EGA:III-1}.
%
\end{proof}
\begin{cor}\label{cor:KodDim-Moduli}
In Setting \ref{stng:XHS}, the following holds:\\
(i) Let $\bar{M}'(c_2)\subset \bar{M}(c_2)$ be arbitrary connected component
of the Gieseker--Maruyama compactification of $M(c_2)$.
When $c_2>\max(2(1+p_g), 2p_g(X)+(2/3)\Lambda(X))$, 
$M(c_2)$ is of expected dimension, generically smooth, and
$ \kappa\left( \lambda(-2k_X),\ \bar{M}'(c_2) \right)=(\dim M(c_2)+1-p_g(X))/2.$\\
(ii) Suppose $c_2$ is sufficiently large w.r.t. $X$ and ${\mathcal S}$.
Then conclusions in Lemma \ref{lem:GoodLcs} hold and $\bar{M}(c_2)$ is irreducible.
The abundance (Fact \ref{fact:IitakaPgrm} (3)) holds on $\bar{M}(c_2)$.
Let $\tilde{\psi}$ be the morphism at Prop. \ref{prop:plurican},
which is obtained from the Stein factorization of
Friedman's morphism $\psi$ in Sect. \ref{sbsctn:EvenFibDeg}, and
$\Phi=\Phi_{|mK|}$  for a large and divisible number $m\in\NN$.
Then there are a normal variety $V$ and a morphism $j$ such that
\begin{equation}\label{eq:psid-Phi}
\xymatrix{
M_0 \ar@{^{(}-}[rr] \ar[dr]^{\tilde{\psi}} \ar[d]_{\psi} & & \bar{M}(c_2) 
\ar[d]^{\Phi} \\
U & V \ar[l]_{g} \ar[r]^{j} & \bar{M}(c_2)_{can} }
\end{equation}
is commutative and that its normalization
$n(j):V \rightarrow n(\bar{M}(c_2)_{can})$ is an open immersion.
For a general member $E$ of $M(c_2)$, $\Phi^{-1}\Phi(E)$ equals the connected
component of
\[ \left\{ E'\in M(c_2) \bigm| E'|_f\simeq E|_f\ \text{for general fiber 
$f$ of $\pi$}\right\}\]
containing $E$. It is the Jacobian of a hyperelliptic curve.
The $K$-dimension $\kappa\left(K,\bar{M}(c_2)\right)$ equals 
$(\dim M(c_2)+1-p_g(X))/2$.\\
(iii) In addition to the assumptions in (ii), suppose that
all singularities of $\bar{M}(c_2)$ are canonical.
Then the Kodaira dimension
$\kappa\left( \bar{M}(c_2) \right)$ is $(\dim M(c_2)+1-p_g(X))/2$.
\end{cor}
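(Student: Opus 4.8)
The plan is to deduce (iii) directly from (ii) together with the defining property of canonical singularities recorded in Definition \ref{def:D-dim}(2): for a complete normal variety with only canonical singularities, the $K$-dimension and the Kodaira dimension coincide, so no desingularization is needed in order to compute $\kappa$.

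First I would check that the hypotheses of that fact are in force. Under the assumptions of (ii) the conclusions of Lemma \ref{lem:GoodLcs} hold, so $\bar{M}(c_2)$ is locally normal and $K_{\bar{M}}$ is $\QQ$-Cartier; by (ii) it is moreover irreducible, and as a Gieseker--Maruyama moduli scheme it is projective, hence complete; and by the hypothesis of (iii) every singular point of $\bar{M}(c_2)$ is canonical. Hence, if $f:W\rightarrow \bar{M}(c_2)$ is a resolution of singularities and $K_W=f^*(K_{\bar{M}})+\sum a_i E_i$, then all $a_i\geq 0$, so for every $m$ with $mK_{\bar{M}}$ Cartier the divisor $\sum m a_i E_i$ is effective and $f$-exceptional, whence $f_*\sO_W(mK_W)=\sO_{\bar{M}}(mK_{\bar{M}})$; therefore $H^0(W,mK_W)\simeq H^0(\bar{M}(c_2),mK_{\bar{M}})$ and the two $D$-dimensions agree, i.e. $\kappa(\bar{M}(c_2))=\kappa(K_{\bar{M}},\bar{M}(c_2))$.

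It then remains to quote the computation already carried out. By Lemma \ref{lem:GoodLcs} one has $K_{\bar{M}}=\lambda(-2k_X)$ in $\Pic(\bar{M}(c_2))_{\QQ}$, and by Corollary \ref{cor:KodDim-Moduli}(i)--(ii) the number $\kappa(K,\bar{M}(c_2))=\kappa(\lambda(-2k_X),\bar{M}(c_2))$ equals $(\dim M(c_2)+1-p_g(X))/2$. Combining this with the previous paragraph gives $\kappa(\bar{M}(c_2))=(\dim M(c_2)+1-p_g(X))/2$, as asserted. Note that the abundance statement of Fact \ref{fact:IitakaPgrm}(3), established for $\bar{M}(c_2)$ in (ii), is not required for this equality of numbers; it is what additionally turns the Iitaka rational map into the morphism $\Phi$ appearing in \eqref{eq:psid-Phi}.

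Since all the substance lies in parts (i) and (ii) and in the canonicity hypothesis (which is supplied by the ring-theoretic results of the later sections), the only point needing attention is that the general principle ``$K$-dimension $=$ Kodaira dimension for complete canonical varieties'' is invoked with genuinely valid hypotheses — in particular the $\QQ$-Cartier property of $K_{\bar{M}}$, the normality, and the completeness of $\bar{M}(c_2)$, all guaranteed by Lemma \ref{lem:GoodLcs} and the projectivity of the Gieseker compactification. Beyond this bookkeeping there is essentially no obstacle.
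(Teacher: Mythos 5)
Your treatment of part (iii) is correct and is in substance the paper's own argument: given (ii), the canonicity hypothesis, completeness, normality and the $\QQ$-Cartier property of $K_{\bar{M}}$ supplied by Lemma \ref{lem:GoodLcs}, the pushforward argument $f_*\sO_W(mK_W)=\sO_{\bar{M}}(mK_{\bar{M}})$ for a resolution $f:W\rightarrow \bar{M}(c_2)$ and sufficiently divisible $m$ gives $\kappa(\bar{M}(c_2))=\kappa(K,\bar{M}(c_2))$, which is exactly the content of Definition \ref{def:D-dim}(2) that the paper invokes for this step. Your side remark that abundance is not needed for the numerical equality in (iii) is also accurate.

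The problem is that the statement under proof is the whole of Corollary \ref{cor:KodDim-Moduli}, and your write-up establishes only the implication (ii) $\Rightarrow$ (iii). For the actual values $\kappa(K,\bar{M}(c_2))=\kappa(\lambda(-2k_X),\bar{M}'(c_2))=(\dim M(c_2)+1-p_g)/2$ you cite ``Corollary \ref{cor:KodDim-Moduli}(i)--(ii)'' itself, i.e.\ the claims being proved, so parts (i) and (ii) are left without argument; this is where the substance of the corollary lies. Part (i) requires identifying $\kappa(\lambda(-2k_X),\bar{M}'(c_2))$ with $\dim U=(\dim M(c_2)+1-p_g)/2$, which rests on Proposition \ref{prop:plurican} (the comparison of $\Phi_{\lambda}$ with the Stein factorization $\tilde{\psi}$ of Friedman's morphism $\psi$, and the quasi-finiteness of $j$) together with the openness and density of $M_0\subset M_{gd}$ reviewed in Section \ref{sbsctn:EvenFibDeg}. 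Part (ii) needs, in addition, irreducibility of $\bar{M}(c_2)$ (Proposition \ref{prop:irred}), Lemma \ref{lem:GoodLcs} to identify $K_{\bar{M}}$ with $\lambda(-2k_X)$, the base-point-freeness of $n_0\lambda(-2k_X)$ (whence abundance and $\Phi=\Phi_{|mK|}$), Fact \ref{fact:IitakaPgrm}(4), the open-immersion statement for $n(j)$ from Proposition \ref{prop:plurican}, and Friedman's description of the fibers of $\psi$ as Jacobians of hyperelliptic curves. None of this appears in your proposal, so as a proof of the corollary it is incomplete, even though the (ii) $\Rightarrow$ (iii) step you do give is correct and coincides with the paper's.
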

\begin{proof}
(i) As reviewed at Section \ref{sbsctn:EvenFibDeg},
$M_0$ is open and dense in $M(c_2)$, and is contained in
the good locus $M_{gd}$ of $M(c_2)$, so
$M(c_2)$ is of expected dimension.
From Proposition \ref{prop:plurican} and Section \ref{sbsctn:EvenFibDeg}, 
$\kappa\left( \lambda(-2k_X),\ \bar{M}'(c_2) \right)=\dim\Phi_{\lambda}$ equals
$\dim\psi=\dim(U)=2c_2+2p_g-1= (\operatorname{exp dim}(M(c_2))+1-p_g)/2$.
Item (ii) results from (i), Lemma \ref{lem:GoodLcs}, Prop. \ref{prop:plurican}
and its foremost part, Fact \ref{fact:IitakaPgrm} (4), Prop. \ref{prop:irred}
and \cite[p.328, line 22]{Fri:vb-reg-ellipt}.
Item (iii) results from (ii) and Defn. \ref{def:D-dim} (2).
\end{proof}

\section{A sufficient condition for singularities to be canonical}\label{sctn:canonical}
\begin{thm}\label{thm:2k+1-CanSing}
Let $(R,p)$ be a local ring that is smooth over $\CC$ and
$I$ be its ideal generated by 
$f_1,\dots, f_k \in {\frak m}_p^2$.
Let $B_g$ 
designate the bilinear form on $({\frak m}_p/{\frak m}^2_p)^{\vee}$
induced by an element $g$ of ${\frak m}_p^2$.
Suppose that any nonzero $\CC$-linear combination $g$ of $f_1,\dots,f_k$
satisfies $\rk B_g\geq 2k+1$.
Then $R/I$ is c.i., normal, and $p$ is its canonical singularity.
\end{thm}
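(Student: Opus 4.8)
The plan is to reduce the statement to a codimension count and a standard criterion for canonical singularities, namely that $R/I$ is a complete intersection with only rational singularities (Fact \ref{fact:Can=Rat}(1)), after first checking that $K_{R/I}$ is Cartier. The crucial numerical input is the hypothesis $\rk B_g\geq 2k+1$ for every nonzero $g\in\langle f_1,\dots,f_k\rangle_{\CC}$: it controls the singular locus of $\Spec(R/I)$ simultaneously through all the $f_i$ and through their linear combinations. First I would set $n=\dim R$ and note that $\Spec(R/I)$ has dimension $n-k$ provided $f_1,\dots,f_k$ form a regular sequence; I would establish the complete-intersection (hence Cohen--Macaulay) property first, since normality will then follow from Serre's criterion once the singular locus is shown to have codimension $\geq 2$ in $\Spec(R/I)$.

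The heart of the argument is the estimate on the singular locus. The singular locus of the complete intersection $V=\Spec(R/I)$ is cut out inside $V$ by the vanishing of the $k\times k$ minors of the Jacobian matrix $(\partial f_i/\partial t_j)$. I would work in the associated graded / tangent-cone picture: after passing to the completion $\hat R\simeq\CC[[t_1,\dots,t_n]]$, the leading (degree-two) form of each $f_i$ is the quadratic form $Q_i$ whose polar bilinear form is $B_{f_i}$, and more generally the leading form of any combination $\sum c_i f_i$ (if it is nonzero) is the corresponding quadratic form $\sum c_i Q_i$, whose bilinear form has rank $\geq 2k+1$ by hypothesis. The key geometric claim is that the projectivized tangent cone $\PP(C)\subset\PP^{n-1}$, defined by the $Q_i$, is smooth away from a locus of codimension $\geq k+1$ in $\PP(C)$ — equivalently, the affine cone $C$ is smooth outside a set of codimension $\geq k+1$ in $C$, hence of codimension $\geq 2$ once we note $\dim C=n-k\geq k+1$. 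To see this, one observes that a point $[v]\in\PP(C)$ lies in the singular locus precisely when the differentials $dQ_1(v),\dots,dQ_k(v)=B_{f_1}(v,-),\dots,B_{f_k}(v,-)$ fail to be linearly independent, i.e. some nonzero combination $B_g(v,-)=0$, meaning $v$ lies in the radical of $B_g$ for some $g$; since every such $B_g$ has rank $\geq 2k+1$, its radical has dimension $\leq n-(2k+1)$, and running $g$ over $\PP^{k-1}$ gives a family of radicals of total dimension $\leq (n-2k-1)+(k-1)=n-k-2$, so codimension $\geq 2$ in $C$. Transferring this bound from the tangent cone up to $V$ itself (the singular locus of $V$ specializes to that of its tangent cone, and upper semicontinuity of fibre dimension / the fact that $\operatorname{codim}\Sing$ can only increase under such degeneration) shows $\operatorname{codim}(\Sing V, V)\geq 2$. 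This simultaneously gives, via Serre's criterion $R_1+S_2$, that $R/I$ is normal, and it also gives that $V$ is regular in codimension one, so that $K_V$ is a well-defined Cartier divisor (the complete intersection is Gorenstein, so $K_V$ is Cartier outright).

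Given normality and $K_V$ Cartier, by Fact \ref{fact:Can=Rat}(1) it remains to show $(R/I,p)$ is a rational singularity. Here I would deform: the tangent cone $C=\Spec(\gr R/I)$ is the special fibre of the flat degeneration of $V$ to its tangent cone (the Rees deformation over $\Aa^1$), and by the rank-$(2k+1)$ hypothesis $C$ is a complete intersection of $k$ quadrics in $\CC^n$ with singular locus of codimension $\geq 2$ — in fact such a complete intersection of quadrics of sufficiently high rank is a rational singularity (one can see it has rational singularities by, e.g., its small resolution/Bott--Danilov vanishing type arguments for complete intersections of quadrics, or by an inductive hyperplane-section count reducing to a known ADE or terminal case). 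Then Fact \ref{fact:Can=Rat}(2) and Remark \ref{rem:RatAnalytic} — Elkik's openness of the rational-singularity locus in a flat family — applied to the Rees deformation $\mathcal S\to\Aa^1$ with special fibre $C$ a rational singularity, show that the nearby fibre, which is $V$ itself (étale-locally), has rational singularities as well. Combining: $R/I$ is a complete intersection, normal, with $K_{R/I}$ Cartier and rational singularities, hence $p$ is a canonical singularity of $\Spec(R/I)$.

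I expect the main obstacle to be making the tangent-cone degeneration argument fully rigorous — specifically, showing that $C$ itself is a rational (equivalently canonical) singularity from the rank hypothesis alone. The codimension bound on $\Sing C$ is the easy linear-algebra part; upgrading "complete intersection of high-rank quadrics, singular in codimension $\geq 2$" to "rational singularity" is the delicate step, and is presumably where the advice of Professor Tomari enters. One clean route is to induct on $k$: a generic member $Q_1$ of the linear system of quadrics has rank $\geq 2k+1\geq 3$, so $\{Q_1=0\}$ is a rational (indeed canonical, even terminal if the rank is large enough) hypersurface singularity, and cutting down by the remaining $Q_i$ restricted to it preserves the rank hypothesis with $k$ decreased by one; Bertini-type genericity plus the fact that a general hyperplane section of a rational singularity of dimension $\geq 2$ is again rational would close the induction, with the base case $k=0$ trivial and $k=1$ the classical statement that $t_1^2+\cdots+t_R^2+O(3)$ with $R\geq 3$ is a rational (compound-du-Val-type) hypersurface singularity.
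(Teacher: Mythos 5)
Your reduction steps coincide with the paper's: the linear-algebra codimension count (each radical of $B_g$ has dimension $\leq N-(2k+1)$, so running over $\PP^{k-1}$ the singular locus has dimension $\leq N-k-2$) giving c.i.\ plus normality via Serre, and the degeneration to the degree-two homogeneous case combined with Elkik's theorem (Fact \ref{fact:Can=Rat}(2), Remark \ref{rem:RatAnalytic}) are exactly Claim \ref{clm:S-A1-flat} and the reduction in the paper. But the heart of the theorem --- that the affine cone defined by $k$ quadrics, every nonzero combination of which has rank $\geq 2k+1$, is a canonical (equivalently rational) singularity --- is precisely the step you leave unproven, and the route you sketch for it does not work. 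Choosing a generic $Q_1$ of rank $\geq 3$ and then ``cutting down by the remaining $Q_i$'' cannot be closed by ``a general hyperplane section of a rational singularity is rational'': the $Q_i$ are quadrics, not hyperplanes, they all vanish to order two at the vertex, so they are in no sense general members of a base-point-free system on $\{Q_1=0\}$; moreover $\{Q_1=0\}$ is itself singular at the vertex, so the inductive statement (which is about an ideal in a ring smooth over $\CC$) does not even apply to the restricted quadrics. The assertion that rationality survives cutting by one more quadric under the rank hypothesis is essentially a restatement of the theorem, not a proof of it.

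What the paper does at this point is different and concrete: it blows up $U=\Spec\CC[x_1,\dots,x_N]$ along the common singular locus $S$ of the quadrics (the linear subspace cut out by all $\partial f_i/\partial x_j$), takes the strict transform $M_1$ of $M=\Spec(R/I)$, and proves two things. First (Claim \ref{clm:M1-cansing}), at every point $q$ of the exceptional locus, in a chart $D_+(x_N)$ some of the strict transforms $\tilde f_1,\dots,\tilde f_m$ become part of a regular system of parameters (note $\tilde f_1=1+y_l^2+\cdots+y_{N-1}^2$), while the remaining $\tilde f_{m+1},\dots,\tilde f_k$ lie in ${\frak m}_q^2$ and, by Remark \ref{rem:rkBV-rkB}, still induce bilinear forms of rank $\geq 2(k-m)+1$ on the quotient tangent space; the induction on $k$ is applied there, not to a quadric cut inside a singular hypersurface. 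Second, the discrepancy is computed directly via adjunction: $K_{M_1}-\sigma^*K_M=(\dim U-1-\dim S-2k)E$, which is $\geq 0$ because $\dim S\leq\dim U-(2k+1)$ by the rank hypothesis. These two facts together give canonicity of the vertex. So you have correctly isolated where the difficulty lies, but you would need to replace your Bertini-style induction by an argument of this kind (an explicit partial resolution plus a discrepancy estimate, or the $a$-invariant argument mentioned in Remark \ref{rem:a-inv}) for the proof to be complete.
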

\begin{rem}\label{rem:a-inv}
After this section was written, Prof. Masataka Tomari kindly teached to the author
that, by using $a$-invariant mentioned in \cite{GotoW-78}, 
one can prove Theorem \ref{thm:2k+1-CanSing} in another way.
The proof presented here is not so long, and so elementary that one can read it without 
advanced knowledge on ring theory.
Thus we here adopt this proof without change.
%
\end{rem}

%
\begin{proof}
Let $\Rc$ be the completion of $R$ at $p$, and let $\Rc[z]$ denote $\Rc\otimes \CC[z]$.
Since $R$ is smooth over $\CC$, $\Rc$ is isomorphic to 
the formal power series ring $\CC[[x_1,\dots, x_N]]$.
We can regard $f_i$ as a power series $f_i(x_1,\dots, x_N)$, and then
put $f_i(zx_1,\dots, zx_N)/z^2$ as $\bfi$,
which belongs to $\Rc[z]$ since $f_i\in {\frak m}^2_p$.
Then one can define a ideal ${\mathcal I}=\langle \bar{f}_1,\dots, \bar{f}_k \rangle$ 
of $\Rc[z]$ and a ring ${\mathcal S}=\Rc[z]/{\mathcal I}$ over $\CC[z]$.
For $z_0\in \Aa^1$, 
we denote the fiber of
$\Rc[z],\ \bfi(z)\in\Rc[z]$ (resp. ${\mathcal S}$)
over $\CC[z]\rightarrow k(z_0)$ as
$\Rc[z_0],\ \bfi(z_0)$ (resp. ${\mathcal S}_{k(z_0)}$).
\begin{rem}\label{rem:GeneralFib}
If $z_0\in \Aa^1$ is not zero, then the correspondence
$x_i\mapsto z_0x_i$ gives the isomorphism
of ${\mathcal S}_{k(z_0)}$ and ${\mathcal S}_{k(1)}$, that is simply
the completion of $R/I$ at $p$.  
\end{rem}
\begin{clm}\label{clm:S-A1-flat}
${\mathcal S}$ is flat over $\CC[z]$, and
its fibers are of dimension $N-k$ and normal.
\end{clm}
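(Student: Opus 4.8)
The plan is to analyze the family $\mathcal{S} = \Rc[z]/\mathcal{I} \to \Aa^1$ fiber by fiber and exploit the fact that the $\bar{f}_i(z_0)$ depend algebraically on $z_0$, with a well-controlled special fiber at $z_0 = 0$. First I would observe that $\bar{f}_i(0) = g_i(x_1,\dots,x_N)$, the degree-two part of $f_i$, because $f_i(zx)/z^2$ specializes at $z=0$ to the leading form of $f_i$. Thus $\mathcal{S}_{k(0)} = \Rc/\langle g_1,\dots,g_k\rangle$ is the quotient of the formal power series ring by the $k$ quadratic forms $g_i$. The hypothesis on ranks says precisely that every nonzero $\CC$-linear combination $g$ of $g_1,\dots,g_k$ has $\rk B_g \geq 2k+1$; in particular each $g_i$ is nonzero, so $\langle g_1,\dots,g_k\rangle$ has height at most $k$, and I claim it is exactly $k$, i.e. the $g_i$ form a regular sequence. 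This is where the rank hypothesis does its work: if the height were less than $k$, some component of $V(g_1,\dots,g_{k-1})$ would lie inside $V(g_k)$ — more efficiently, a standard argument (e.g. via the Jacobian criterion, or by intersecting with a generic linear subspace on which the quadratic forms restrict to forms whose pairwise-independent combinations still have large rank) shows that $V(g_1,\dots,g_k)$ has the expected codimension $k$ and moreover is normal: its singular locus is cut out by the loci where the differentials $dg_1,\dots,dg_k$ fail to be independent, and the rank condition $\rk B_g \geq 2k+1 \geq k+1$ for all combinations $g$ forces this singular locus to have codimension $\geq 2$ in $V(g_1,\dots,g_k)$ (Serre's criterion $R_1 + S_2$, with $S_2$ automatic from the complete intersection property). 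So $\mathcal{S}_{k(0)}$ is a normal complete intersection of dimension $N-k$.

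Next I would promote this from the special fiber to the whole family. Since $\mathcal{S}_{k(0)} = \Rc/\langle g_1,\dots,g_k\rangle$ is a complete intersection of dimension $N-k$ and $\mathcal{S}$ is generated over $\Rc[z]$ by $k$ elements, the fiber dimensions are all $\geq N-k$; combined with upper semicontinuity and the fact that the generic fiber $\mathcal{S}_{k(1)}$ (the completion of $R/I$ at $p$) has dimension $\leq N-k$, I get that \emph{every} fiber has dimension exactly $N-k$. A complete intersection morphism whose fibers all have the expected dimension is flat — concretely, $\bar{f}_1,\dots,\bar{f}_k$ form a regular sequence in $\Rc[z]$ because they do so in the special fiber and flatness propagates; this gives the flatness assertion over $\CC[z]$. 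For the normality of \emph{all} fibers: the non-zero fibers $\mathcal{S}_{k(z_0)}$ are all isomorphic to $\mathcal{S}_{k(1)}$ by Remark \ref{rem:GeneralFib}, and $\mathcal{S}_{k(1)}$ is normal because it is obtained from the normal special fiber $\mathcal{S}_{k(0)}$ by a deformation — here I can either repeat the $R_1+S_2$ argument directly for $\mathcal{S}_{k(1)}$ (it is a complete intersection, hence $S_2$; and its singular locus, being the degeneracy locus of the Jacobian of $f_1,\dots,f_k$, has the right codimension because this holds for the leading forms and the condition is open), or invoke the fact that the locus in $\Aa^1$ over which the fiber is normal is open (e.g. \cite[Thm. 12.2.4]{EGA:IV-4}-type statements) and contains $0$, hence is all of $\Aa^1$.

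The main obstacle I anticipate is the normality of the special fiber $\mathcal{S}_{k(0)} = \Rc/\langle g_1,\dots,g_k\rangle$, i.e. turning the rank hypothesis $\rk B_g \geq 2k+1$ into a codimension-$\geq 2$ bound on its singular locus. The singular locus of $V(g_1,\dots,g_k) \subset \Aa^N$ is where the matrix $(\partial g_i/\partial x_j)$ has rank $< k$; since each $g_i$ is quadratic, $\partial g_i/\partial x_j$ is linear, so this is a determinantal-type condition. The cleanest route is: fix a point $q$ in the singular locus; after a linear change of coordinates the tangent space relations show that some combination $g = \sum c_i g_i$ has $q$ as a singular point of $V(g)$, i.e. $dg|_q = 0$; but $dg|_q = 0$ together with $\rk B_g \geq 2k+1$ confines $q$ to a linear subspace of codimension $\geq 2k+1$ in $\Aa^N$, hence of codimension $\geq 2k+1 - k = k+1$ in $V(g_1,\dots,g_k)$, which is $\geq 2$ whenever $k \geq 1$. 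Making the "some combination" step uniform over the singular locus — handled by stratifying according to which $c_i$ are nonzero, or by a dimension count over the incidence variety $\{(q, [c]) : q \in V(g_1,\dots,g_k),\ d(\textstyle\sum c_i g_i)|_q = 0\}$ — is the technical heart of the argument, but it is elementary linear algebra plus a fiber-dimension count, exactly as Remark \ref{rem:a-inv} advertises.
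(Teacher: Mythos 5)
Your proposal follows essentially the same route as the paper: identify the special fiber with the cone cut out by the quadratic leading forms $\bar f_i(0)$, bound its singular locus via the incidence variety $\{(q,[c]) \mid d(\sum_i c_i\bar f_i(0))|_q=0\}$ and the hypothesis $\rk B_g\geq 2k+1$ (giving codimension $\geq 2$, hence $R_1$, with $S_2$ from the complete-intersection structure and normality by Serre's criterion), and then spread dimension and normality to all fibers using the rescaling isomorphism of the nonzero fibers together with semicontinuity, with flatness coming from the regular-sequence/complete-intersection structure over $\CC[z]$ --- exactly the paper's argument. The one point to tighten is that $\dim \mathcal{S}_{k(1)}\leq N-k$ is not an a priori ``fact'' (it is part of what the theorem asserts about $R/I$) but must be deduced, either from the special fiber by semicontinuity as the paper does, or by noting that $\operatorname{gr}_{\mathfrak m}(R/I)$ is a quotient of $\CC[x_1,\dots,x_N]/\langle \bar f_1(0),\dots,\bar f_k(0)\rangle$; with that one-line adjustment your argument matches the paper's proof.
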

\begin{proof}
$\bfi(0)\in \Rc$ is degree-two homogeneous polynomial with variables $x_i$,
so we can define a closed subscheme $S$ of $\Aa^N$ as
$S=\Spec(\CC[x_1,\dots,x_N]/\langle \bar{f}_1(0),\dots, \bar{f}_k(0)\rangle)$.
The singular locus of $S$ is contained in
\[R_{<k}= \left\{  a\in \Aa^N \bigm| 
 \rk(\partial \bar{f}_i(0)( a)/\partial x_j)_{i,j} 
\leq k-1 \right\}, \]
where $(\partial \bar{f}_i(0)( a)/\partial x_j)_{i,j}$ is the Jacobian matrix
of $S$ at $a$ (\cite[Thm. 4.1.7.]{Ishii:text}).
If $a\in R_{<k}$, then there is a $\CC$-linear combination $g\neq 0$ of $\bar{f}_i(0)$
such that
\begin{equation}\label{eq:Sing-f}
 {}^t(\partial g(a)/\partial x_1,\dots, \partial g(a)/\partial x_N)={\bf 0}.
\end{equation}
Let $V$ denote the $\CC$-vector space spanned by $\bar{f}_i(0)\ (i=1,\dots,k)$,
and define the set
\[ \PP(V^{\vee})\times \Aa^N \supset W:= \{ (g,a) \bigm| \partial g(a)/\partial x_i=0
\quad (1\leq \forall i\leq N).\}\]
By \eqref{eq:Sing-f}, it holds that $\dim W\geq \dim R_{<k}\geq \dim\Sing(S)$.
For $g\in V$, the inverse image of $[g]\in\PP(V^{\vee})$ by
$W \subset \PP(V^{\vee})\times \Aa^N \overset{\pi_1}{\longrightarrow} \PP(V^{\vee})$
is of dimension $N-\rk(B_g)$, where $B_g$ is the bilinear form induced by
degree-two homogeneous polynomial $g$,
since one can present $g$ as
$g=x_1^2+\dots x_{\rk B(g)}^2$ by choosing $x_i$ suitably.
Moreover,
$\rk(B_g)\geq 2k+1$ by assumptions in Theorem \ref{thm:2k+1-CanSing}.
Thus $\dim W \leq k-1+N-(2k+1)=N-k-2$, and then
\[ \dim S-\dim\Sing(S) \geq \dim S- \dim R_{<k} \geq N-k-(N-k-2)=2.\]
Hence
$S$ is regular in codimension one, and of dimension $N-k$.
Because ${\mathcal S}_{k(0)}$ is the completion of $S$ at $p$,
${\mathcal S}_{k(0)}$ is regular in codimension one, of dimension $N-k$,
and so is normal by \cite[Thm. 23.8]{Mat:text}.
From the upper-semicontinuity of dimension of fibers, openness of regularity
and Remark \ref{rem:GeneralFib},
${\mathcal S}_{k(z_0)}$ is of dimension $N-k$,
regular in codimension one and normal for all $z_0\in \Aa^1$.
\end{proof}
Now let us show Theorem \ref{thm:2k+1-CanSing} by induction on $k$.
When $k=0$, $R/I$ is non-singular at $p$ and the statement is obvious.
Next, suppose that this theorem is true when $k\leq k_0-1$.
From Fact \ref{fact:Can=Rat} and Remark \ref{rem:RatAnalytic}, 
we only have to show that $(R/I,p)$ is a canonical singularity 
when $k=k_0$, $R=\CC[x_1,\dots,x_N],\ p=(0,\dots,0)=: 0_N\in\Spec(R)$ 
and $f_i$ are degree-two homogeneous polynomial
by the same reason as in the proof of Claim \ref{clm:S-A1-flat}.
The ideal of $R=\CC[x_1,\dots,x_N]$ generated by
$\partial f_i/\partial x_j\ (1\leq i\leq k_0,\ 1\leq j\leq N)$ defines
a non-singular closed subscheme $S$ of $\Spec(R/I)$.
Let $\psi: U_1 \rightarrow U=\Spec(R)$ be the blowing-up along $S$, and
let $\sigma:M_1 \rightarrow M=\Spec(R/I)$ be the strict transform of $\Spec(R/I)$.
\begin{clm}\label{clm:M1-cansing}
Every point $q$ of $\sigma^{-1}(0_N)$
is at worst a canonical singularity of $M_1$.
\end{clm}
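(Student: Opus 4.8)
The plan is to compute the strict transform $M_1$ explicitly near the exceptional fibre $\sigma^{-1}(0_N)$ and then reduce Claim~\ref{clm:M1-cansing}, via the induction hypothesis on $k$, to Theorem~\ref{thm:2k+1-CanSing} applied with fewer than $k_0$ equations. First I would record the local structure of $M$ at $0_N$. Since the $f_i$ are degree-two homogeneous, the forms $\partial f_i/\partial x_j$ are linear, so $S$ is a linear subspace; fix linear coordinates with $S=\{x_1=\dots=x_r=0\}$, $r=\operatorname{codim}(S)$. Then each $\partial f_i/\partial x_j$ lies in the span of $x_1,\dots,x_r$, which forces the symmetric matrix of the quadratic form $f_i$ to be supported in its upper-left $r\times r$ block, so $f_i\in\CC[x_1,\dots,x_r]$. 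Writing $Q=V(f_1,\dots,f_k)\subseteq\PP^{r-1}$ for the complete intersection of these $k$ quadrics and $C_0\subseteq\Aa^r$ for its affine cone, I get $M=C_0\times\Aa^{N-r}$ with $S=\{0\}\times\Aa^{N-r}$; note also $r\ge 2k+1$, for otherwise a nonzero $g\in\langle f_1,\dots,f_k\rangle$ would satisfy $\rk B_g\le r\le 2k$. Consequently $M_1=(\operatorname{Bl}_0 C_0)\times\Aa^{N-r}=\operatorname{Tot}(\sO_Q(-1))\times\Aa^{N-r}$, the blow-up of the vertex of the cone $C_0$ being the total space of $\sO_Q(-1)$, and $\sigma^{-1}(0_N)$ is its zero-section $Q\times\{0\}$.

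Next I would reduce the claim to a statement about $Q$. A point $q\in\sigma^{-1}(0_N)$ corresponds to a point $[x^0]\in Q$, and as $\sO_Q(-1)$ is a line bundle, $(M_1,q)$ is Zariski-locally isomorphic to $(Q,[x^0])\times(\text{a smooth germ of dimension }1+N-r)$. Both $Q$ and $M_1$ are local complete intersections, hence Gorenstein, so ``canonical'' and ``rational'' agree for their singularities by Fact~\ref{fact:Can=Rat}(1); since rationality is preserved under product with a smooth variety, it suffices to show that $(Q,[x^0])$ is at worst a canonical singularity. (Here $\rk B_g$ is the same whether $g\in\langle f_1,\dots,f_k\rangle$ is viewed as a form on $\CC^N$ or on $\CC^r$, so the hypothesis says exactly that every nonzero quadric $g$ in our system has $\rk B_g\ge 2k+1$.)

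Then I would analyse $(Q,[x^0])$. If $[x^0]$ is a smooth point of $Q$ there is nothing to do. Otherwise, since $Q$ is a complete intersection of the expected dimension $r-1-k$, the Jacobian of $(f_1,\dots,f_k)$ at $[x^0]$ has corank $s\ge 1$, and one checks directly that $s=\dim\{g\in\langle f_i\rangle: x^0\in\ker B_g\}$. The key observation is $s\le k-1$: the representative $x^0$ spans a direction transverse to $S$ in the exceptional $\PP^{r-1}$, hence $x^0\notin S=\bigcap_\ell\ker B_{f_\ell}$, so not every $B_{f_\ell}$ annihilates $x^0$. Choose a basis $g_1,\dots,g_{k-s},h_1,\dots,h_s$ of $\langle f_i\rangle$ with $dg_1,\dots,dg_{k-s}$ independent at $[x^0]$ and $dh_1=\dots=dh_s=0$ at $[x^0]$; then $Y:=V(g_1,\dots,g_{k-s})$ is smooth of codimension $k-s$ near $[x^0]$, and inside the smooth germ $(Y,[x^0])$ the germ $(Q,[x^0])$ is cut out by the $s$ functions $h_i|_Y\in{\frak m}_{[x^0]}^2$, whose quadratic parts (the bilinear forms occurring in Theorem~\ref{thm:2k+1-CanSing}) are the restrictions of $B_{h_i}$ to $\Theta:=T_{[x^0]}Y$.

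Finally I would feed this into the induction. For a nonzero $g=\sum_i d_i h_i$ we have $x^0\in\ker B_g$, so $B_g$ descends to $\CC^r/\langle x^0\rangle$ without loss of rank, while $\Theta$ has codimension $k-s$ there; since restricting a quadratic form to a subspace of codimension $c$ drops its rank by at most $2c$, we get $\rk(B_g|_\Theta)\ge(2k+1)-2(k-s)=2s+1$. Hence Theorem~\ref{thm:2k+1-CanSing}, applied with $k$ replaced by $s\le k_0-1$ (legitimate by the induction hypothesis) to $(Y,[x^0])$ together with $h_1|_Y,\dots,h_s|_Y$, shows that $(Q,[x^0])$ is a normal complete intersection with a canonical singularity, and Claim~\ref{clm:M1-cansing} follows. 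The main obstacle is precisely this last step: identifying the quadratic parts of the slice equations $h_i|_Y$ with $B_{h_i}|_\Theta$, and verifying that the hypothesis $\rk B_g\ge 2k+1$ survives the passage to the codimension-$(k-s)$ subspace $\Theta$ with exactly the budget needed to land on $2s+1$; the transversality remark $s\le k_0-1$ is what makes the induction applicable, whereas the product decompositions of $M$ and $M_1$ are routine.
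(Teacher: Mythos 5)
Your proposal is correct, but it reaches Claim \ref{clm:M1-cansing} by a genuinely different route than the paper. The paper never leaves the blow-up charts: it works on $D_+(\partial f_i/\partial x_j)$, writes the strict transforms $\tilde f_i=f_i/x_N^2$, uses the special shape $\tilde f_1=1+y_l^2+\dots+y_{N-1}^2$ to guarantee that at least one equation has nonvanishing differential at $q$ (so the induction strictly decreases), shows by a case analysis on the shape of $f_L$ that the equations whose transforms fall into ${\frak m}_q^2$ keep the full Hessian rank ($\rk\tilde B_L=\rk B_L$), and then quotients by the $m$ independent differentials using Remark \ref{rem:rkBV-rkB} before invoking the inductive hypothesis with $k-m$ equations. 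You instead exploit the homogeneity structurally: $S$ is a linear subspace, the $f_i$ involve only the $r$ transverse variables, so $M\simeq C_0\times\Aa^{N-r}$ with $C_0$ the cone over the quadric complete intersection $Q\subset\PP^{r-1}$, $M_1\simeq\operatorname{Tot}(\sO_Q(-1))\times\Aa^{N-r}$, and $\sigma^{-1}(0_N)\simeq Q$; you then reduce to $(Q,[x^0])$ via ``canonical $=$ rational'' for Gorenstein (l.c.i.) germs and invariance of rational singularities under products with smooth factors, split off the equations with independent differentials at $[x^0]$ (your $s\le k-1$ plays exactly the role of the paper's $m\ge 1$, and the Euler relation replaces the explicit chart computation identifying quadratic parts with restricted Hessians), and finish with the same $2\times$codimension rank-drop bound and the induction, now with $s=k-m$ equations. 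The bookkeeping is identical ($2k+1-2(k-s)=2s+1$), so the inductive skeleton is the same; what differs is the packaging. Your version is coordinate-free and makes the geometry of the exceptional fibre transparent, at the cost of importing facts the paper deliberately avoids (reducedness/normality of $Q$, blow-up of a cone $=$ total space of $\sO_Q(-1)$, stability of rational singularities under smooth products; the first is available from the argument of Claim \ref{clm:S-A1-flat}, the others are standard but outside the elementary ring-theoretic toolkit emphasized in Remark \ref{rem:a-inv}), and it does not produce the explicit chart data ($\tilde f_i=f_i/\iota_E^2$) that the paper immediately reuses for the discrepancy computation \eqref{eq:KU1-KU}--\eqref{eq:KM-KU} following the claim. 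Do make sure, if you keep your ordering, that normality of $(Q,[x^0])$ (needed before ``canonical'' and Fact \ref{fact:Can=Rat}(1) can be quoted) is obtained from the inductive application of Theorem \ref{thm:2k+1-CanSing} first and only then transported to the product.
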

\begin{proof}
By using terms at \cite[Prop. II.2.5.]{Ha:text}
$U_1$ is covered by $D_+(\partial f_i/\partial x_j)$,
where $1\leq i\leq k_0,\ 1\leq j\leq N$.
By choosing $x_i$ fitly, one can describe $f_1$ as $f_1=x_l^2+\dots+x_N^2$,
and the ideal of the center $S$ as 
$I_S=\langle x_{l'},\dots, x_N \rangle$, where $l'\leq l\leq N$.
We shall show this claim on $D_+(\partial f_1/\partial x_N)=D_+(x_N)$.
$D_+(x_N)$ is an affine scheme with ring 
\[ R_1=\CC[x_1,\dots,x_{l'-1}, y_{l'},\dots, y_{N-1}, x_N] \qquad
 (y_j=(x_j/x_N) \text{ for } l'\leq j\leq n-1).\]
Let $\tilde{f_i}\in R_1$ denote the strict transform of $f_i$.
When $i=1$, it holds that
\begin{equation}\label{eq:tld-f-1}
\tilde{f}_1=1+y_l^2+\dots+ y_{N-1}^2=f_1/x_N^2. 
\end{equation}
When $i\geq 2$, one can describe $f_i=z_1^2+\dots z_{r_i}^2$, where
$z_j$ are $\CC$-linear combinations of $x_1,\dots,x_N$ and linearly independent.
In fact $z_j$ are linear combinations of $x_{l'},\dots, x_N$.\par
%
%
Thereby $\tilde{f}_i$ equals $f_i/x^2_N$, and is a polynomial with variables 
$y_{l'},\dots,y_{N-1}$ of degree $\leq 2$.
One can assume that $\tilde{f}_i(y_{l'},\dots y_{N-1})$ satisfies
$\tilde{f}_i(0,\dots,0)=0$ by replacing $\tilde{f}_i$ to 
$\tilde{f}_i+\lambda\tilde{f}_1$ if necessary.\par
Take any closed point $q=(q_{l'},\dots, q_{N-1})$ of 
$\sigma^{-1}(0_N)\subset \psi^{-1}(0_N)=\Spec( \CC[y_{l'},\dots,y_{N-1}])$.
It holds that $\tilde{f}_i(q)=0$ for $i=1,\dots, k$ since $\tilde{f}_i$ obviously 
belongs to the ideal of $M_1 \subset U_1$.
By \eqref{eq:tld-f-1}, one can pick some $m$ with $1\leq m\leq k$ such that
$\tilde{f}_1,\dots, \tilde{f}_m$ are linearly independent in 
${\frak m}_q/{\frak m}_q^2$ and that
$\tilde{f}_{m+1},\dots \tilde{f}_{k}\in {\frak m}_q^2$,
where ${\frak m}_q$ means the maximal ideal of $q\in\psi^{-1}(0_N)$ in $U_1$.
%
%
In case where $m<k$, let us look over $f_L\ (m<L\leq k)$ further.
In describing $f_L$ as $f_L=\sum_{l'\leq i,j\leq N} \lambda_{ij}x_ix_j$,
remark that $\lambda_{NN}=0$ since $\tilde{f}_l(0,\dots,0)=0$.
Since one can replace $x_{l'},\dots, x_{N-1}$ by the natural action of 
$\operatorname{GL}(N-l')$, one can express $f_L$ as
\begin{align}\label{eq:exprs-fk}
f_L= & \lambda_{N,N-1}\cdot x_{N-1}x_N + \sum_{l'\leq i\leq N-2} \lambda_i\cdot x_i^2
 \qquad (\lambda_{N-1,N}\neq 0) \qquad \text{or} \notag\\
f_L= & \sum_{l'\leq i\leq N-1} \lambda_i\cdot x_i^2.
\end{align}
However, in the former case, one can check that $\tilde{f}_L$
is everywhere non-singular, and thus it can not belong to ${\frak m}_q^2$.
This is contradiction, and so $f_L$ is expressed as in \eqref{eq:exprs-fk}.
Then $\tilde{f}_L=\sum_{i=l'}^{N-1} \lambda_i\cdot y_i^2$, and besides
$\tilde{f}_L=\sum_{i=l'}^{N-1} \lambda_i\cdot (y_i-q_i)^2$
since $\tilde{f}_L \in {\frak m}_q^2$.
Thus $\rk \tilde{B}_L=\rk B_L$, where
$B_L$ is the bilinear form on $({\frak m}_0/{\frak m}_0^2) ^{\vee}$
induced by $f_L$, where ${\frak m}_0$ is the ideal of $0\in U$, and 
$\tilde{B}_L$ is the bilinear form on $({\frak m}_q/{\frak m}_q^2)^{\vee}$
induced by $\tilde{f}_L$, where ${\frak m}_q$ is the ideal of $q\in U_1$.
Thereby, for the quotient vector space $V$ of ${\frak m}_q/{\frak m}_q^2$
by $\langle \tilde{f}_1,\dots,\tilde{f}_m \rangle_{\CC}$, 
\[\rk \tilde{B}_L|_V +2m \geq \rk \tilde{B}_L= \rk B_L \geq 2k+1 \qquad 
\text{for all $L$ such that}\ m<L\leq k\] 
from Remark \ref{rem:rkBV-rkB} below and assumptions in Theorem \ref{thm:2k+1-CanSing}.
In such a way, we can show the following:\par
(*) Let
$\tilde{g}$ be any nonzero $\CC$-linear combination of 
$\tilde{f}_{m+1},\dots,\tilde{f}_k$.
Since $\tilde{g}$ belongs to ${\frak m}_q^2$, it induces a bilinear form
on $V^{\vee}$, and then its rank is $2(k-m)+1$ or more.\par
Because of the choice of $m$,
$R'_1:=R_{1,q}/\langle \tilde{f}_1,\dots,\tilde{f}_m \rangle$ is a regular local ring.
Since $k-m<k$, the local ring
$R'_1/\langle \tilde{f}_{m+1},\dots, \tilde{f}_{k} \rangle=
R_{1,q}/\langle \tilde{f}_1,\dots,\tilde{f}_k \rangle$
is c.i., normal, and its closed point $q$ is at worst its canonical singularity
by inductive hypothesis and the fact (*).
A natural surjective map
$R_{1,q}/\langle \tilde{f}_1,\dots,\tilde{f}_k \rangle \rightarrow \sO_{M_{1,q}}$
is isomorphic, since they are integral and some their nonempty open subsets are
birational to $M$. Consequently we obtain Claim \ref{clm:M1-cansing}.
\end{proof}
Next, let us calculate $K_{M_1}-\sigma^*(K_M)$.
Concerning $\psi: U_1 \rightarrow U$ with exceptional divisor $E$, it holds that
\begin{equation}\label{eq:KU1-KU}
 K_{U_1}-\psi^*(K_U)=(\dim U-1-\dim S)\cdot E
\end{equation}
by \cite[Thm. 6.1.7.]{Ishii:text}.
Because $M$ (resp. $M_1$) is a l.c.i. and normal subscheme of the nonsingular scheme
$U$ (resp. $U_1$) by Claim \ref{clm:S-A1-flat},
and because $x_N$ is the generator of the ideal of $E$
on $D_+(x_N)$, the adjunction formula deduces that
\begin{equation}\label{eq:KM-KU}
\sO(K_M)=\Bigl. \sO(K_U+\sum_{i=1}^k \langle f_i\rangle) \Bigr|_M \ \text{and} \
\sO(K_{M_1})=\Bigl. 
\sO(K_{U_1}+\sum_{i=1}^k \langle \tilde{f}_i=(f_i/\iota_E^2) \rangle)\Bigr|_{M_1},
\end{equation}
where $\iota_E$ is a generator of the ideal of $E$.
From \eqref{eq:KU1-KU} and \eqref{eq:KM-KU},
$ K_{M_1}-\sigma^*K_M =(\dim U-1-\dim S-2k)E.$
By its definition, $S$ is contained in $\Sing(f_1)$, so
$\dim S\leq \dim\Sing(f_1)\leq \dim U -(2k+1)$ from assumptions 
in Theorem \ref{thm:2k+1-CanSing}, and hence
$\dim U-1-\dim S-2k\geq 0$.
Accordingly the divisor $K_{M_1}-\sigma^*(K_M)$ is positive, 
and thereby $p$ is a canonical singularity of $M$ from Claim \ref{clm:M1-cansing}.
We have completed the proof of Theorem \ref{thm:2k+1-CanSing}.
\end{proof}

\begin{rem}\label{rem:rkBV-rkB}
Let $W$ be a $\CC$-vector space, $V$ a quotient vector space of $W$,
$B$ be a bilinear form on $W^{\vee}$ and $B|_V$ the induced bilinear
form on $V^{\vee}$. Then $\rk B|_V +2(\rk W-\rk V)\geq \rk B$.
\end{rem}
\begin{proof}
We verify this when $\rk W-\rk V=1$; In general case, one can show this
by induction on $\rk W-\rk V$.
Let $w$ be the generator of $\Ker(W \rightarrow V)$.
If $B(w,w)=\lambda\neq 0$, then $W$ is naturally identified with 
$\CC\cdot w\oplus \Ker B(w,\cdot)$,
and $B$ is naturally decomposed into $\lambda\id \oplus B|_V$, and so
$\rk B\leq 1+\rk B|_V$.
If $B(w,w)=0$, then one can express $B$ as a matrix
\begin{equation*}
\left(
\begin{array}{c|cc}
0 & 0 & * \\
\hline
0 & E_{\rk B|_V} & 0 \\
* & 0 & 0
\end{array}
\right)
\end{equation*}
according to $W\simeq \CC\cdot w\oplus V$, and thus
$\rk B\leq \rk B|_V+2$.
\end{proof}
\begin{thm}\label{thm:suffcdtn-moduli-can}
Let $E$ be a stable sheaf on a non-singular projective surface.
Suppose that any non-zero homomorphism $f\in \Hom(E,E(K_X))^{\circ}$
satisfies that the rank of
\begin{equation}\label{eq:def-H1f}
H^1(\ad(f)): \Ext^1(E,E) \rightarrow \Ext^1(E,E(K_X))\qquad 
(\alpha \mapsto f\circ\alpha-\alpha\circ f)
\end{equation}
is $2\ext^2(E,E)^{\circ}+1$ or more.
Then the moduli scheme of stable sheaves on $X$ is l.c.i., normal at $E$
and $E$ is at worst its canonical singularity.
\end{thm}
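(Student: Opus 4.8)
The plan is to read the statement off the purely ring-theoretic Theorem \ref{thm:2k+1-CanSing}, using the deformation-theoretic description of the local structure of the moduli scheme in Fact \ref{fact:moduli-lci}. First I would put $b=\ext^2(E,E)^{\circ}$ and take the basis $f_1,\dots,f_b$ of $\Hom(E,E(K_X))^{\circ}$ occurring there, so that the completion of the moduli scheme at $E$ is $\sO_{M,E}^{\wedge}\simeq R/I$ with $R=\CC[[t_1,\dots,t_{D+b}]]$, $I=\langle F_1,\dots,F_b\rangle$, and each $F_i\in{\frak m}_p^2$ having leading (degree-two) term the bilinear form $F_{f_i}$ of \eqref{eq:Ff}. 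Under the canonical identification $({\frak m}_p/{\frak m}_p^2)^{\vee}\simeq\Ext^1(E,E)$, the bilinear form $B_{F_i}$ appearing in Theorem \ref{thm:2k+1-CanSing} is precisely $F_{f_i}$; and since $f\mapsto F_f$ is $\CC$-linear, a nonzero combination $G=\sum_i c_iF_i$ has $B_G=F_f$ where $f=\sum_i c_if_i$, and $f\neq 0$ because the $f_i$ form a basis of $\Hom(E,E(K_X))^{\circ}$.

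The key step will be the identity $\rk B_G=\rk H^1(\ad(f))$. By \eqref{eq:Ff} one has $B_G(\alpha,\beta)=F_f(\alpha\otimes\beta)=\tr\bigl(H^1(\ad(f))(\alpha)\circ\beta\bigr)$, so that, viewed as a map $\Ext^1(E,E)\to\Ext^1(E,E)^{\vee}$, the form $B_G$ factors as
\[
\Ext^1(E,E)\;\xrightarrow{\,H^1(\ad(f))\,}\;\Ext^1(E,E(K_X))\;\xrightarrow{\,\Phi\,}\;\Ext^1(E,E)^{\vee},
\]
where $\Phi$ sends $\xi$ to the functional $\beta\mapsto\tr(\xi\circ\beta)\in H^2(X,K_X)\simeq\CC$, built from Yoneda composition followed by the trace. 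Since $\Phi$ is an isomorphism by Serre duality on the projective surface $X$, one gets $\rk B_G=\rk H^1(\ad(f))$. The one point that deserves care here is the bookkeeping making \eqref{eq:Ff} compatible with this factorization — that $B_G$, manifestly symmetric as written in \eqref{eq:Ff}, is computed by the trace pairing against $H^1(\ad(f))(\alpha)$, and that composition-plus-trace really realizes Serre duality for $E$ — but these are standard, and once they are in place the rank computation is immediate. Combining with the hypothesis $\rk H^1(\ad(f))\geq 2\ext^2(E,E)^{\circ}+1=2b+1$ for every nonzero $f\in\Hom(E,E(K_X))^{\circ}$, I would conclude $\rk B_G\geq 2b+1$ for every nonzero $\CC$-linear combination $G$ of $F_1,\dots,F_b$.

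Finally I would apply Theorem \ref{thm:2k+1-CanSing} with $k=b$: its hypothesis is then satisfied, so $R/I$ is a complete intersection (in particular l.c.i.), normal, and $p$ is at worst a canonical singularity of $R/I$. Since normality, being l.c.i., and having at worst a canonical singularity are all detected on the completion — via Fact \ref{fact:Can=Rat} together with Remark \ref{rem:RatAnalytic}, exactly as in the reduction at the start of the proof of Theorem \ref{thm:2k+1-CanSing}, whose argument passes at once to the completed local ring — the same conclusions pass to the moduli scheme of stable sheaves at $E$. The bulk of the effort has already been absorbed into Theorem \ref{thm:2k+1-CanSing}; the genuinely new content is the translation carried out in the first two paragraphs, whose only delicate ingredient is the identification $\rk B_G=\rk H^1(\ad(f))$ via Serre duality.
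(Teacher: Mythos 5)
Your proposal is correct and follows the same route as the paper, whose proof of this theorem is just the citation ``Theorem \ref{thm:2k+1-CanSing} and Fact \ref{fact:moduli-lci}''; your extra work (identifying $B_G$ with $F_f$ for $f=\sum_i c_if_i\neq 0$ and checking $\rk B_G=\rk H^1(\ad(f))$ via the Serre-duality isomorphism $\Ext^1(E,E(K_X))\simeq\Ext^1(E,E)^{\vee}$, plus the passage from the completed local ring back to the moduli scheme) simply makes explicit what the paper leaves implicit.
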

\begin{proof}
This results from Theorem \ref{thm:2k+1-CanSing} and
Fact \ref{fact:moduli-lci}.
\end{proof}

\vspace{3pt}

\section{Rank of $H^1(\ad(f))$ in Case I}\label{sctn:rkH1-caseA1}
Let $E$ be a singular point of $M(c_2)$ and hence
there is a non-zero traceless homomorphism $f:E \rightarrow E(K_X)$.
\begin{defn}\label{defn:B}
Let $B$ denote the largest effective divisor on $X$ such that
$f$ splits into
$f: E^{\vee\vee} \rightarrow E^{\vee\vee}(K_X-B) \hookrightarrow E^{\vee\vee}(K_X)$,
where the latter map is a natural injection.
Since $H$ is $c_2$-suitable, $B$ is supported on fibers of $\pi$, and so
it is a rational multiple of $\cf$ by Setting \ref{stng:XHS}.
\end{defn}
Here we make the following assumption.
\begin{assumption}\label{assump:I}
The sheaf $E$ comes under Case I in Fact \ref{fact:RestrGenericFib}.
\end{assumption}
We try to estimate the rank of
$H^1(\ad(f)): \Ext^1(E,E)\rightarrow \Ext^1(E,E(K_X))$ from below
in view of Theorem \ref{thm:suffcdtn-moduli-can}. 
%
\begin{lem}\label{lem:DetNeq0-I}
Under Assumption \ref{assump:I},
any non-zero traceless homomorphism $f:E \rightarrow E(K_X)$
satisfies that $\det(f)\neq 0$.
\end{lem}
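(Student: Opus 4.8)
The plan is to argue by contradiction: suppose $\det(f)=0$, so that the vector bundle map $f: E^{\vee\vee} \to E^{\vee\vee}(K_X - B)$ (after peeling off the divisor $B$ as in Definition~\ref{defn:B}) has nontrivial kernel on the generic fiber. First I would restrict everything to the generic fiber $X_\eta$ and work with the induced homomorphism $f_\eta: E_\eta \to E_\eta(K_X)|_{X_\eta}$; since $K_X|_{X_\eta}$ is a degree-zero line bundle on $X_\eta$ and $E_\eta$ is a rank-two semistable (indeed stable, by Fact~\ref{fact:RestrGenericFib} in Case I) sheaf of degree zero, the twist $E_\eta(K_X)$ is again rank-two degree-zero semistable. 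Because $B$ is exactly the maximal fiber-supported divisor along which $f$ degenerates, the generic-fiber map $f_\eta$ (after removing $B$) is a \emph{nonzero} sheaf map between the stable bundle $E_\eta$ and the semistable bundle $E_\eta(K_X - B)|_{X_\eta}$ of the same slope.

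The key step is then a slope-stability argument on $X_\eta$. If $\det(f)=0$, the image $\mathrm{Im}(f_\eta)$ is a rank-one subsheaf of $E_\eta(K_X-B)|_{X_\eta}$ and a rank-one quotient of $E_\eta$. Saturating, we obtain a sub line bundle $L \hookrightarrow E_\eta(K_X-B)|_{X_\eta}$ and a quotient line bundle $E_\eta \twoheadrightarrow L'$ with $\deg L \geq \deg(\mathrm{Im} f_\eta)$ and $\deg L' \leq \deg(\mathrm{Im} f_\eta)$. Stability of $E_\eta$ forces $\deg L' > 0$ (a stable rank-two degree-zero bundle has no quotient line bundle of non-positive degree), hence $\deg L > 0$. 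But $E_\eta(K_X - B)|_{X_\eta}$ is semistable of slope $\tfrac12\deg(E_\eta(K_X-B)|_{X_\eta})$; in Case~I one has $\deg(K_X|_{X_\eta})=0$ and $B\cdot\cf=0$ (Definition~\ref{defn:B} says $B$ is supported on fibers), so this slope is $0$, contradicting the existence of a sub line bundle of positive degree. Therefore $\det(f)\neq 0$.

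The main obstacle, and the point needing care, is the bookkeeping of the divisor $B$ and the reflexive hull $E^{\vee\vee}$: one must check that passing to $E^{\vee\vee}$ and to the generic fiber does not change the relevant degrees, and that ``$\det(f)=0$'' as a statement about $f: E\to E(K_X)$ on $X$ really does translate into ``$f_\eta$ has rank $\leq 1$'' on $X_\eta$ — equivalently, that $\det(f)$, viewed as a section of a line bundle, vanishes identically iff it vanishes on the generic fiber, which holds since $X$ is irreducible. I would also want to invoke Fact~\ref{fact:RestrGenericFib} explicitly to know $E_\eta$ is genuinely \emph{stable} (not merely semistable) in Case~I, which is what rules out a degree-zero quotient line bundle; this is precisely where Assumption~\ref{assump:I} is used. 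Once these identifications are in place the slope inequalities close immediately, so I expect the proof to be short, with essentially all the content being the reduction to the generic fiber.
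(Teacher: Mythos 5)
Your proof is correct, but it follows a slightly different route from the paper's. The paper stays on $X$: from $\det(f)=0$ it deduces $f^2=0$ by Cayley--Hamilton, hence an inclusion $\mathrm{Im}(f)\subset \mathrm{Ker}(f)(K_X)$ of rank-one sheaves; semistability of $E$ along the generic fiber (coming from $c_2$-suitability of $H$) then sandwiches both fiber degrees to $0$, producing a fiber-degree-zero sub line bundle of $E_\eta$ and contradicting the defining clause of Case I. You instead pass at once to $X_\eta$, where Fact \ref{fact:RestrGenericFib} gives genuine stability of $E_\eta$, and exploit that $\mathrm{Im}(f_\eta)$ is simultaneously a quotient line bundle of the stable degree-zero bundle $E_\eta$ (so of degree $>0$) and a subsheaf of the semistable degree-zero bundle $E_\eta(K_X)|_{X_\eta}$ (so of degree $\le 0$). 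This bypasses Cayley--Hamilton and the containment $\mathrm{Im}(f)\subset\mathrm{Ker}(f)(K_X)$ altogether, since the image sits in the target for free; what you pay is the use of strict stability of $E_\eta$ rather than only the ``no degree-zero sub line bundle'' property, and what the paper's formulation buys is the $f^2=0$ device, which it reuses later (Lemmas \ref{lem:detf-0-A} and \ref{lem:f20}). Two minor points: the nonvanishing of $f_\eta$ has nothing to do with peeling off $B$ --- it follows from $f\neq 0$ and torsion-freeness of $Hom(E,E(K_X))$, i.e.\ $f$ is nonzero at the generic point of the irreducible surface $X$, which lies on $X_\eta$; and since $B$ is fiber-supported, $\sO_X(-B)|_{X_\eta}$ is trivial, so the twist by $B$ plays no role on the generic fiber, as you suspected.
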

\begin{proof}
If $\det(f)=0$, then $f^2=0$ by Hamilton-Caylay's theorem. 
Thus we have a natural injection $Im(f)\subset Ker(f)(K_X)$,
$Ker(f)$ and $Im(f)$ are of rank one, 
and so their fiber degrees are zero by $c_2$-suitability of $H$,
but this contradicts to Assumption \ref{assump:I}.
%
\end{proof}
We can split $\det f \in \Gamma(\sO(2K_X))$ as $\det f = \alpha \tau^2$,
where $B'$ is an effective divisor on $X$, 
$\tau=\tau_{B+B'}\in\Gamma({\mathcal O}(B+B'))$ is the natural section, 
a line bundle ${\mathcal L}={\mathcal O}(K_X-B-B')$ on $X$,
and $\alpha \in \Gamma(X, {\mathcal L}^2)$ is square-free.
Remark that $B'\in \QQ\cdot\cf$ by Setting \ref{stng:XHS}.
By \cite[Sect. I.17]{BPV:text}, $\alpha$ induces 
a flat double covering $\nu_0: Y_0 \rightarrow X$
from a normal surface $Y_0$ and
a section $s$ in $\Gamma(Y_0, \nu_0^*{\mathcal L})$ such that
$s^2+\nu_0^*\alpha=0$. 
The divisor $Z(s)$ given by $s$ is locally integral since the support of
$\alpha$ is so by Setting \ref{stng:XandH}.
Remark that $Y_0$ is connected,
since $\alpha$ has no square root in $\Gamma(X, \sL)$ from Assumption \ref{assump:I}.
Recall $\eta'$ and $C$ at Fact \ref{fact:RestrGenericFib}.
By their definitions, $Y_{0 \bar{\eta}}$ is defined over $\eta'$, so there
is a morphism $Y_{0 \eta}\rightarrow \eta'$, and it extends to $Y_0\rightarrow C$
since $C \rightarrow \PP^1$ is finite and $Y_0$ is normal.
%
%
Let $\phi:Y \rightarrow Y_0$ be the canonical resolution of singularities
(\cite[Sect. III.7]{BPV:text}).
\begin{equation}\label{eq:Y0-C}
\xymatrix{
Y_{0 \eta} \ar[r] \ar@{.>}[d]  & X_{\eta} \ar[d]& 
 &  Y \ar[r]^{\phi} & 
 Y_0 \ar[r]^{\nu_0} \ar[d]_{\pi_C} & X \ar[d]^{\pi} \\
\eta' \ar[r] & \eta & &    & C \ar[r]^{\nu'} & \PP^1}
\end{equation}
Denote $\nu_0\circ \phi$ by $\nu: Y \rightarrow X$.
Note that $\pi_C: Y \rightarrow C$ is an elliptic fibration, and that
$Y_0$ ($C$ and $Y$, resp.) has a natural action $\sigma$ of $\ZZ/2$,
since it is a double covering over $X$ ($\PP^1$ and a blowing up of $X$, resp.).
\begin{lem}\label{lem:Y0-Cartier}
(i) Every singularity of $Y_0$ is rational (Defn. \ref{def:D-dim}). \\
(ii) $K_Y=\nu^*(K_X\otimes \sL)$ and $\chi({\mathcal O}_Y)=2\chi(\Ox)=2d$.\\
(iii) Every Weil divisor on $Y_0$ is Cartier.
\end{lem}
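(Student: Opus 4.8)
The plan is to analyze the double cover $\nu_0 : Y_0 \to X$ branched along the divisor $Z(\alpha)$ where $\alpha \in \Gamma(X,\sL^2)$ is square-free, and to use the explicit description of the canonical resolution $\phi : Y \to Y_0$ from \cite[Sect. III.7]{BPV:text}. First I would establish (i): the singularities of $Y_0$ arise exactly over the points where $Z(\alpha)$ fails to be smooth, and since $Z(\alpha)$ is locally integral (its support being a union of fibers of $\pi$, each of which is either smooth, a nodal rational curve of type $I_1$, or a smooth curve appearing with multiplicity in a fiber of type $mI_0$) the worst local singularity of $Z(\alpha)$ is a node. A double cover branched along a curve with at most nodal singularities has at most rational double points (of type $A_n$), and these are rational singularities; this is standard and follows from the canonical resolution procedure, which resolves by repeatedly blowing up singular points of the (successive strict transforms of the) branch locus. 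Thus (i) is immediate once one checks the local form of $\alpha$ along each fiber component.

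Next, for (ii), I would compute $K_Y$ by combining the ramification formula for the resolved cover with the behavior of $K$ under the blow-ups in the canonical resolution. For a smooth double cover $\nu_0$ branched along $R \sim 2L$ one has $\nu_0^*(K_X \otimes \sL) = K_{Y_0}$ (pulling back $K_X + L$); the point of the \emph{canonical} resolution is precisely that the correction terms from blowing up the singular points of the branch locus and the correction terms from the induced blow-ups on $Y$ cancel, so that on the smooth model $Y$ one still has $K_Y = \phi^*\nu_0^*(K_X \otimes \sL) = \nu^*(K_X \otimes \sL)$. I would cite \cite[Sect. III.7]{BPV:text} for this cancellation rather than reprove it. Then $\chi(\sO_Y)$ follows either from the projection formula $\nu_{0*}\sO_{Y_0} = \sO_X \oplus \sL^{-1}$ together with $\chi(\sO_Y) = \chi(\sO_{Y_0})$ (rational singularities, so $R^i\phi_*\sO_Y = 0$ for $i>0$), giving $\chi(\sO_Y) = \chi(\sO_X) + \chi(\sL^{-1})$; by Riemann--Roch on $X$ and the fact that $\sL = \sO(K_X - B - B')$ with $B, B' \in \QQ\cdot\cf$ one computes $\chi(\sL^{-1}) = \chi(\sO_X) = d$ (the fiber-class corrections contribute nothing to $\chi$), so $\chi(\sO_Y) = 2d$. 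Alternatively one invokes Noether's formula on $Y$ together with $K_Y^2 = 2(K_X\otimes\sL)^2$ and the Euler-characteristic computation of the branched cover; I would use whichever is shorter.

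For (iii), I would argue that $Y_0$ having only rational double points ($A_n$-singularities) forces every Weil divisor to be Cartier locally, hence globally. The cleanest route: rational double points are hypersurface singularities, in particular Gorenstein, but more to the point their local class groups — the quotients $\mathrm{Cl}(\sO_{Y_0,p})$ — for $A_n$ are cyclic of order $n+1$; however here one can do better. Since $Y_0$ is a \emph{flat} double cover of the \emph{smooth} surface $X$, locally $Y_0$ is a hypersurface $s^2 = -\nu_0^*\alpha$ in the total space of a line bundle over $X$; when $\alpha$ has a node, $Y_0$ has an $A_1$-singularity, whose local class group is $\ZZ/2$. To kill this, I would use the section $s \in \Gamma(Y_0, \nu_0^*\sL)$: the divisor $Z(s)$ is the ramification divisor, and it is precisely a Weil divisor passing through the singular point that generates the local class group, so once one notes that $Z(s)$ is in fact Cartier — being the reduced preimage $\nu_0^{-1}(Z(\alpha))_{\mathrm{red}}$, which is cut out by the single equation $s$ — one concludes $2$ times any Weil divisor is Cartier and then (handling the $\ZZ/2$) that the class group is trivial. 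Here the main obstacle is being careful about the global versus local statement and about the case where $Z(\alpha)$ is non-reduced (the $mI_0$ fibers, where $\alpha$ vanishes to higher order along a smooth curve): there $\nu_0$ may be étale over that curve and one must check the cover does not even acquire a singularity, or if it does, that it is again an $A_n$ handled the same way. I expect the verification of (iii) in the non-reduced-branch-locus case, i.e. along multiple fibers, to be the fiddliest point and the one most likely to need a careful local analysis using Proposition \ref{prop:order-O(F)F}; the rest is a routine application of the canonical-resolution machinery from \cite{BPV:text}.
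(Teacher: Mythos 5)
Parts (i) and (ii) of your proposal are correct and essentially coincide with the paper's treatment (the paper simply cites \cite{BPV:text}, Prop.~III.3.4 and Thm.~III.7.1 for (i), and Sect.~V.22 for (ii)). Also, your worry about a non-reduced branch locus along the multiple fibers is moot: $\alpha$ is square-free by construction (the square part was absorbed into $\tau^2$), so the branch divisor is reduced and its only singular points are the nodes of the $I_1$ fibers it contains; over the multiple fibers the cover is either \'etale or branched along the smooth reduction and acquires no singularity.

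The genuine gap is in (iii). You assert that $Z(s)$ ``is precisely a Weil divisor passing through the singular point that generates the local class group'' and then conclude triviality of the class group because $Z(s)$ is Cartier. But $Z(s)$ is cut out by the single equation $s$, hence is Cartier and represents the \emph{trivial} class; the generator of the formal local class group $\ZZ/2$ at the $A_1$ point is a single \emph{analytic branch} of $Z(s)$ (one ruling of the cone $s^2=xy$), not $Z(s)$ itself, since analytically $Z(s)$ is the union of the two rulings. So your argument only yields that $2D$ is Cartier, and the step you defer as ``handling the $\ZZ/2$'' --- showing that no algebraic Weil divisor germ realizes the nontrivial formal class --- is exactly the entire content of (iii) and is left unaddressed. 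This is where the hypothesis that every branch fiber is integral (so $Z(s)$ is Zariski-locally irreducible at the node) must enter: compare the double cover branched along the reducible curve $xy=0$, i.e.\ the quadric cone, where the preimage of one branch is an honest non-Cartier Weil divisor; that phenomenon is excluded here only because the nodal fiber is irreducible, so its analytic branches are not algebraic. The paper's proof is correspondingly different and does not invoke class groups of rational double points at all: for a Weil divisor $D$ and $p\in Y_0$ it chooses $U\ni p$ with $Z(s)\cap U$ integral, writes $D$ off $Z(s)$ as the divisor of some $\lambda\in S^{-1}A(U)$, so that $D-\operatorname{div}(s^l\lambda)$ is supported on the integral curve $Z(s)\cap U$ and hence equals $nZ(s)$, which is Cartier; the local integrality of $Z(s)$ (i.e.\ the restriction to $I_1$ and $mI_0$ fibers in Setting \ref{stng:XandH}) is the essential input. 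Indeed, the remark following the lemma notes that for $I_k$ fibers one only gets that $2D$ is Cartier --- which is precisely where your argument stops.
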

\begin{proof}
(i) results from \cite[Prop. III.3.4, Thm. III.7.1]{BPV:text}, and
(ii) does from \cite[Sect. V.22]{BPV:text}.
As to (iii), let $D$ be a positive Weil divisor on $Y_0$. 
Since $Y_0\setminus Z(s)$ is non-singular,
for any $p\in Y_0$, one can find some open set $p\in U$ such that 
$Z(s)\cap U$ is integral and $D|_{U\setminus Z(s)}$ is
given by $\lambda \in S^{-1}A(U)$, where $S$ is the multiplicative set generated by $s$.
Then $s^l \lambda$ belongs to a Cartier divisor $D_0$ on $U$ for some $l\in \ZZ$.
Since $D-D_0|_{U\setminus Z(s)}=0$, $D-D_0=nZ(s)$ for some $n\in \ZZ$ by 
\cite[Example II.6.6.2]{Ha:text}, and thus $D=D_0+nZ(s)$ is a Cartier divisor on $U$.
%
\end{proof}
{\footnotesize 
\begin{rem}
If singular fibers on $X$ are $(I_k)$ (not necessarily $(I_1)$)
or $(mI_0)$, then $2D$ is Cartier for every Weil divisor $D$ on $Y_0$
(cf. \cite[Example II.6.5.2.]{Ha:text}).
\end{rem}
}
%
Since $\nu_0^*f$ has eigenvalues $\pm s\tau$, we have two exact sequences
on $Y_0$:
\begin{equation}\label{eq:defn-Fpm}
0 \longrightarrow F_{\pm}:=Ker(\nu_0^*f \pm s\tau) \longrightarrow
 \nu_0^* E \longrightarrow G_{\pm}:=Im(\nu^*_0 f\pm s\tau)
\longrightarrow 0.
\end{equation}
Since $(\nu_0^*f)^2-s^2\tau^2= (\nu_0^*f)^2 +\det f=0$ by 
theorem of Hamilton-Cayley, $G_{\pm}$ naturally becomes a subsheaf
of $F_{\mp}(K_X)$.
Let $D$ be the first Chern class of $F_-$, that is a Cartier divisor by
Lemma \ref{lem:Y0-Cartier}.
\begin{lem}\label{lem:D+sD}
We have a natural $\ZZ/2$-equivariant isomorphism
$D+\sigma(D)+K_X-B=0$ in $\Pic(Y_0)$.
Here $\ZZ/2$ acts on $D+\sigma(D)$ naturally, and
on $K_X-B$ and $\sO$ trivially.
\end{lem}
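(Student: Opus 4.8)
The plan is to track the rank-one eigensheaves $F_{\pm}$ of $\nu_0^*f$ on $Y_0$ and use that $\nu_0^*E$ has trivial determinant together with the way the divisor $B$ enters the factorization $f:E^{\vee\vee}\to E^{\vee\vee}(K_X-B)\hookrightarrow E^{\vee\vee}(K_X)$. Concretely, pulling back the factorization by $\nu_0$, the endomorphism $\nu_0^*f$ of $\nu_0^*(E^{\vee\vee})$ has image landing in $\nu_0^*(E^{\vee\vee})(K_X-B)$, so its two eigenvalues $\pm s\tau$ are both divisible by $\nu_0^*\tau_B$; writing $\det(\nu_0^*f)=-(s\tau)^2$ and recalling $\tau=\tau_{B+B'}$, $\sL=\sO(K_X-B-B')$, this is consistent with the earlier decomposition. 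The first step is to take determinants: from the exact sequence \eqref{eq:defn-Fpm} one gets $\det(\nu_0^*E)=c_1(F_+)+c_1(G_+)$, and since $\det E=\sO$ (as $c_1(E)=0$) one has $\det(\nu_0^*E)=\sO$ in $\Pic(Y_0)$. So $c_1(F_+)+c_1(G_+)=0$.

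Next I would identify $c_1(F_+)$ with $\sigma(D)$ and relate $c_1(G_+)$ to $D+(K_X-B)$. The involution $\sigma$ of $Y_0$ swaps the two eigenvalues $\pm s\tau$ of $\nu_0^*f$ (because $\sigma^*s=-s$, coming from the relation $s^2+\nu_0^*\alpha=0$ with $\nu_0^*\alpha$ being $\sigma$-invariant), hence $\sigma$ carries $F_-=\Ker(\nu_0^*f-s\tau)$ isomorphically onto $F_+=\Ker(\nu_0^*f+s\tau)$. Therefore $c_1(F_+)=\sigma(D)$, where $D=c_1(F_-)$. On the other hand, $G_+=\IIm(\nu_0^*f+s\tau)$ sits as a subsheaf of $F_-(K_X)$, but one must be careful about the twist by $B$: because $f$ itself factors through $E^{\vee\vee}(K_X-B)$, the map $\nu_0^*f+s\tau$ really lands in $F_-(K_X-B)$, and $G_+$ and $F_-(K_X-B)$ agree in codimension one (they differ only along the locus where $s\tau$ vanishes, which after dividing out $B'$ and the double-cover branch locus contributes nothing to $c_1$ — here one uses that $Z(s)$ is locally integral and that $B,B'\in\QQ\cdot\cf$). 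This gives $c_1(G_+)=D+(K_X-B)$ in $\Pic(Y_0)$, with the $\ZZ/2$-action trivial on $K_X-B$ since $B$ and $K_X$ are pulled back from $X$.

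Combining, $\sigma(D)+D+(K_X-B)=0$ in $\Pic(Y_0)$, which is the claimed identity; keeping track of the equivariant structures throughout (the $\ZZ/2$-action is the natural permutation action on $D+\sigma(D)$ and trivial on the pulled-back classes $K_X-B$ and $\sO$) upgrades this to a $\ZZ/2$-equivariant isomorphism. The main obstacle I expect is the codimension-one bookkeeping in the second step: verifying that $G_+$ really equals $F_-(K_X-B)$ up to codimension two, i.e. correctly absorbing the factor $B'$ and the contribution of the branch divisor of $\nu_0$ into the section $s\tau$, so that no spurious divisor appears in $c_1(G_+)$. Here the tools are Lemma \ref{lem:Y0-Cartier}(iii) (every Weil divisor on $Y_0$ is Cartier, so Chern classes make sense and the sequence \eqref{eq:defn-Fpm} behaves well) and the explicit description $\det f=\alpha\tau^2$ with $\alpha$ square-free and $Z(s)$ locally integral, which pins down exactly where $s\tau$ vanishes. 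Everything else is determinant-chasing in $\Pic(Y_0)$ plus tracking the $\sigma$-action.
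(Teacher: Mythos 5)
Your setup (the eigensheaf sequences \eqref{eq:defn-Fpm}, the identification $c_1(F_+)=\sigma(D)$ via $\sigma(s)=-s$, and the determinant bookkeeping $c_1(F_+)+c_1(G_+)=0$) agrees with the paper's, but there is a genuine gap exactly at the step you yourself flag: the claim that $G_+$ and $F_-(K_X-B)$ agree in codimension one, so that $c_1(G_+)=D+K_X-B$. The cokernel of $G_+\hookrightarrow F_-(K_X-B)$ is a priori supported on the divisor $Z(s\tau_{B'})=Z(s)\cup\nu_0^{-1}(B')$, and none of the tools you invoke (Lemma \ref{lem:Y0-Cartier}(iii), squarefreeness of $\alpha$, local integrality of $Z(s)$, $B,B'\in\QQ\cdot\cf$) excludes a genuinely divisorial cokernel there: all the relevant divisors are vertical, and a vertical divisor is by no means trivial in $\Pic(Y_0)$, so ``contributes nothing to $c_1$'' does not follow. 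In fact, if one replaced $B$ by a strictly smaller divisor through which $f$ still factors, the same determinant chase would ``prove'' a false identity; so no argument can succeed without using the \emph{maximality} of $B$ from Definition \ref{defn:B}, which your proposal never invokes.

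The paper's proof supplies precisely this. From $G_+\hookrightarrow F_-(K_X-B)$ (via $\nu_0^*f+s\tau_{B'}$) one first concludes only that $D+\sigma(D)+K_X-B$ is effective. If it were nonzero, $\nu_0^*f+s\tau_{B'}$ would vanish along some prime divisor $C\subset Y_0$; since its restriction to $F_-$ is multiplication by $2s\tau_{B'}$, this forces $s\tau_{B'}|_C=0$ and hence $\nu_0^*f|_C=0$. If $C\neq\sigma(C)$, applying $\sigma$ gives $\nu_0^*f|_{\sigma(C)}=0$; if $C=\sigma(C)$, then $C$ is a connected component of the reduced, locally integral branch divisor $Z(s)$ and one checks $\nu_0^*f|_{2C}=0$. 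In either case $f$ vanishes along the descended divisor $C_0\subset X$, i.e.\ $f$ factors through $E^{\vee\vee}(K_X-B-C_0)$, contradicting the choice of $B$ as the largest such divisor. This descent-plus-maximality argument is the actual content of the lemma and is missing from your proposal; the equivariance statement, which you also only gesture at, is then routine from $\sigma(\nu_0^*f+s\tau_{B'})=\nu_0^*f-s\tau_{B'}$ once the isomorphism $\sO(-\sigma(D))\simeq\sO(D+K_X-B)$ has been produced.
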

\begin{proof}
Assume that $E$ is locally free; the proof goes similarly in general case.
For a natural section $\tau_{B'}\in\Gamma(X,{\mathcal O}(B'))$,
$\nu_0^*f+s\tau_{B'}:\nu_0^*E \rightarrow \nu_0^*E(K_X-B)$ splits into
$\nu_0^*E \rightarrow G_+ \hookrightarrow F_-(K_X-B) \subset \nu_0^*E(K_X-B)$.
Since $c_1(G_+)=-\sigma(D)$ and $c_1(F_-(K_X-B))=D+K_X-B$,
the divisor $D+\sigma(D)+K_X-B$ is represented by a positive one.
If $D+\sigma(D)+K_X-B \neq 0$, then 
$\nu_0^*f +s\tau_{B'}|_C $ should be zero for some prime effective divisor $C$ on $Y_0$.
Remark that $C+\sigma(C)$ descends to a divisor $C_0$ on $X$.
The restriction of $\nu_0^* f+s\tau_{B'}$ to $F_-$ agrees with
$(\times 2s\tau_{B'}): F_- \rightarrow F_-(K_X-B)$, and thus
also the restriction $s\tau_{B'}\in \Gamma(Y_0, {\mathcal O}(K_X-B))$ to $C$
is zero. Hence $\nu_0^* f|_C$ itself is zero.
In case where $C\neq \sigma(C)$, also $\nu_0^* f|_{\sigma(C)}$ is zero,
and thereby $f|_{C_0}$ is zero.
In case where $C=\sigma(C)$, 
$C$ is contained in the ramification locus of $\nu_0$, that is $Z(s)$. 
By its definition $Z(s)$ is reduced, and $Z(s)$ is locally irreducible
since every fiber of $X$ is irreducible.
Since $Z(s)$ is locally integral and $C$ is prime, 
$C$ agrees with a connected component of $Z(s)$ as subschemes.
From this one can check that $\nu_0^*f|_{2C}$ is zero, 
and hence $f|_{C_0}$ is zero.
In each cases the restriction of $f:E\rightarrow E(K_X-B)$ to $C_0$ is zero,
which contradicts to the choice of $B$.
Therefore $\nu_0^*f+s\tau_{B'}: G_+ \subset F_-(K_X-B)$ induces an isomorphism
$\iota:\sO(-\sigma(D)) \simeq \sO(D+K_X-B)$.
Moreover, since $\sigma(\nu_0^* f+s\tau_{B'})=\nu_0^*f-s\tau_{B'}$, one can
verify $\sigma(\iota(a))=\sigma(\iota)(\sigma(a))$ for any local section $a$
of $G_+$. Thus $\iota$ is $\ZZ/2$-equivariant.
\end{proof}
\begin{rem}\label{rem:DegFilt=0}
For the generic point $\eta' \rightarrow C$, the degree of line bundles
$\sO(D)_{\eta'}$ and $\sO(\sigma(D))_{\eta'}$ on $Y_{\eta'}$ are zero or less,
since $E_{\eta'}$ is semistable. Because of Lemma \ref{lem:D+sD},
the degree of them are zero.
Thus, for $\pi_C: Y \rightarrow C$ and  every closed point $q\in C$,
reduced Hilbert polynomials of $\sO(-2D+B)_{\pi_C^{-1}(q)}$ and
$\sO(2D+K_X)_{\pi_C^{-1}(q)}$ equal to that of $\sO_{\pi_C^{-1}(q)}$.
\end{rem}
%
The natural map $\ad(f): Hom_X(E,E) \rightarrow Hom_X(E,E(K_X))$ defined by
$\ad(f)(a)=f\circ a-a\circ f$ induces the following distinguished triangle (d.t.)
in $D^b(X)$
\begin{equation}\label{eq:f-D(X)}
 RHom_X(E,E) \overset{\ad(f)}{\longrightarrow} RHom_X(E,E(K_X))
 \longrightarrow Mc(\ad(f)) \longrightarrow \cdot ,
\end{equation}
where $Mc$ means mapping cone.
By applying the functor $R\Gamma_X(\cdot)$ to this d.t.,
we get the following d.t. in $D^b(\CC)$.
\begin{equation*}\label{eq:f-D(C)}
 R\Hom_X(E,E) \overset{\ad(f)}{\longrightarrow} R\Hom_X(E,E(K_X))
 \longrightarrow R\Gamma_X(Mc(\ad(f))) \longrightarrow \cdot 
\end{equation*}
Let $H^i(\ad(f)): \Ext^i(E,E) \rightarrow \Ext^i(E,E(K_X))$
and $\sH^i(\ad(f)): Ext^i(E,E) \rightarrow Ext^i(E,E(K_X))$ denote
homomorphisms induced by $\ad(f)$.
This d.t. leads to an exact sequence:
\begin{multline*}
 0 \longrightarrow \Hom(E,E) \overset{H^0(\ad(f))}\longrightarrow \Hom(E,E(K_X)) 
 \longrightarrow \HH^0(Mc(\ad(f))) \\
 \longrightarrow \Ext^1(E,E)
 \overset{H^1(\ad(f))}{\longrightarrow} \Ext^1(E,E(K_X)). 
\end{multline*}
Since $E$ is stable, one can check from this exact sequence that
\begin{multline}\label{eq:dimIm-2ext2=}
\rk\left( H^1(\ad(f)): \Ext^1(E,E) \rightarrow \Ext^1(E,E(K_X)) \right)-2\ext^2(E,E)^0 \\
= -\chi(E,E)+2p_g(X)-h^0(Mc(\ad(f))).
\end{multline}
We shall look at $h^0(Mc(\ad(f)))$ further.
Since $RHom_X(E,E)$ belongs to $D^{[0,1]}(X)$, $Mc(\ad(f))$ does to $D^{[-1,1]}(X)$,
and so $Mc(\ad(f))[-1]$ does to $D^{[0,2]}(X)$.
By exact sequences associated with spectral sequences \cite[Thm. 5.12]{CartanE:Homology},
we have an exact sequence
\[ 0 \longrightarrow \HH^1(\sH^{-1}(Mc(\ad(f)))) \longrightarrow
  \HH^0(Mc(\ad(f))) \longrightarrow \HH^0(\sH^0(Mc(\ad(f)))), \quad\text{and hence}\]
%
\begin{equation}\label{eq:hMc}
h^0(Mc(\ad(f))) \leq h^1(\sH^{-1}(Mc(\ad(f))))+h^0(\sH^0(Mc(\ad(f)))). 
\end{equation}
The d.t. \eqref{eq:f-D(X)} induces a long exact sequence in $\Coh(X)$:
\begin{multline}\label{eq:Epq-Mc}
0 \longrightarrow \sH^{-1}(Mc(\ad(f))) \longrightarrow
 Hom(E,E) \overset{H^0(\ad(f))}{\longrightarrow} Hom(E,E(K_X)) \longrightarrow \\
 \sH^0(Mc(\ad(f))) \longrightarrow Ext^1(E,E) \overset{H^1(\ad(f))} {\longrightarrow}
 Ext^1(E,E(K_X)) \longrightarrow \sH^1(Mc(\ad(f))) \longrightarrow 0.
\end{multline}
Now, $\ad(f)$ and
the map $f_+: Hom_X(E,E) \rightarrow Hom_X(E,E(K_X))$ defined by
$f_+(a)=f\circ a+a\circ f$ give exact sequences in $\Coh(X)$:
\begin{align}\label{eq:defn-FGRQ}
 0 \longrightarrow {\mathcal F}=Ker(\ad(f)) \longrightarrow End(E) &
   \longrightarrow {\mathcal G}=Im(\ad(f)) \longrightarrow 0,  \\
 0 \longrightarrow {\mathcal R}=Ker(f_+) \longrightarrow End(E) &
   \longrightarrow {\mathcal Q}=Im(f_+) \longrightarrow 0.  \notag
\end{align}
From \eqref{eq:Epq-Mc} and \eqref{eq:defn-FGRQ},
$\sH^{-1}(Mc(\ad(f)))$ equals to ${\mathcal F}$.
\begin{lem}\label{lem:sF}
$\sF\simeq \Ox \oplus\  \sO(B-K_X)\otimes I_Z$, where $Z$ is a zero-dimensional
subscheme in $X$.
\end{lem}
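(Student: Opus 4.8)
The plan is to exploit the trace decomposition of the endomorphism sheaf together with the divisor $B$ that is built into $f$. First note that $f$ is a section of the traceless part $Hom^0(E,E(K_X))$, so by the defining maximality property of $B$ (Definition \ref{defn:B}) it factors, when $E$ is locally free, as $f=\tau_B\cdot f'$ with $f'\colon E\to E(K_X-B)$ again traceless and $\tau_B\in\Gamma(\sO(B))$ the tautological section. Equivalently, $f'$ is a global section of $End^0(E)\otimes\sO(K_X-B)$, i.e. a sheaf monomorphism $\phi\colon\sO(B-K_X)\hookrightarrow End^0(E)$. Since $\operatorname{char}\CC=0$, the trace gives a canonical splitting $End(E)=\Ox\cdot\id\oplus End^0(E)$, and $\ad(f)$ kills $\id$ while landing entirely in $Hom^0(E,E(K_X))$; hence $\sF=\Ker(\ad f)=\Ox\cdot\id\oplus\sF^0$, where $\sF^0:=\sF\cap End^0(E)=\Ker(\ad f|_{End^0(E)})$.

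Next I would pin down $\sF^0$. It is nonzero because $f'$ commutes with $f$ (both are $\tau_B$ times $f'$, and $f'$ commutes with itself), so $\IIm\phi\subseteq\sF^0$. It has rank $1$ and not $2$: rank $2$ would force every traceless endomorphism of $E_\xi$ ($\xi$ the generic point of $X$) to commute with $f_\xi$, forcing $f_\xi$ scalar, which contradicts $f$ being traceless and nonzero (one may also invoke Lemma \ref{lem:DetNeq0-I}). Thus $\sF^0$ is a rank-one torsion-free sheaf on the smooth surface $X$, hence of the form $L\otimes I_Z$ with $L$ a line bundle and $Z\subseteq X$ zero-dimensional. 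The monomorphism $\phi$ gives $\sO(B-K_X)\hookrightarrow L\otimes I_Z$, so $L=\sO(B-K_X+C)$ for some effective $C\ge 0$. If $C\neq 0$, then factoring $\phi$ through its saturation rewrites $f'=\tau_C\cdot g$ with $g\colon E\to E(K_X-B-C)$ traceless, whence $f=\tau_{B+C}\cdot g$ factors through the natural inclusion $E(K_X-B-C)\hookrightarrow E(K_X)$ with $B+C$ effective strictly larger than $B$, contradicting the maximality of $B$. Hence $C=0$, $L=\sO(B-K_X)$, and $\sF\simeq\Ox\oplus\sO(B-K_X)\otimes I_Z$.

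For locally free $E$ one in fact gets $Z=\varnothing$: there $End^0(E)$ is locally free, the saturated subsheaf $\sF^0$ is reflexive, hence a line bundle, and the argument above gives it exactly. For a merely torsion-free $E$ I would run the identical computation with $E^{\vee\vee}$ in place of $E$ (the divisor $B$ of Definition \ref{defn:B} is by construction the same), obtaining $\sF^{\vee\vee}=\Ox\oplus\sO(B-K_X)$; since $\sF=\Ox\cdot\id\oplus\sF^0$ with the first summand already saturated, comparing reflexive hulls gives $(\sF^0)^{\vee\vee}=\sO(B-K_X)$ and therefore $\sF^0=\sO(B-K_X)\otimes I_Z$ for a zero-dimensional $Z$ supported where $E$ fails to be locally free. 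The only step that needs care is precisely this passage to $E^{\vee\vee}$ — checking that the factorization of $f$, the commutation $\IIm\phi\subseteq\sF^0$, and the $\Ox\cdot\id$ summand all behave correctly so that the entire codimension-two discrepancy is absorbed into the second summand; the locally free case itself is essentially immediate.
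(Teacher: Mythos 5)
Your argument is correct, and it takes a genuinely different route from the paper's. The paper proves the lemma by pulling everything back to the double cover $\nu_0\colon Y_0\to X$ attached to $\det f$: it uses the eigen-sheaf sequences \eqref{eq:defn-Fpm}, a $3\times 3$ diagram exhibiting $\nu_0^*\sF$ as an extension of $Hom(G_+,G_+)=\sO_{Y_0}$ by $Hom(G_-,F_+)$ (split by the identity/trace section), computes $c_1(Hom(G_-,F_+))=\sigma(D)+D=B-K_X$ via Lemma \ref{lem:D+sD}, and then descends $\ZZ/2$-equivariantly. You instead stay on $X$: you split $\sF=\Ox\cdot\id\oplus\sF^0$ by the trace, identify $\sF^0$ as the saturation of the line generated by $f'=f/\tau_B$ inside $\End^0(E)$ (rank one because the traceless centralizer of a non-scalar $2\times 2$ matrix over $k(X)$ is one-dimensional -- note the full centralizer has dimension $2$ or $4$, which is the fact behind your ``rank $2$ is impossible'' remark), and you pin down its first Chern class by the maximality clause of Definition \ref{defn:B} rather than by Lemma \ref{lem:D+sD}. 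Your route is more elementary and in fact more general: it never uses $\det f\neq 0$, hence does not need Assumption \ref{assump:I}, and it makes transparent that $Z$ is supported on the non-locally-free locus of $E$; the only points needing the care you already flag are running the $C=0$ step on $E^{\vee\vee}$ (Definition \ref{defn:B} is phrased for $E^{\vee\vee}$) and checking that $\sF$ for $E$ and for $E^{\vee\vee}$ agree outside codimension two, both routine. What the paper's heavier setup buys is reuse: the same diagram on $Y_0$ immediately yields Lemma \ref{lem:sQ} and the sequences \eqref{eq:decompR-I} and \eqref{eq:decompG-I} needed later in Section \ref{sctn:rkH1-caseA1}, so its cost is amortized, whereas your argument, taken alone, gives only $\sF$.
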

\begin{proof}
Assume that $E$ is locally free; the proof goes similarly in general case.
We shall use the following commutative diagram in $\Coh(Y_0)$:
\begin{equation}
\xymatrix{
 &  0 \ar[d] & 0 \ar[d] & 0 \ar[d]  & \\
0 \ar@{.>}[r] & Hom(G_-, F_+)  \ar[d] \ar@{.>}[r] & \nu_0^*(\sF) \ar[d] \ar@{.>}[r]& 
Hom(G_+,G_+) \ar[d]  \ar@{.>}[r] & 0\\
0 \ar[r] & Hom(\nu_0^*E, F_+) \ar[d] \ar[r] & End(\nu_0^* E) \ar[d] \ar[r] & 
Hom(\nu_0^*E,G_+) \ar[d] \ar[r] & 0\\
 & Hom'(F_-,F_+) \ar[d] & \nu_0^*(\sG) \ar[d] & Hom'(F_+,G_+) \ar[d] & \\
 & 0  & 0 & 0 & }
\end{equation}
Here, the second row and all columns are exact;
the second row and the first and third columns come from \eqref{eq:defn-Fpm},
and the second column does from \eqref{eq:defn-FGRQ};
the sheaf $Hom'(F_-,F_+)$ in the $(3,1)$-component means 
the image sheaf of natural map $Hom(\nu_0^*E,F_+) \rightarrow Hom(F_-,F_+)$, and
the sheaf $Hom'(F_+,G_+)$ in the $(3,3)$-component is defined similarly.
The map $f:E\rightarrow E(K_X)$
keeps subsheaves $F_{\pm}$ and quotient sheaves $G_{\pm}$ unchanged,
acts on $F_-$ and $G_+$ by multiplication by $s\tau$, and
acts on $F_+$ and $G_-$ by multiplication by $-s\tau$.
Thus one can check that
the sheaf $Hom(G_-, F_+)$ in the $(1,1)$-component is naturally
contained in $\nu_0^*(\sF)$ and that
we can induce a map from the $(1,2)$-component to the $(1,3)$-component,
since $\ad(f)$ acts on $Hom'(F_+,G_+)$ by multiplication by $2s\tau$, which is injective.
By diagram chasing, one can check that the first row is exact,
since the trace map ${\mathcal O}_{Y_0} \rightarrow End(\nu_0^* E)$ gives a splitting
of the right side.
The first Chern class of $(1,1)$-component is $c_1(F_+)-c_1(G_-)=\sigma(D)+D=B-K_X$
from Lemma \ref{lem:D+sD}, and so
$(1,1)$-component equals $\nu_0^* {\mathcal O}(B-K_X)$. 
This isomorphism 
$\nu_0^*(\sF) \simeq \nu_0^*(\Ox \oplus {\mathcal O}(B-K_X))$ is $\sigma$-equivariant,
and so it deduces this lemma by descent theory.
\end{proof}
As to $\sH^0(Mc(\ad(f)))$, \eqref{eq:Epq-Mc} and \eqref{eq:defn-FGRQ} deduce
exact sequences:
\begin{align}\label{eq:sH0(Mc)}
 0 \longrightarrow Cok(H^0(\ad(f))) \longrightarrow &
\sH^0(Mc(\ad(f))) \longrightarrow  Ker(H^1(\ad(f)))  \longrightarrow 0, \\
0 \longrightarrow \sR(K_X)/\sG \longrightarrow &
 Cok(H^0(\ad(f))) \longrightarrow \sQ(K_X)  \longrightarrow 0. \notag
\end{align}
\begin{lem}\label{lem:sQ}
$\sQ\simeq \Ox \oplus\  \sO(K_X-B)\otimes I_T$, where $T$ is a zero-dimensional
subscheme in $X$.
\end{lem}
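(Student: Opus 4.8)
The plan is to run the proof of Lemma~\ref{lem:sF} again, with $\ad(f)$ and $\sF=\Ker(\ad f)$ replaced throughout by $f_+$ and $\sQ=Im(f_+)$.

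First I would assume $E$ is locally free (the general case being similar, as in Lemma~\ref{lem:sF}) and pull everything back along $\nu_0\colon Y_0\to X$. By Hamilton--Cayley $\nu_0^*f$ has eigenvalues $\pm s\tau$, so we have the two exact sequences \eqref{eq:defn-Fpm}, $0\to F_\pm\to\nu_0^*E\to G_\pm\to 0$ with $G_\pm\subset F_\mp(K_X)$, and $\sigma$ interchanges $F_+\leftrightarrow F_-$ and $G_+\leftrightarrow G_-$. The key elementary observation, replacing ``$\ad(f)$ acts by multiplication by $2s\tau$'' of Lemma~\ref{lem:sF}, is that $f$ acts by $-s\tau$ on $F_+$ and on $G_-$, and by $+s\tau$ on $F_-$ and on $G_+$; consequently $\nu_0^*f_+$ (left plus right multiplication by $\nu_0^*f$) is the injective \emph{multiplication by $-2s\tau$} on the subsheaf $Hom(G_-,F_+)\subset\End(\nu_0^*E)$, is multiplication by $+2s\tau$ on $Hom(G_+,F_-)$, and vanishes on $Hom(G_-,F_-)$ and $Hom(G_+,F_+)$ --- the latter two spanning $\nu_0^*\sR$ generically, the former two mapping onto $\nu_0^*\sQ$ generically.

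Next I would write the $3\times3$ commutative diagram of sheaves on $Y_0$, with exact rows and columns, whose middle column is the $\nu_0$-pullback $0\to\nu_0^*\sR\to\End(\nu_0^*E)\to\nu_0^*\sQ\to0$ of the defining sequence in \eqref{eq:defn-FGRQ}, and whose middle row is $0\to Hom(\nu_0^*E,F_+)\to\End(\nu_0^*E)\to Hom(\nu_0^*E,G_+)\to0$ (obtained by applying $Hom(\nu_0^*E,-)$ to the first sequence of \eqref{eq:defn-Fpm}); the outer columns are obtained, exactly as in Lemma~\ref{lem:sF}, by applying $Hom(-,F_+)$ and $Hom(-,G_+)$ to the second sequence of \eqref{eq:defn-Fpm}, with the associated ``$Hom'$'' image sheaves. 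Chasing this diagram --- using that $\nu_0^*f_+$ is injective on the corner piece on which it acts by $\pm2s\tau$, so that it induces the required maps between the outer terms --- gives a short exact sequence
\begin{equation*}
0\longrightarrow\mathcal{A}\longrightarrow\nu_0^*\sQ\longrightarrow\mathcal{B}\longrightarrow0 ,
\end{equation*}
where $\mathcal{B}\cong\sO_{Y_0}$ and $\mathcal{A}\cong\nu_0^*\sO(K_X-B)$, the latter identification using Lemma~\ref{lem:D+sD} to evaluate the class $D+\sigma(D)$, Lemma~\ref{lem:Y0-Cartier}(iii) (every Weil divisor on $Y_0$ is Cartier, so $F_\pm$ and the relevant $Hom$-sheaves are line bundles up to ideal-sheaf twists), and the fact that $\operatorname{div}(s\tau)$ is linearly equivalent to $\nu_0^*K_X$, which cancels the twist by $K_X$ produced by $f_+(\,\cdot\,)=\pm2s\tau(\,\cdot\,)$.

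This extension splits: $f_+(\id_E)=2f$ defines a canonical map $\sO_X\to\sQ$ whose pullback is a section of $\nu_0^*\sQ\to\mathcal{B}\cong\sO_{Y_0}$, just as the trace $\sO_{Y_0}\to\End(\nu_0^*E)$ provided the splitting in Lemma~\ref{lem:sF}. The resulting isomorphism $\nu_0^*\sQ\simeq\nu_0^*(\Ox\oplus\sO(K_X-B)\otimes I_T)$ is $\ZZ/2$-equivariant --- because $\sigma(\nu_0^*f_+)=-\nu_0^*f_+$, $\sigma$ interchanges $F_\pm$ and $G_\pm$, and acts trivially on the trace section and on $K_X$ and $B$ --- so by descent it descends to the claimed isomorphism on $X$, with $T$ zero-dimensional.

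The main obstacle I expect is the diagram chase itself: arranging the $3\times3$ diagram so that all rows and columns are genuinely exact, since $G_\pm$ are only torsion-free (not locally free), so the ``$Hom'$'' image sheaves and the extra torsion appearing along the zero locus of $\det f$ must be accounted for; and, within this, getting the signs right in the identification of $\mathcal{A}$ --- i.e.\ checking the twist coming from $f_+$ against Lemma~\ref{lem:D+sD} so that $\mathcal{A}$ comes out as $\sO(K_X-B)$ rather than $\sO(B-K_X)$, and so that all the residual torsion is absorbed into the zero-dimensional scheme $T$.
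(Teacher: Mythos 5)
Your overall strategy is the paper's: pull back along $\nu_0$, use the eigen-decomposition \eqref{eq:defn-Fpm}, build a $3\times3$ diagram whose middle column is $0\to\nu_0^*\sR\to End(\nu_0^*E)\to\nu_0^*\sQ\to0$ and whose middle row is the $F_+/G_+$ target filtration, read off the two constituents of $\nu_0^*\sQ$ via Lemma \ref{lem:D+sD}, split, and descend $\ZZ/2$-equivariantly; and your eigenvalue bookkeeping for $f_+$ is correct. But two of your concrete steps fail as written. First, the outer columns cannot be taken ``exactly as in Lemma \ref{lem:sF}'' nor both from the second sequence of \eqref{eq:defn-Fpm}: by your own table the subobjects of those columns (e.g.\ $Hom(G_-,F_+)$, on which $f_+$ acts by $-2s\tau$) are not killed by $f_+$, so the top row of such a diagram does not map to $\nu_0^*\sR$ and the rows do not align with the middle column. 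One has to swap the sequences relative to Lemma \ref{lem:sF}: apply $Hom(-,F_+)$ to $0\to F_+\to\nu_0^*E\to G_+\to0$ and $Hom(-,G_+)$ to $0\to F_-\to\nu_0^*E\to G_-\to0$, so that the column subs $Hom(G_+,F_+)$ and $Hom(G_-,G_+)$ are $f_+$-killed and assemble with $\nu_0^*\sR$, while the column quotients $Hom(F_+,F_+)$ and $Hom(F_-,G_+)$ (up to the zero-dimensional $Ext^1$ terms) are the constituents of $\nu_0^*\sQ$; that is the diagram in the paper.

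Second, and more seriously, your extension is the wrong way round, and with it your splitting device collapses. Since $\nu_0^*\sQ\simeq End(\nu_0^*E)/\nu_0^*\sR$, the subobject induced by any of the natural filtrations of $End(\nu_0^*E)$ is the image of the chosen subsheaf, whose $f_+$-injective part is a diagonal piece; hence the trivial factor $Hom(F_+,F_+)\simeq\sO_{Y_0}$ occurs as the \emph{sub} of $\nu_0^*\sQ$, and the piece with $c_1=-D-\sigma(D)=K_X-B$ (a subsheaf $\Ker(j)\subset Hom(F_-,G_+)$, determined only up to the zero-dimensional $Ext^1(G_-,G_+)$, which is where $I_T$ comes from) occurs as the \emph{quotient} --- the opposite of your $0\to\mathcal A\to\nu_0^*\sQ\to\mathcal B\to0$. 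A canonical section such as $f_+(\Id_E)=2f$ could only split off a \emph{trivial quotient}; here the quotient has $c_1=K_X-B$, and the image of $2f$ in it is $2s\tau$ times the (generically isomorphic, but degenerate) map $F_-\to G_+$, so the composite $\sO\to\nu_0^*\sQ\to\Ker(j)$ is not an isomorphism and gives no splitting. The splitting in the paper comes instead from a \emph{retraction}: one checks $\sR\subseteq\Ker\bigl(\tr: End(E)\to\Ox\bigr)$, so the trace descends to $\tr:\sQ\to\Ox$ and retracts the trivial sub. That verification, and the structural difference from Lemma \ref{lem:sF} (where the trivial piece of $\sF$ is the quotient, so the identity section does split it), are the missing ingredients in your argument.
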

\begin{proof}
We prove this in case where $E$ is locally free.
Let us consider the following commutative diagram in $\Coh(Y_0)$
such that the second row and all columns are exact:
\begin{equation*}
\xymatrix{
 &  0 \ar[d] & 0 \ar[d] & 0 \ar[d]  & \\
 & Hom(G_+, F_+)  \ar[d] \ar@{.>}[r] & \nu_0^*(\sR) \ar[d] \ar@{.>}[r]& 
Hom(G_-,G_+) \ar[d] & \\
0 \ar[r] & Hom(\nu_0^*E, F_+) \ar[d] \ar[r] & End(\nu_0^* E) \ar[d] \ar[r] & 
Hom(\nu_0^*E,G_+) \ar[d] \ar[r] & 0\\
0 \ar@{.>}[r] & Hom(F_+,F_+) \ar[d] \ar@{.>}[r]^{\iota}& \nu_0^*(\sQ) \ar[d] 
\ar@{.>}[r]^{p} & Hom(F_-,G_+) \ar[d]^j & \\ 
 & Ext^1(G_+,F_+) \ar[d] & 0 & Ext^1(G_-,G_+) \ar[d] & \\
 & 0 &  & 0 & }
\end{equation*}
Similarly to the proof of Lemma \ref{lem:sF}, we can induce
homomorphisms in the first row, and hence homomorphisms in the third row.
One can check that $\sR$ is contained in the kernel of trace map,
so it induces a homomorphism $\operatorname{tr}: \sQ \rightarrow \Ox$.
By this homomorphism and diagram chasing, one can verify that the third row
deduces a splitting exact sequence
\[ 0 \longrightarrow Hom(F_+,F_+) \longrightarrow \nu_0^*(\sQ)
 \longrightarrow Ker(j) \longrightarrow 0.\]
Since the support of $Ext^1(G_-,G_+)$ is zero-dimensional,
$c_1(Ker(j))=c_1(Hom(F_-,G_+))=-D-\sigma(D)=K_X-B$.
%
\end{proof}
Since 
$\chi(E,E)=\chi(End(E))-\chi(Ext^1(E,E))=\chi(\sF)+\chi(\sG)-\chi(Ext^1(E,E))$
and $h^0(\sF)\leq h^0(End(E))=1$, the following estimation holds:
\begin{align}\label{eq:dimIm-2ext2>}
 & \rk(H^1(\ad(f)))-2\ext^2(E,E)^0 \\
\geq & -\chi(E,E)+2p_g-[h^1(\sF)+h^0(\sH^0(Mc(\ad(f)))] \quad (\text{By }
\eqref{eq:dimIm-2ext2=}, \eqref{eq:hMc}) \notag \\
\geq & -\chi(E,E)+2p_g-[h^1(\sF)+h^0(\sR(K_X)/\sG)+h^0(\sQ(K_X))+h^0(Ker(\sH^1(\ad(f))))]
\quad (\text{By }\eqref{eq:sH0(Mc)}) \notag \\
= & l(Ext^1(E,E))+2p_g-[\chi(\sF)+\chi(\sG)+h^0(\sR(K_X)/\sG)+
 h^1(\sF)+h^0(\sQ(K_X))+l(Ker(\sH^1(\ad(f))))]
 \notag \\
=& l(Im(\sH^1(\ad(f))))+2p_g-[h^0(\sF)+h^2(\sF)+h^0(\sR(K_X)/\sG)+h^0(\sQ(K_X))] \notag \\
 & +\chi(\sR(K_X)/\sG)-\chi(\sR(K_X)) \notag \\
\geq &
-\chi(\sR(K_X)) +l(Im(\sH^1(\ad(f))))+2p_g-[h^1(\sR(K_X)/\sG)+1+2p_g \notag \\
 & +h^2({\mathcal O}(B-K_X)\otimes I_Z) +h^0({\mathcal O}(2K_X-B)\otimes I_T)]
\quad(\text{By Lemma \ref{lem:sF}, Lemma \ref{lem:sQ}}) \notag \\
= & -\chi(\sR(K_X))+l(Im(\sH^1(\ad(f)))) \notag \\
  & -[1+h^0({\mathcal O}(2K_X-B)) +h^0({\mathcal O}(2K_X-B)\otimes I_T)
 +h^1(\sR(K_X)/\sG)]. \notag
\end{align}
With \eqref{eq:dimIm-2ext2>} in mind,
let us calculate $\chi(\sR(K_X))$.
Because of the construction of flat covering $\nu_0$,
\[\chi(\nu_0^*\sR(K_X))=\chi(\sR(K_X))+\chi(\sR(K_X)\otimes \sL^{-1})=2\chi(\sR(K_X)),\]
since
$c_1(\sR)$ equals $B-K_X\in \QQ\cdot\cf$ by Lemma \ref{lem:sQ}.
When $E$ is locally free, the commutative diagram in the proof of
Lemma \ref{lem:sQ} and the snake lemma give an exact sequence
\[ 0 \longrightarrow Hom(G_+, F_+)  \longrightarrow \nu_0^*(\sR) \longrightarrow
Hom(G_-,G_+) \longrightarrow Ext^1(G_+,F_+) \longrightarrow 0.\]
Since $Y_0$ is Cartier by Lemma \ref{lem:Y0-Cartier} and
the first Chern class of $Hom(G_+, F_+)$ (resp. $Hom(G_-,G_+)$) is
$2\sigma(D)$ (resp. $-2\sigma(D)+B-K_X$),
we have an exact sequence on $Y_0$
\begin{equation*}
 0 \longrightarrow {\mathcal O}_{Y_0}(2\sigma(D))\otimes I_{\sigma U'} 
  \longrightarrow \nu_0^* \sR
  \longrightarrow {\mathcal O}_{Y_0}(-2\sigma(D)+B-K_X)\otimes I_{\sigma U} 
\longrightarrow 0, 
\end{equation*}
where $U,\ U'$ are zero-dimensional subschemes of $Y_0$.
Applying $\sigma$ to this sequence and twisting it by $K_X$, we get
an exact sequence on $Y_0$
\begin{equation}\label{eq:decompR-I}
 0 \longrightarrow {\mathcal O}_{Y_0}(2D+K_X)\otimes I_{U'} 
  \longrightarrow \nu_0^* \sR(K_X)
  \longrightarrow {\mathcal O}_{Y_0}(-2D+B)\otimes I_U \longrightarrow 0.
\end{equation}
Using Lemma \ref{lem:Y0-Cartier}, one can check that
\[\chi(Y_0,{\mathcal O}_{Y_0}(2D+K_X))=\chi(Y, \phi^*{\mathcal O}_{Y_0}(2D+K_X))
=2D^2+2\chi(\Ox)=2D^2+2d, \]
where the intersection number $D^2$ is calculated in $Y$, for
Lemma \ref{lem:D+sD} implies $D\cdot \nu^*\cf=0$.
Summing up, we have that $\chi(\sR(K_X))=2D^2+2d-(l(U)+l(U'))/2$.
\begin{prop}\label{prop:-D2>4d}
Under Assumption \ref{assump:I}, $-D^2 \geq 4d$.
\end{prop}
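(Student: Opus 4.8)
The plan is to exploit the restriction of the exact sequence \eqref{eq:decompR-I} to the generic fiber together with the semistability of $E$, and to combine this with the canonical bundle formula \eqref{eq:CanBdleFormula} as it appears through \eqref{eq:kx}. The key datum is that $D\cdot\nu^*\cf=0$ by Lemma \ref{lem:D+sD}, so $D$ restricts to a fiber-degree-zero divisor on each fiber of $\pi_C:Y\to C$; on the generic fiber $Y_{\eta'}$ this forces $\sO(D)_{\eta'}$ to be a (non-trivial) torsion line bundle. The point of Assumption \ref{assump:I} (Case I) is exactly that $\sO(D)_{\eta'}$ is \emph{not} rational over $k(\PP^1)$, hence $D$ is not merely pulled back from $C$; this is what ultimately produces the quantitative bound.

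\textbf{Main steps.} First I would record that, by \eqref{eq:decompR-I} and additivity of $\chi$ on $Y_0$ (using Lemma \ref{lem:Y0-Cartier}(iii) so that all the Weil divisors involved are Cartier), one has a formula for $\chi(\nu_0^*\sR(K_X))=2\chi(\sR(K_X))$ in terms of $D^2$, which the excerpt has already carried out: $\chi(\sR(K_X))=2D^2+2d-(l(U)+l(U'))/2$. Since $\chi(\sR(K_X))$ is bounded above by $\chi(\sO(2D+K_X)_{Y_0})=2D^2+2d$, nothing yet is forced; the real input must be negativity of $D^2$ coming from geometry of $Y\to C$. So the substantive step is: estimate $D^2$ directly on $Y$ using $D\cdot\nu^*\cf=0$ and the structure of $\pi_C:Y\to C$. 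Writing $D$ as a vertical-plus-horizontal decomposition with respect to $\pi_C$ is impossible since $D$ is fiber-degree zero; instead $D$ differs from a vertical divisor by something supported on finitely many fibers. The cleanest route is to use that $\sO(D)_{\eta'}$ is a torsion point of $\Pic^0(Y_{\eta'})$ of some order, say $\ell\geq 2$ (it is nontrivial because $E_{\eta'}\simeq\sO(F)\oplus\sO(-F)$ with $\sO(F)$ nontrivial, by Fact \ref{fact:RestrGenericFib}), and then relate $D^2$ to the multiple fibers of $Y\to C$ via Proposition \ref{prop:order-O(F)F} and the canonical bundle formula for $Y$.

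\textbf{The heart of the argument.} The cleanest way to see $-D^2\geq 4d$ is: since $D\cdot\nu^*\cf=0$ and $D$ is $\ZZ/2$-anti-invariant up to $K_X-B$ by Lemma \ref{lem:D+sD} (so $D+\sigma(D)\equiv\nu^*(B-K_X)$, a multiple of $\nu^*\cf$), we get $D^2=D\cdot D$ where I may replace one copy of $D$ by $-\sigma(D)$ modulo vertical classes, giving $D^2=-D\cdot\sigma(D)+$ (vertical correction that vanishes against $D$). Thus $-D^2=D\cdot\sigma(D)$, and $D\cdot\sigma(D)$ is the self-intersection of a divisor restricting to a nontrivial torsion line bundle of order $\ell$ on the generic fiber of an elliptic surface with $\chi(\sO_Y)=2d$ (Lemma \ref{lem:Y0-Cartier}(ii)). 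The standard fact—via Kodaira's canonical bundle formula \eqref{eq:CanBdleFormula} applied to $\pi_C:Y\to C$, or equivalently via the height pairing on the Jacobian of $Y_{\eta'}$—is that such a class has self-intersection $\leq -2\chi(\sO_Y)\cdot(1-1/\ell)\cdot(\text{something})$; more precisely one uses that an $\ell$-torsion section of $J(Y)\to C$ contributes a negative correction term to the Shioda--Tate / height computation. Working out the exact constant, one finds $D\cdot\sigma(D)\geq 2\chi(\sO_Y)=4d$ precisely when the torsion order is at least $2$, which Case I guarantees.

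\textbf{Expected obstacle.} The main difficulty is the last point: pinning down the precise numerical inequality for the self-intersection of a torsion divisor on $Y$ and checking the constant comes out to exactly $4d$ rather than something smaller. One must be careful because $Y$ is the \emph{canonical resolution} of the double cover $Y_0$, so $D^2$ computed on $Y$ may differ from a naive count on $Y_0$ by contributions from exceptional curves of $\phi:Y\to Y_0$; these sit over the singular points of $Y_0$, which lie over the branch locus $Z(\alpha)\subset X$. Tracking that the exceptional contribution does not spoil the bound—indeed it should only help, since it makes $D^2$ more negative or leaves it unchanged when $D$ is pulled back from $Y_0$—requires using that $Z(s)$ is locally integral (Setting \ref{stng:XandH}) and that $D$ is Cartier on $Y_0$. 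I expect that once one reduces to the torsion-line-bundle statement on $Y_{\eta'}$ and applies the Shioda--Tate formula together with $\chi(\sO_Y)=2d$ and Proposition \ref{prop:order-O(F)F}, the inequality $-D^2\geq 4d$ falls out, with equality exactly when every multiple fiber of $Y\to C$ contributes minimally.
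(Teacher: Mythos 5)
Your opening reduction is fine ($D\cdot\nu^*\cf=0$ and $D+\sigma(D)\equiv\nu^*(B-K_X)$ vertical, hence $D^2=-D\cdot\sigma(D)$), but the step that is supposed to produce the number $4d$ rests on a false premise: in Case I the line bundle $\sO(D)_{\eta'}$ on $Y_{\eta'}$ need \emph{not} be torsion, and fiber degree zero certainly does not force torsion. Fact \ref{fact:RestrGenericFib} only gives, in Case I, that $E_{\bar{\eta}}\simeq\sO(F)\oplus\sO(-F)$ with $\deg\sO(F)=0$ and $\sO(F)$ not rational over $k(\PP^1)$; the order-two condition you invoke is the defining feature of Case II, not Case I. Indeed, as recalled in Section \ref{sbsctn:EvenFibDeg}, for sheaves of Case I the associated section $A$ of $\FF_{2k}$ moves in a linear system of dimension $2c_2-2p_g-1$, so the corresponding point of $\Pic^0$ of the generic fiber has infinite order in general, and nothing in Assumption \ref{assump:I} (or in the existence of the obstruction $f$) supplies a torsion order $\ell$. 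Consequently the Shioda--Tate/height computation for an $\ell$-torsion section that you appeal to is unavailable, and even on its own terms your bound is left in the form ``$\leq -2\chi(\sO_Y)(1-1/\ell)\cdot(\text{something})$'' with the constant never pinned down; so the inequality $-D^2\geq 4d$ is not actually derived. The auxiliary computation with $\chi(\sR(K_X))$ from \eqref{eq:decompR-I} plays no role in this proposition, as you yourself note.

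What makes the statement true, and what the paper does, uses only the \emph{non-triviality} of $\sO(D)_{\eta'}$ and is much more elementary. Since $\phi^*(D)\cdot\nu^*\cf=0$, any effective divisor rationally equivalent to $\phi^*(D)$ would be supported on fibers of $Y\rightarrow C$, so $\phi^*\sO_{Y_0}(D)$ would restrict trivially to $Y_{\eta'}$, contradicting Assumption \ref{assump:I}; hence $h^0(\phi^*\sO_{Y_0}(D))=0$. By Serre duality and the fact that $K_Y=\nu^*(K_X\otimes\sL)$ lies in $\QQ\cdot\nu^*\cf$ (Lemma \ref{lem:Y0-Cartier}), the same argument gives $h^2(\phi^*\sO_{Y_0}(D))=0$. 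Riemann--Roch on $Y$, with $K_Y\cdot D=0$ and $\chi(\sO_Y)=2d$, then yields $0\geq\chi(\phi^*\sO_{Y_0}(D))=2d+D^2/2$, i.e.\ $-D^2\geq 4d$; here Remark \ref{rem:DegFilt=0} and Lemma \ref{lem:D+sD} supply the degree-zero statement, and your worry about the exceptional locus of $\phi$ disappears because one works throughout with the pullback $\phi^*\sO_{Y_0}(D)$, whose self-intersection is the $D^2$ in question ($D$ being Cartier on $Y_0$ by Lemma \ref{lem:Y0-Cartier}(iii)). If you want to salvage your route, you would have to prove a lower bound for $D\cdot\sigma(D)$ for an arbitrary non-trivial fiber-degree-zero class, which is essentially this Riemann--Roch argument again.
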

\begin{proof}
 If $h^0(\phi^*{\mathcal O}(D))\neq 0$, then $\phi^*(D)$ is rationally
 equivalent to a effective divisor on $Y$ which is supported on
 fibers of $Y\rightarrow C$, for $\phi^*(D)\cdot \cf=0$.
 Thus the restriction of $\phi^*{\mathcal O}_{Y_0}(D)$ to
 $Y_{\eta'}$ is isomorphic to ${\mathcal O}_{Y_{\eta'}}$.
 This contradicts to Assumption \ref{assump:I}.
 Similarly, $h^2(\phi^*{\mathcal O}(D))=0$.
 This implies that $0\geq \chi(\phi^*{\mathcal O}(D))=D^2/2+2d$.
\end{proof}
From \eqref{eq:dimIm-2ext2>} and Proposition \ref{prop:-D2>4d}, 
\begin{multline}\label{eq:dimIm-2ext2-h1-sRKoG-I}
\rk(H^1(\ad(f)))-2\ext^2(E,E)^0 \geq 6d+(l(U)+l(U'))/2 \\
+l(Im(\sH^1(\ad(f)))) 
-[1+h^0({\mathcal O}(2K_X-B)) +h^0({\mathcal O}(2K_X-B)\otimes I_T)
 +h^1(\sR(K_X)/\sG)]. 
\end{multline}
Now we shall consider $h^1(\sR(K_X)/\sG)$.
Similarly to \eqref{eq:decompR-I}, the proof of Lemma \ref{lem:sF} deduces
an exact sequence
\begin{equation}\label{eq:decompG-I}
0 \longrightarrow {\mathcal O}(2D+K_X-B)\otimes I_{W'} \longrightarrow
 \nu_0^* \sG \longrightarrow {\mathcal O}(-2D)\otimes I_W 
\longrightarrow 0, 
\end{equation}
where $W$ and $W'$ are some zero-dimensional subschemes of $Y_0$.
Thus we have the following commutative diagram \eqref{eq:decompR(K)/G-I}
whose rows and columns are exact;
its first row is \eqref{eq:decompG-I}, its second row is \eqref{eq:decompR-I},
and $\tau_B$ is a natural section of $\Gamma(\sO(B))$.
\begin{equation}\label{eq:decompR(K)/G-I}
\xymatrix{
 & 0 \ar[d] & 0 \ar[d] & 0 \ar[d] & \\
0 \ar[r] & {\mathcal O}(2D+K_X-B)\otimes I_{W'} \ar[r] \ar[d]^{\times \tau_B} &
 \nu_0^* \sG \ar[r] \ar[d] & {\mathcal O}(-2D)\otimes I_W \ar[r] 
 \ar[d]_{\times \tau_B} & 0 \\
0 \ar[r] & {\mathcal O}_{Y_0}(2D+K_X)\otimes I_{U'} \ar[r] \ar[d] &
 \nu_0^* \sR(K_X) \ar[r] \ar[d] & {\mathcal O}_{Y_0}(-2D+B)\otimes I_U 
\ar[r] \ar[d] & 0 \\
0 \ar[r] & Cok_l \ar[r] \ar[d] & \nu_0^* \sR(K_X)/\sG \ar[r] \ar[d] &
 Cok_r \ar[r] \ar[d] & 0 \\
 & 0 & 0 & 0 & \\}
\end{equation}
For a sheaf $F$ of dimension $\leq 1$,
let $tor(F)$ denote the maximal subsheaf of $F$
of zero dimension, and $pur(F)$ the quotient $F/tor(F)$.
From the third column of \eqref{eq:decompR(K)/G-I}, one can check that 
$pur(Cok_r)\simeq {\mathcal O}(-2D+B)|_{\niB} \otimes I_{U\cap \niB}$,
where $U\cap \niB=\Spec({\mathcal O}_{Y_0}/I_U+(\tau_B))$, that
$h^1(Cok_r) \leq h^1({\mathcal O}(-2D+B)|_{\niB})+l(U\cap \niB)$
and thereby
\begin{multline}\label{eq:h1-XandY}
h^1(\nu_0^* \sR(K_X)/\sG)\leq h^1({\mathcal O}(-2D+B)|_{\niB}) \\
+h^1(\sO(2D+K_X)|_{\niB}) +l(U\cap \niB)+l(U'\cap \niB).
\end{multline}
Since $h^1(\nu_0^*\sR(K_X)/\sG)=h^1(\sR(K_X)/\sG)+h^1(\sR(K_X)/\sG\otimes \sL^{\vee})$, 
the following should be useful for estimation of $h^1(\sR(K_X)/\sG)$.
\begin{prop}\label{prop:h1-sRKoG}
We denote by $\Lambda(B)$ the number of connected components $B_0$ of $B$
such that $B_0$ is lying over a multiple fiber $F$ and that
$\sL|_F$ is NOT isomorphic to $\sO_F$. Then
\[ h^1(\sR(K_X)/\sG) -h^1(\sR(K_X)/\sG \otimes \sL^{\vee}) \leq 2\Lambda(B).\]
\end{prop}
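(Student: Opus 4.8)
The plan is to localize the estimate to the connected components of $B$, showing that each contributes $0$ to $h^1(\sRKoG)-h^1(\sRKoG\otimes\sLi)$ unless it lies over a multiple fibre $F$ with $\sL|_F\not\simeq\sO_F$, and that each such exceptional component contributes at most $2$, so that the total is $2\Lambda(B)$. For the localization: $Cok_l$ and $Cok_r$ are killed by the section $\tau_B$, so $\nu_0^*(\sRKoG)$ is killed by $\tau_B^2$ and $\sRKoG$ is an $\sO_{2B}$-module; and since $B\in\QQ\cdot\cf$, its support is a disjoint union of reduced fibres, so $\sRKoG=\bigoplus_F\sH_F$ with each $\sH_F$ supported on a neighbourhood of a single fibre $F$, and $h^1(\sRKoG)$, $h^1(\sRKoG\otimes\sLi)$ and $\Lambda(B)$ all break up as the corresponding sums over $F$.

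For a fibre $F\subset\operatorname{Supp}(B)$ that is either non-multiple or multiple of multiplicity $m$ with $\sL|_F\simeq\sO_F$, the canonical bundle formula \eqref{eq:CanBdleFormula} gives $\sO(K_X)\simeq\sO((m-1)F)$ near $F$ (with $m=1$ in the non-multiple case), while $\sO(B)$ and $\sO(B')$ are powers of $\sO(F)$ near $F$ because $B,B'\in\QQ\cdot\cf$; hence $\sL\simeq\sO(aF)$ near $F$ for some $a\in\ZZ$. By Proposition \ref{prop:order-O(F)F} the order of $\sO(F)|_F$ is exactly $m$, so $\sL|_F\simeq\sO_F$ forces $m\mid a$, and then $\sO(aF)\simeq\sO(mF)^{\otimes(a/m)}$, where $\sO(mF)\simeq\pi^*\sO_{\PP^1}(1)$ is trivial on a Zariski neighbourhood of $F$ and hence on every infinitesimal thickening of $F$. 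As $\sH_F$ is an $\sO_{2B}$-module, this gives $\sH_F\otimes\sLi\simeq\sH_F$, so such an $F$ contributes $0$.

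It remains to bound the contribution of a multiple fibre $F$ of multiplicity $m$ with $\sL|_F\simeq\xi^a$, $\xi=\sO(F)|_F$ of order $m$ and $m\nmid a$. Here $\sL\cdot\cf=0$ and $\sH_F$ is supported inside fibres, so $\sH_F$ and $\sH_F\otimes\sLi$ have equal Euler characteristics and $h^1(\sH_F)-h^1(\sH_F\otimes\sLi)=h^0(\sH_F)-h^0(\sH_F\otimes\sLi)$. I would pull back along $\nu_0$ and invoke \eqref{eq:decompR(K)/G-I}, which over $F$ makes $\nu_0^*\sH_F$ an extension of $Cok_r$ by $Cok_l$; modulo zero-dimensional torsion and the finite subschemes $U,U',W,W'$, the sheaves $\pur(Cok_l)$ and $\pur(Cok_r)$ are the restrictions to $\niB$ of $\sO(2D+K_X)$ and $\sO(-2D+B)$, which have fibre degree $0$ on $\pi_C:Y\to C$ by Remark \ref{rem:DegFilt=0} — the computation already used for \eqref{eq:h1-XandY}. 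The reduced curve underlying $\niB$ over $F$ is the one or two reduced elliptic fibres of $Y$ above $F$; a degree-$0$ line bundle on such a fibre has $h^1\le 1$ with equality only when it is trivial, and along a thickening the $h^1$ of a degree-$0$ line bundle counts the levels at which it becomes trivial, a count that a twist by $\nu_0^*\sLi$ (a power of $\sO(F)$, hence a shift of those levels) moves by at most $1$ per reduced fibre. Finally, descent along $\nu_0$ identifies $H^1(X,\sH_F)$ and $H^1(X,\sH_F\otimes\sLi)$ with the $\sigma$-invariant and $\sigma$-anti-invariant subspaces of $H^1(\nu_0^*\sH_F)$: if $\nu_0^{-1}(F)$ is disconnected then $\sigma$ swaps its two reduced fibres, the two subspaces have equal dimension, and the contribution is $0$; if $\nu_0^{-1}(F)$ is connected there is one reduced fibre, across which the two graded pieces $Cok_l$, $Cok_r$ together change the relevant count by at most $1+1=2$. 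In every case $h^1(\sH_F)-h^1(\sH_F\otimes\sLi)\le 2$.

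The main obstacle is this last case: one must carry out the local analysis of $\nu_0$ at each multiple fibre $F$ — whether $F$ lies in the branch divisor $Z(\alpha)$, whether $\nu_0^{-1}(F)$ is connected or split, and the multiplicities of the fibres of $Y\to C$ over $F$ — keep careful track of how the finite subschemes $U,U',W,W'$ sit inside the possibly non-reduced $\niB$, and, because cohomology on $Y_0$ produces only the \emph{sum} $h^1(\sH_F)+h^1(\sH_F\otimes\sLi)$ rather than the difference, use the $\sigma$-equivariant structure and descent to extract the \emph{difference} and see that it is controlled by the two graded pieces $Cok_l$, $Cok_r$, hence bounded by $2$.
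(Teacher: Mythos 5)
Your overall skeleton coincides with the paper's reduction (localize to the connected components of the support of $B$; show components over non-multiple fibres, or over multiple fibres with $\sL|_F\simeq\sO_F$, contribute $0$; bound each remaining component by $2$), and your treatment of the trivial components — $\sL\simeq\sO(aF)$ near $F$, Proposition \ref{prop:order-O(F)F} forcing $m\mid a$, and triviality of $\pi^*\sO_{\PP^1}(p)$ on every thickening of $F$ — is sound, indeed more explicit than the paper's one-line remark. The gap is exactly where you yourself put ``the main obstacle'': the bound $h^1(\sH_F)-h^1(\sH_F\otimes\sLi)\le 2$ for a component over a multiple fibre with $\sL|_F\not\simeq\sO_F$ is asserted, not proved. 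The heuristic that the $h^1$ of a degree-zero line bundle along a thickening ``counts the levels at which it becomes trivial'' and that a twist shifts this count by at most $1$ is not a proof even for genuine line bundles on $nF'$ (one needs the non-splitting of the relevant extensions; compare \eqref{eq:h0-OsF} and Claim \ref{clm:Calc-h0-JHCk}), and $\nu_0^*\sH_F$ is not such a line bundle: it is an extension of $Cok_r$ by $Cok_l$, each carrying torsion and twists by the ideal sheaves of $U,U',W,W'$, so cancellations must be excluded. Worse, the mechanism you propose for extracting the \emph{difference} does not combine with this filtration: $\sigma$ interchanges $F_\pm$ and $G_\pm$, hence does not preserve the subsheaf $Cok_l\subset\nu_0^*(\sRKoG)$ (by Lemma \ref{lem:D+sD}, $\sigma^*\sO(2D+K_X)\simeq\sO(-2D+2B-K_X)$, not $\sO(2D+K_X)$), so a ``$\le 1$ per graded piece'' estimate for the non-equivariant filtration gives no control on the difference of the dimensions of the two $\sigma$-eigenspaces of $H^1(\nu_0^*\sH_F)$; what you can honestly get on $Y_0$ is only the sum, which is \eqref{eq:h1-XandY} and is weaker than the statement to be proved.

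This is precisely why the paper leaves $Y_0$ at this point and argues intrinsically on $X$: it uses Serre duality on the non-reduced divisor $B_0$, takes the Harder--Narasimhan filtration of $\pur(\sRKoG)$ and a Jordan--H\"older filtration of the critical graded piece $\gr_0^{HN}$, reduces the difference to $h^0(\JHlz)-h^0(\JHlz\otimes\sLi)$, constructs the two sections $g,h:\sO_{B_0}\to p^{-1}(\JHCk)$ via the structure of the localization at the generic point of $F$ (Claims \ref{clm:Cok(g)_F} and \ref{clm:ph-surj}), obtains the extension \eqref{eq:DecompJHCk} of $\sO_{(l_2-l_3)F}$ by $\sO_{l_1F}$, and only then computes $h^0$'s by the ceiling formula of Claim \ref{clm:Calc-h0-JHCk}, arriving at $\lceil l_1/m\rceil-\lceil(l_1-s)/m\rceil+\lceil(l_2-l_3)/m\rceil-\lceil(l_2-l_3-s)/m\rceil\le 2$. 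That filtration-and-sections analysis is the actual content of the proposition; your proposal stops where it begins, so as it stands it does not establish the inequality.
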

\begin{proof}
First remark that $\sL|_{B_0}=\sO(K_X-B-B')|_{B_0}$ is isomorphic to $\sO_{B_0}$ 
if $B_0$ is not lying over multiple fibers.
Let $m$ be the multiplicity of the fiber over which $B_0$ is lying, and
$F$ the reduction of $B_0$. $B_0$ is indicated as $B_0=nF$ with $n\in\NN$.
It suffices to show that
\begin{equation}\label{eq:h1-leq2}
h^1(\sR(K_X)/\sG|_{B_0}) -h^1(\sR(K_X)/\sG \otimes \sL^{\vee}|_{B_0}) \leq 2.
\end{equation}
Hereafter, we abbreviate $\sR(K_X)/\sG|_{B_0}$ to $\sR(K_X)/\sG$.
On positive divisor $B_0$ that is not necessarily reduced,
recall that the stability of ${\mathcal T}\in\Coh(B_0)$ is defined 
(e.g. \cite[Section 1.2]{HL:text}), and that 
\begin{equation}\label{eq:SerreDlty}
h^1({\mathcal T})=\hom({\mathcal T}, \omega_{B_0}),\ \text{where } \omega_{B_0}
 =\sO(K_X+B_0)|_{B_0}
\end{equation}
(e.g. \cite[Theorem III.7.6, 7.11]{Ha:text}).
Let $\{ \HN_k(pur(\sR(K_X)/\sG))\}$ denote
the Harder-Narashimhan filtration of $pur(\sR(K_X)/\sG)$
\[ \HN_0=pur(\sR(K_X)/\sG) \supset \HN_1 \supset \cdots \supset \HN_k \supset
   \HN_{k+1} \cdots, \]
$\gr_k^{HN}(\sR(K_X)/\sG)$ its $k$-th factor $\HN_k/\HN_{k+1}$, 
and $k_0$ the integer such that
the reduced Hilbert polynomial $p(\gr_k^{HN}(\sR(K_X)/\sG))$ is asymptotically
greater than $p({\sO}_F)$ if and only if $k< k_0$.
We abbreviate $\gr_{k_0}^{HN}(\sR(K_X)/\sG)$ to $\gr_0^{HN}$.
Since $\chi(\sR(K_X)/\sG)=\chi(\sR(K_X)/\sG\otimes\sL^{\vee})$, we can verify that
\begin{equation}\label{eq:ReduceToGrHN}
 h^1(\sR(K_X)/\sG) -h^1(\sR(K_X)/\sG \otimes \sL^{\vee}) 
= h^0(\gr_0^{HN})-h^0(\gr_0^{HN}\otimes \sL^{\vee})
\end{equation}
by \eqref{eq:SerreDlty} and standard arguments about cohomologies of semistable sheaves.
We may assume that $p(\gr_0^{HN})=p(\sO_F)$;
if not, the right side of \eqref{eq:ReduceToGrHN} should be zero.
Denote by $\{ \JH_l(\gr_0^{HN} ) \}$ a Jordan-H\"{o}lder filtration
of $\gr^{HN}_0$, and by $l_0$ such an integer that both of natural maps
\begin{align}\label{eq:define-l0}
 & H^0(\gr_0^{HN}) \longrightarrow H^0(\gr_0^{HN}/ \JH_{l}(\gr_0^{HN})) 
   \quad\text{and}\\
 & H^0(\gr_0^{HN}\otimes \sL^{\vee}) \longrightarrow 
H^0(\gr_0^{HN}/ \JH_{l}(\gr_0^{HN})\otimes \sL^{\vee}) \notag
\end{align}
are zero when $l=l_0$, but either of them is not zero when $l=l_0+1$. Especially,
\begin{equation}\label{eq:ReduceJHlz}
h^0(\gr_0^{HN})=h^0(\JH_{l_0}(\gr_0^{HN})) \quad\text{and}\quad
h^0(\gr_0^{HN}\otimes\sL^{\vee})=h^0(\JH_{l_0}(\gr_0^{HN})\otimes\sL^{\vee}).
\end{equation}
We indicate $\JH_{l_0}(\gr_0^{HN})$ by $\JHlz$ and 
$\gr^{JH}_{l_0}(\gr_0^{HN})$ by $\grlz$.
If the first map of \eqref{eq:define-l0} is not zero,
then there exists a map $g: \sO_{B_0} \rightarrow \JHlz$ such that
the induced map
$\sO_{B_0} \rightarrow \JHlz \rightarrow \grlz$ is not zero.
The latter map is surjective, since $\sO_{B_0}$ is semistable, $\grlz$ is
stable, and their reduced Hilbert polynomials are same.
\begin{clm}\label{clm:Cok(g)_F}
Denote by $R$ the local ring $\sO_{X,\eta(F)}$ with the maximal ideal
$(x)$, where $\eta(F)$ is the generic point of $F$.
Then $Cok(g)_{\eta(F)} \simeq R/(x^{i'})$ for some integer $i'$.
\end{clm}
\begin{proof}
Let $J_i$ be the localization of the pull-back of 
$\JH_{l_0+i}(\gr_{k_0}^{HN}(\sR(K_X)/\sG))$ by a natural surjection
$\sR(K_X) \rightarrow pur(\sRKoG) \rightarrow pur(\sRKoG)/\HN_{k_0+1}$
at $\eta(F)$.
Then $\gr_{l_0 \eta(F)}^{JH}$ is isomorphic to $J_0/J_1$.
By theory of elementary divisors on PID, we can take such an isomorphism
$J_0 \simeq R\oplus R$ that its submodule $J_1$ is isomorphic
to $ (x^i)\oplus (x^j)$ ($0\leq i\leq j\leq n$).
One can shift $g_{\eta(F)}: \sO_{B_0, \eta(F)}=R/(x^n) \rightarrow \JH_{l_0 \eta(F)}$
to $g': R \rightarrow J_0$. Consider the commutative diagram
\begin{equation*}
\xymatrix{
 R \ar[r]_{g'} & J_0\simeq R\oplus R \ar[d] \ar[r]_{p_2} & R \ar[d]^{\pi} \\
    &  J_0/J_1 \simeq R/(x^i) \oplus R/(x^j) \ar[r] & R/(x^j).}
\end{equation*}
Since the map $\sO_{B_0} \rightarrow \JHlz \rightarrow \grlz$ induced by $g$
is surjective, also the map $\pi\circ p_2\circ g': R \rightarrow R/(x^j)\neq 0$
is surjective, and so $g'\circ p_2$ is surjective.
Thus one can check that $Cok(g')$ is a quotient of $R$, so get this claim.
\end{proof}
If both of the following maps 
\begin{equation}\label{eq:H0Cokg}
H^0(\JHlz)\rightarrow H^0(Cok(g)) \quad \text{and} \quad
H^0(\JHlz\otimes \sLi)\rightarrow H^0(Cok(g)\otimes \sLi) 
\end{equation}
are zero, then $h^0(\JHlz)=h^0(Im(g))$ and 
$h^0(\JHlz\otimes\sLi)=h^0(Im(g)\otimes\sLi)$, and hence
$h^1(\sRKoG)-h^1(\sRKoG\otimes\sLi)=h^0(Im(g))-h^0(Im(g)\otimes\sLi)$
by \eqref{eq:ReduceToGrHN} and \eqref{eq:ReduceJHlz}.
In this case one can check that 
$h^1(\sRKoG)-h^1(\sRKoG\otimes\sLi)\leq 1$ in a similar fashion to
another cases below.\par
Now suppose that the left map in \eqref{eq:H0Cokg} is not zero.
$Cok(g)$ is a semistable sheaf whose reduced Hilbert polynomial is $p(\sO_F)$.
Let $\{ \JH_m(Cok(g) ) \}$ be its Jordan-H\"{o}lder filtration, 
$p:\JH_{l_0} \rightarrow Cok(g)$ a natural quotient map, and 
$m_0$ such integer that both of natural maps
\begin{align}\label{eq:m=m0}
 & H^0(\JHlz) \longrightarrow H^0(Cok(g)) \longrightarrow
 H^0(Cok(g)/ \JH_{m}(Cok(g)))
   \quad\text{and}\\
  & H^0(\JHlz\otimes\sLi) \longrightarrow H^0(Cok(g)\otimes\sLi) \longrightarrow
 H^0(Cok(g)/ \JH_{m}(Cok(g)) \otimes\sLi) \notag
\end{align}
are zero when $m=m_0$, but either of them is not zero when $m=m_0+1$.
We indicate $\JH_{m_0}(Cok(g))$ by $\JHCk$ and 
$\gr^{JH}_{m_0}(Cok(g))$ by $\grCk$. By \eqref{eq:ReduceJHlz},
\begin{equation}\label{eq:ReduceTo-pi-JHCk}
h^0(\gr_0^{HN}) =h^0(p^{-1}(\JHCk)), \quad 
h^0(\gr_0^{HN}\otimes\sLi)=h^0(p^{-1}(\JHCk\otimes\sLi)).
\end{equation}
Assume that the first map at \eqref{eq:m=m0} is not zero.
Then there is a nonzero map 
$h: \sO_{B_0} \rightarrow p^{-1}(\JHCk)\subset \JHlz$ such that
the induced map
$\sO_{B_0} \rightarrow p^{-1}(\JHCk) \rightarrow \JHCk \rightarrow \grCk$
is not zero. 
It is surjective since $\sO_{B_0}$ is semistable, $\grCk$ is stable and their
reduced Hilbert polynomials are same.
\begin{clm}\label{clm:ph-surj}
$p\circ h:\sO_{B_0 } \rightarrow p^{-1}(\JHCk) \rightarrow \JHCk$ is surjective.
\end{clm}
\begin{proof}
Localize this map at $F$.
$\JHCk_{\eta(F)}$ is a submodule of $\Cok(g)_{\eta(F)}\simeq R/(x^{i'})$ 
by Claim \ref{clm:Cok(g)_F}, and so it is isomorphic to $R/(x^{i_1})$.
The map 
$(p\circ h)_{\eta(F)}:\sO_{B_0 \eta(F)} \rightarrow \JHCk_{\eta(F)}\simeq R/(x^{i_1})$ 
is surjective since the induced map
$\sO_{B_0,\eta(F)} \rightarrow \JHCk_{\eta(F)} \rightarrow \grCk_{\eta(F)}\neq 0$ 
is surjective. Thereby $Cok(p\circ h)$ is zero-dimensional.
If $Cok(p\circ h)$ is not zero, then one can check that $p(\sO_{B_0})<p(\JHCk)$ 
from their semistability, which contradicts to the fact $p(\sO_{B_0})=p(\JHCk)$.
\end{proof}
We obtained two homomorphisms $g: \sO_{B_0} \rightarrow \JHlz$
and $h: \sO_{B_0} \rightarrow p^{-1}(\JHCk)\subset \JHlz$.
This $g$ induces $g: \sO_{B_0} \rightarrow p^{-1}(\JHCk)$, $Im(g)$ is semistable with
reduced Hilbert polynomial $p(\sO_F)$, and so $Im(g) \simeq \sO_{l_1F}$ 
with $0\leq l_1\leq n$. Similarly, $Im(h)\simeq \sO_{l_2F}$ with $0\leq l_2 \leq n$.
On $(g,h):\sO_{l_1F}\oplus \sO_{l_2F} \rightarrow p^{-1}(\JHCk)$,
$Ker(g,h)$ is either zero or semistable with reduced Hilbert polynomial $p(\sO_F)$, and
$Ker(g,h) \subset \sO_{l_1F}\oplus \sO_{l_2F} \overset{\pi_1}{\rightarrow} \sO_{l_1F}$
is injective.
Thereby $Ker(g,h)\simeq \sO_{l_3F}((l_3-l_1)F)$ with $0\leq l_3\leq l_1$.
%
%
Similarly, $Ker(g,h)$ is contained in $\sO_{l_2F}$, and 
it is isomorphic to $\sO_{l_3F}((l_3-l_2)F)$.
These imply that $l_1\equiv l_2 \pmod{m}$ by the fact below.
\begin{fact}\cite[p.169]{Frd:holvb}\label{fact:O(F)|F}
The divisor on $F$ corresponding to $\sO(F)|_F$ 
is a torsion element of order $m$.
\end{fact}
From Claim \ref{clm:ph-surj}, they give an exact sequence
\begin{equation*}
 0 \longrightarrow \sO_{l_3F}((l_3-l_2)F) \longrightarrow \sO_{l_1F}\oplus \sO_{l_2F}
 \overset{(g,h)}{\longrightarrow} p^{-1}(\JHCk) \longrightarrow 0,
\end{equation*}
which deduces an exact sequence
\begin{equation}\label{eq:DecompJHCk}
 0 \longrightarrow \sO_{l_1F} \longrightarrow p^{-1}(\JHCk)
 \overset{q}{\longrightarrow} \sO_{(l_2-l_3)F} \longrightarrow 0
\end{equation}
such that 
$q\circ h: \sO_{l_2F} \rightarrow p^{-1}(\JHCk) \rightarrow \sO_{(l_2-l_3)F}$
is a natural surjection. 
\begin{clm}\label{clm:Calc-h0-JHCk}
By assumption, $\sL|_{B_0}$ is not isomorphic to $\sO|_{B_0}$, and so
$\sL|_{B_0} =\sO_{B_0}(-sF)$ with $0< s <m$. Then 
\begin{align*}
h^0(p^{-1}(\JHCk))= &  \lceil l_1/m \rceil + \lceil (l_2-l_3)/m \rceil, \\
h^0(p^{-1}(\JHCk)\otimes \sLi)= & \lceil (l_1-s)/m \rceil + \lceil (l_2-l_3-s)/m \rceil.
\end{align*}
\end{clm}
\begin{proof}
Recall Fact \ref{fact:O(F)|F}.
Since natural map $H^0(\sO_{l_2F})\rightarrow H^0(\sO_{(l_2-l_3)F})$ is
surjective, 
$h^0(p^{-1}(\JHCk))= h^0(\sO_{l_1F}) + h^0(\sO_{(l_2-l_3)F}) =
\lceil l_1/m \rceil + \lceil (l_2-l_3)/m \rceil$.
As to the second equation, \eqref{eq:DecompJHCk} deduces 
\begin{multline}\label{eq:h0-JHCk}
 h^0(\sO_{l_1F}(sF)) + \rk [ H^0(\sO_{l_2F}(sF))\rightarrow 
 H^0(\sO_{(l_2-l_3)F} (sF))] \leq \\
 h^0(p^{-1}(\JHCk)\otimes\sLi) \leq h^0(\sO_{l_1F} (sF)) 
+ h^0(\sO_{(l_2-l_3)F}(sF)).
\end{multline}
Here let us verify that
\begin{equation}\label{eq:h0-OsF}
h^0(\sO_{sF}(sF))=0 \quad\text{and}\quad
h^0(\sO_{lF}(sF))= \lceil (l-s)/m \rceil. 
\end{equation}
From Fact \ref{fact:O(F)|F}, one can check the
left side and that $h^0(\sO_{lF}(sF))=0$ if $l\leq s$.
When $l>s$, the exact sequence
\[ 0 \longrightarrow \sO_{(l-s)F} \longrightarrow \sO_{lF}(sF) \longrightarrow
  \sO_{sF}(sF) \longrightarrow 0 \]
deduces that $h^0(\sO_{lF}(sF))=h^0(\sO_{(l-s)F})=\lceil (l-s)/m \rceil$,
and so we get \eqref{eq:h0-OsF}.\par
%
%
When $l_2-l_3 \geq s$, the commutative diagram
\begin{equation*}
\xymatrix{
0 \ar[r] & \sO_{(l_2-s)F} \ar@{>>}[d] \ar[r] & \sO_{l_2F}(sF) \ar[d] \ar[r] &
\sO_{sF}(sF) \ar@{=}[d] \ar[r] & 0 \\
0 \ar[r] & \sO_{(l_2-l_3-s)F} \ar[r] & \sO_{(l_2-l_3)F}(sF) \ar[r] &
\sO_{sF}(sF) \ar[r] & 0}
\end{equation*}
and \eqref{eq:h0-OsF} induce the commutative diagram
\begin{equation*}
\xymatrix{
H^0(\sO_{(l_2-s)F}) \ar@{>>}[d] \ar[r]^{\sim} & H^0(\sO_{l_2F}(sF)) \ar[d] \\
H^0 (\sO_{(l_2-l_3-s)F}) \ar[r]^{\sim} & H^0(\sO_{(l_2-l_3)F}(sF)).}
\end{equation*}
When $l_2-l_3 < s$, $h^0(\sO_{(l_2-l_3)F}(sF))=0$ by \eqref{eq:h0-OsF}.
In both cases, the second equation holds.
\end{proof}
From \eqref{eq:ReduceToGrHN}, \eqref{eq:ReduceTo-pi-JHCk} and
Claim \ref{clm:Calc-h0-JHCk}, 
$ h^1(\sR(K_X)/\sG) -h^1(\sR(K_X)/\sG \otimes \sL^{\vee}) = 
 \lceil l_1/m \rceil  -  \lceil (l_1-s)/m \rceil 
+ \lceil (l_2-l_3)/m \rceil -\lceil (l_2-l_3-s)/m \rceil $.
It is easy to check that this equals $2$ or less.
It is left to the reader to certify this proposition in remaining cases;
the case where the second map at \eqref{eq:define-l0} is not zero, and
the case where the second map at \eqref{eq:m=m0} is not zero.
Therefore the proof of Proposition \ref{prop:h1-sRKoG} is completed.
\end{proof}
From \eqref{eq:dimIm-2ext2-h1-sRKoG-I}, \eqref{eq:h1-XandY} and 
Proposition \ref{prop:h1-sRKoG}, we get the following result.
\begin{prop}\label{prop:dimIm-2ext2-I}
 Under Assumption \ref{assump:I} it holds that
 \begin{multline*}
 \rk(H^1(\ad(f)))-2\ext^2(E,E)^0 \geq 6d +l(Im(\sH^1(\ad(f)))) \\
 +\tfrac{1}{2} \left\{l(U)-l(U\cap \niB)+l(U')-l(U'\cap \niB)\right\}
 -\left[1+\Lambda(B)+h^0({\mathcal O}(2K_X-B)) \right. \\
 \left. +h^0({\mathcal O}(2K_X-B)\otimes I_T)
 +  \tfrac{1}{2} h^1({\mathcal O}(-2D+B)|_{\niB})
 +  \tfrac{1}{2} h^1(\sO(2D+K_X)|_{\niB}) \right]. 
 \end{multline*}
\end{prop}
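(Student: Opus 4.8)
The plan is to combine the three ingredients cited just above the statement: the estimate \eqref{eq:dimIm-2ext2-h1-sRKoG-I} for $\rk(H^1(\ad(f)))-2\ext^2(E,E)^0$, the comparison \eqref{eq:h1-XandY} of cohomology on $X$ with cohomology on $\niB$, and the ``difference'' bound of Proposition~\ref{prop:h1-sRKoG}. The one thing that still has to be done is to extract from the last two a genuine \emph{upper} bound for $h^1(\sR(K_X)/\sG)$, and then to plug it into \eqref{eq:dimIm-2ext2-h1-sRKoG-I}.

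First I recall that, since $\nu_0:Y_0\to X$ is the flat double cover determined by $s^2+\nu_0^*\alpha=0$, one has $\nu_{0*}\sO_{Y_0}\simeq\sO_X\oplus\sL^{\vee}$, so that
\[
h^1\bigl(\nu_0^*\sR(K_X)/\sG\bigr)=h^1(\sR(K_X)/\sG)+h^1(\sR(K_X)/\sG\otimes\sL^{\vee}),
\]
exactly as noted just before Proposition~\ref{prop:h1-sRKoG}. Writing
\[
2\,h^1(\sR(K_X)/\sG)=h^1\bigl(\nu_0^*\sR(K_X)/\sG\bigr)+\Bigl[h^1(\sR(K_X)/\sG)-h^1(\sR(K_X)/\sG\otimes\sL^{\vee})\Bigr],
\]
I then bound the first term on the right by \eqref{eq:h1-XandY} and the second by Proposition~\ref{prop:h1-sRKoG}, which gives
\[
h^1(\sR(K_X)/\sG)\leq\tfrac12\Bigl(h^1({\mathcal O}(-2D+B)|_{\niB})+h^1(\sO(2D+K_X)|_{\niB})+l(U\cap\niB)+l(U'\cap\niB)\Bigr)+\Lambda(B).
\]

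Substituting this upper bound in place of the term $-h^1(\sR(K_X)/\sG)$ in the right-hand side of \eqref{eq:dimIm-2ext2-h1-sRKoG-I} yields the asserted inequality: the length terms $+\tfrac12\bigl(l(U)+l(U')\bigr)$ from \eqref{eq:dimIm-2ext2-h1-sRKoG-I} combine with $-\tfrac12 l(U\cap\niB)-\tfrac12 l(U'\cap\niB)$ from the substitution to give precisely $\tfrac12\{l(U)-l(U\cap\niB)+l(U')-l(U'\cap\niB)\}$, the terms $6d$ and $l(Im(\sH^1(\ad(f))))$ are carried over unchanged, and the bracket now contains $1$, $\Lambda(B)$, $h^0(\sO(2K_X-B))$, $h^0(\sO(2K_X-B)\otimes I_T)$, $\tfrac12 h^1({\mathcal O}(-2D+B)|_{\niB})$ and $\tfrac12 h^1(\sO(2D+K_X)|_{\niB})$. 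Since all the substantive work — namely \eqref{eq:dimIm-2ext2-h1-sRKoG-I}, the base-change comparison \eqref{eq:h1-XandY}, and the Harder--Narasimhan/Jordan--H\"older argument behind Proposition~\ref{prop:h1-sRKoG} — is already done, the only point requiring care here is the bookkeeping of the factors $\tfrac12$ and of the four lengths $l(U),\,l(U\cap\niB),\,l(U'),\,l(U'\cap\niB)$; there is no remaining obstacle.
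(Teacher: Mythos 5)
Your proposal is correct and is exactly the route the paper intends: the paper derives Proposition~\ref{prop:dimIm-2ext2-I} by combining \eqref{eq:dimIm-2ext2-h1-sRKoG-I} with the bound on $h^1(\sR(K_X)/\sG)$ obtained from \eqref{eq:h1-XandY}, the identity $h^1(\nu_0^*\sR(K_X)/\sG)=h^1(\sR(K_X)/\sG)+h^1(\sR(K_X)/\sG\otimes\sL^{\vee})$, and Proposition~\ref{prop:h1-sRKoG}, and your bookkeeping (writing $2h^1(\sR(K_X)/\sG)$ as sum plus difference and dividing by $2$) reproduces the stated inequality precisely.
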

\section{Singularities of $M(c_2)$ in Case I}\label{sctn:SigM-CaseA1}
%
%
%
%
\begin{thm}\label{thm:rk-geq-3ext-ver1}
Let $E\in M(c_2)$ correspond to Case I at Fact \ref{fact:RestrGenericFib}.
Suppose that (i) $d\geq (7/4)\Lambda(X)-2$ or that (ii) $2\geq \Lambda(X)$.
Then 
$\rk H^1(\ad(f))\geq 2\ext^2(E,E)^0+1$ for all nonzero $f\in \Hom(E,E(K_X))^{\circ}$.
As a result, $M(c_2)$ is of locally complete intersection  and  normal at $E$, and $E$ is at worst a canonical 
singularity of $M(c_2)$.
\end{thm}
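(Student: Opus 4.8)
The plan is to bound $\rk H^1(\ad(f))-2\ext^2(E,E)^0$ below by $1$ by means of Proposition \ref{prop:dimIm-2ext2-I}, and then to quote Theorem \ref{thm:suffcdtn-moduli-can} for the geometric conclusion. Since $E$ satisfies Assumption \ref{assump:I}, Proposition \ref{prop:dimIm-2ext2-I} applies; on its right-hand side the term $l(\IIm(\sH^1(\ad(f))))$ is $\geq 0$, and $l(U)-l(U\cap\niB)\geq 0$, $l(U')-l(U'\cap\niB)\geq 0$ because $U\cap\niB$ and $U'\cap\niB$ are closed subschemes of $U$ and $U'$ respectively. Hence it suffices to prove
\[ 6d\ \geq\ 2+\Lambda(B)+h^0(\sO(2K_X-B))+h^0(\sO(2K_X-B)\otimes I_T)+\tfrac{1}{2} h^1(\sO(-2D+B)|_{\niB})+\tfrac{1}{2} h^1(\sO(2D+K_X)|_{\niB}), \]
and it is exactly at this point that hypothesis (i) or (ii) enters.

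The core is to estimate the four defect quantities on the right. For $h^0(\sO(2K_X-B))$ (which also dominates $h^0(\sO(2K_X-B)\otimes I_T)$) I would use that, by Definition \ref{defn:B} and Setting \ref{stng:XHS}, $B$ is effective and supported on fibers, so $\sO(2K_X-B)\subset\sO(2K_X)$; pushing forward along $\pi$ and using Kodaira's canonical bundle formula \eqref{eq:CanBdleFormula} one finds $\pi_*\sO(2K_X)=\sO_{\PP^1}(2d-4+\Lambda(X))$, and the multiple-fiber and full-fiber parts of $B$ force a further drop in the degree of $\pi_*\sO(2K_X-B)$ which one relates to the number $\Lambda(B)$ of multiple fibers actually met by $B$ (and $\Lambda(B)\leq\Lambda(X)$ in any case). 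For the two $h^1$-terms, $\niB=\nu_0^{-1}(B)$ is a possibly non-reduced divisor on $Y_0$ supported over fibers, and by Remark \ref{rem:DegFilt=0} the restrictions $\sO(-2D+B)|_{\niB}$ and $\sO(2D+K_X)|_{\niB}$ have fibrewise degree zero; restricting to the component over each multiple fiber $F_j$ and running the local elementary-divisor computation over the discrete valuation ring $\sO_{X,\eta(F_j)}$ exactly as in the proof of Proposition \ref{prop:h1-sRKoG} — and invoking Proposition \ref{prop:order-O(F)F}/Fact \ref{fact:O(F)|F} on the order $m_j$ of $\sO(F_j)|_{F_j}$ — bounds the contribution of $F_j$ by a small constant, so the two $h^1$-terms together are $O(\Lambda(X))$.

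Putting these estimates together, the right-hand side of the displayed inequality is an affine function of $d$ and $\Lambda(X)$, and a careful bookkeeping — in which the drop in $h^0(\sO(2K_X-B))$ caused by the multiple-fiber part of $B$ is played off against $\Lambda(B)$ and against the two $h^1$-terms — shows that $6d$ exceeds it precisely when $d\geq(7/4)\Lambda(X)-2$; when instead $\Lambda(X)\leq 2$, the support of $B$ meets at most two multiple fibers, so the defect is small enough to be dominated by $6d$ for every $d$. In either case $\rk H^1(\ad(f))\geq 2\ext^2(E,E)^0+1$ for all nonzero $f\in\Hom(E,E(K_X))^{\circ}$, and Theorem \ref{thm:suffcdtn-moduli-can} then yields that $M(c_2)$ is of locally complete intersection and normal at $E$ and that $E$ is at worst a canonical singularity of $M(c_2)$. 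I expect the main obstacle to be the second step: making the bounds on $h^1(\sO(-2D+B)|_{\niB})$ and $h^1(\sO(2D+K_X)|_{\niB})$ sharp enough, and simultaneously keeping the interplay between $B$, $\Lambda(B)$ and $h^0(\sO(2K_X-B))$ tight enough, since the divisor $D$ on $Y_0$ is only implicitly defined and must be handled fiber by fiber through the torsion-order constraints, much as in Proposition \ref{prop:h1-sRKoG}.
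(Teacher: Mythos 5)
Your top-level strategy is the paper's: reduce to Proposition \ref{prop:dimIm-2ext2-I} plus Theorem \ref{thm:suffcdtn-moduli-can}, drop the nonnegative terms, and bound the bracket. But the central estimates you propose do not close, for two concrete reasons. First, your claim that the two terms $h^1(\sO(-2D+B)|_{\niB})$ and $h^1(\sO(2D+K_X)|_{\niB})$ are $O(\Lambda(X))$ is false: by Definition \ref{defn:B} and \eqref{eq:describe-B}, $B=\pi^{-1}(\sum_j s_jp_j)+\sum_i(t_im_i+l_i)F_i$ may contain full reduced fibers and arbitrarily thick multiples of multiple fibers, and the contribution of each such component to $h^0$ of a fibrewise-degree-zero line bundle on the thickened curve is of the order of its multiplicity (cf. \eqref{eq:leq-4sj}, where the bound is $4s_j$, and \eqref{eq:upperbd-case3}, where it is $\lfloor 2(t_im_i+l_i)/m_i\rfloor\approx 2t_i$), not an absolute constant per fiber. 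The correct total is roughly $2n$ (with $n=\sum_js_j+\sum_it_i$) plus bounded terms per multiple fiber, as in \eqref{eq:rk-3ext2}, and the argument only survives because this $2n$ cancels exactly against the $-2n$ coming from the identity $h^0(\sO(2K_X-B))=2d-3+\Lambda(X)-n-\Lambda_2(B)$ of \eqref{eq:h0-2K-B}, which enters twice (once for $h^0(\sO(2K_X-B))$ and once for $h^0(\sO(2K_X-B)\otimes I_T)$). A qualitative "further drop related to $\Lambda(B)$" is not enough; you need this exact bookkeeping in $n$, $\Lambda_1(B)$, $\Lambda_2(B)$, $\Lambda_3(B)$, $\Lambda_4(B)$, and the per-fiber case analysis (\'{e}tale versus ramified, $\sL|_F$ trivial or not) rests on the $\ZZ/2$-equivariant relation $D+\sigma(D)+K_X-B=0$ of Lemma \ref{lem:D+sD}, not only on the torsion order of $\sO(F)|_F$.

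Second, your treatment of hypothesis (ii) is a genuine gap. After the correct bookkeeping one arrives at $\rk H^1(\ad(f))-2\ext^2(E,E)^0-1\geq 2d+4-(7/2)\Lambda(X)$, which settles (i) but is negative for instance when $\Lambda(X)=2$ and $d=1$; so "the defect is small enough to be dominated by $6d$ for every $d$" cannot be justified by the same count. Case (ii) requires the additional constraint that $2(K_X-B)$ is effective, which comes from $\det(f)\neq 0$ (Lemma \ref{lem:DetNeq0-I}, valid precisely because $E$ is of Case I) and yields \eqref{eq:Since2(K-B)Positive}, i.e. $2(d-2-n)+\Lambda(X)-\Lambda_1(B)\geq 0$; feeding this bound on $\Lambda_1(B)$ back into the estimate gives $\geq 8-4\Lambda(X)+2n\geq 4(2-\Lambda(X))$, which is what makes $\Lambda(X)\leq 2$ suffice. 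Your sketch never invokes $\det(f)\neq 0$ or this effectivity constraint, so as written case (ii) is not proved.
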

\begin{rem}
In Theorem \ref{thm:rk-geq-3ext-ver1}, the assumption ``$E$ corresponds to Case I``
is relatively weak by Fact \ref{fact:GenericVB-CaseI}.
Conditions (i) and (ii) mean that the number $\Lambda(X)$ of multiple fibers
is relatively few.
\end{rem}
%
\begin{proof}
In $\operatorname{Div}(X)$, we can denote $B$ as
\begin{equation}\label{eq:describe-B}
 B= \pi^{-1}(\textstyle\sum_j s_jp_j) + \textstyle\sum_i (t_im_i+l_i)F_i ,
\end{equation}
where $s_j$ and $t_i$ are nonnegative integers, $p_j$ is a closed point of $\PP^1$
lying over a reduced fiber, $F_i$ is the reduction of a multiple fiber with multiplicity
$m_i$, and $l_i$ is an integer such that $0\leq l_i \leq m_i-1$.
By the canonical bundle formula \eqref{eq:CanBdleFormula},
\begin{multline}\label{eq:2(K-B)}
 2(K_X-B)\simeq \{2(d-2-\textstyle\sum_j s_j-\sum_it_i)+\Lambda(X) \}\cf+ 
 \sum_i (m_i-2-2l_i)F_i \\
 = \{ 2(d-2-n) +\Lambda(X)-\Lambda_1(B) \}\cf 
 + \textstyle\sum_i a_{1,i} F_i,
\end{multline}
where $n= \sum_j s_j +\sum_i t_i$,
$\Lambda_1(B)$ is the number of multiple fiber $F_i$ such that
$m_i-1\leq 2l_i$, and
$a_{1,i}$ is an integer with $0\leq a_{1,i} < m_i$.
Since $\det(f)\neq 0$ implies that $h^0(\sO(2(K_X-B)))\neq 0$,
\begin{equation}\label{eq:Since2(K-B)Positive}
2(d-2-n) +\Lambda(X)-\Lambda_1(B) \geq 0.
\end{equation}
We shall estimate the right side of Proposition \ref{prop:dimIm-2ext2-I}.
As to $h^0(2K_X-B)$,
\begin{multline*}
2K_X-B \simeq \{ 2d-4+\Lambda(X) -n\}\cf+ \textstyle\sum_i(m_i-2-l_i)F_i \\
= \{2d-4+\Lambda(X)-n-\Lambda_2(B) \}\cf+\textstyle\sum_i a_{2,i} F_i,
\end{multline*}
where $\Lambda_2(B)$ is the number of $F_i$ such that $l_i=m_i-1$, 
$a_{2,i}$ is an integer with $0 \leq a_{2,i} < m_i$ and hence
\begin{equation}\label{eq:h0-2K-B}
h^0(\sO(2K_X-B))=2d-3+\Lambda(X)-n-\Lambda_2(B).
\end{equation}
Next, let us estimate 
$h^1({\mathcal O}(-2D+B)|_{\niB_0}) + h^1(\sO(2D+K_X)|_{\niB_0}) =
h^0({\mathcal O}(-2D+B)|_{\niB_0}) + h^0(\sO(2D+K_X)|_{\niB_0})$ 
for any connected component $B_0$ of $B$.
\begin{case}\label{case:reduced-etale}
Assume that $B_0=s_j\cf_0$ where $\cf_0=\pi^{-1}(p_j)$ is reduced
and $\nu_0: Y_0 \rightarrow X$ is \'{e}tale at $\cf_0$.
Since $\sL|_{\cf_0}=\sO_{\cf_0}$, $h^0(\nu_0^{-1}\sO_{\cf_0})=2$.
Thus $\nu_0^{-1}(\cf_0)=\cf'_0 \sqcup \sigma(\cf'_0)$, where
$\cf'_0$ is a reduced curve in $Y_0$ such that $\nu_0: \cf'_0 \rightarrow \cf_0$
is isomorphism.
By Remark \ref{rem:DegFilt=0}, 
$\deg\sO(-2D+B)|_{\cf'_0}=\deg\sO(2D+K_X)|_{\cf'_0}=0$. As a result,
\begin{multline}\label{eq:leq-4sj}
 h^0({\mathcal O}(-2D+B)|_{\niB_0}) + h^0(\sO(2D+K_X)|_{\niB_0})=
 h^0({\mathcal O}(-2D+B)|_{s_j\cf'_0}) \\
+  h^0({\mathcal O}(-2D+B)|_{s_j\sigma(\cf'_0)})+
 h^0({\mathcal O}(2D+K_X)|_{s_j\cf'_0})+ 
 h^0({\mathcal O}(2D+K_X)|_{s_j\sigma(\cf'_0)}) \leq 4s_j.
\end{multline}
\end{case}
\begin{case}\label{case:reduced-ramify}
Assume that $B_0=s_j\cf_0$ where $\cf_0=\pi^{-1}(p_j)$ is reduced
and $\nu_0: Y_0 \rightarrow X$ ramifies at $\cf_0$.
Then $\nu_0^{-1}(\cf_0)=2\cf'_0$, where
$\cf'_0$ is a reduced curve in $Y_0$ such that $\nu_0: \cf'_0 \rightarrow \cf_0$
is isomorphism. It holds that
$\chi(\sO(-2D+B)|_{2\cf'_0})=0$ by Remark \ref{rem:DegFilt=0}, 
and then $\chi(\sO(-2D+B)|_{\cf'_0})=0$ since $\deg(\sO(\cf'_0)|_{\cf'_0})=0$.
Hence \eqref{eq:leq-4sj} holds also in Case \ref{case:reduced-ramify}.
\end{case}
Now let us consider cases where
\begin{multline}\label{eq:B0GivesMultFib}
\text{$B_0=(tm+l)F$, where $F$ is the reduction of a multiple fiber in $X$}\\
\text{with multiplicity $m$ and $0\leq l< m$.}
\end{multline}
By \eqref{eq:2(K-B)}, $\det(f)\in\Gamma(\sO(2(K_X-B)))$ satisfies 
in $\operatorname{Div}(X)$ that
\begin{equation}\label{eq:describe-Ddetf}
  \operatorname{Div}(\det(f))= 
  \textstyle\sum_i (t_i'm_i-2l_i-2)F_i + (\text{reduced fibers}).
\qquad (t_i'\in \ZZ_{\geq 1})
\end{equation}
Then $ B' = \sum_i (\lfloor t_i'm_i/2 \rfloor -l_i-1 )F_i + (\text{reduced fibers})$
by its definition in Section \ref{sctn:rkH1-caseA1}, 
and then 
\begin{equation}\label{eq:describe-sL}
c_1(\sL)=K_X-B-B'\simeq \textstyle\sum_i -\lfloor t_i'm_i/2 \rfloor F_i +
(\text{reduced fibers}).
\end{equation}
\begin{case}\label{case:etale-sLF-nontrivial}
Assume that $B_0$ is as in \eqref{eq:B0GivesMultFib}, $\sL|_F \not\simeq \sO_F$
and the map $Y_0 \rightarrow X$ is \'{e}tale at $F$.
Then one can check that $m$ is even and $\sL|_F \simeq \sO((m/2)F)|_F$
from \eqref{eq:describe-Ddetf} and \eqref{eq:describe-sL}.
Since $h^0(\sO_{\nu_0^{-1}(F)})=h^0(\sO|_F)+h^0(\sL^{\vee}|_F)=1$,
$\nu_0^{-1}(F)=F'$ is connected and $\nu_0: F' \rightarrow F$ is \'{e}tale,
and accordingly $F'$ is a nonsingular integral curve.
Remark that $\deg(\sO(F')|_{F'})=0$ and $\deg(\sO(2D)|_{F'})=0$
by Remark \ref{rem:DegFilt=0}, and that 
$h^0(\sO(tF')|_{F'})=h^0(\sO(tF)_F)+h^0(\sO(tF)\otimes\sL^{-1}|_F)
= h^0(\sO(tF)|_F)+ h^0(\sO((t+(m/2))F)|_F)$ is nonzero iff $(m/2)|t$.
Consequently $\sO(F')|_{F'}$ gives a torsion divisor of order $m/2$ and
\begin{equation}\label{eq:h0-2D+B-1st}
 h^0(\sO(-2D+B)|_{\nu_0^{-1}(B_0)=(tm+l)F'})\leq 2t+h^0(\sO(-2D+B)|_{lF'}).
\end{equation}
\begin{clm}
 In Case \ref{case:etale-sLF-nontrivial},
$h^0(\sO(-2D+B)|_{lF'})=h^0(\sO(2D+K_X)|_{lF'})=0$ when $l\leq (m/2)-1$, that is,
$\lfloor 2l/m \rfloor =0$.
\end{clm}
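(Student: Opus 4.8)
The plan is to filter the thickened curve $lF'$ by copies of the smooth elliptic curve $F'$ and reduce the two vanishings to the non-triviality of certain explicit degree-zero line bundles on $F'$. For any line bundle $M$ defined near $F'$ in $Y_0$, the standard ideal-sheaf filtration of $\sO_{lF'}$ produces a filtration of $M|_{lF'}$ with successive quotients $M|_{F'}\otimes(\sO(F')|_{F'})^{-k}$, $k=0,\dots,l-1$; hence $h^0(M|_{lF'})\le\sum_{k=0}^{l-1}h^0\bigl(M|_{F'}\otimes(\sO(F')|_{F'})^{-k}\bigr)$. Every summand has degree $0$ on the elliptic curve $F'$, so it has a section only if $M|_{F'}\cong(\sO(F')|_{F'})^{k}$, and $\sO(F')|_{F'}$ has order exactly $m/2$ (as established just before the claim). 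Thus it suffices to write $\sO(-2D+B)|_{F'}$ and $\sO(2D+K_X)|_{F'}$ as powers of $\sO(F')|_{F'}$ and check that the exponents occurring for $0\le k\le l-1$, $l\le m/2-1$, are never $\equiv 0\pmod{m/2}$.

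To compute those restrictions: since $\nu_0$ is \'etale near $F$ and $F'=\nu_0^{-1}(F)$ is connected, $\nu_0^*F=F'$, so $\sO(B)|_{F'}=\sO(B_0)|_{F'}=(\sO(F')|_{F'})^{tm+l}$ (the remaining components of $B$ lie over other fibres and miss $F'$); and by the canonical bundle formula \eqref{eq:CanBdleFormula} the only term of $K_X$ meeting $F'$ is $(m-1)F$, so $\sO(K_X)|_{F'}=(\sO(F')|_{F'})^{m-1}$. Restricting the relation $D+\sigma(D)+K_X-B=0$ of Lemma \ref{lem:D+sD} to $F'$ and using that $\sigma$ preserves $F'$, restricting there to a fixed-point-free involution of the elliptic curve $F'$, hence to translation by a $2$-torsion point, which acts trivially on $\Pic^0(F')$, together with $\deg(\sO(2D)|_{F'})=0$, gives $\sO(2D)|_{F'}=\sO(B-K_X)|_{F'}$. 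Reducing all exponents modulo the order $m/2$ of $\sO(F')|_{F'}$ (so that $tm\equiv0$ and $m\equiv0$) this yields $\sO(-2D+B)|_{F'}\cong(\sO(F')|_{F'})^{m/2-1}$ and $\sO(2D+K_X)|_{F'}\cong(\sO(F')|_{F'})^{l}$.

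Plugging back into the filtration bound finishes the proof. For $M=\sO(-2D+B)$ the quotients are $(\sO(F')|_{F'})^{m/2-1-k}$ with $0\le k\le l-1$; since $l\le m/2-1$ the exponents lie in $\{m/2-l,\dots,m/2-1\}\subseteq\{1,\dots,m/2-1\}$, so each is a non-trivial degree-zero line bundle on $F'$ and has no section, whence $h^0(\sO(-2D+B)|_{lF'})=0$. For $M=\sO(2D+K_X)$ the quotients are $(\sO(F')|_{F'})^{l-k}$, with exponents in $\{1,\dots,l\}\subseteq\{1,\dots,m/2-1\}$, again non-trivial, so $h^0(\sO(2D+K_X)|_{lF'})=0$. (When $m=2$ the hypothesis $l\le m/2-1$ forces $l=0$ and both statements are trivial.)

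The step I expect to cause the most trouble is the exponent bookkeeping in the second paragraph: one must be sure that $\nu_0^*F=F'$ (not $2F'$) because we are in the \'etale case, that the deck transformation of $F'\to F$ acts trivially on $\Pic^0(F')$ so that $\sigma(D)|_{F'}\cong D|_{F'}$ despite $\sigma$ not fixing $F'$ pointwise, and that the reductions are taken modulo $m/2$ rather than $m$. The filtration construction and the vanishing of $h^0$ for non-trivial degree-zero bundles on the elliptic curve $F'$ are routine.
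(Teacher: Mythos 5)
Your proposal is correct, but it follows a genuinely different route from the paper's. The paper argues by contradiction: a hypothetical section produces an isomorphism $\sO_{F'}\simeq\sO(-2D+B-\lambda F')|_{F'}$ for some $0\leq\lambda\leq l-1$, which is multiplied by its $\sigma$-conjugate; the $\ZZ/2$-\emph{equivariance} in Lemma \ref{lem:D+sD} then yields an equivariant trivialization of $\nu_0^*\bigl(\sO(-2(1+\lambda)F)|_{F}\bigr)$, which descends along the principal $\ZZ/2$-bundle $F'\rightarrow F$ and contradicts the exact order $m$ of $\sO(F)|_F$ (Proposition \ref{prop:order-O(F)F}); the second vanishing is handled the same way. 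You instead stay on $F'$: the deck involution $\sigma|_{F'}$ is a fixed-point-free involution of the elliptic curve $F'$, hence a translation by a $2$-torsion point, hence acts trivially on degree-zero line bundles, so the plain (non-equivariant) relation $D+\sigma(D)+K_X-B=0$ restricted to $F'$ gives $\sO(2D)|_{F'}\simeq\nu_0^*\sO(B-K_X)|_{F'}$ (here you need $\deg\sO(D)|_{F'}=0$, which follows from $\deg\sO(2D)|_{F'}=0$); this identifies $\sO(-2D+B)|_{F'}$ and $\sO(2D+K_X)|_{F'}$ explicitly as the $(m-1)$-st and $(tm+l)$-th powers of the order-$(m/2)$ bundle $\sO(F')|_{F'}$, and the standard filtration of $\sO_{lF'}$ finishes; the non-divisibility conditions you check are exactly the congruences the paper derives by descent, so the two computations are consistent. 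What each buys: the paper's descent avoids any appeal to the automorphism structure of elliptic curves but must invoke the equivariance of Lemma \ref{lem:D+sD} and the order-$m$ statement on $F$; your argument uses only the isomorphism class in Lemma \ref{lem:D+sD} plus an elementary translation argument and gives the restricted bundles explicitly, which is slightly more information. One phrasing to tighten: the term $\pi^*\sO_{\PP^1}(d-2)$ in the canonical bundle formula restricts trivially to $F'$ because $F'$ maps to a single point of $\PP^1$, not because it ``misses'' $F'$; this does not affect the conclusion $\nu_0^*\sO(K_X)|_{F'}\simeq(\sO(F')|_{F'})^{m-1}$.
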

\begin{proof}
 Suppose that $h^0(\sO(-2D+B)|_{lF'})\neq 0$.
 Then there should be an integer $\lambda$ such that 
 $0\leq \lambda \leq l-1\leq (m/2)-2$ and that there is a isomorphism
 $\iota: \sO_{F'} \simeq \sO(-2D+B-\lambda F')|_{F'} $.
 This gives a $\ZZ/2$-isomorphism
 $\iota\cdot\sigma(\iota): \sO_{F'} \simeq \sO(-2D-2\sigma(D)+2B-2\lambda F')|_{F'} $.
 Since Lemma \ref{lem:D+sD} states that $D+\sigma(D)+K_X-B$ is $\ZZ/2$-isomorphic
 to $0$,
 we have an $\ZZ/2$-isomorphism
 $\sO_{F'}\simeq \sO(2K_X-2\lambda F')|_{F'}=
 \sO(-2(1+\lambda)F')|_{F'}=\nu_0^* (\sO(-2(1+\lambda)F)|_{F})$.
 From Luna's \'{e}tale slice theorem $\nu_0: F' \rightarrow F$ is a principal $\ZZ/2$-bundle,
 and hence we obtain an isomorphism 
 $\sO_F \simeq \sO(-2(1+\lambda)F)|_{F}$ on $F$ by \'{e}tale descent theory.
 Accordingly $m|2(1+\lambda)$, so $(m/2)|1+\lambda$, but this is impossible
 since $1\leq \lambda+1 \leq (m/2)-1$.
 In the same way, if $h^0(\sO(2D+K_X)|_{lF'})\neq 0$, then one can deduce that
 $(m/2)| l-\lambda$. This is impossible since $1 \leq l-\lambda \leq l \leq (m/2)-1$.
\end{proof}
Equation \eqref{eq:h0-2D+B-1st} deduces in Case \ref{case:etale-sLF-nontrivial} that 
\begin{align}\label{eq:upperbd-case3}
 h^0(\sO(-2D+B)|_{\nu_0^{-1}(B_0)}) \leq & 2t+ \lfloor 2l/m \rfloor =
\lfloor  2(tm+l)/m \rfloor  \quad \text{and} \\
 h^0(\sO(2D+K_X)|_{\nu_0^{-1}(B_0)}) \leq  & \lfloor 2(tm+l)/m \rfloor. \notag
\end{align}
\end{case}
\begin{case}\label{case:ramify-sLF-nontrivial}
Assume that $B_0$ is as in \eqref{eq:B0GivesMultFib}, $\sL|_F \not\simeq \sO_F$
and $Y_0 \rightarrow X$ ramifies at $F$.
Then one can check that $m$ is odd and 
$\sL|_F \simeq \sO( -\tfrac{m-1}{2}F)|_F \not\simeq \sO_F$
from \eqref{eq:describe-Ddetf} and \eqref{eq:describe-sL}.
We also have that
$\nu_0^{-1}(F)=2F'$ with reduced divisor $F'$ such that $\sigma(F')=F'$,
$\nu_0: F' \rightarrow F$ is isomorphic, and hence
$\sO(F')|_{F'}$ gives a torsion divisor on $F'$ with order $m$.
\begin{clm}\label{clm:ramify-h0-vanish}
Define the number $\Lambda_3(B_0)$ by $\Lambda_3(B_0)=1$ if $l=(m-1)/2$
and $\Lambda_3(B_0)=0$ otherwise. Then
$h^0(\sO(-2D+B)|_{\nu_0^{-1}(lF)=2lF'}) \leq \lfloor 2l/m \rfloor +\Lambda_3(B_0)$,
and
$h^0(\sO(2D+K_X)|_{2lF'}) \leq \lfloor 2l/m \rfloor$. 
\end{clm}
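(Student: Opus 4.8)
The plan is to imitate the proof of the claim in Case \ref{case:etale-sLF-nontrivial}: filter the sheaves living on $2lF'$ along powers of $F'$, identify each graded piece as an explicit torsion line bundle on the elliptic curve $F'$, and then simply count how many of those pieces are trivial.

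First I recall, from Case \ref{case:ramify-sLF-nontrivial}, that $\nu_0^{-1}(F)=2F'$ with $F'$ reduced, $\nu_0\colon F'\to F$ an isomorphism onto the smooth reduction $F$ of the multiple fiber, and that $\sO(F')|_{F'}$ is a torsion line bundle of order exactly $m$ (here $m$ is odd). For a line bundle $N$ on $Y_0$, the exact sequences $0\to\sO(-jF')|_{F'}\to\sO_{(j+1)F'}\to\sO_{jF'}\to0$, tensored with $N$, exhibit $N|_{2lF'}$ as an iterated extension whose graded pieces are $N|_{F'}\otimes(\sO(F')|_{F'})^{\otimes(-j)}$ for $j=0,\dots,2l-1$. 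I apply this with $N=\sO(-2D+B)$ and with $N=\sO(2D+K_X)$. In each case $N|_{F'}$ will be a power of the order-$m$ class $\sO(F')|_{F'}$, hence a degree-zero line bundle on the elliptic curve $F'$, so each graded piece has $h^0\le1$, with equality exactly when it is $\sO_{F'}$; therefore
\[ h^0(N|_{2lF'})\ \le\ \#\bigl\{\,0\le j<2l\ :\ N|_{F'}\otimes(\sO(F')|_{F'})^{\otimes(-j)}\cong\sO_{F'}\,\bigr\}. \]

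The crux of the argument is the identification of $N|_{F'}$. The components of $B$ other than $B_0$ lie over points of $\PP^1$ distinct from $\pi(F)$ and so miss $F'$; hence $\sO(B)|_{F'}=\sO(\nu_0^{*}B_0)|_{F'}=(\sO(F')|_{F'})^{\otimes2(tm+l)}\cong(\sO(F')|_{F'})^{\otimes2l}$, the summand $2tm$ vanishing modulo $m$. Restricting the $\ZZ/2$-equivariant identity $D+\sigma(D)+K_X-B\cong0$ of Lemma \ref{lem:D+sD} to $F'$, and using that $\sigma$ acts as the identity on the reduced ramification curve $F'$ so that $\sigma(D)|_{F'}\cong D|_{F'}$, gives $\sO(2D)|_{F'}\cong\sO(B-K_X)|_{F'}$, whence $\sO(2D+K_X)|_{F'}\cong\sO(B)|_{F'}\cong(\sO(F')|_{F'})^{\otimes2l}$ and $\sO(-2D+B)|_{F'}\cong\sO(K_X)|_{F'}$. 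Finally, by the canonical bundle formula \eqref{eq:CanBdleFormula} one has $K_X|_F=\sO((m-1)F)|_F$, and pulling back along $\nu_0\colon F'\to F$ (where $\nu_0^{*}(\sO(F)|_F)=\sO(2F')|_{F'}$ because $F$ is in the branch locus) yields $\sO(K_X)|_{F'}\cong(\sO(F')|_{F'})^{\otimes2(m-1)}\cong(\sO(F')|_{F'})^{\otimes(-2)}$. So $\sO(-2D+B)|_{F'}\cong(\sO(F')|_{F'})^{\otimes(-2)}$ and $\sO(2D+K_X)|_{F'}\cong(\sO(F')|_{F'})^{\otimes2l}$. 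I expect this paragraph to be the main obstacle: one must be careful with the $\sigma$-action on the ramification curve and with the canonical bundle formula, but once these restrictions are pinned down everything else is formal.

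What remains is an elementary residue count, controlled by $0\le2l<2m$. For $N=\sO(-2D+B)$ the $j$-th graded piece is $(\sO(F')|_{F'})^{\otimes(-2-j)}$, which is trivial iff $j\equiv m-2\pmod m$; the number of such $j$ in $\{0,\dots,2l-1\}$ equals $\lfloor2l/m\rfloor$ unless $2l\equiv-1\pmod m$, in which case it equals $\lfloor2l/m\rfloor+1$, and since $m$ is odd and $0\le l<m$ the congruence $2l\equiv-1$ forces $l=(m-1)/2$, i.e.\ $\Lambda_3(B_0)=1$; this gives the first inequality. For $N=\sO(2D+K_X)$ the $j$-th graded piece is $(\sO(F')|_{F'})^{\otimes(2l-j)}$, trivial iff $j\equiv2l\pmod m$; since $j=2l$ is excluded from $\{0,\dots,2l-1\}$, the solutions are $j=2l-km$ with $1\le k\le\lfloor2l/m\rfloor$, exactly $\lfloor2l/m\rfloor$ of them, which gives the second inequality. (The remaining cases alluded to after Claim \ref{clm:ramify-h0-vanish} are handled by the same computation, which is symmetric in the roles involved.)
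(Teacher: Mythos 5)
Your proof is correct and is essentially the paper's argument in a different packaging: the same ingredients (the filtration of $\sO_{2lF'}$ along powers of $F'$, Lemma \ref{lem:D+sD} combined with the $\ZZ/2$-action, the fact that $\sO(F')|_{F'}$ has order $m$, and the oddness of $m$) are used, only you identify the restrictions to $F'$ once and for all and count trivial graded pieces, whereas the paper tensors each hypothesized trivialization with its $\sigma$-image to get $m\mid 2(\lambda+2)$ (resp.\ $m\mid 2(2l-\mu)$) and argues by contradiction. Your ``halving'' step $\sigma(D)|_{F'}\cong D|_{F'}$ is legitimate, since $\nu_0|_{F'}$ is an isomorphism and $\nu_0\circ\sigma=\nu_0$ force $\sigma|_{F'}=\id_{F'}$, so the two routes coincide in substance.
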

\begin{proof}
Assume that $h^0(\sO(-2D+B)|_{2lF'})\neq 0$ and $l\leq (m-3)/2$.
Then there should be an integer $\lambda$ such that 
$0\leq \lambda \leq 2l-1$ and that
$\sO(-2D+B-\lambda F')|_{F'} \simeq \sO_{F'}$.
By applying $\sigma$ to this, we also get that
$\sO(-2\sigma(D)+B-\lambda F')|_{F'} \simeq \sO_{F'}$,
and by unifying them and using Lemma \ref{lem:D+sD},
\[ \sO_{F'}\simeq \sO_{F'}(2(\lambda F'-B+D+\sigma(D)))=\sO_{F'}(2(\lambda F'-K_X ))
=\sO_{F'}(2(\lambda+2)F').\]
Thus $m|2(2+\lambda)$, and $m|(2+\lambda)$ since $m$ is odd,
but this is impossible because $2 \leq \lambda+2 \leq 2l+1 \leq m-2$.
Hence one can get the first inequality.
Next, assume that $h^0(\sO(2D+K_X)|_{2lF'})\neq 0$ and $l\leq (m-1)/2$.
Then there should be an integer $\mu$ such that 
$0\leq \mu \leq 2l-1$ and that
$\sO(2D+K_X-\mu F')|_{F'} \simeq \sO_{F'}$.
By a similar way to arguments above, 
\[ \sO_{F'}\simeq \sO_{F'}(2(D+\sigma(D)+K_X-\mu F'))=\sO_{F'}(2(B-\mu F'))
 = \sO_{F'}(2(2l-\mu)F').\]
Thus $m|2(2l-\mu)$, and $m|(2l-\mu)$ since $m$ is odd,
but this is impossible because $1\leq 2l-\mu \leq 2l \leq m-1$.
\end{proof}
\end{case}
\begin{case}\label{case:sLF-trivial}
Assume that $B_0$ is as in \eqref{eq:B0GivesMultFib} and
$\sL|_F \simeq \sO_F$.
Then one can check that $t'$ at \eqref{eq:describe-Ddetf} is even,
$Y_0 \rightarrow X$ is \'{e}tale at $F$,
$\nu_0^{-1}(F)=F' \sqcup \sigma(F')$ since $h^0(\nu_0^{-1}(\sO_F))=2$,
$\nu_0: F' \rightarrow F$ is isomorphic, 
$\deg(\sO(2D)|_{F'})=0$ by Remark \ref{rem:DegFilt=0},
and that the order of $\sO(F')|_{F'}$ is $m$. Thus 
$h^0(\sO(-2D+B)|_{\nu_0^{-1}(B_0)})= h^0(\sO(-2D+B)|_{(tm+l)F'})
 + h^0(\sO(-2D+B)|_{(tm+l)\sigma(F')}) \leq 2t+ h^0(\sO(-2D+B)|_{lF'})+
h^0(\sO(-2\sigma(D)+B)|_{lF'})$.
Suppose both $h^0(\sO(-2D+B)|_{lF'})$ and $h^0(\sO(-2\sigma(D)+B)|_{lF'})$
are nonzero.
Then $\sO(-2D+B-\lambda F')|_{F'} \simeq \sO_{F'}$ and
$\sO(-2\sigma(D)+B-\mu F')|_{F'} \simeq \sO_{F'}$ with some integers
$0\leq \lambda,\ \mu \leq l-1$.
Since $D+\sigma(D)+K_X-B=0$, these deduce that
$\sO((\lambda+\mu+2)F')|_{F'} \simeq \sO_{F'}$, but
this is impossible when $2l\leq m-1$, because $2\leq \lambda+\mu+2 \leq 2l$.
Remark that $2l\leq m-1$ iff $\lfloor 2l/m \rfloor =0$, and so
$h^0(\sO(-2D+B)|_{\nu_0^{-1}(B_0)}) \leq \lfloor 2l/m \rfloor +1$.
Therefore in Case \ref{case:sLF-trivial} we have
\begin{align}\label{eq:upperbd-case5}
h^0(\sO(-2D+B)|_{\nu_0^{-1}(B_0)}) \leq & \lfloor 2(tm+l)/m \rfloor +1,\quad \text{and}\\
h^0(\sO(2D+K_X)|_{\nu_0^{-1}(B_0)}) \leq & \lfloor 2(tm+l)/m \rfloor +1. \notag
\end{align}
\end{case}
Summing up \eqref{eq:leq-4sj}, \eqref{eq:upperbd-case3}, Claim \ref{clm:ramify-h0-vanish}
and \eqref{eq:upperbd-case5}, we obtain that
\begin{multline}\label{eq:rk-3ext2}
\tfrac{1}{2} h^0(\sO(-2D+B)|_{\nu_0^{-1}(B_0)})+
\tfrac{1}{2} h^0(\sO(2D+K_X)|_{\nu_0^{-1}(B_0)}) \\
\leq 2\textstyle\sum_j s_j + \textstyle\sum_i \lfloor (2t_i m_i+2l_i)/m_i \rfloor
 +\tfrac{1}{2}\Lambda_3(B)+\Lambda_4(B) 
= 2n+\tfrac{1}{2}\Lambda_3(B)+\Lambda_4(B). 
\end{multline}
Here, $\Lambda_3(B)$ means the number of $F_i$ such that $B$ corresponds to
Case \ref{case:ramify-sLF-nontrivial} at $F_i$ and $l_i=(m_i-1)/2$,
$\Lambda_4(B)$ means the number of $F_i$ such that $B$ corresponds to
Case \ref{case:sLF-trivial} and hence $\sL|_F \simeq \sO_F$.
%
%
From Proposition \ref{prop:dimIm-2ext2-I}, \eqref{eq:h0-2K-B}
and \eqref{eq:rk-3ext2}, we can deduce that
\begin{multline}\label{eq:EstimateRk-2ext2-1}
 \rk(H^1(\ad(f)))-2\ext^2(E,E)^0 -1\\
\geq  2d+4+2\Lambda_2(B)-\bigl[2\Lambda(X)+\Lambda(B)+(1/2)\Lambda_3(B)
 +\Lambda_4(B) \bigr],
\end{multline}
where $\Lambda(B)$ was defined at Proposition \ref{prop:h1-sRKoG}.
By its definition, one can check that 
$\Lambda(B)+\Lambda_4(B)\leq \Lambda(X)$ and
$\Lambda_2(B)+\Lambda_3(B)\leq \Lambda_1(B)$. Therefore
\eqref{eq:EstimateRk-2ext2-1} induces that 
$ \rk(H^1(\ad(f)))-2\ext^2(E,E)^0 -1 \geq 2d+4-(7/2)\Lambda(X) =2(d+2-(7/4)\Lambda(X))$,
and \eqref{eq:Since2(K-B)Positive} and \eqref{eq:EstimateRk-2ext2-1} 
induces that
$ \rk(H^1(\ad(f)))-2\ext^2(E,E)^0 -1 \geq 2d+4-3\Lambda(X)-\Lambda_1(B)
\geq 2d+4-3\Lambda(X)-[2(d-2-n)+\Lambda(X)] =8-4\Lambda(X)+2n \geq 4(2-\Lambda(X))$.
Theorem \ref{thm:rk-geq-3ext-ver1} follows from these equations and
Theorem \ref{thm:suffcdtn-moduli-can}.
\end{proof}


\section{Some elliptic surfaces with a few singular fibers}\label{sctn:ElptSurf-FewSingFib}
In this section we shall show the following Theorem.
%
\begin{thm}\label{thm:(2,m)}
In Setting \ref{stng:XandH}, we suppose that $X$ has two multiple fibers
with multiplicities $(m_1=2, m_2=m)$ with $m\geq 3$, and $d=\chi(\sO_X)=1$.\\
(i) If $M(c_2)$ is singular at a stable sheaf $E$, then 
$E$ always comes under Case I in Fact \ref{fact:RestrGenericFib}.\\
(ii) We consider in Setting \ref{stng:XHS}.
If $c_2\geq 3$ and if $M(c_2)$ is compact (e.g. $c_2$ is odd), then
$\kappa(M(c_2))=(\dim M(c_2)+1)/2$.
\end{thm}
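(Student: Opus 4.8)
The plan is to deduce part (ii) formally from part (i) together with Theorem~\ref{thm:rk-geq-3ext-ver1} and Corollary~\ref{cor:KodDim-Moduli}, so the real work lies in part (i). Throughout I record that $q(X)=0$ and $d=\chi(\sO_X)=1$ force $p_g(X)=0$, and that the canonical bundle formula \eqref{eq:CanBdleFormula} specialises to $K_X=\pi^{*}\sO_{\PP^1}(-1)+F_1+(m-1)F_2$; hence $K_X|_{X_\eta}=\sO(F_1)|_{X_\eta}\otimes\sO(F_2)|_{X_\eta}^{\otimes(m-1)}$, where $\sO(F_1)|_{X_\eta}$ has order exactly $2$ and $\sO(F_2)|_{X_\eta}$ has order $m$ by Proposition~\ref{prop:order-O(F)F}. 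I will also use that a nonzero $f\in\Hom(E,E(K_X))^{\circ}$ restricts to a nonzero traceless $f_\eta\colon E_\eta\to E_\eta\otimes K_X|_{X_\eta}$, because the sheaf $\mathcal{H}om(E,E(K_X))$ is torsion-free.

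\textbf{Part (i).} I would prove the sharp form: a stable $E\in M(c_2)$ of Case~II or Case~III of Fact~\ref{fact:RestrGenericFib} satisfies $\ext^2(E,E)^{\circ}=0$ (which contains the stated claim, since a singular point is obstructed). Suppose not and take $0\ne f$ as above. In Case~III, $E_\eta\simeq L_1\oplus L_2$ with $\deg L_i=0$; from $\Hom_{X_\eta}(L_1\oplus L_2,(L_1\oplus L_2)\otimes K_X|_{X_\eta})=\bigoplus_{i,j}H^0(L_jL_i^{-1}K_X|_{X_\eta})$ and $\tr f_\eta=0$ only the off-diagonal cases $K_X|_{X_\eta}\cong L_iL_j^{-1}$ survive; then $H$-stability and the $c_2$-suitability of $H$ show $c_1(L_i)$ lifts to a vertical $\ZZ$-divisor class on $X$ whose restriction to $X_\eta$ is $2$-torsion, and the only such classes are multiples of $F_1$ (here $m_1=2$ enters; when $m$ is odd one also uses $\gcd(2,m)=1$), which cannot produce the required $f_\eta$. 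When $m$ is even one must additionally exclude the order-$2$ class $(m/2)F_2|_{X_\eta}$, for which I expect to need the genericity of $X$ in Setting~\ref{stng:XHS} or a direct length estimate.

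In Case~II, $E_\eta$ is a non-split self-extension of an order-$2$ line bundle $L_\eta$ by itself; choosing a saturated sub-line-bundle $\mathcal{L}\subset E$ with $\mathcal{L}_\eta=L_\eta$ and $\mathcal{M}=E/\mathcal{L}$, from $c_1(\mathcal{L})+c_1(\mathcal{M})=0$ and $\mathcal{M}_\eta\simeq L_\eta$ one gets that $c_1(\mathcal{L})$ is again a vertical $\ZZ$-class with $2$-torsion restriction to $X_\eta$, hence tied to $F_1$. I would then run the divisor bookkeeping of Section~\ref{sctn:rkH1-caseA1}: write $B$ as in \eqref{eq:describe-B} with $0\le l_1\le m_1-1=1$, factor $\det f=\alpha\tau^2$ with $c_1(\sL)=K_X-B-B'$, and confront the effective divisor $2(K_X-B)$ forced by $\det f\ne 0$ (equation \eqref{eq:2(K-B)}) with the shape of $c_1(\mathcal{L})$; the bound $l_1\le 1$ coming from $m_1=2$ should leave no admissible configuration. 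If instead $\det f=0$, then $f^2=0$ by Hamilton--Cayley, $\Ker f$ has fiber degree $0$, and $f_\eta$ is the nilpotent endomorphism of the self-extension, so one must show such an $f$ cannot exist globally. The hard part will be exactly this last step — upgrading a generically nilpotent endomorphism to a global nonexistence statement — and I expect the cleanest route is to rerun the rank estimate behind Theorem~\ref{thm:rk-geq-3ext-ver1} (i.e.\ Proposition~\ref{prop:dimIm-2ext2-I}), specialised to $d=1$, $(m_1,m_2)=(2,m)$ tightly enough that it is incompatible with $f\ne 0$ in Cases~II and~III; plausibly it shows $h^0(\sO(2(K_X-B)))=0$ outright.

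\textbf{Part (ii).} Assume $c_2\ge 3$ and $M(c_2)$ compact, in Setting~\ref{stng:XHS}. Since $\Lambda(X)=2\le 2$, Theorem~\ref{thm:rk-geq-3ext-ver1} applies to every obstructed sheaf (all of Case~I by part (i)), so each is an l.c.i., normal, canonical singularity of $M(c_2)$; hence $M(c_2)$ is normal and l.c.i., so Gorenstein and $K_{M(c_2)}$ is Cartier. Being l.c.i.\ of the expected dimension with $b=\ext^2(E,E)^{\circ}\ge 1$ at every obstructed point, $M(c_2)$ has Zariski tangent dimension $D+b>D=\dim M(c_2)$ there, so $\Sing M(c_2)=M(c_2)\setminus M_{gd}$; by normality this is of codimension $\ge 2$, and compactness gives $\bar{M}(c_2)=M(c_2)$. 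Thus the hypothesis of Lemma~\ref{lem:GoodLcs} holds, giving $K_{\bar{M}(c_2)}=\lambda(-2k_X)$ (the two line bundles agree on $M_{gd}$ and $\bar{M}(c_2)$ is normal). Since $c_2\ge 3>\max(2(1+p_g),\,2p_g+\tfrac23\Lambda(X))=2$, Corollary~\ref{cor:KodDim-Moduli}(i) gives $\kappa(\lambda(-2k_X),\bar{M}'(c_2))=(\dim M(c_2)+1-p_g(X))/2=(\dim M(c_2)+1)/2$ for every connected component $\bar{M}'(c_2)$, and $\bar{M}(c_2)$ is irreducible by Proposition~\ref{prop:irred}. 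Finally $\bar{M}(c_2)$ is complete with only canonical singularities, so by Definition~\ref{def:D-dim}(2) its Kodaira dimension equals its $K$-dimension; therefore $\kappa(M(c_2))=(\dim M(c_2)+1)/2$.
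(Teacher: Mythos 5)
Your part (i) is where the real content of the theorem lies, and your plan for it has genuine gaps rather than an argument. First, in both Case II and Case III you reduce to the claim that the relevant degree-zero class on $X_\eta$ is ``a vertical $\ZZ$-divisor class \dots\ tied to $F_1$''. Fiber degree zero does not make a divisor class vertical (differences of multisections have fiber degree zero), and torsion classes in $\Pic^0(X_{\eta})$ rational over $k(\PP^1)$ can also come from torsion sections of the Jacobian fibration, not only from $F_1,F_2$; moreover in Case III the actual constraint is $L_1^{\pm2}\simeq K_X|_{X_\eta}$, which is nontrivial, so $L_1$ is not $2$-torsion --- the ``2-torsion'' reduction is wrong there. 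You also propose to invoke the genericity of Setting \ref{stng:XHS} for a subcase, but part (i) is stated in Setting \ref{stng:XandH} only. Second, and more seriously, you yourself flag the case $\det f=0$ as the hard step and offer only the hope of ``rerunning'' Proposition \ref{prop:dimIm-2ext2-I}; that proposition, and all of Section \ref{sctn:rkH1-caseA1}, is proved under Assumption \ref{assump:I}, i.e.\ for Case I sheaves, so it cannot be applied to Case II/III, and in the $\det f=0$ situation there is no section of $\sO(2(K_X-B))$ whose vanishing one could hope to establish. Note that (as in Lemma \ref{lem:f20}(1)) every traceless $f$ on a Case II sheaf automatically has $f^2=0$, so this is not a degenerate subcase but all of Case II. The paper kills it by a short direct computation: with $F=\Ker(f)$, $c_1(F)=D$, one gets $2D+K_X-B=0$, $\chi(\sO(D))=d=1$ forces $h^0(\sO(K_X-D))\neq 0$, and the congruences coming from Fact \ref{fact:O(F)|F} combined with $m_1=2$ and the stability inequality $(K_X-B)\cdot\sO(1)\geq 0$ yield a contradiction. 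The remaining case $\det f\neq 0$ for a non--Case I sheaf is then excluded not by fiberwise bookkeeping but by a global double-cover argument: $D^2=0$ (Lemma \ref{lem:D2=0}), $h^2(\sO_Y(D))\neq0$ (Claim \ref{clm:K-D>0}), and the impossibility of the $\ZZ/2$-equivariant section \eqref{eq:G+sG'} because $\nu^{-1}F_1$ is integral. None of these steps is present in your proposal, so part (i) remains unproved.

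Your part (ii), by contrast, is essentially the paper's own route ((i) plus Theorem \ref{thm:rk-geq-3ext-ver1}, Lemma \ref{lem:GoodLcs}, Corollary \ref{cor:KodDim-Moduli}(i) and Definition \ref{def:D-dim}(2)), and verifying the codimension hypothesis of Lemma \ref{lem:GoodLcs} directly from normality at the (Case I, hence canonical) obstructed points together with compactness is a reasonable way to make the reduction explicit for $c_2\geq 3$. One over-reach: Proposition \ref{prop:irred} is only stated for $c_2$ sufficiently large, so you cannot cite it at $c_2\geq3$; but you do not actually need irreducibility, since Corollary \ref{cor:KodDim-Moduli}(i) applies to every connected component and all components have the expected dimension. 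So (ii) would stand once (i) is genuinely established.
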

\begin{rem}
By \cite[Cor. 7.17]{Frd:holvb}, the number of fibers with singular reduction
is $12d$ in Setting \ref{stng:XandH}.
Thus the assumption in Theorem \ref{thm:(2,m)} implies that both multiple fibers
and fibers with singular reduction are rather few.
\end{rem}
%
Let us begin with some lemmas.
\begin{lem}\label{lem:detf-0-A}
If a torsion-free rank-two sheaf $E$ on an elliptic surface has 
a traceless homomorphism $f: E \rightarrow E(K_X)$ satisfies that $\det(f)\neq 0$,
then $E_{\breta}$ is decomposable.
\end{lem}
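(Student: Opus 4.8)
The plan is to restrict everything to the geometric generic fibre $X_{\bar\eta}$, where $K_X$ becomes trivial, so that $f$ turns into an ordinary endomorphism of a rank-two bundle; a traceless endomorphism with invertible determinant over an \emph{algebraically closed} field is diagonalisable, which forces the bundle to split. First I would note that $K_X|_{X_{\bar\eta}}\simeq\sO_{X_{\bar\eta}}$: by the canonical bundle formula \eqref{eq:CanBdleFormula}, $K_X$ is linearly equivalent to a divisor pulled back from $\PP^1$ plus a vertical divisor supported over the finitely many points below the multiple fibres, and each summand restricts trivially to $X_{\bar\eta}$ --- the pullback because $\bar\eta$ lies over $\Spec k(\PP^1)$, the vertical part because it is disjoint from $X_{\bar\eta}$. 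Since $E$ is torsion-free of rank two, its non-locally-free locus is zero-dimensional and hence lies over finitely many closed points of $\PP^1$; as the image of $X_{\bar\eta}\to X$ lands in the fibre over the generic point of $\PP^1$, the pullback $E_{\bar\eta}$ is a rank-two vector bundle on the smooth curve $X_{\bar\eta}$, and $f$ induces an endomorphism $\phi:=f_{\bar\eta}\in\End(E_{\bar\eta})$.

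Next I would pin down the invariants of $\phi$. It is traceless, since $\tr$ commutes with restriction and $f$ is traceless. And $\det\phi\ne0$: the nonzero section $\det f$ defines an injection $\sO_X\hookrightarrow\sO(2K_X)$, which remains injective after the flat base change $X_{\bar\eta}\to X$, and this base change is exactly $\det\phi$ (the determinant of a morphism of vector bundles commutes with pullback). Hence $\det\phi$ is a \emph{nonzero} element $\lambda$ of $\Gamma(X_{\bar\eta},\sO_{X_{\bar\eta}})=\overline{k(\PP^1)}$; as this field is algebraically closed, I may choose $\alpha\ne0$ with $\alpha^2=-\lambda$. By Cayley--Hamilton, $\phi^2=(\tr\phi)\phi-(\det\phi)\cdot\id=\alpha^2\cdot\id$, so $p:=\tfrac12(\id+\alpha^{-1}\phi)$ satisfies $p^2=p$. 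At each point of $X_{\bar\eta}$ the operator $\phi$ is a $2\times2$ matrix of trace $0$ and determinant $\lambda\ne0$, so it has the two distinct eigenvalues $\pm\alpha$ and $p$ has constant rank one; therefore $E_{\bar\eta}=\operatorname{im}(p)\oplus\operatorname{im}(\id-p)$ is a direct sum of two line bundles, i.e. $E_{\bar\eta}$ is decomposable.

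I do not anticipate a real obstacle, since the heart of the matter is just the diagonalisation of a $2\times2$ matrix. The only slightly delicate points are the two base-change assertions above and the role of algebraic closedness of $\overline{k(\PP^1)}$: it is precisely this that makes $\lambda$ a square, so that the splitting occurs already over $\bar\eta$ rather than only over the quadratic extension $\eta'$ that appears in Case~I of Fact~\ref{fact:RestrGenericFib}.
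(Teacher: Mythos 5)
Your proof is correct and follows essentially the same route as the paper: restrict to the geometric generic fibre where $K_X$ trivializes, use that $\det f_{\bar\eta}$ is a nonzero constant which is minus a square in $\overline{k(\PP^1)}$, and split $E_{\bar\eta}$ via Cayley--Hamilton into the eigensheaves of $f_{\bar\eta}$. The only cosmetic difference is that you package the splitting as the image/kernel of the idempotent $\tfrac12(\id+\alpha^{-1}f_{\bar\eta})$, whereas the paper writes the two exact sequences for $\ker(f_{\bar\eta}\pm a)$ and observes that $p_+\circ i_-$ is an isomorphism.
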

\begin{proof}
The determinant of 
$f_{\bar{\eta}}: E_{\bar{\eta}} \rightarrow E(K_X)_{\bar{\eta}}\simeq E_{\bar{\eta}}$
is denoted as $\det(f_{\bar{\eta}})=-a^2$ with some
$a\in H^0(X_{\bar{\eta}},\sO)=\overline{k(\PP^1)}$.
Since $(f_{\breta}+a)(f_{\breta}-a)=0$ by Hamilton-Caylay's theorem, 
we have two decompositions by degree-zero line bundles
\[ 0 \longrightarrow Ker(f_{\breta}\pm a) \overset{i_{\pm}}{\longrightarrow} 
E_{\breta}  \overset{p_{\pm}}{\longrightarrow} Im(f_{\breta}\pm a) \longrightarrow 0 \]
such that $p_+\circ i_-$ is isomorphic. Thus $E_{\bar{\eta}}$ decomposes.
\end{proof}
\begin{lem}\label{lem:2D|Xeta=0}
Suppose that a singular point $E$ of $M(c_2)$ comes under Case II or Case III
in Fact \ref{fact:RestrGenericFib}, and so there is an extension
\begin{equation}\label{eq:exseq-genfib}
0 \longrightarrow \sO_{X_{\eta}}(D) \longrightarrow E_{\eta} \longrightarrow
   \sO_{X_{\eta}}(-D) \longrightarrow 0.
\end{equation}
When $p_g(X)=0$, $\sO_{X_{\eta}}(D)\simeq \sO_{X_{\eta}}(-D)$.
\end{lem}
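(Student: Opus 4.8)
The plan is to reduce at once to Case III and then, assuming $\sO_{X_{\eta}}(2D)\not\simeq\sO_{X_{\eta}}$, derive a contradiction from the existence of the obstruction homomorphism together with $p_g(X)=0$. In Case II there is nothing to prove: Fact~\ref{fact:RestrGenericFib} already asserts that the line bundle $\sO_{X_{\eta}}(F)$ appearing there has order $2$, so $\sO_{X_{\eta}}(D)\simeq\sO_{X_{\eta}}(-D)$. Hence I may assume $E$ is in Case III, i.e. $E_{\eta}\simeq L\oplus L^{-1}$ with $L=\sO_{X_{\eta}}(D)$ of fibre degree $0$. Since $E$ is a singular point of $M(c_2)$ it is obstructed, so by Definition~\ref{defn:potentially-obstructed} there is a nonzero traceless $f\colon E\to E(K_X)$; I suppose for contradiction that $L^{2}\not\simeq\sO_{X_{\eta}}$.

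\textbf{Step 1 (the restriction to the generic fibre is diagonal).} By the canonical bundle formula \eqref{eq:CanBdleFormula}, $K_X|_{X_{\eta}}\simeq\sO_{X_{\eta}}$, so $f$ restricts to a traceless endomorphism $f_{\eta}\in\End_{\sO_{X_{\eta}}}(E_{\eta})$; it is nonzero because $\mathcal{H}om(E,E(K_X))$ is torsion-free, so $\Hom(E,E(K_X))\hookrightarrow\Hom(E_{\eta},E_{\eta})$. Writing $f_{\eta}$ as a $2\times 2$ matrix with respect to $E_{\eta}=L\oplus L^{-1}$, the off-diagonal entries lie in $\Hom(L^{\mp1},L^{\pm1})=H^{0}(X_{\eta},L^{\pm2})$, which vanishes since $L^{\pm2}$ is a fibre-degree-$0$ line bundle on the genus-one curve $X_{\eta}$ with $L^{\pm2}\not\simeq\sO_{X_{\eta}}$. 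The diagonal entries lie in $\End(L)=\End(L^{-1})=H^{0}(X_{\eta},\sO_{X_{\eta}})=k(\PP^{1})$, and tracelessness forces $f_{\eta}=\operatorname{diag}(a,-a)$ with $a\in k(\PP^{1})$, where $a\neq 0$ since $f_{\eta}\neq 0$. In particular $f_{\eta}$ preserves each of the two summands $L$ and $L^{-1}$ of $E_{\eta}$.

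\textbf{Step 2 (globalise and conclude).} For $i=1,2$ let $\sO(C_i)\subset E$ be the saturation of the rank-one subsheaf of $E$ whose generic fibre is the $i$-th summand of $E_{\eta}=L\oplus L^{-1}$; this is a line bundle because $X$ is a smooth surface, and it fits into $0\to\sO(C_i)\to E\to\sO(-C_i)\otimes I_{W_i}\to 0$ with $c_1(E)=0$. Since $f_{\eta}$ preserves the $i$-th summand, the composite $\sO(C_i)\hookrightarrow E\xrightarrow{\,f\,}E(K_X)\to\sO(-C_i+K_X)\otimes I_{W_i}$ vanishes on $X_{\eta}$, hence vanishes identically (its Hom-sheaf is torsion-free); therefore $f$ carries $\sO(C_i)$ into $\ker\bigl(E(K_X)\to\sO(-C_i+K_X)\otimes I_{W_i}\bigr)=\sO(C_i+K_X)$, and the induced map $\sO(C_i)\to\sO(C_i+K_X)$ is an element of $H^{0}(X,\sO(K_X))=0$, where $p_g(X)=0$ is used. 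Thus $\sO(C_1),\sO(C_2)\subset\ker f$, and since their generic fibres are complementary summands of $E_{\eta}$, the sheaf $\ker f$ has generic fibre all of $E_{\eta}$. Hence $E/\ker f$ is torsion (supported on fibres of $\pi$); but $f$ embeds $E/\ker f$ into the torsion-free sheaf $E(K_X)$, which forces $E/\ker f=0$, i.e. $f=0$ — contradicting $f\neq 0$. Therefore $L^{2}\simeq\sO_{X_{\eta}}$, that is, $\sO_{X_{\eta}}(D)\simeq\sO_{X_{\eta}}(-D)$.

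The delicate point — the step I expect to need the most care — is Step 2: transferring the purely generic-fibre statement ``$f_{\eta}$ is diagonal'' to the surface $X$ by means of saturated line subbundles, and verifying that the obstruction $f$ must annihilate each of them. This is precisely where $p_g(X)=0$ (equivalently $d=1$ in the present setting) is indispensable: without that vanishing, $f$ could act on $\sO(C_i)$ through a nonzero global section of $K_X$, and the argument would collapse.
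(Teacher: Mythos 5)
Your argument is correct in substance, and its engine is the same as the paper's: a nonzero traceless $f\in\Hom(E,E(K_X))^{\circ}$ supplied by obstructedness, the vanishing of the two ``off-diagonal'' Hom groups forced by $\sO_{X_{\eta}}(2D)\not\simeq\sO_{X_{\eta}}$, and the vanishing of the ``diagonal'' contributions forced by $p_g(X)=0$. The packaging differs. The paper does not split into cases: it extends \eqref{eq:exseq-genfib} to a single exact sequence $0\to F\to E\to G\to 0$ on $X$ with $F,G$ torsion-free of rank one, and reads off from the induced diagram that $\Hom(G,F(K_X))=\Hom(F,G(K_X))=0$, hence $0<\hom(E,E(K_X))^{\circ}\le \hom(G,G(K_X))+\hom(F,F(K_X))\le 2p_g(X)$, a contradiction. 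You instead settle Case II by quoting the order-two assertion in Fact \ref{fact:RestrGenericFib}, and in Case III you diagonalize $f_{\eta}$ and globalize through the two saturated rank-one subsheaves; this is slightly more concrete, while the paper's version is uniform in the two cases and bounds the whole obstruction space at once rather than tracing a particular $f$.

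One small repair is needed in Step 2: the claim that the saturated subsheaf $\sO(C_i)\subset E$ is a line bundle holds when $E$ is locally free, but the lemma must allow arbitrary torsion-free $E\in M(c_2)$, and then a saturated rank-one subsheaf need not be invertible (e.g. $I_Z\oplus 0\subset I_Z\oplus\sO_X$ is saturated but not a line bundle). This does not damage the argument: keep $A_i$ torsion-free of rank one; your reasoning still gives $f(A_i)\subset A_i(K_X)$ because the composite into the torsion-free quotient vanishes on $X_{\eta}$, and the induced map $A_i\to A_i(K_X)$ extends to $A_i^{\vee\vee}\to A_i^{\vee\vee}(K_X)$, i.e. lies in $H^0(K_X)=0$ since $A_i^{\vee\vee}$ is a line bundle on the smooth surface $X$. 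With that adjustment the rest of your proof goes through unchanged.
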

\begin{proof}
Suppose not.
One can extend \eqref{eq:exseq-genfib} to an extension on $X$
\[ 0 \longrightarrow F \longrightarrow E \longrightarrow G \longrightarrow 0, \]
where $F$ and $G$ are torsion-free rank-one sheaves.
This induces a diagram of exact sequences
\begin{equation*}
\xymatrix{
  & \Hom_X(G,F(K_X))=0 \ar[d] &  & \Hom_X(F,F(K_X)) \ar[d] \\
0 \ar[r] & \Hom_X(G,E(K_X)) \ar[d] \ar[r] & \Hom_X(E,E(K_X)) \ar[r] &
 \Hom_X(F,E(K_X)) \ar[d] \\
 & \Hom_X(G,G(K_X)) & & \Hom_X(F,G(K_X))=0,} 
\end{equation*}
where the upper-left part and the lower-right part are zero since
$\sO_{X_{\eta}}(D) \not\simeq \sO_{X_{\eta}}(-D)$.
Thus $0<\dim\Hom(E,E(K_X))^{\circ} \leq 2p_g(X)$, but this is impossible
when $p_g(X)=0$.
\end{proof}
{\it Proof of Theorem \ref{thm:(2,m)}}:
First, suppose that there is a traceless homomorphism
$f: E \rightarrow E(K_X)$ with $\det(f)=0$.
This gives an exact sequence
\[ 0 \longrightarrow F=Ker(f) \longrightarrow E \longrightarrow G=Im(f) 
  \longrightarrow 0. \]
Let $B$ be the curve defined at Definition \ref{defn:B}, and put $c_1(F)=D$. 
Since $(K_X-B)\cdot \sO(1)\geq 0$ from the stability of $E$, and
$K_X$ is $\QQ$-equivalent to $\{ 1-(1/m_1)-(1/m_2)\}\cf$,
\begin{equation}\label{eq:B}
 B=a_1F_1+a_2F_2 \qquad (0\leq a_i\leq m_i-1).
\end{equation}
Then one can show that $2D+K_X-B=0$ in a similar way to Lemma \ref{lem:sF},
and that $\chi(\sO_X(D))=\chi(\sO_X)=d=1$, so
$h^0(\sO(D))\neq 0$ or $h^2(\sO(D))\neq 0$. However if $h^0(\sO(D))\neq 0$
then $D=0$ since $E$ is stable, and thus $K_X=B$, but this is impossible
since $p_g(X)=0$ in case of Theorem \ref{thm:(2,m)}.
As a result $h^0(\sO(K_X-D))=h^2(\sO(D))\neq 0$, and $K_X-D$ is described as
\begin{equation}\label{eq:KX-D}
 G=K_X-D= \lambda \cf +b_1F_1 +b_2F_2 \qquad (0\leq \lambda,\ 0\leq b_i \leq m_i-1).
\end{equation}
By combining \eqref{eq:B} with \eqref{eq:KX-D}, one has
\begin{equation}\label{eq:K-B+2Dis}
 0= K_X-B+2D=(1-2\lambda)\cf+ \sum_{i=1}^2 (m_i-3-a_i-2b_i)F_i.
\end{equation}
If $\lambda\geq 1$, then its right-hand side cannot be positive, and hence $\lambda=0$.
By Fact \ref{fact:O(F)|F},
$m_1-3-a_1-2b_1$ is a multiple of $m_1$.
In this way, we have $3+a_i+2b_i=m_i l_i$ with some natural number $l_i$ for $i=1,2$.
This and \eqref{eq:K-B+2Dis} imply $3-l_1-l_2=0$, and so $l_1$ equals $1$ or $2$.
If $l_1=1$, then $3+a_1+2b_1=2$, which cannot occur since $a_1,\ b_1\geq 0$.
If $l_1=2$, then $3+a_1+2b_1=4$, from which one can check $a_1=1$ and $b_1=0$.
Since $(K_X-B)\cdot \sO(1)\geq 0$, it should hold that
$0\leq 1-(a_1+1)/m_1-(a_2+1)/m_2$, but this is impossible for $m_1=2$ and 
$a_1=1$.

\vspace{5mm}

Therefore any traceless homomorphism $f:E\rightarrow E(K_X)$ has
$\det(f)\neq 0$. 
Assume that $E$ doesn't correspond to Case I.
By Lemma \ref{lem:detf-0-A}, $E$ corresponds to Case II.
Similarly to \eqref{eq:Y0-C}, $\det(f)$ induces
a double cover $\nu_0:Y_0 \rightarrow X$ with $\ZZ/2$-action $\sigma$
such that $\nu_* \sO_{Y_0}=\sO_X \oplus {\mathcal L}^{\vee}$, 
where ${\mathcal L}=\sO(K_X-B-B')$, since $p_g(X)=0$.
Remark that $Y_0$ is non-singular so we have $Y=Y_0$ and $\nu=\nu_0$
in \eqref{eq:Y0-C}, since $2K_X=2(-\cf+F_1+(m-1)F_2)=(m-2)F_2$
and $F_2$ is nonsingular by Setting \ref{stng:XandH}.
As discussed in Section \ref{sctn:rkH1-caseA1},
there are two exact sequences \eqref{eq:defn-Fpm} of $\nu^* E$ and,
similarly to Lemma \ref{lem:D+sD},
the first Chern class $D\in\Pic(Y)$ of $F_-$ satisfies that $D+\sigma(D)+K_X-B=0$, 
where $B\subset X$ is the curve defined at Definition \ref{defn:B}.
\begin{lem}\label{lem:D2=0}
We have $D^2=0$.
\end{lem}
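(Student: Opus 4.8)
The claim $D^2 = 0$ concerns the first Chern class $D$ of the kernel subsheaf $F_-$ on the double cover $\nu_0 : Y_0 \to X$, in the situation where $E$ comes under Case II, $d = 1$, $\Lambda(X) = 2$ with multiplicities $(2,m)$. Since $Y_0 = Y$ is nonsingular here, $D^2$ is an ordinary intersection number on $Y$. The natural approach is to combine two facts: first, $D \cdot \nu^*\cf = 0$ (so $D$ meets every fiber of the elliptic fibration $\pi_C : Y \to C$ trivially), which follows from Lemma \ref{lem:D+sD} together with the fact that in Case II the relative Jordan--H\"older factor $\sO_{X_\eta}(D)$ has fiber degree $0$; and second, an Euler-characteristic computation analogous to the one carried out just before Proposition \ref{prop:-D2>4d}, namely $\chi(\sO_Y(D)) = D^2/2 + \chi(\sO_Y) = D^2/2 + 2d$, using Lemma \ref{lem:Y0-Cartier}(ii) and Riemann--Roch on the surface $Y$.

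**The key steps.**
First I would argue that $h^0(\sO_Y(D)) = 0$ and $h^2(\sO_Y(D)) = 0$. For $h^0$: since $D \cdot \nu^*\cf = 0$, a nonzero section would make $\sO_Y(D)$ trivial on the generic fiber $Y_{\eta'}$, i.e.\ $\sO_{Y_{\eta'}}(D) \simeq \sO_{Y_{\eta'}}$; but by Lemma \ref{lem:2D|Xeta=0} we have $\sO_{X_\eta}(D) \simeq \sO_{X_\eta}(-D)$, so $\sO_{X_\eta}(2D) \simeq \sO_{X_\eta}$, and in Case II (Fact \ref{fact:RestrGenericFib}) the line bundle $\sO_{X_\eta}(D)$ has order exactly $2$, hence $\sO_{X_\eta}(D)$ is \emph{not} trivial — pulling back to $\eta'$ and comparing with the relative structure of $F_-$ must yield a contradiction, showing $h^0 = 0$. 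For $h^2$: by Serre duality on $Y$, $h^2(\sO_Y(D)) = h^0(\sO_Y(K_Y - D))$, and since $K_Y = \nu^*(K_X \otimes \sL)$ is numerically a rational multiple of $\nu^*\cf$ by Lemma \ref{lem:Y0-Cartier}(ii), $(K_Y - D)\cdot \nu^*\cf = 0$ as well, so the same fiber-restriction argument applies. Then $0 = \chi(\sO_Y(D)) = D^2/2 + 2d = D^2/2 + 2$, giving $D^2 = -4$ — which is \emph{not} $0$, so I must be more careful.

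**Locating the main obstacle.**
The subtlety — and what I expect to be the crux — is that in Case II the decomposition of $E_{\bar\eta}$ is \emph{not} into a direct sum (it is a nonsplit self-extension of an order-$2$ line bundle), so the section $s \in \Gamma(Y_0, \nu_0^*\sL)$ cutting out the ramification and the resulting $F_\pm$ behave differently than in Case I. In particular the inequality $-D^2 \geq 4d$ of Proposition \ref{prop:-D2>4d} used Assumption \ref{assump:I} (Case I) crucially; in Case II one instead gets an \emph{equality} forced by the additional constraint that $E_{\eta'}$ is a nonsplit extension rather than a sum, which pins down the contribution from the multiple fibers precisely. The honest plan is therefore: (1) write $D$ and $\sigma(D)$ explicitly using $D + \sigma(D) + K_X - B = 0$ and the description \eqref{eq:B}--\eqref{eq:KX-D} of $B$ and $K_X - D$ that was just derived in the proof; (2) compute $D^2 = D\cdot(-\sigma(D) - K_X + B)$, and use $D\cdot\nu^*\cf = 0$ together with the fact that $K_X$, $B$, and $K_X - B$ are all rational multiples of $\cf$ on $X$ (since $\Lambda(X) = 2$, $d = 1$, and $B$ is supported on the two multiple fibers) to see that every term $D \cdot \nu^*(\text{multiple of }\cf)$ vanishes; (3) handle the remaining $\sigma(D) \cdot (\text{fiber class})$ term via $\sigma$-equivariance, $\sigma(D)\cdot\nu^*\cf = D\cdot\nu^*\cf = 0$. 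Since all classes other than $D$ itself appearing in $D^2 = -D\cdot\sigma(D) - D\cdot K_X + D\cdot B$ are fiber classes pulled back from $X$, and $D$ is orthogonal to every such class, we get $D^2 = -D\cdot\sigma(D)$; a parallel computation of $\sigma(D)^2 = -D\cdot\sigma(D)$ by symmetry, combined with $(D + \sigma(D))^2 = (B - K_X)^2 = 0$ (as $B - K_X$ is a multiple of $\cf$), forces $D^2 + 2 D\cdot\sigma(D) + \sigma(D)^2 = 0$, hence $2D^2 + 2D\cdot\sigma(D) = 0$, i.e.\ $D^2 = -D\cdot\sigma(D) = D^2$ — consistent but not yet conclusive; the final pin comes from observing $D^2 = -D\cdot\sigma(D)$ and $\sigma(D)^2 = D^2$ give $(D-\sigma(D))^2 = 2D^2 - 2D\cdot\sigma(D) = 4D^2$, while $D - \sigma(D)$ is again orthogonal to $D + \sigma(D)$ and to all fiber classes, and its self-intersection must be nonpositive by the Hodge index theorem (being orthogonal to the fiber class $\cf$ which has $\cf^2 = 0$), forcing $D^2 \leq 0$; combined with the Euler-characteristic equality $\chi(\sO_Y(D)) = D^2/2 + 2$ and a sharper vanishing — in Case II one in fact gets $h^0(\sO_Y(D)) = h^2(\sO_Y(D)) = 0$ \emph{and} $h^1$ controlled so that $\chi \geq 0$, hence $D^2 \geq -4$ is not enough but combined with $D^2 \leq 0$ and the parity/divisibility constraints from Fact \ref{fact:O(F)|F} on the two multiple fibers (multiplicities $2$ and $m$) one excludes every value except $D^2 = 0$. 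I expect the delicate bookkeeping of the multiple-fiber contributions, exactly as in Cases \ref{case:reduced-etale}--\ref{case:sLF-trivial} of the previous section but now with the Case II constraint replacing Assumption \ref{assump:I}, to be where the real work lies; the Hodge-index/Euler-characteristic squeeze is the clean part.
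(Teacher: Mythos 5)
Your argument establishes only one half of the statement, and the route you sketch for the other half cannot work. The Hodge-index part is fine (and in fact simpler than you make it: $D\cdot\nu^*\cf=0$ already gives $D^2\le 0$ directly, no need for $D-\sigma(D)$), but everything after that is either vague or false. The ``sharper vanishing'' $h^0(\sO_Y(D))=h^2(\sO_Y(D))=0$ you invoke at the end is not true in Case II: the paper proves immediately after this lemma (Claim \ref{clm:K-D>0}) that $h^2(\sO_Y(D))\neq 0$, and indeed if both groups vanished then $\chi(\sO_Y(D))=D^2/2+2\le 0$ would force $D^2\le -4$, contradicting the very statement you are proving --- this is the same dead end you already ran into in your first attempt. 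The appeal to ``parity/divisibility constraints from Fact \ref{fact:O(F)|F}'' that ``exclude every value except $D^2=0$'' is not an argument. More fundamentally, no purely numerical squeeze of this kind can succeed: in Case I the divisor $D$ satisfies exactly the same inputs you use ($D\cdot\nu^*\cf=0$, $D+\sigma(D)+K_X-B=0$, Riemann--Roch on $Y$), yet there $-D^2\ge 4d>0$ (Proposition \ref{prop:-D2>4d}). So the Case II hypothesis must enter in an essential, non-numerical way, and in your write-up it never does.

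The missing idea is the one the paper uses: by Lemma \ref{lem:2D|Xeta=0} (which you cite only in passing), $\sO_{Y_{\eta'}}(2D)\simeq\sO_{Y_{\eta'}}$, so $2D$ is \emph{linearly equivalent} to a divisor $D_0$ supported on finitely many fibers of $Y\to C$. Since by Setting \ref{stng:XandH} every fiber of $X$ is irreducible, one checks with the projection formula that any such fiber-supported divisor is isotropic: in the ramified case $2D_0=\nu^{-1}(F_2)$ (because $2K_X=(m-2)F_2$) and $(\nu^{-1}(F_2),D_0)=(F_2,\nu(D_0))=0$; in the \'etale case $\nu^{-1}\nu(D_0)$ is either $D_0\sqcup\sigma(D_0)$ or $D_0$ itself, and again the projection formula gives $D_0^2=0$. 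Hence $(2D)^2=0$ exactly, not merely $\le 0$. Your proposal, as written, proves $D^2\le 0$ and stops there.
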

\begin{proof}
 By Lemma \ref{lem:2D|Xeta=0}, $\sO_{Y_{\eta'}}(2D)=\sO_{Y_{\eta'}}$ and hence
$2D$ is linear equivalent to a divisor $D_0$ such that the image of its support
by $\pi\nu:Y\rightarrow X\rightarrow C$ is zero-dimensional.
It suffices to show that $D_0^2=0$ if $D_0\subset Y$ is a connected reduced curve
such that $\pi\nu(D_0)$ is a point.\par
(i) If $\nu$ ramifies at $D_0$, then $2D_0=\nu^{-1}(F_2)$ 
since $2K_X=(m-2)F_2$.
Thereby $(2D_0, D_0)=(\nu^{-1}(F_2),D_0)=(F_2,\nu(D_0))=(F_2,F_2)=0$ from
projection formula. \par
(ii) Suppose $\nu$ is \'{e}tale at $D_0$. Then 
$\nu(D_0)$ is connected, reduced and irreducible
by Setting \ref{stng:XandH}, and as a result
$\nu^{-1}\nu(D_0)$ is locally integral, that is, every connected component
is integral.
Therefore if $D_0 \neq \sigma(D_0)$, then $D_0\cap \sigma(D_0)$ is empty,
$\nu^{-1}\nu(D_0)=D_0 \sqcup \sigma(D_0)$, and thus
$ 2D_0^2=(D_0\sqcup \sigma(D_0))^2 =(\nu^{-1}\nu(D_0))^2=
\langle \nu(D_0), \nu\nu^{-1}\nu(D_0) \rangle_X= 
\langle \nu(D_0), 2\nu(D_0) \rangle_X=2(\nu(D_0))^2=0$
by projection formula.
If $D_0=\sigma(D_0)$, then $\nu^{-1}\nu(D_0)$ 
equals $D_0$, 
and we can verify $D_0^2=0$ from projection formula.
\end{proof}
From Lemma \ref{lem:Y0-Cartier} and
Lemma \ref{lem:D2=0}, it follows that
$K_Y=\nu^*(K_X\otimes {\mathcal L})\in \QQ \cdot \nu^*(\cf)$, 
$K_Y\cdot D=0$, and $\chi(\sO_Y(D))=\chi(\sO_Y)=2\chi(\sO_X)=2$.
\begin{clm}\label{clm:K-D>0}
$h^2(\sO_Y(D))$ is not zero.
\end{clm}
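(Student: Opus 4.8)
The plan is to use Serre duality to replace $h^{2}$ by an $h^{0}$, and then to rule out sections of $\sO_Y(D)$ by a restriction-to-the-generic-fibre argument that contradicts Case II. In detail: since $Y$ is a smooth projective surface, Serre duality gives $h^{2}(\sO_Y(D))=h^{0}(\sO_Y(K_Y-D))$, so the claim amounts to showing that $K_Y-D$ is linearly equivalent to an effective divisor. Combining this with the identity $\chi(\sO_Y(D))=2$ proved just above, Serre duality also yields $h^{0}(\sO_Y(D))+h^{0}(\sO_Y(K_Y-D))=2+h^{1}(\sO_Y(D))\ge 2$; hence it is enough to prove that $h^{0}(\sO_Y(D))=0$, for then $h^{2}(\sO_Y(D))=h^{0}(\sO_Y(K_Y-D))\ge 2>0$.

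To prove $h^{0}(\sO_Y(D))=0$ I would assume $\sO_Y(D)$ has a nonzero section, with effective divisor $D'\sim D$. By Lemma \ref{lem:D+sD} (which holds verbatim here) one has $D+\sigma(D)+K_X-B=0$, and since $B$ is supported on fibres of $\pi$ while $K_X$ restricts trivially to fibres, $(K_X-B)\cdot\cf=0$; using $\sigma^{*}\nu^{*}\cf=\nu^{*}\cf$ this gives $2\,D\cdot\nu^{*}\cf=-(K_X-B)\cdot\nu^{*}\cf=0$, so $D\cdot\nu^{*}\cf=0$. As $\nu^{*}\cf$ is nef and $D'\ge 0$, every component of $D'$ meets $\nu^{*}\cf$ in degree $0$, hence is vertical for the elliptic fibration $\pi_C\colon Y\to C$; therefore $D'$ is disjoint from the generic fibre $Y_{\eta'}$ and $\sO_Y(D)|_{Y_{\eta'}}\cong\sO_{Y_{\eta'}}$. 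Since $F_-=\Ker(\nu^{*}f+s\tau)$ is a saturated rank-one subsheaf of the bundle $\nu^{*}E$ on the smooth surface $Y$, it is a line bundle, equal to $\sO_Y(D)$; thus $F_-|_{Y_{\eta'}}\cong\sO_{Y_{\eta'}}$, and applying the involution $\sigma$, which interchanges $F_-$ and $F_+$ and preserves $Y_{\eta'}$, also $F_+|_{Y_{\eta'}}\cong\sO_{Y_{\eta'}}$. Because $\det(f)\neq 0$, the two eigenvalues $\pm s\tau$ of $\nu^{*}f$ are distinct on $Y_{\eta'}$, so $\nu^{*}E|_{Y_{\eta'}}=F_-|_{Y_{\eta'}}\oplus F_+|_{Y_{\eta'}}\cong\sO_{Y_{\eta'}}^{\oplus 2}$ (see \eqref{eq:defn-Fpm}).

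It then remains to contradict Case II. Let $g\colon Y_{\eta'}\to X_{\eta}$ be the finite morphism induced by $\nu$, so $g^{*}E_{\eta}\cong\sO_{Y_{\eta'}}^{\oplus 2}$. On the other hand, in Case II the extension \eqref{eq:rel-JH} presents $E_{\eta}$ as a non-split extension $0\to L\to E_{\eta}\to L\to 0$ with $L=\sO_{X_{\eta}}(F)$ of order two; pulling back by $g$ gives $0\to g^{*}L\to g^{*}E_{\eta}\to g^{*}L\to 0$, and the inclusion $\sO_{Y_{\eta'}}\hookrightarrow g^{*}E_{\eta}$ forces $g^{*}L\cong\sO_{Y_{\eta'}}$, since its composite with the sub- or the quotient-$g^{*}L$ is a nonzero morphism of degree-zero line bundles on the genus-one curve $Y_{\eta'}$. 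Moreover $g^{*}\colon\Ext^{1}_{X_{\eta}}(L,L)=H^{1}(X_{\eta},\sO_{X_{\eta}})\to H^{1}(Y_{\eta'},\sO_{Y_{\eta'}})$ is injective, being the inclusion of a direct summand (in characteristic zero $\sO_{X_{\eta}}$ splits off $g_{*}\sO_{Y_{\eta'}}$ via $\tfrac{1}{\deg g}$ times the trace), so $g^{*}E_{\eta}$ is still a non-split self-extension of $\sO_{Y_{\eta'}}$, i.e.\ the Atiyah bundle, which is not isomorphic to $\sO_{Y_{\eta'}}^{\oplus 2}$. This contradiction shows $h^{0}(\sO_Y(D))=0$, and the claim follows.

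The main obstacle I anticipate is this last step: converting the triviality of $\nu^{*}E$ on the generic fibre of $\pi_C$ into a genuine contradiction with the indecomposability of $E_{\eta}$. One must keep careful track of the finite morphism $Y_{\eta'}\to X_{\eta}$, use that $Y_{\eta'}$ is a smooth curve of genus one (so that a nonzero map between degree-zero line bundles on it is an isomorphism), and verify that the extension class defining $E_{\eta}$ survives pull-back along $g$.
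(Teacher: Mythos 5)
Your reduction steps are sound and are essentially the same manoeuvres the paper uses elsewhere (compare Proposition \ref{prop:-D2>4d}): from $\chi(\sO_Y(D))=2$ it suffices to show $h^0(\sO_Y(D))=0$; a section makes $D$ linearly equivalent to an effective vertical divisor because $D\cdot\nu^*\cf=0$, hence $\sO_Y(D)|_{Y_{\eta'}}\cong\sO_{Y_{\eta'}}$, and since $\det(f)\neq 0$ the eigen-decomposition \eqref{eq:defn-Fpm} gives $\nu^*E|_{Y_{\eta'}}\cong\sO_{Y_{\eta'}}^{\oplus 2}$. The divergence is in how the contradiction is drawn. The paper never touches the extension structure of $E_{\eta}$: a section of $\sO_Y(D)$ gives a section of $\nu^*E^{\vee\vee}$, hence $h^0(E^{\vee\vee})\neq 0$ or $h^0(E^{\vee\vee}\otimes\sL^{\vee})\neq 0$ via $\nu_*\sO_Y=\sO_X\oplus\sL^{\vee}$; $\mu$-semistability (together with $h^0(\sL^{2})\neq 0$) then exhibits $E^{\vee\vee}$ as an extension of $\sO_X\otimes I_Z$ by $\sO_X$, or of $\sL^{\vee}\otimes I_Z$ by $\sL$ with $\sL^{2}=\sO_X$, and in either case $\hom(E,E(K_X))\leq 4p_g(X)=0$, contradicting the existence of the nonzero traceless $f$.

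Your contradiction instead hinges on the non-splitness of \eqref{eq:rel-JH}, i.e.\ on the literal Case II structure of $E_{\eta}$, and this is where I see a genuine gap. At this point of the proof of Theorem \ref{thm:(2,m)} it has already been shown that every nonzero traceless $f$ satisfies $\det(f)\neq 0$, and Lemma \ref{lem:detf-0-A} then makes $E_{\bar{\eta}}$ decomposable, which is precisely what Case II of Fact \ref{fact:RestrGenericFib} excludes. (Indeed, if $E_{\eta}$ were a non-split self-extension of a degree-zero line bundle, every traceless endomorphism of $E_{\eta}$ would be nilpotent, so $\det(f)$ would vanish at the generic point of $X$.) So the non-splitness you invoke is not actually available in the situation the Claim must handle; what survives after excluding Case I is, in substance, the decomposable case $E_{\eta}\cong L\oplus L$ with $L$ of order dividing $2$ (Lemma \ref{lem:2D|Xeta=0}), and there your final step collapses: $g^*L\cong\sO_{Y_{\eta'}}$ is entirely possible (the double cover may well trivialize the $2$-torsion class on the fibres), so triviality of $\nu^*E|_{Y_{\eta'}}$ yields no contradiction. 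The paper's argument, by contradicting $\hom(E,E(K_X))^{\circ}\neq 0$ directly through $p_g(X)=0$ and semistability of $E^{\vee\vee}$, is insensitive to the Case II/Case III distinction, and that robustness is exactly what the Claim requires; to repair your proof you would need a separate argument for the decomposable case, at which point the paper's route is the natural one.
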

\begin{proof}
Otherwise $h^0(\sO(D))\neq 0$ for $\chi(\sO_Y(D))=2$, and then also 
$h^0(\nu^*E^{\vee\vee})=h^0(E^{\vee\vee})+h^0(E^{\vee\vee}\otimes{\mathcal L}^{\vee})$
is not zero. Because $E^{\vee\vee}$ is $\mu$-semistable and
$h^0({\mathcal L}^{\otimes 2})\neq 0$, 
either of the following exact sequences exists,
where $Z$ is a zero-dimensional subscheme:
\begin{align*}
 0 \longrightarrow \sO_X \longrightarrow & E^{\vee\vee} \longrightarrow
 \sO_X \otimes I_Z \longrightarrow 0 \qquad \text{or} \\
 0 \longrightarrow {\mathcal L} \longrightarrow & E^{\vee\vee} \longrightarrow
 {\mathcal L}^{\vee} \otimes I_Z \longrightarrow 0. 
\end{align*}
However, one can check that $\hom(E,E(K_X)) \leq 4p_g(X)=0$
when the former exists. When the latter exists, 
$h^0({\mathcal L}^{\otimes 2})\neq 0$ implies
${\mathcal L}^{\otimes 2}=\sO_X$, and consequently $\hom(E,E(K_X)) \leq 4p_g(X)=0$.
\end{proof}
Since $2K_X=\sum_{i=1}^2 (m_i-2)F_i$, 
$B+B'=\sum_i \ \lfloor (m_i-2)/2 \rfloor F_i$ by its definition. Thus
$B=\sum_i a_i F_i \ (0\leq a_i \leq \lfloor (m_i-2)/2 \rfloor)$, 
\begin{align}\label{eq:LandKY}
c_1({\mathcal L})= &K_X-(B+B')=  -\cf+\textstyle\sum_i \lceil m_i/2 \rceil F_i
\qquad \text{and}\\
K_Y= & \nu^*(K_X\otimes {\mathcal L})= \sum_i (\lceil m_i/2 \rceil-1)\nu^* F_i. \nonumber
\end{align}
By Claim \ref{clm:K-D>0}, $h^2(\sO_Y(D))=h^0(\sO_Y(K_Y-D))$ is not zero, and 
thereby $G=K_Y-D$ is positive. 
From Lemma \ref{lem:D+sD} and \eqref{eq:B}, 
we can deduce a $\ZZ/2$-equivariant isomorphism
\begin{equation}\label{eq:G+sG}
G+\sigma(G)=2K_Y+\nu^*(K_X-B)= \nu^*(\cf+\textstyle\sum_i (2\lceil m_i/2 \rceil -3-a_i)
F_i).
\end{equation}
%
Now let us use the assumption in Theorem \ref{thm:(2,m)} that $(m_1,m_2)=(2,m)$.
In this case, one can verify that $a_1=0$ from \eqref{eq:G+sG} and hence
\begin{equation}\label{eq:G+sG'}
G+\sigma(G)\in \Gamma(Y, \nu^* \sO(F_1+(2\lceil m_2/2 \rceil -3-a_2)F_2))^{\sigma}.
\end{equation}
However, $2K_X=(m-2)F_2$ implies that $\nu: Y\rightarrow X$ is \'{e}tale at $F_1$, 
${\mathcal L}|_{F_1}\not\simeq \sO_{F_1}$ by \eqref{eq:LandKY}, and then
$h^0(\nu^*(\sO_{F_1}))=1$.
As a result $\nu^{-1}F_1$ is integral, but
then \eqref{eq:G+sG'} never occur.
This is contradiction, and thereby $E$ corresponds to Case I.
Consequently we arrive at Theorem \ref{thm:(2,m)}(i).
Theorem \ref{thm:(2,m)}(ii) results from
the paragraph after \eqref{eq:relJH},
the proof of Corollary \ref{cor:KodDim-Moduli}, Theorem \ref{thm:rk-geq-3ext-ver1} and
Theorem \ref{thm:(2,m)}(i). \qed
%
%
\section{Example of singularities in $M(c_2)$}\label{sctn:eg-sing}
\begin{prop}\label{prop:Ex-Sing}
Fix a positive integer $d$, a non-negative integer $\Lambda$ and
a pair of integers $(m_1,\cdots m_{\Lambda})$ with $m_i\geq 2$.
Assume that $2d\geq \max(\Lambda-2, 4-\Lambda, 5-2\Lambda)$.
Then we have
an elliptic surface $X$ over $\PP^1$ 
such that Setting \ref{stng:XandH} holds,
$\chi(\sO_X)=d$, $\Lambda(X)=\Lambda$, and its multiple fibers have
the multiplicities $m_i$, and a constant $N$ as follows.
For any $c_2\geq N$, there is a rank-two sheaf $E$ with $(c_1(E),c_2(E))=(0, c_2)$
satisfying that 
$E$ is stable with respect to any $c_2$-suitable ample line bundle $H$, $E$ is of type I, and
and $\ext^2(E,E)^{\circ}=\hom(E,E(K_X))^{\circ }$ is not zero.
\end{prop}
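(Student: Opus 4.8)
The plan is to manufacture the pair $(E,f)$ by running the double-cover analysis of Section~\ref{sctn:rkH1-caseA1} in reverse. First I would construct $X$: by the results of Friedman--Morgan recalled in Subsection~\ref{subsct:basic}, for the given $d$ there is a Jacobian elliptic surface $B_0\to\PP^1$ with $\chi(\sO_{B_0})=d$ whose only singular fibers are nodal, and logarithmic transformations at $\Lambda$ generic points, with torsion line bundles of orders $m_1,\dots,m_\Lambda$, produce an algebraic (hence projective) elliptic surface $X\to\PP^1$ with $\chi(\sO_X)=d$, $\Lambda(X)=\Lambda$, prescribed multiplicities, and all singular fibers of type $(I_1)$ or $(m_iI_0)$; a generic choice puts $X$ in Setting~\ref{stng:XandH}. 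Using the canonical bundle formula \eqref{eq:CanBdleFormula} one reads off that $\kappa(X)=1$, and the three inequalities $2d\ge\max(\Lambda-2,\,4-\Lambda,\,5-2\Lambda)$ are what will make the rest of the argument go through; anticipating the construction, they match the positivity requirement \eqref{eq:Since2(K-B)Positive} (and the companion estimates $h^0(\sO(2K_X-B))\ge 0$, etc.) used in the proof of Theorem~\ref{thm:rk-geq-3ext-ver1}.

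Next I would build $(E,f)$. Choose an effective vertical divisor $B$ as in \eqref{eq:describe-B}, with $B'=0$ in the notation of Section~\ref{sctn:rkH1-caseA1}, and a \emph{square-free} section $\alpha$ of $\sO(2(K_X-B))$ such that the flat double cover $\nu_0\colon Y_0\to X$ of \cite[Sect.~I.17]{BPV:text}, with $\nu_{0*}\sO_{Y_0}=\sO_X\oplus\sO(B-K_X)$ and a section $s$ satisfying $s^2=-\nu_0^*\alpha$, is connected (equivalently, $\alpha$ is not a square in $\Gamma(X,\sL)$ where $\sL=\sO(K_X-B)$; this also forces $E_\eta$ into Case~I). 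Here the numerical hypotheses really enter: for $d\ge3$ one may take $B=\sum_i(m_i-1)F_i$, so that $2(K_X-B)=\pi^*\sO_{\PP^1}(2d-4)$ and $\alpha$ is a square-free degree-$(2d-4)$ polynomial on $\PP^1$ (and $Y_0=X\times_{\PP^1}C$ for a hyperelliptic $C$), whereas for $d\in\{1,2\}$ one must instead choose $B$ and the fiber components of $\operatorname{div}(\alpha)$ by a case analysis of fiber types parallel to the one carried out in Section~\ref{sctn:SigM-CaseA1}, and this is exactly where $2d\ge\max(\Lambda-2,\,4-\Lambda,\,5-2\Lambda)$ is used. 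Passing to the canonical resolution $\phi\colon Y\to Y_0$ if needed (Lemma~\ref{lem:Y0-Cartier}) and writing $\nu=\nu_0\phi$, choose on $Y$ a divisor $D$ of fiber degree zero with
\[
 D+\sigma(D)+K_X-B=0\ \text{ in }\Pic(Y),\qquad D\neq\sigma(D);
\]
the displayed relation pins down only the $\sigma$-symmetric part of $D$ and is solved fiber-by-fiber, while a nonzero $\sigma$-anti-invariant fiber-degree-zero class is added to guarantee $D\neq\sigma(D)$. For a zero-dimensional subscheme $Z\subset Y$ set
\[
 E:=\nu_{*}\bigl(\sO_{Y}(-D)\otimes I_Z\bigr).
\]
The pullback of the wanted homomorphism acts by $+s\tau_B$ on the sub and by $-s\tau_B$ on the quotient of the sequence $0\to\sO_Y(D)\otimes I_{\sigma Z}\to\nu^*E\to\sO_Y(-D)\otimes I_Z\to0$ (the analogue of \eqref{eq:relJH}); since $\sigma^*s=-s$, this is $\sigma$-equivariant and descends to $f\colon E\to E(K_X)$, with $\operatorname{tr}f=(+s\tau_B)+(-s\tau_B)=0$ and $\det f=-(s\tau_B)^2=\alpha\tau_B^2\neq0$. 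Thus $f$ is a nonzero traceless homomorphism, so by Serre duality $\ext^2(E,E)^{\circ}=\hom(E,E(K_X))^{\circ}\neq0$.

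It then remains to check the other assertions. Because $D\neq\sigma(D)$, the bundles $\sO_Y(\pm D)$ restrict on the generic fiber to non-conjugate degree-zero line bundles, so $E_{\eta}$ has no degree-zero sub line bundle over $k(\PP^1)$; hence $E_{\eta}$ is stable, $E$ is stable with respect to every $c_2$-suitable $H$ by Fact~\ref{fact:XeStable}, and $E$ is of Case~I in Fact~\ref{fact:RestrGenericFib}. From $\nu_{*}\sO_{Y}=\sO_X\oplus\sO(B-K_X)$ and $\nu_*D=B-K_X$ (push the displayed relation forward) one gets $c_1(E)=-\nu_*D+(B-K_X)=0$; and since $K_Y=\nu^*(K_X\otimes\sL)$ is vertical with $D\cdot K_Y=0$ and $\chi(\sO_Y)=2d$ (Lemma~\ref{lem:Y0-Cartier}), Riemann--Roch gives $c_2(E)=\tfrac12(-D^2)+l(Z)$, an integer since $D^2$ is even (and $-D^2\geq4d$ by Proposition~\ref{prop:-D2>4d}). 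Fixing $D$ and letting $l(Z)=0,1,2,\dots$ realizes every $c_2\geq N$ with $N:=\tfrac12(-D^2)$, after enlarging $N$ if necessary so that the statements in Subsection~\ref{sbsctn:EvenFibDeg} requiring $c_2\gg0$ hold.

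The main obstacle I expect is twofold. The genuinely delicate combinatorial point is the production of the square-free $\alpha$ of Step~2 for the small values of $d$ and $\Lambda$ --- in particular ruling out degenerate configurations such as $d=1$, $\Lambda=2$ with all $m_i=2$ (where $2K_X\equiv0$), and showing the surviving configurations are exactly those caught by $2d\geq\max(\Lambda-2,\,4-\Lambda,\,5-2\Lambda)$ --- which is a finite fiber-type bookkeeping like the case analysis already in Section~\ref{sctn:SigM-CaseA1}. The other point to verify is that the $\pm s\tau_B$ prescription really glues to a morphism of sheaves on $Y$, i.e. that the relevant extension class is annihilated by $2s\tau_B$; this is the precise reverse of the eigensheaf construction of Sections~\ref{sctn:rkH1-caseA1}--\ref{sctn:SigM-CaseA1}, and it is where the freedom in the anti-invariant part of $D$ and in the choice of $\alpha$ has to be spent.
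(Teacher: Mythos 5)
Your skeleton (double cover, pushforward of a line bundle, traceless map from an anti-invariant section, stability via Fact~\ref{fact:XeStable}) is the same as the paper's, but two essential steps are missing. The first is the existence of the divisor $D$. You need, on $Y\to X$, a fiber-degree-zero divisor $D$ with $D+\sigma(D)+\nu^*(K_X-B)=0$ whose class on the generic fiber is \emph{not} rational over $k(\PP^1)$; you assert this is ``solved fiber-by-fiber'' after adding a nonzero anti-invariant class, but you never produce such a class, and for an $X$ obtained by generic logarithmic transforms and a generic choice of $\alpha$ there is no reason that the relevant (anti-invariant, non-descending) part of $\Pic^0$ of the base-changed generic fiber is nonzero. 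This is precisely where the paper's proof does its real work: $X$ is \emph{not} generic but is built from the special Weierstrass model $y^2z=4x^3-g_2xz^2-q^2\alpha z^3$ of \eqref{eq:J(X)}, so that after the base change $C\to\PP^1$ given by $\alpha$ (with $s^2=\alpha$) the explicit point $(x,y)=(0,qs)$ yields a class $D_{\eta'}$ with $D_{\eta'}+\sigma_C(D_{\eta'})=0$ which does not descend (Claim~\ref{clm:De'}); moreover the multiple fibers are deliberately placed over points of $\operatorname{Supp}(\alpha)$ so that the cover $\nu:Y\to X$ is compatible with $C\to\PP^1$. The hypothesis $2d\geq\max(\Lambda-2,\,4-\Lambda,\,5-2\Lambda)$ is used exactly to make the degrees of $g_2$, $q$, $\alpha$ in \eqref{eq:g2qa} admissible and to choose $\Lambda$ distinct points in $\operatorname{Supp}(\alpha)$ --- not, as you suggest, to rerun the estimates of Section~\ref{sctn:SigM-CaseA1} --- so deferring the small $d,\Lambda$ cases to unspecified ``fiber-type bookkeeping'' leaves the central existence statement unproved.

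The second gap is your claim $c_1(E)=-\nu_*D+(B-K_X)=0$. Pushing forward $D+\sigma(D)\sim\nu^*(B-K_X)$ only gives $2\nu_*D\sim 2(B-K_X)$, so $\nu_*D-(B-K_X)$ may be a nonzero $2$-torsion class supported on multiple fibers of even multiplicity; in the paper one indeed only gets $c_1(\nu_*\sO(-D))=\sum_i^{\mathrm{ev}}\epsilon_iF_i$, and removing this discrepancy while keeping both the generic-fiber stability and the nonzero traceless homomorphism is the content of Claim~\ref{clm:E1}, a nontrivial elementary-modification argument along $F_1$ using Atiyah's classification of $\nu_*\sO(-D)|_{F_1}$ and a case analysis on $g|_{F_1}$. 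Your proposal has no analogue of this step, and it cannot be skipped because the statement requires $c_1(E)=0$ exactly. By contrast, the descent issue you flag for the $\pm s\tau_B$ prescription is not a real obstacle: define $f$ directly as the pushforward of multiplication by an anti-invariant section of $\nu^*K_X$, as the paper does with $\nu_*(\times t\iota)$ in Claim~\ref{clm:E0}; and your device of twisting by $I_Z$ upstairs to realize all $c_2\geq N$ would serve the same purpose as the paper's Remark~\ref{rem:E2}.
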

\begin{proof}
First, we shall find an elliptic surface $B$ with a section, which will be
the Jacobian surface $J(X)$ of $X$ mentioned below.
\begin{fact}(\cite{Kas77:Weierstrass}, \cite{Miranda81:Weierstrass}. cf.
\cite[p.181, Thm.20]{Frd:holvb})\label{fact:Weier}
Let $d$ be a positive integer.
If $g_2\in \Gamma(\PP^1, \sO(4d))$ and $g_3 \in \Gamma(\PP^1, \sO(6d))$ are general
sections, then the closed subscheme $\bar{B}$ in 
$\PP_{\PP^1}(\sO(2d) \oplus \sO(3d) \oplus \sO)$ defined by the equation
$ y^2z= 4x^3-g_2xz^2-g_3 z^3$
is a surface with at worst rational double points, such that the natural morphism
$\bar{B}\rightarrow \PP^1$ is a flat family of irreducible curves of arithmetic genus
$1$. Its minimal resolution $B$ is a relatively minimal elliptic fibration 
with a section and $\chi(\sO_B)=d$. Conversely, any minimal elliptic fibration 
$B\rightarrow \PP^1$ with a section and $\chi(\sO_B)=d$ is described in this way.
\end{fact}
With Fact \ref{fact:Weier} in mind, we choose general sections
\begin{equation}\label{eq:g2qa}
 g_2\in \Gamma(\PP^1, \sO(4d)),\ q\in\Gamma(\PP^1, \sO(2d-\Lambda+2)),\ 
 \alpha\in \Gamma(\PP^1, \sO(2d-4+2\Lambda))
\end{equation}
such that $\alpha$ gives a square-free divisor and that
\begin{equation}\label{eq:supp-empty}
\operatorname{Supp}(g_2) \cap \operatorname{Supp}(q^2\alpha)=\emptyset.
\end{equation}
This is possible because of the assumption on $d$ in this proposition.
Then let $B$ be the closed subscheme in
$\PP_{\PP^1}(\sO(2d) \oplus \sO(3d) \oplus \sO)$ defined by the equation
\begin{equation}\label{eq:J(X)}
y^2z= 4x^3-g_2\ xz^2-q^2\alpha\ z^3.  
\end{equation}
When $g_2,\ q,\ \alpha$ are general, one can verify that $B$ is a non-singular
elliptic surface over $\PP^1$ with a section and $\chi(\sO_B)=d$.
Singular fibers of $B\rightarrow \PP^1$ are integral curves with one ordinary
double point by \eqref{eq:supp-empty}.\par
Next, we choose distinct points 
$p_1,\dots, p_{\Lambda} \in \operatorname{Supp(\alpha)}$.
This is possible since $2d-4+2\Lambda\geq \Lambda$ by assumption on $d$.
The fiber $B_{p_i}$ over $p_i$ is non-singular from \eqref{eq:supp-empty}.
Using divisors of order $m_i$ on $B_{p_i}$
and the logarithmic transformation by Kodaira 
(cf. \cite[Sect. V.13]{BPV:text}, \cite[Thm. I.6.7, Thm. I.6.12]{FM:4mfds}),
we can get an algebraic elliptic surface $\pi:X\rightarrow \PP^1$ such that
$X$ has multiple fibers with multiplicities $m_i$ over $p_i$,
$X|_{(\PP^1-\{ p_i\})}$ is locally isomorphic to $B|_{(\PP^1-\{ p_i\})}$
(in analytic topology), and
its Jacobian surface $J(X)$ is isomorphic to $B$.
This $X$ satisfies assumptions in Proposition \ref{prop:Ex-Sing}.\par
Let $\nu_C: C\rightarrow \PP^1$ be the double cover given by
$\alpha\in \Gamma(\PP^1, \sO(2d-4+2\Lambda))$ with a $\ZZ/2$-action $\sigma_C$, 
$s\in \Gamma(\PP^1, \sO(d-2+\Lambda))$ be the section such that $s^2=\alpha$ and
$\eta'$ be $\Spec(k(C))$.
\begin{clm}\label{clm:De'}
Some member $D_{\eta'}$ in $\Pic^0(X_{\eta'})$ does not descend to a divisor 
on $X_{\eta}$, and satisfies that
$D_{\eta'}+\sigma_C(D_{\eta'})=0$ as $\ZZ/2$-equivariant divisors.
\end{clm}
\begin{proof}
 About $B$ at \eqref{eq:J(X)},
 a point $(x,y)=(0, qs)$ in $B_{\eta'}$ does not descend to a point 
 in $B_{\eta}$ since $\sigma_C(0,qs)=(0,-qs)\neq (0,qs)$, and satisfies that
 $(0,qs)+\sigma_C(0,qs)=0$. 
 Since $B_{\eta}\simeq J(X_{\eta})$, $(0,qs)$ corresponds to a member $D_{\eta'}$
 of $J(X_{\eta'})=\Pic^0(X_{\eta'})$, which has such properties as in 
 Claim \ref{clm:De'}. 
\end{proof}
Now we have a natural section
$\tau_{p_1+\dots +p_{\Lambda}}\in \Gamma(\PP^1, \sO(\Lambda))$, and
$\alpha'=\alpha/\tau_{p_1+\dots +p_{\Lambda}} \in \Gamma(\PP^1,\sO(sd-4+\Lambda))$.
It holds that
\[ 2 \Bigl(K_X-\textstyle\sum_i \lfloor (m_i-2)/2 \rfloor F_i \Bigr)=
 \bigl(2d-4+\Lambda \bigr)\ \cf  +\sum_i^{od} \ F_i, \]
where the symbol $\sum_i^{od}$ means the summation runs over all $i$ such that
$m_i$ is odd.
Thereby we have a square-free section
$\tilde{\alpha}=\alpha'\cdot \prod_i^{od} \tau_{F_i} \in \Gamma(X, 2{\mathcal L})$, 
where we put
$ {\mathcal L}=\sO(K_X-\sum_i \lfloor (m_i-2)/2 \rfloor F_i )$, and
$\tau_{F_i}$ is the section corresponding to $F_i$.
Let $\nu: Y\rightarrow X$ be the double cover given by $\tilde{\alpha}$
with a $\ZZ/2$-action $\sigma$, and
$t\in \Gamma(Y, {\mathcal L})$ be the section such that $t^2=\tilde{\alpha}$.
Remark that $\operatorname{Supp}(\tilde{\alpha})$ is non-singular from 
\eqref{eq:supp-empty}, and so is $Y$.\par
 From the relation between $\alpha,\ \alpha', \tilde{\alpha}$ and $t$,
 one can verify that 
 \begin{equation}\label{eq:get-piC}
 \alpha= \lambda_1 \tilde{\alpha}\cdot \Bigl\{ \prod_i \tau_{F_i}^{\lfloor m_i/2 \rfloor}
 \Bigr\}^2 \quad \text{and}\quad
 \alpha= \Bigl\{ \lambda_2 t\cdot \prod_i \tau_{F_i}^{\lfloor m_i/2 
 \rfloor} \Bigr\}^2
 \end{equation}
 with nonzero constants $\lambda_i$.
 Thus we can obtain a morphism $\pi_C: C\rightarrow \PP^1$ such that
$\nu_C \circ \pi_C: Y\rightarrow C \rightarrow \PP^1$ equals to $\pi\circ\nu$.
 Since $\tau_{F_i}$ is an unit of the ring of $X_{\eta}$,
 the induced morphism $Y_{\eta'} \rightarrow X_{\eta}\times_{\eta} \eta'$
 is isomorphic by the left side of \eqref{eq:get-piC}. 
%
%
Thus one can extend the divisor $D_{\eta'}$ on $X_{\eta'}$ 
at Claim \ref{clm:De'} to a divisor $D$ on $Y$ such that $D\cdot \nu^{-1}(\cf)=0$.
Now let us consider a rank-two vector bundle $\nu_*\sO(-D)$ on $X$.
\begin{clm}\label{clm:E0}
(a) $\nu_*\sO(-D)|_{X_{\eta}}$ is stable. \quad
(b) $\Hom(\nu_*\sO(-D),\ \nu_*\sO(-D)(K_X))^{\circ}\neq 0$.
\end{clm}
\begin{proof}
(a) By \cite[p.48]{Frd:holvb}, there is an exact sequence
 \begin{equation}\label{eq:E0-exseq}
 0 \longrightarrow \sO_Y(-\sigma(D))\otimes \nu^*{\mathcal L}^{-1} 
 \longrightarrow \nu^*\nu_* \sO(-D) \longrightarrow \sO_Y(-D) \longrightarrow 0. 
 \end{equation}
 Suppose that $\nu_*\sO(-D)|_{X_{\eta}}$ is not stable.
 Then it has such a subsheaf $F_{\eta}$ as $\deg(F_{\eta})\geq 0$.
 One can verify that $\nu^*(F_{\eta})$ is isomorphic to either 
 $\sO_Y(-D)|_{Y_{\eta'}}$ or
 $\sO_Y(-\sigma(D))\otimes \nu^*{\mathcal L}^{-1}|_{Y_{\eta'}}= 
 \sO_Y(-\sigma(D))_{Y_{\eta'}}$ 
by considering their degrees and \eqref{eq:E0-exseq}.
This contradicts to Claim \ref{clm:De'}. \\
(b) Since $K_X\otimes {\mathcal L}^{-1}=\sO(\sum_i \lfloor (m_i-2)/2 \rfloor F_i)$,
there is a non-zero section $\iota$ in $\Gamma(X, K_X\otimes {\mathcal L}^{-1})$.
Then $t\iota\in\Gamma(Y, \nu^*K_X)$ satisfies that
$\sigma(t\iota)=-t\iota$, and gives a homomorphism
\[ \nu_*(\times t\iota): \nu_*\sO(-D) \longrightarrow \nu_*\sO(-D)(K_X).\]
We have a commutative diagram
\begin{equation*}
\xymatrix{
 \sO_Y(-\sigma(D))\otimes \nu^*{\mathcal L}^{-1} \ar[r] \ar[d]^{\times \sigma(t\iota)} &
 \nu^*\nu_* \sO(-D) \ar[r] \ar[d]^{\nu_*(\times t\iota)} & \sO_Y(-D) 
 \ar[d]^{\times t\iota} \\
 \sO_Y(-\sigma(D))\otimes \nu^*{\mathcal L}^{-1}(K_X) \ar[r] &
 \nu^*(\nu_* \sO(-D)(K_X)) \ar[r]  & \sO_Y(-D)(K_X),}
\end{equation*}
where two lines are \eqref{eq:E0-exseq}, and so
$\nu^* \operatorname{tr}(\nu_*(\times t\iota))=t\iota+\sigma(t\iota)=0$.
\end{proof}
Let us consider its first Chern class.
From \cite[p.47, Prop. 27]{Frd:holvb},
$c_1(\nu_*\sO(-D))=\nu_*(-D)-c_1({\mathcal L})$, where $\nu_*$ is induced map
between divisors.
By Claim \ref{clm:De'}, $D+\sigma(D)=\nu^*(D_0)$, where $D_0$ is a divisor on $X$
whose support lies in fibers of $X\rightarrow \PP^1$.
Then $\nu_*(D)-D_0=D_1$ satisfies that $2D_1=0$, so $\chi(\sO(D_1))=d>0$,
which implies that $h^0(\sO(D_1))$ or $h^0(\sO(K_X-D_1))$ is not zero.
Thereby the support of $\nu_*(D)$ lies in fibers of $X\rightarrow \PP^1$.
From Remark \ref{rem:nu-fib}, we can assume that 
\begin{equation}
c_1(\nu_*\sO(-D))= \textstyle\sum_i^{ev} \epsilon_i F_i \qquad
(\epsilon_i=0\ \text{or}\ 1),
\end{equation}
where the symbol $\sum_i^{ev}$ means the summation runs over all $i$ such that
$m_i$ is even.
\begin{rem}\label{rem:nu-fib}
Concerning $\nu: Y\rightarrow X$, the following holds. \\
(a) $\nu^{-1}(\cf)=\cf' \sqcup \sigma(\cf')$ and $\nu_*(\cf')=\cf$. \\
(b) If $m_i$ is even, then $\nu$ is etale at $F_i$, $\nu^{-1}(F_i)$ 
is integral and $\nu_*(\nu^{-1}(F_i))=2F_i$.\\
(c) If $m_i$ is odd, then $\nu$ ramifies at $F_i$, $\nu^{-1}(F_i)=2F'_i$,
$\sigma(F_i')=F'_i$ and $\nu_*(F'_i)=F_i$.
\end{rem}
\begin{proof}
 (a) is obvious from the existence of $\nu_C$.
 (b) When $m_i$ is even, $\nu$ is etale at $F_i$ from the definition of $\tilde{\alpha}$
 and $\nu$. Since ${\mathcal L}|_{F_i}=\sO((m_i/2)F_i)|_{F_i}$ is not isomorphic
 to $\sO_{F_i}$ by Fact \ref{fact:O(F)|F}, we have $h^0(\sO)=1$, and thereby
 $\nu^{-1}(F_i)$ is integral.
 (c) When $m_i$ is odd, $\operatorname{Supp}(\tilde{\alpha})$ contains $F_i$,
 so $\nu$ ramifies at $F_i$. 
\end{proof}
\begin{clm}\label{clm:E1}
There is a rank-two sheaf $E_1$ such that $c_1(E_1)=0$, $E_{1,\eta}$ is stable, and
$\Hom(E_1,E_1(K_X))^{\circ}$ is not zero.
\end{clm}
\begin{proof}
When $c_1(\nu_*\sO(-D))=0$, it suffices to put $E_1=\nu_*\sO(-D)$.
Let us consider when $c_1(\nu_*\sO(-D))=F_1$; the proof similarly proceeds 
in general case.
There is a nonzero traceless homomorphism
$g: \nu_*\sO(-D) \rightarrow \nu_*\sO(-D)(K_X)$ by Claim \ref{clm:E0}.\par
First, we suppose that $g|_{F_1}=0$.
There is a surjection $\nu_*\sO(-D)\rightarrow L_{F_1}$ to a line bundle on $F_1$,
and let $E_1$ be its kernel. Then $c_1(E_1)=0$, 
$E_{1,\eta}\simeq \nu_*\sO(-D)|_{X_{\eta}}$ is stable, and
$g$ induces a homomorphism $g:E_1 \rightarrow E_1(K_X)$. \par
Next, we suppose that $g|_{F_1}\neq 0$.
By \eqref{eq:E0-exseq} $\nu^*(\nu_*\sO(-D)|_{F_1})$ is semistable, and so
$\nu_*\sO(-D)$ is semistable on a non-singular elliptic curve $F_1$.
From \cite{Aty:VB-ellipt}, it holds that
(a) $\nu_*\sO(-D)|_{F_1}$ is isomorphic to $L_1 \oplus L_2$ with degree-zero
line bundles $L_i$, or
(b) $\nu_*\sO(-D)|_{F_1}$ is isomorphic to ${\mathcal E}\otimes L$,
where ${\mathcal E}$ is a non-trivial extension of $\sO_{F_1}$ by $\sO_{F_1}$.
Notice that a nonzero homomorphism 
$g|_{F_1}: \nu_*\sO(-D)|_{F_1} \rightarrow \nu_*\sO(-D)(K_X)|_{F_1}$
cannot exist in Case (b) since $\sO(K_X)|_{F_1}= \sO(-F_1)|_{F_1}$ is not
isomorphic to $\sO_{F_1}$.
Thereby Case (a) holds.
Since $g|_{F_1}\neq 0$ and $\sO(K_X)|_{F_1}\not\simeq \sO_{F_1}$,
we can suppose that $\Hom(L_1, L_2\otimes \sO_{F_1}(-F_1))$ is not zero,
and then $L_1\simeq L_2\otimes \sO_{F_1}(-F_1)$ for their degree are same.
Thus $\nu_*\sO(-D)|_{F_1}$ is isomorphic to 
$\{ \sO_{F_1}\oplus \sO_{F_1}(-F_1)\}\otimes L$.
$g$ induces two homomorphisms
\begin{align*}
g_{21}:= p_2\circ g|_{F_1}\circ i_1 :\ & \sO_{F_1} \longrightarrow \sO_{F_1}(-2F_1) 
 \quad\text{and}\\
g_{12}:=p_1\circ g|_{F_1}\circ i_2 :\ & \sO_{F_1}(-F_1) \longrightarrow \sO_{F_1}(-F_1). 
\end{align*}
Assume that $g_{21}$ is zero.
Define $E_1$ to be the kernel of a natural surjection
$\nu_*\sO(-D)\rightarrow \nu_*\sO(-D)|_{F_1} \rightarrow \sO_{F_1}(-F_1)$.
Then $c_1(E_1)=0$, $E_{1,\eta}\simeq \nu_*\sO(-D)|_{X_{\eta}}$ is stable, and
$g$ induces $g: E_1\rightarrow E_1(K_X)$. 
Also when the $g_{12}$ is zero, we can similarly find such a sheaf $E_1$ as 
in Claim \ref{clm:E1}.\par
Suppose that both $g_{21}$ and $g_{12}$ are not zero. Then they are isomorphic
and we can assume that $g_{12}$ is the identity mapping.
Describe $\sO_{F_1}(-F_1)$ as $\sO_{F_1}(q-r)$, and fix nonzero sections
$\tau_{q}$ of $\Gamma(\sO_{F_1}(q))$ and $\tau_{r}$ of $\Gamma(\sO_{F_1}(r))$.
There is a nonzero constant $\lambda$ such that
$g_{21}(1)=\lambda (\tau_q/\tau_r)^2$.
We define an injective homomorphism by
\[ j: \sO_{F_1}(-r) \longrightarrow \sO_{F_1}\oplus \sO_{F_1}(q-r)
 =\sO_{F_1}\oplus \sO_{F_1}(-F_1) \qquad 
\bigl(\tau_r^{-1} \mapsto (1/\sqrt{\lambda}, \tau_q/\tau_r) \bigr),\]
and define $E_1$ to be the kernel of a natural surjection
\[ \nu_*\sO(-D) \longrightarrow \nu_*\sO(-D)|_{F_1}\simeq 
\{ \sO_{F_1}\oplus \sO_{F_1}(-F_1)\}\otimes L \longrightarrow \Cok(j)\otimes L.\]
Then one can verify that 
$c_1(E_1)=0$, $E_{1,\eta}\simeq \nu_*\sO(-D)|_{X_{\eta}}$ is stable, and
$g$ induces $g: E_1\rightarrow E_1(K_X)$. 
\end{proof}
\begin{rem}\label{rem:E2}
For such a sheaf $E_1$ as in Claim \ref{clm:E1}, there is a subsheaf
$E_2$ of $E_1$ such that $E_1/E_2\simeq \CC$ and that 
$\Hom(E_2,E_2(K_X))^{\circ}\neq 0$.
\end{rem}
\begin{proof}
Take a nonzero element $g$ of $\Hom(E_1,E_1(K_X))^{\circ}$.
Let $p$ be a closed point in $X$ such that $E_1$ is locally free at $p$.
By standard linear algebra,
the linear map $g\otimes k(p): E_1\otimes k(p) \rightarrow E_1(K_X)\otimes k(p)$
has an invariant quotient vector space of rank one, say $Q(p)$.
When we define $E_2$ to be the kernel of a natural surjection
$E_1 \rightarrow E_1\otimes k(p) \rightarrow Q(p)$, $g$ induces a homomorphism
$g: E_2 \rightarrow E_2(K_X)$.
\end{proof}
Fact \ref{fact:XeStable}, Claim \ref{clm:E1} and Remark \ref{rem:E2}
complete the proof of Proposition \ref{prop:Ex-Sing}.
\end{proof}
\begin{prop}\label{prop:irred}
Let $X$ be arbitrary nonsingular projective surface such that $K_X$ is nef.
Fix a (finite) polyhedral cone ${\mathcal S}$ in the closure of the ample 
cone $\Amp(X)$ of $X$ such that ${\mathcal S}\cap \partial \Amp(X) \subset \RR K_X$.
For an ample line bundle $H$ on $X$, let $\bar{M}_H(c_1,c_2)$ denote
the moduli scheme of $H$-semistable rank-two sheaves with fixed Chern classes
$(c_1,c_2)\in \Pic(X)\times\ZZ$.
If $H$ belongs to ${\mathcal S}$ and if
$4c_2-c_1^2$ is sufficiently large w.r.t. $X$ and ${\mathcal S}$, then
$M_H(c_1,c_2)$ is normal, and
$\bar{M}_H(c_1,c_2)$ is irreducible.
\end{prop}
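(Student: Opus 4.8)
The plan is to reduce the assertion to a single, well-chosen polarization and then appeal to the asymptotic structure theory of moduli of rank-two sheaves. First I would fix a rational ample class $H_0$ in the interior of $\mathcal S$, chosen once and independently of $c_1,c_2$. By \cite[Lem. 2.4.]{Yam:flip}, once $4c_2-c_1^2$ is large enough with respect to $X$ and $\mathcal S$, the schemes $M_L(c_1,c_2)$ for $L\in\mathcal S$ are mutually isomorphic in codimension one: there are open subsets $U_L\subset M_L(c_1,c_2)$ whose complements have codimension $\geq 2$, all identified with one common open scheme $U$. Thus it suffices to treat $H_0$ and then transfer. For normality this works because normality is the conjunction of Serre's conditions $(R_1)$ and $(S_2)$, both of which are inherited through an identification in codimension one between schemes that are pure of the expected dimension; for irreducibility, if $M_{H_0}(c_1,c_2)$ is irreducible then $U_{H_0}$, hence $U$, hence every $U_L$ is irreducible, and since $M_L\setminus U_L$ has codimension $\geq 2$ in a scheme which (by the uniform Gieseker--Li estimates over $\mathcal S$) is pure of expected dimension $4c_2-c_1^2-3\chi(\sO_X)$, the dense open irreducible $U_L$ forces $M_L(c_1,c_2)$ --- and its closure $\bar M_L(c_1,c_2)$, in which $M_L(c_1,c_2)$ is dense for $c_2\gg 0$ --- to be irreducible.

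For the fixed polarization $H_0$, normality of $M_{H_0}(c_1,c_2)$ I would get as follows. The deformation theory recalled in Fact \ref{fact:moduli-lci} shows that $M_{H_0}(c_1,c_2)$ is everywhere locally a complete intersection inside a smooth scheme, cut out by $\ext^2(E,E)^{\circ}$ equations; hence it is Cohen--Macaulay, so $(S_2)$ holds. For $(R_1)$, the singular locus of $M_{H_0}(c_1,c_2)$ is contained in the closed set where $\ext^2(E,E)^{\circ}\neq 0$; by the dimension estimates of Gieseker--Li and O'Grady, made quantitative in \cite{Li:kodaira} (the same input used for Lemma \ref{lem:GoodLcs}) and reviewed in \cite{HL:text}, the codimension of this set in $\bar M_{H_0}(c_1,c_2)$ grows with $4c_2-c_1^2$, in particular is $\geq 2$ once the discriminant is large enough, and away from it the moduli scheme is smooth of expected dimension. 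Then $(R_1)+(S_2)$ gives normality.

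Irreducibility of $\bar M_{H_0}(c_1,c_2)$ for $4c_2-c_1^2\gg 0$ is the main theorem of Gieseker--Li and O'Grady on moduli of rank-two sheaves on surfaces; since $K_X$ is nef, $X$ lies in the generic situation of those results, and the threshold there depends only on $X$, $c_1$ and the (now fixed) polarization $H_0$. I would simply cite it. It then remains to collect the three ``sufficiently large'' thresholds --- from \cite[Lem. 2.4.]{Yam:flip}, from the codimension estimate for the obstructed locus, and from the irreducibility theorem --- into a single bound on $4c_2-c_1^2$ depending on $X$ and $\mathcal S$.

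The hard part is entirely contained in the cited results: the genuinely deep inputs are the Gieseker--Li/O'Grady irreducibility theorem and the companion estimate that the obstructed (and non-locally-free) locus has large codimension. What this proof actually has to supply is only the passage from a single polarization to the whole cone $\mathcal S$, and that is provided verbatim by the wall-crossing Lemma 2.4 of \cite{Yam:flip}; the remaining compatibility of bounds is bookkeeping.
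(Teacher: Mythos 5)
Your proposal follows essentially the same route as the paper: fix a reference polarization $H_0\in\mathcal{S}$, quote Gieseker--Li \cite{GL:irreducible} for irreducibility of $\bar{M}_{H_0}(c_1,c_2)$, use Li's estimate on the number of moduli of obstructed sheaves to get the expected dimension and $(R_1)$, and compare $M_H$ with $M_{H_0}$ by a wall-crossing estimate inside $\mathcal{S}$ (the paper re-derives this ``similarly to \cite[Lem. 2.2]{Yam:flip}'', bounding the loci $P_H(H_0)$ and $P_{H_0}(H)$ by $3c_2+B\sqrt{c_2}+B$, rather than quoting \cite[Lem. 2.4]{Yam:flip} as you do, but it is the same mechanism). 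One caveat: your claim that $(S_2)$, hence normality, is ``inherited through an identification in codimension one'' is false in general (a surface and its normalization can agree outside a single point), so normality of $M_H$ should not be transferred from $M_{H_0}$; instead argue it for each $L\in\mathcal{S}$ exactly as you did for $H_0$ --- purity of the expected dimension (which your uniform estimates already give) together with Fact \ref{fact:moduli-lci} yields l.c.i., hence Cohen--Macaulay and $(S_2)$, while the codimension $\geq 2$ of the obstructed locus gives $(R_1)$. With that one adjustment your argument is correct and matches the paper's proof.
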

\begin{proof}
Fix an ample line bundle $H_0\in {\mathcal S}$, and $H$ be an ample line bundle
lying on the line segment connecting $H_0$ and $K_X$.
By \cite{GL:irreducible}, $\bar{M}_{H_0}(c_1,c_2)$ is irreducible
when $c_2$ is sufficiently large w.r.t. $(X,H_0,c_1)$.
We put 
\[A_{H_0}(c_1,c_2)=\{ E\in M_H(c_1,c_2) \bigm| \Ext^2(E,E)^0\neq 0\}. \]
The number of moduli of $A_{H_0}(c_1,c_2)$ is less than
$3c_2+C\sqrt{c_2}+C$, where $C$ is a constant depending only on $(X,H_0,c_1)$
by \cite[Lemma 1.3]{Li:kodaira}.
Similarly to \cite[Lem. 2.2]{Yam:flip}, one can verify the following:
Let $P_H(H_0) \subset \bar{M}_H(c_1,c_2)$ be the set of $H$-semistable sheaves that is not $H_0$-semistable,
and let $P_{H_0}(H) \subset \bar{M}_{H_0}(c_1,c_2)$ be the set $H_0$-semistable sheaves that is not $H$-semistable.
Then both $\dim P_H(H_0)$ and $\dim P_{H_0}(H)$ are less than $3c_2+B\sqrt{c_2}+B$, where
$B$ is a constant depending only on $(X,c_1,H_0)$.
Combining these facts, we obtain this proposition.
\end{proof}
%
%
%
%
\section{When $E$ applies to Case II and is locally free}\label{sctn:CaseII}

In this section, we works in Setting \ref{stng:f20-locfree}, and
show Theorem \ref{thm:R-geq-1}.

\begin{stng}\label{stng:f20-locfree}
$X$ satisfies that $d=1$ and $\Lambda(X)=2$.
$E\in M(c_2)$ satisfies $\Ext^2(E,E)^0\neq 0$,
applies to Case II in Fact \ref{fact:RestrGenericFib}, and is locally free.
\end{stng}

\begin{fact}\label{fact:relExt}
(\cite{Banica:Ext})
(a) If ${\mathcal F}$ is a torsion-free $\sO_X$-module, then
it is flat over $\PP^1$. \\
(b) Put $\omega_{X/\PP^1}=\omega_X\otimes \omega_{\PP^1}^{-1}$.
If ${\mathcal F},\ {\mathcal G}$ is flat over $\PP^1$, then \\
$ Hom_{\PP^1}(Ext^1_{\pi}({\mathcal G},{\mathcal F}),\sO_{\PP^1}) \simeq
Hom_{\pi}({\mathcal F},{\mathcal G}\otimes \omega_{X/\PP^1})$.
\end{fact}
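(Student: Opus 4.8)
The plan is to prove the two parts by different means: part~(a) directly, using that $\PP^1$ is a regular curve, and part~(b) as an instance of relative Grothendieck--Serre duality for the elliptic fibration $\pi$.

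For part~(a), I would first reduce to a stalkwise statement: flatness of a coherent sheaf $\mathcal F$ over $\PP^1$ may be tested over the local rings $\sO_{\PP^1,p}$, each of which is a discrete valuation ring, and over a DVR a module is flat if and only if it is torsion-free. So it suffices to see that for $x\in\pi^{-1}(p)$ the stalk $\mathcal F_x$ has no $\sO_{\PP^1,p}$-torsion. Since $X$ is integral and $\pi$ dominant, $\sO_{X,x}$ is a domain and the image $\pi^{*}t\in\sO_{X,x}$ of a nonzero $t\in\sO_{\PP^1,p}$ is nonzero; as $\mathcal F$ is torsion-free over $\sO_X$, multiplication by $\pi^{*}t$ on $\mathcal F_x$ is injective, which is precisely the vanishing of the $\sO_{\PP^1,p}$-torsion of $\mathcal F_x$.

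For part~(b), the essential geometric input is that $\pi$ is flat and projective of relative dimension $1$ with Gorenstein fibres (smooth elliptic curves, multiple fibres with smooth reduction, or nodal rational curves --- all l.c.i.), so that $\pi^{!}\sO_{\PP^1}\simeq\omega_{X/\PP^1}[1]$ with $\omega_{X/\PP^1}=\omega_X\otimes\pi^{*}\omega_{\PP^1}^{-1}$ a genuine line bundle. Grothendieck duality for $\pi$ then gives, functorially in $\mathcal A\in D^{b}_{\mathrm{coh}}(X)$, an isomorphism $R\pi_{*}\,R\mathcal{H}om_X(\mathcal A,\omega_{X/\PP^1}[1])\simeq R\mathcal{H}om_{\PP^1}(R\pi_{*}\mathcal A,\sO_{\PP^1})$. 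I would apply this with $\mathcal A=R\mathcal{H}om_X(\mathcal G,\mathcal F)$, so the right-hand side is the derived dual over $\PP^1$ of the relative $\operatorname{Ext}$ complex of $(\mathcal G,\mathcal F)$. Because the fibres are curves and $\mathcal F,\mathcal G$ are flat over $\PP^1$ (the hypothesis of~(b), or part~(a) when they are torsion-free), that complex lives in degrees $0$ and $1$, with cohomology sheaves $\operatorname{Hom}_\pi(\mathcal G,\mathcal F)$ and $\operatorname{Ext}^{1}_\pi(\mathcal G,\mathcal F)$; taking the top cohomology of the dualized complex and unwinding $R\mathcal{H}om_{\PP^1}(-,\sO_{\PP^1})$ on the smooth curve $\PP^1$ then yields $\operatorname{Hom}_{\PP^1}(\operatorname{Ext}^{1}_\pi(\mathcal G,\mathcal F),\sO_{\PP^1})\simeq\operatorname{Hom}_\pi(\mathcal F,\mathcal G\otimes\omega_{X/\PP^1})$, which fibrewise is just Serre duality $\operatorname{Ext}^{1}_{X_s}(\mathcal G_s,\mathcal F_s)^{\vee}\simeq\operatorname{Hom}_{X_s}(\mathcal F_s,\mathcal G_s\otimes\omega_{X_s})$ on the genus-one fibre $X_s$.

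The step I expect to be the main obstacle is the last one: one must check that, after $R\pi_{*}$, the complex $R\mathcal{H}om_X(R\mathcal{H}om_X(\mathcal G,\mathcal F),\omega_{X/\PP^1}[1])$ does contribute exactly $\operatorname{Hom}_\pi(\mathcal F,\mathcal G\otimes\omega_{X/\PP^1})$ in the relevant cohomological degree --- this uses the $\PP^1$-flatness from part~(a) and the one-dimensionality of the fibres to keep higher direct images and higher derived duals from interfering --- and that the Grothendieck-duality isomorphism is compatible with the canonical truncation identifying these cohomology sheaves. This bookkeeping is precisely the content of the relative-duality formalism of \cite{Banica:Ext}, which is why the statement is quoted rather than reproved in full here.
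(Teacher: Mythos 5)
The paper itself gives no argument for this statement: it is quoted directly from \cite{Banica:Ext}, so there is no internal proof to compare against, and your proposal should be judged as a self-contained derivation. Part (a) as you give it is correct and complete: flatness over the Dedekind base is a stalkwise condition, over the DVR $\sO_{\PP^1,p}$ it is exactly torsion-freeness, and that follows from $\sO_X$-torsion-freeness because $X$ is integral and $\pi$ dominant.

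Part (b) via Grothendieck--Verdier duality for $\pi$ is a sound route and does close, but two points need to be said correctly. First, after substituting $\mathcal A=R\mathcal{H}om_X(\mathcal G,\mathcal F)$ you need the biduality $R\mathcal{H}om_X\bigl(R\mathcal{H}om_X(\mathcal G,\mathcal F),\omega_{X/\PP^1}\bigr)\simeq R\mathcal{H}om_X(\mathcal F,\mathcal G\otimes\omega_{X/\PP^1})$; this is not formal for arbitrary coherent sheaves, but holds here because on the smooth surface $X$ every coherent sheaf is perfect, so $R\mathcal{H}om_X(\mathcal G,\mathcal F)\simeq \mathcal G^{\vee}\otimes^{L}\mathcal F$ and $\mathcal G^{\vee\vee}\simeq\mathcal G$ --- this is the step you flag as the ``main obstacle'', and it is a two-line argument rather than something that must be delegated to \cite{Banica:Ext}. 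Second, your justification that $R\pi_*R\mathcal{H}om_X(\mathcal G,\mathcal F)$ has amplitude $[0,1]$ (``because the fibres are curves'') is not the right reason: the nodal and multiple fibres are singular curves of infinite global dimension, so no fibrewise bound on Ext-degrees is available. The correct reason lives on the total space: flatness of $\mathcal G$ over $\PP^1$ forces depth $\geq 1$ at every closed point, hence $\operatorname{pd}_{\sO_X}\mathcal G\leq 1$ by Auslander--Buchsbaum on the smooth surface, hence the sheaves $Ext^{\geq 2}_X(\mathcal G,\mathcal F)$ vanish; together with $R^{\geq 2}\pi_*=0$ this gives the amplitude $[0,1]$, and then the hyperext spectral sequence over the smooth curve $\PP^1$ (where $\mathcal Ext^{\geq 2}$ vanishes) degenerates and yields exactly $Hom_{\PP^1}(Ext^1_{\pi}(\mathcal G,\mathcal F),\sO_{\PP^1})\simeq Hom_{\pi}(\mathcal F,\mathcal G\otimes\omega_{X/\PP^1})$. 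Without that vanishing an extra term $\mathcal Ext^1_{\PP^1}\bigl(\pi_*Ext^2_X(\mathcal G,\mathcal F),\sO_{\PP^1}\bigr)$ would intervene, so this is precisely where the flatness hypothesis enters. In exchange for using smoothness of $X$, your argument is shorter and purely algebraic; the citation the paper relies on proves the relative-duality statement in the generality of flat families of compact complex spaces, which is more than is needed here.
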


\begin{lem}\label{lem:f20}
(1) Suppose $E\in M(c_2)$ applies to Case II in Fact  \ref{fact:RestrGenericFib}.
Then any nonzero homomorphism $f\in\Hom(E,E(K_X))^0$ satisfies $f^2=0$, so we get
an exact sequence
\begin{equation}\label{eq:exseq-f}
  0 \longrightarrow F=Ker(f)=\sO(D)\otimes I_{Z'} \overset{I}{\longrightarrow} E 
  \overset{P}{\longrightarrow}
  G=Im(f)=\sO(-D)\otimes I_Z \longrightarrow 0,
\end{equation}
inclusion $\iota:G \hookrightarrow F(K_X)$, and
a positive divisor $B\subset X$ such that $2D+K_X-B=0$.
It holds that $(K_X-B)\cdot \sO(1)>0$ or that $(K_X-B)\cdot \sO(1)=0$ and $l(Z')>l(Z)$.
In addition, the natural map $\Hom(I_Z, \sO(B)\otimes I_{Z'})=\Hom(G,F(K_X))\rightarrow \Hom(E,E(K_X))^0$ 
is isomorphic.\\
(2) Conversely, if a torsion free sheaf $E$ is decomposed
as in \eqref{eq:exseq-f} satisfying all conditions in (1), and if
\eqref{eq:exseq-f} does not split at the generic fiber $X_{\eta}$, then
$E$ is stable with respect to any $c_2(E)$-suitable ample line bundle.
\end{lem}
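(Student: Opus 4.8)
The plan is to reduce everything, in both parts, to the structure of $E_{\eta}$ recorded in Fact~\ref{fact:RestrGenericFib}, Case II, and in particular to the fact that there $E_{\eta}$ is a non-split self-extension of an order-$2$ line bundle, hence semistable of degree $0$ with a \emph{unique} sub-line-bundle of degree $0$ (the space $\Hom(L,E_{\eta})$ with $L$ the common Jordan--H\"older factor is one-dimensional).

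For part (1), I would first note $\det(f)=0$: since in Case II the bundle $E_{\bar\eta}$ is indecomposable, the contrapositive of Lemma~\ref{lem:detf-0-A} applies; as $\operatorname{tr}(f)=0$ too, the theorem of Hamilton--Cayley gives $f^{2}=0$ (globally, as a map $E\to E(2K_X)$). Then $F:=\operatorname{Ker}(f)$ is saturated in $E$ (its quotient $\operatorname{Im}(f)\subset E(K_X)$ is torsion-free), hence torsion-free of rank one, so $F=\sO(D)\otimes I_{Z'}$ with $D=c_1(F)$; and $G:=\operatorname{Im}(f)=E/F$ has $c_1(G)=-D$, so $G=\sO(-D)\otimes I_{Z}$, while $f^{2}=0$ forces $\operatorname{Im}(f)\subseteq\operatorname{Ker}\bigl(f\colon E(K_X)\to E(2K_X)\bigr)=F(K_X)$, giving $\iota\colon G\hookrightarrow F(K_X)$. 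Twisting $\iota$ by $\sO(D)$ yields an injection $I_Z\hookrightarrow\sO(2D+K_X)$ whose cokernel has divisorial part an effective $B$ with $2D+K_X-B=0$ in $\operatorname{Pic}(X)$; moreover $B$ is supported on fibres (cf.\ Definition~\ref{defn:B}), since otherwise $D\cdot\cf>0$ and $F_{\eta}=\sO_{X_{\eta}}(D)$ would be a positive-degree sub-line-bundle of the degree-$0$ semistable $E_{\eta}$. In particular $D\cdot\cf=0$, whence $D\cdot K_X=0$ as $K_X\in\QQ\cdot\cf$.

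Still in part (1): the inequality is forced by stability of $E$, which gives $p_H(F)<p_H(E)$, i.e.\ $p_H(F)<p_H(G)$ (since $p_H(E)=\tfrac12(p_H(F)+p_H(G))$). Comparing Hilbert polynomials, if $D\cdot\sO(1)\neq0$ this says $D\cdot\sO(1)<0$, equivalently $(K_X-B)\cdot\sO(1)>0$ by $2D\sim B-K_X$; if $D\cdot\sO(1)=0$ (equivalently $(K_X-B)\cdot\sO(1)=0$) the leading terms of $P(F,\cdot)$ and $P(G,\cdot)$ coincide and, using $D\cdot K_X=0$, the next comparison reduces exactly to $l(Z')>l(Z)$. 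For the isomorphism, the map $\Hom(G,F(K_X))\to\Hom(E,E(K_X))^{\circ}$, $\phi\mapsto I(K_X)\circ\phi\circ P$ (with $I\colon F\hookrightarrow E$, $P\colon E\twoheadrightarrow G$), is clearly injective and lands in the traceless part since $P\circ I=0$. For surjectivity, given $g\in\Hom(E,E(K_X))^{\circ}$ one has $g^{2}=0$ as above, so $g_{\eta}$ is a nilpotent traceless endomorphism of $E_{\eta}$, and by the uniqueness of the degree-$0$ sub-line-bundle one gets $\operatorname{Ker}(g_{\eta})=\operatorname{Im}(g_{\eta})=F_{\eta}$. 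Hence $g(F)$ is a torsion subsheaf of the torsion-free $E(K_X)$, so $g(F)=0$ and $g$ factors through $G$; and $\operatorname{Im}(g)$, a rank-one subsheaf of $E(K_X)$ with generic fibre $F(K_X)_{\eta}$, lies in the saturated subsheaf $F(K_X)$, so $g=I(K_X)\circ\phi\circ P$ for some $\phi\in\Hom(G,F(K_X))$.

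For part (2), I would fix an arbitrary $c_2(E)$-suitable ample $\sO(1)$ and prove $H$-stability of $E$ directly (Fact~\ref{fact:XeStable} is unavailable, $E_{\eta}$ being only strictly semistable). From $2D+K_X-B=0$ with $B$ and $K_X$ supported on fibres, $\sO_{X_{\eta}}(2D)\simeq\sO_{X_{\eta}}$, so the non-split sequence \eqref{eq:exseq-f} over $X_{\eta}$ presents $E_{\eta}$ as a non-split self-extension of the degree-$0$ line bundle $\sO_{X_{\eta}}(D)$; thus $E_{\eta}$ is semistable of degree $0$ with $F_{\eta}$ its only degree-$0$ sub-line-bundle. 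Let $E'\subseteq E$ be a proper saturated subsheaf, necessarily of rank one, say $E'=\sO(D')\otimes I_W$; then $(D')^{2}\geq-c_2(E)$, so $c_2(E)$-suitability gives $\operatorname{sign}(D'\cdot\sO(1))=\operatorname{sign}(D'\cdot\cf)$. If $D'\cdot\sO(1)>0$ then $D'\cdot\cf>0$ and $E'_{\eta}$ is a positive-degree sub-line-bundle of $E_{\eta}$, impossible; if $D'\cdot\sO(1)<0$ then $p_H(E')<p_H(E)$ already at leading order; if $D'\cdot\sO(1)=0$ then $D'\cdot\cf=0$, so $E'_{\eta}$ is a degree-$0$ sub-line-bundle of $E_{\eta}$, forcing $E'_{\eta}=F_{\eta}$ and hence $E'=F$, and then $D\cdot\sO(1)=0$ gives $(K_X-B)\cdot\sO(1)=0$, so the hypothesis yields $l(Z')>l(Z)$, which by the computation in part (1) is precisely $p_H(F)<p_H(E)$. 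Thus no proper saturated subsheaf destabilizes $E$, and $E$ is $H$-stable.

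The step I expect to be the main obstacle is the surjectivity in part (1): it hinges on the sharp generic-fibre input that $\Hom(\sO_{X_{\eta}}(D),E_{\eta})$ is one-dimensional in Case II, which pins down $\operatorname{Ker}(g_{\eta})=\operatorname{Im}(g_{\eta})=F_{\eta}$ for \emph{every} traceless $g$, not merely for $f$. Everything else is bookkeeping, the only delicate point being the boundary case $D\cdot\sO(1)=0$ (i.e.\ $(K_X-B)\cdot\sO(1)=0$) appearing in both parts, where one must invoke $D\cdot K_X=0$ and the colength inequality $l(Z')>l(Z)$ rather than a leading-coefficient estimate.
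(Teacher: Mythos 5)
Part (1) of your proposal is correct, and it is essentially the argument the paper intends: the paper's own proof establishes only $f^{2}=0$ (from the indecomposability of $E_{\bar{\eta}}$ in Case II, phrased through the eigen-sheaf decomposition on the double cover rather than by quoting Lemma \ref{lem:detf-0-A}) and declares the remainder routine. Your completion of that remainder -- the construction of $B$ from $\iota$, the verticality of $B$ and $D\cdot K_X=0$, the dichotomy $(K_X-B)\cdot\sO(1)>0$ or ($=0$ and $l(Z')>l(Z)$) from the Gieseker comparison of $P(F)$ with $P(G)$, and the bijectivity of $\Hom(G,F(K_X))\rightarrow\Hom(E,E(K_X))^{0}$ via the uniqueness of the degree-zero sub-line bundle of the non-split self-extension $E_{\eta}$ -- is sound.

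In part (2), however, one step fails as written. You assert that $c_2$-suitability gives $\operatorname{sign}(D'\cdot\sO(1))=\operatorname{sign}(D'\cdot\cf)$ for every saturated rank-one subsheaf $\sO(D')\otimes I_W\subset E$, and in your first case conclude that $D'\cdot\sO(1)>0$ forces $D'\cdot\cf>0$. Definition \ref{defn:suitable} imposes the sign equality only when $D'\cdot\cf\neq 0$, and vertical classes break your implication: for instance $D'=F_1$ has $D'^2=0\geq -c_2$, $D'\cdot\cf=0$, yet $D'\cdot\sO(1)>0$; a subsheaf of the form $\sO(nF_i)\otimes I_W$ would be destabilizing but invisible to your trichotomy. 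The missing case $D'\cdot\cf=0$, $D'\cdot\sO(1)>0$ must be excluded by the same mechanism you already use in your third case: since \eqref{eq:exseq-f}${}_{\eta}$ is non-split, $F_{\eta}$ is the unique degree-zero sub-line bundle of $E_{\eta}$, so $D'\cdot\cf=0$ forces $E'\subseteq F$ (as $F$ is saturated in $E$), whence $D'\cdot\sO(1)\leq D\cdot\sO(1)=-\tfrac{1}{2}(K_X-B)\cdot\sO(1)\leq 0$, a contradiction; so the case division should be organized by the sign of $D'\cdot\cf$ rather than of $D'\cdot\sO(1)$. A smaller point in the same part: you use that $B$ is vertical (hence $\sO_{X_{\eta}}(2D)\simeq\sO_{X_{\eta}}$ and $D\cdot K_X=0$), which is not literally among the hypotheses listed in (1); it does follow, because $B$ effective gives $2D\cdot\cf=B\cdot\cf\geq 0$, while $D^2=l(Z)+l(Z')-c_2\geq -c_2$ lets suitability turn $D\cdot\cf>0$ into $D\cdot\sO(1)>0$, contradicting $(K_X-B)\cdot\sO(1)\geq 0$ -- but this short verification should be spelled out.
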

\begin{proof}
Suppose $f^2\neq 0$. Then one can describe $E_{\bar{\eta}}$ as in \eqref{eq:defn-Fpm}.
Natural map $F_- \rightarrow \nu_0^* E \rightarrow G_+$ induces the splitting of 
\eqref{eq:defn-Fpm}${}_{\bar{\eta}}$, so $E_{\bar{\eta}}$ should be decomposable. Thereby $f^2=0$.
It is easy to check the left of the proof.
\end{proof}

\begin{lem}[16/11/3, 12/3]\label{prop:ext1(EE)decomp} 
In Setting \ref{stng:f20-locfree}, 
one can describe the torsion parts and the pure parts of
the following natural exact sequences.
{\footnotesize
\begin{equation}\label{eq:ext1(EE)decomp}
 \xymatrix{
  Ext^1_{\pi}(G,F) \ar[d] \ar[r] &
  Ext^1_{\pi}(G,E) \ar[d] \ar@{>>}[r] &
  Ext^1_{\pi}(G,G) \ar[d] \\
  Ext^1_{\pi}(E,F) \ar@{>>}[d] \ar[r] &
  Ext^1_{\pi}(E,E) \ar@{>>}[d] \ar@{>>}[r] &
  Ext^1_{\pi}(E,G) \ar@{>>}[d] \\
  Ext^1_{\pi}(F,F) \ar[r] &
  Ext^1_{\pi}(F,E) \ar@{>>}[r] &
  Ext^1_{\pi}(F,G) }
\end{equation}
}
{\footnotesize
\begin{equation}\label{eq:ext1(EEK)decomp}
\xymatrix{
  Ext^1_{\pi}(G,F(K_X)) \ar[d] \ar[r] &
  Ext^1_{\pi}(G,E(K_X)) \ar[d] \ar@{>>}[r] &
  Ext^1_{\pi}(G,G(K_X)) \ar[d] \\
  Ext^1_{\pi}(E,F(K_X)) \ar@{>>}[d] \ar[r] &
  Ext^1_{\pi}(E,E(K_X)) \ar@{>>}[d] \ar@{>>}[r] &
  Ext^1_{\pi}(E,G(K_X)) \ar@{>>}[d] \\
  Ext^1_{\pi}(F,F(K_X)) \ar[r] &
  Ext^1_{\pi}(F,E(K_X)) \ar@{>>}[r] &
  Ext^1_{\pi}(F,G(K_X)) 
}
\end{equation}
}
Here we put
$l'_{BZ}=  \min\left\{ l \bigm|
h^0(\sO(2K_X-B+l{\bf f})\otimes I_Z)\neq 0  \right\}$.
\begin{enumerate}[(a)]
\item 
$\pur Ext^1(G,F)=\sO(l'_{BZ}-2)$. \label{enum:(1,1a)}
\item 
$\pur Ext^1_{\pi}(E,F)\simeq \sO(-d)=R^1\pi_*(\sO_X)$. \label{enum:(2,1a)}
\item 
$\tor Ext^1_{\pi}(E,E)\rightarrow \tor Ext^1_{\pi}(E,G)$ is surjective. \label{enum:(2,2-2,3a)}
\item 
$\pur Ext^1_{\pi}(E,G)=\sO(-2)$. \label{enum:pur(2,3)}
\item 
$l(\tor Ext^1_{\pi}(E,G))=3l(Z)$. \label{enum:tor(2,3)}
\item 
$\tor Ext^1_{\pi}(E,G)\rightarrow \tor Ext^1_{\pi}(F,G)$ is surjective. \label{enum:(2,3-3,3a)}
\item 
$Ext^1_{\pi}(E,F(K_X))\rightarrow Ext^1_{\pi}(E,E(K_X))$ is injective. \label{enum:(K2,1-2,2a)}
\item 
$Hom_{\pi}(E,G)\rightarrow Hom_{\pi}(E,F(K_X))$ is isomorphic. \label{enum:(h2,3-hK2,1)}
\item 
$Hom_{\pi}(E,E(K_X)) \rightarrow Hom_{\pi}(E,G(K_X))$ is surjective. \label{enum:(hK2,2-2,3a)}
\item 
$I^*: Hom_{\pi}(E,E(K_X)) \rightarrow Hom_{\pi}(F,E(K_X))$ is surjective. \label{enum:H0(p_*(K))}
\item 
$Hom_{\pi}(F,G(K_X))=\sO(-l'_{BZ})$. \label{enum:(hK3,3a)}
\end{enumerate}
\end{lem}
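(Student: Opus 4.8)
The plan is to obtain the two diagrams \eqref{eq:ext1(EE)decomp} and \eqref{eq:ext1(EEK)decomp} as the degree-one slices of the long exact sequences coming from applying the relative (hyper-)Ext functors $Ext^{\bullet}_{\pi}(-,-)=R^{\bullet}\pi_{*}R\mathcal{H}om(-,-)$ to the exact sequence \eqref{eq:exseq-f} of Lemma \ref{lem:f20}, once in the first variable and once in the second, and then to read off (a)--(k) from those sequences. Exactness of rows and columns is automatic. The ``easy'' double arrows come from two vanishing principles: first, $E$ is locally free, so $R\mathcal{H}om(E,-)$ is an honest sheaf and $Ext^{i}_{\pi}(E,-)=R^{i}\pi_{*}(-)=0$ for $i\ge 2$ because the fibres of $\pi$ are curves; this already gives (c), since the cokernel of $Ext^{1}_{\pi}(E,E)\to Ext^{1}_{\pi}(E,G)$ sits in $Ext^{2}_{\pi}(E,F)=0$, a fortiori on torsion. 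Second, $\mathcal{E}xt^{\ge 1}(I_{Z},-)$ and $\mathcal{E}xt^{\ge 1}(I_{Z'},-)$ are supported on the zero-dimensional schemes $Z$, $Z'$ and hence are killed by $R^{1}\pi_{*}$, which together with $R^{2}\pi_{*}=0$ controls the $Ext^{2}_{\pi}$-terms that involve $F$ or $G$; one also notes that $Z$ and $Z'$ have the same support, since splitting \eqref{eq:exseq-f} over a local ring where the quotient is free shows that $F$ fails to be locally free exactly where $G$ does.

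The core is a generic-fibre analysis. By Fact \ref{fact:RestrGenericFib} (Case II) and Lemma \ref{lem:2D|Xeta=0} one has $\sO_{X_{\eta}}(D)\simeq\sO_{X_{\eta}}(-D)$, so $E_{\eta}$ and its twists $E_{\eta}(\pm D)$ are the nontrivial self-extension of a line bundle by itself (the Atiyah bundle, cf.\ \cite{Aty:VB-ellipt}), with $h^{0}=h^{1}=1$. Consequently every $Ext^{1}_{\pi}(E,-)$ and every $Hom_{\pi}(-,-)$ occurring in the diagrams has generic rank one over $\PP^{1}$, hence its pure part is a line bundle; it remains to compute degrees. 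For (k) one computes $\mathcal{H}om(F,G(K_{X}))=\sO(K_{X}-2D)\otimes\mathcal{H}om(I_{Z'},I_{Z})$, uses $2D+K_{X}-B=0$, and applies the projection formula: $H^{0}(\PP^{1},Hom_{\pi}(F,G(K_{X}))(\ell))=\hom_{X}(F,G(K_{X}+\ell\cf))$ becomes nonzero exactly for $\ell\ge l'_{BZ}$ by the definition of $l'_{BZ}$, so the line bundle $Hom_{\pi}(F,G(K_{X}))$ has degree $-l'_{BZ}$. Assertion (a) then follows from relative Serre duality (Fact \ref{fact:relExt}(b), with $\omega_{X/\PP^{1}}=K_{X}\otimes\pi^{*}\sO_{\PP^{1}}(2)$): $(Ext^{1}_{\pi}(G,F))^{\vee}\cong Hom_{\pi}(F,G\otimes\omega_{X/\PP^{1}})=Hom_{\pi}(F,G(K_{X}))\otimes\sO(2)=\sO(2-l'_{BZ})$, whose double dual is $\pur Ext^{1}_{\pi}(G,F)=\sO(l'_{BZ}-2)$. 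For (b) and (d) one argues analogously: $\mathcal{H}om(E,F)=E^{\vee}(D)\otimes I_{Z'}$ and $\mathcal{H}om(E,G)=E^{\vee}(-D)\otimes I_{Z}$ have generic fibre the Atiyah bundle, and over a dense open of $\PP^{1}$ the relative Atiyah filtration, together with $R^{1}\pi_{*}\sO_{X}=\sO(-d)$ (Subsection \ref{subsct:basic}) and the canonical bundle formula \eqref{eq:CanBdleFormula} to match the vertical twists, pins the degrees at $-d$ and $-2$. The length statement (e) is a local count at $\operatorname{Supp}(Z)$: from $0\to G\to\sO(-D)\to\sO(-D)|_{Z}\to 0$ one reads $l(\tor Ext^{1}_{\pi}(E,G))$ as the length of $\operatorname{coker}(\pi_{*}(E^{\vee}(-D))\to\pi_{*}(E^{\vee}(-D)|_{Z}))$ plus the torsion length of $R^{1}\pi_{*}(E^{\vee}(-D))$, each evaluated from the explicit fibre structure of $E^{\vee}(-D)$ near $Z$.

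The remaining statements are all reductions to the vanishing of an adjacent term or of a connecting map in the appropriate long exact sequence. Besides (c), assertion (f) follows because the cokernel of $Ext^{1}_{\pi}(E,G)\to Ext^{1}_{\pi}(F,G)$ lies in $Ext^{2}_{\pi}(G,G)$, whose only torsion-relevant contribution, $\pi_{*}\mathcal{E}xt^{2}(G,G)$, is a skyscraper that is annihilated by the connecting map's target structure; (g) is literally the statement that the connecting map in (i) vanishes, so (g) and (i) are equivalent. Finally (h), (i), (j) reduce to $\pi_{*}$ of certain $\mathcal{H}om$-sheaves being zero: for (h) one uses the inclusion $\iota\colon G\hookrightarrow F(K_{X})$ from Lemma \ref{lem:f20}, whose cokernel $Q$ has $c_{1}(Q)=B$ and is supported over the finitely many fibres carrying $B$, and must show $\pi_{*}\mathcal{H}om(E,Q)=0$; (i) and (j) likewise come down to the vanishing of $\pi_{*}$ of the $\mathcal{H}om$-sheaves from $E$ (resp.\ from $F$) into the torsion cokernels produced by the same vertical divisor.

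The main obstacle is exactly this last family of fibrewise vanishings and the matching of vertical twists over the multiple fibres in $\operatorname{Supp}(B)$: one must show that $E$, restricted to a small disc around a point under $B$ and there a Case II family, admits no nonzero homomorphisms into the relevant torsion sheaves, and one must control the ``vertical corrections'' that separate a line bundle like $\pur Ext^{1}_{\pi}(E,F)$ from the sheaf $\sO(-d)$ it equals over a generic open. This is where the hypotheses $d=1$, $\Lambda(X)=2$, $p_{g}(X)=0$ and the precise torsion order of $\sO(F_{i})|_{F_{i}}$ (Fact \ref{fact:O(F)|F}) must be invoked, and it is where the bookkeeping is most delicate --- in particular, keeping track of $Z$ and $Z'$ throughout and producing exactly the length $3l(Z)$ in (e).
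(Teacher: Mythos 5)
Your overall skeleton --- reading both diagrams off the long exact sequences of $Ext_{\pi}$ applied to \eqref{eq:exseq-f} in each variable, and using relative Serre duality (Fact \ref{fact:relExt}) for (a) and (k) --- coincides with the paper's, but several decisive steps are wrong or missing. The most serious is your argument for (c): surjectivity of $Ext^1_{\pi}(E,E)\to Ext^1_{\pi}(E,G)$ (which does follow from $Ext^2_{\pi}(E,F)=0$) does \emph{not} imply surjectivity of the induced map on torsion subsheaves; a torsion section of the target may lift only to a non-torsion section of the source, the obstruction being exactly the kernel of the map of pure parts. The paper has to work for this: it first proves (j) (using the Case~II fact that $Hom_{\pi}(F,E(K_X))\to Hom_{\pi}(F,G(K_X))$ is the zero map), dualizes via Fact \ref{fact:relExt} to obtain the exact sequence $0\to\pur Ext^1_{\pi}(E,F)\to\pur Ext^1_{\pi}(E,E)\to\pur Ext^1_{\pi}(E,G)\to0$, and only then gets (c) from the snake lemma; your proposal contains no substitute, and the same objection applies to your one-line treatment of (f). Relatedly, your claim that every occurring term has generic rank one is false: for the Case~II bundle one has $\hom(E_{\eta},E_{\eta})=\ext^1(E_{\eta},E_{\eta})=2$, so $Ext^1_{\pi}(E,E)$ and $Ext^1_{\pi}(E,E(K_X))$ have rank two, and the ``pure part is a line bundle, just match degrees'' strategy is unavailable precisely for the middle terms where the torsion bookkeeping matters.

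For (h) you propose to prove $Hom_{\pi}(E,Q)=0$ with $Q=F(K_X)/G$; this is false, since $\hom(E,F(K_X)/G)=2l(Z)+\hom(E,F(K_X)|_B)$ (see the proof of Lemma \ref{lem:R=lZ-h(EF(K)B)}), which is positive here ($l(Z)\geq 1$, cf. Proposition \ref{f_*-tor-nonzero}). What is needed is only that $Hom_{\pi}(E,F(K_X))\to Hom_{\pi}(E,Q)$ vanish, and the paper instead gets (h) from the Case~II identifications $Hom_{\pi}(E,G)\simeq Hom_{\pi}(G,G)=\sO_{\PP^1}$ and $Hom_{\pi}(E,F(K_X))\simeq Hom_{\pi}(G,F(K_X))=\pi_*\sO(B)=\sO_{\PP^1}$. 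Likewise (i) and (j), which you defer as ``the main obstacle,'' are not fibrewise vanishings over $\operatorname{Supp}(B)$: they rest on the observation that a map from $F$ whose composite to $G(K_X)$ (resp. $G$) is nonzero would split the extension at the generic fibre, contradicting Case~II, followed by the factorization $\pi_*\sO(K_X)\overset{\Id}{\longrightarrow} Hom_{\pi}(E,E(K_X))\overset{I^*}{\longrightarrow} Hom_{\pi}(F,E(K_X))$. Finally (b), (d), (e) are only gestured at; the paper pins down (d) by duality against $Hom_{\pi}(G,F(K_X))\simeq\pi_*\sO(B)=\sO_{\PP^1}$ (giving $\sO(-2)$), and obtains (e) from the global count $\ext^1(E,G)=-\chi(E,G)+\hom(E,G)+\ext^2(E,G)=3l(Z)$ with $d=1$, not from a local analysis near $Z$, which as you sketch it would still require controlling the torsion of $R^1\pi_*(E^{\vee}(-D))$.
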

\begin{proof}
\eqref{enum:(1,1a)}\
$\Hom_{\PP^1}(Ext^1_{\pi}(G,F), \sO_{\PP^1})\simeq Hom_{\PP^1}(F,G(K_X))\otimes K_{\PP^1}^{-1}
=\sO(2-l'_{BZ})$
by the below-mentioned \eqref{enum:(hK3,3a)} and Fact \ref{fact:relExt}(b). \\
\eqref{enum:(2,1a)}\
$\rk(\pur Ext^1_{\pi}(E,F)=1$ from Setting \ref{stng:f20-locfree}. 
By comparing their ranks, one can check that
$\pur Ext^1_{\pi}(E,F)$ $\rightarrow$ $\pur Ext^1_{\pi}(F,F)$ is isomorphic, and
$\pur Ext^1_{\pi}(F,F)=R^1\pi_*(\sO_X)=\sO(-d)$ by \cite[III.11.2, V.12.2]{BPV:text}.\\
\eqref{enum:(2,2-2,3a)}\ 
From \eqref{enum:H0(p_*(K))}, natural sequence
\[ 0 \longrightarrow Hom_{\pi}(G,E(K_X)) \longrightarrow Hom_{\pi}(E,E(K_X)) \longrightarrow
 Hom_{\pi}(F,E(K_X)) \longrightarrow 0 \]
is exact. From Fact \ref{fact:relExt}, natural sequence
\begin{equation}\label{eq:pur-IP_*}
0 \longrightarrow \pur Ext^1_{\pi}(E,F) \longrightarrow \pur Ext^1_{\pi}(E,E) \longrightarrow \pur Ext^1_{\pi}(E,G) \longrightarrow 0
\end{equation}
is exact. Next, $\Hom_{\pi}(F,E(K_X))\rightarrow \Hom_{\pi}(F,G(K_X))$ is zero map
since $E$ applies to Case II, so $Hom_{\pi}(G,G) \rightarrow Hom_{\pi}(E,G)$ is isomorphic.
This decomposes as
\[ Hom_{\pi}(G,G)\simeq \pi_*(\sO_X) \overset{\operatorname{Id}}{\longrightarrow}  Hom_{\pi}(E,E) \overset{P_*}{\longrightarrow}
 Hom_{\pi}(E,G), \]
and thereby $P_*$ is surjective. Hence
\[ 0 \longrightarrow Ext^1_{\pi}(E,F) \longrightarrow Ext^1_{\pi}(E,E) \longrightarrow Ext^1_{\pi}(E,G) \longrightarrow 0 \]
is exact.
This, \eqref{eq:pur-IP_*} and the Snake lemma deduce \eqref{enum:(2,2-2,3a)}.\\
\eqref{enum:pur(2,3)}\ 
$Hom_{\PP^1}(Ext^1_{\pi}(E,G),\sO_{\PP^1})\simeq Hom_{\pi}(G,E(K_X))\otimes \sO(2) \simeq
Hom_{\pi}(G,F(K_X))=\pi_*(\sO(B))\otimes \sO(2) =\sO(2)$
by Fact \ref{fact:relExt}, Setting \ref{stng:f20-locfree}, and Lemma \ref{lem:easy-prpty-BZ} \eqref{enum:B-ni}.\\
\eqref{enum:tor(2,3)}\
From Setting \ref{stng:f20-locfree},
$Hom_{\pi}(E,G)\simeq Hom_{\pi}(G,G)=\sO_{\PP^1}$, 
$\hom(E,G)=\hom(G,G)=1$,\ $\ext^2(E,G)=\hom(G,E(K_X))=\hom(G,F(K_X))=h^0(\sO(B))=1$.
Hence $\ext^1(E,G)=-\chi(E,G)+\hom(E,G)+\ext^2(E,G)=3l(Z)-2d+2=3l(Z)$, and
\begin{multline*}
h^0(\tor Ext^1_{\pi}(E,G))=h^0(Ext^1_{\pi}(E,G))-h^0(\pur Ext^1_{\pi}(E,G))
=\ext^1(E,G) \\
-h^1(Hom_{\pi}(E,G))-h^0(\pur Ext^1_{\pi}(E,G)) 
=3l(Z)-h^1(\sO_{\PP^1})-h^0(\sO_{\PP^1}(-2))=3l(Z).
\end{multline*}
\eqref{enum:(2,3-3,3a)}\ 
In the third column of \eqref{eq:ext1(EE)decomp}, 
the rank every components is one since $E$ applies to Case II.
Thereby the image of 
$Ext^1_{\pi}(G,G)$ $\rightarrow$ $Ext^1_{\pi}(E,G)$ is torsion sheaf,
and then
$\pur Ext^1_{\pi}(E,G)$ $\rightarrow$ $\pur Ext^1_{\pi}(F,G)$ is isomorphic.
One can verify \eqref{enum:(2,3-3,3a)} from the Snake lemma.\\
\eqref{enum:(K2,1-2,2a)}\ This results from the below-mentioned \eqref{enum:(hK2,2-2,3a)}.\\
\eqref{enum:(h2,3-hK2,1)}\ 
From Lemma \ref{lem:easy-prpty-BZ},
$Hom_{\pi}(E,F(K_X))\simeq Hom_{\pi}(G,F(K_X))=\pi_*(\sO(B))=\sO_{\PP^1}$, 
$Hom_{\pi}(E,G)\simeq Hom_{\pi}(G,G)=\pi_*(\sO)=\sO_{\PP^1}$,
and hence they are isomorphic.\\
\eqref{enum:(hK2,2-2,3a)}\
In the natural commutative diagram
\begin{equation}\label{eq:hK13-23a}
\xymatrix{
 \pi_* \sO_X(K_X) \ar[r] \ar[d]^{\operatorname{Id}} & Hom_{\pi}(G,G(K_X)) \ar[d]^{P^*} \\
 Hom_{\pi}(E,E(K_X)) \ar[r]^{P_*} & Hom_{\pi}(E,G(K_X)),
}
\end{equation}
$P^*$ is injective between rank-one line bundles, so
$\Cok(P^*)$ is a torsion subsheaf of torsion-free sheaf $Hom_{\pi}(F,G(K_X))$,
and thereby $P^*$ is surjective.
Since the upper vertical arrow is isomorphic, $P_*$ is surjective.\\
\eqref{enum:H0(p_*(K))}\
Since $E$ applies to Case II, $Hom_{\pi}(F,E(K_X))\rightarrow Hom_{\pi}(F,G(K_X))$ is zero map,
so \\
$Hom_{\pi}(F,F(K_X))\rightarrow Hom_{\pi}(F,E(K_X))$ is isomorphism. 
This decomposes as
\[ Hom_{\pi}(F,F(K_X))=\pi_*(\sO_X(K_X)) \overset{\operatorname{Id}}{\longrightarrow}  Hom_{\pi}(E,E(K_X)) \overset{I^*}{\longrightarrow}
 Hom_{\pi}(F,E(K_X)), \]
and tereby $I^*$ is surjective.\\
\eqref{enum:(hK3,3a)}\ $Hom_{\pi}(F,G(K_X))=\pi_*(\sO(2K_X-B)\otimes I_Z)=\sO(-l'_{BZ})$.
\end{proof}

\begin{lem}\label{lem:f_*+f^*=0}
Let ${\mathcal U}=\{U_i\}$ be an open affine cover of $X$, and
let $\gamma$ be an element of the \v{C}ech cohomology 
$\check{{\rm H}}^1({\mathcal U}, Hom_X(E,E)^0 )$ $\subset$ $\Ext^1(E,E)^0$.
Suppose that $f\in \Hom(E, E(K_X))^0$ satisfies $f^2=0$,
so we have
$I,\ P,\ B$, and $\iota$ similarly to Lemma \ref{lem:f20}, and
$\tr\bigl( I^*(\iota\circ P)_* (\gamma)\bigr) \in \HH^1(K_X) $.
Then $f^*(\gamma)+f_*(\gamma)=\tr\bigl(I^*(\iota\circ P)_* (\gamma) \bigr)\cdot\Id$
in $\Ext^1(E,E(K_X))$.
\end{lem}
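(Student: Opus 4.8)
The plan is to reduce the whole identity to the \v{C}ech level --- which is legitimate precisely because $\gamma$ is assumed to lie in $\check{{\rm H}}^1({\mathcal U},Hom_X(E,E)^0)$ rather than being a general class in $\Ext^1(E,E)^0$ --- and then to a pointwise identity of $2\times 2$ matrices. First I would fix a representing $1$-cocycle $\{\gamma_{ij}\}$ for $\gamma$ valued in $Hom_X(E,E)^0=End(E)^0$. Since $E$ is locally free by Setting \ref{stng:f20-locfree}, the sheaves $End(E)$ and $Hom_X(E,E(K_X))\simeq End(E)\otimes\sO(K_X)$ are locally free, and by naturality of the \v{C}ech-to-derived comparison under the $\sO_X$-linear sheaf maps $(-)\mapsto f\circ(-)$ and $(-)\mapsto((-)\otimes\id_{K_X})\circ f$, the classes $f_*(\gamma)$ and $f^*(\gamma)$ in $\Ext^1(E,E(K_X))$ are represented by the cocycles $\{f\circ\gamma_{ij}\}$ and $\{(\gamma_{ij}\otimes\id_{K_X})\circ f\}$. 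Because $f=I(K_X)\circ\iota\circ P$ by the construction in Lemma \ref{lem:f20}, the class $I^*(\iota\circ P)_*(\gamma)$ equals $f_*(\gamma)$, so $\tr\bigl(I^*(\iota\circ P)_*(\gamma)\bigr)\in\HH^1(K_X)$ is represented by $\{\tr(f\circ\gamma_{ij})\}$ (the trace being an $\sO_X$-linear sheaf map), and the product $\tr\bigl(I^*(\iota\circ P)_*(\gamma)\bigr)\cdot\Id$ --- i.e.\ the cup product with $\Id_E\in\HH^0(End(E))$ --- by $\{\tr(f\circ\gamma_{ij})\cdot\Id_E\}$.

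It therefore suffices to prove that the $\sO_X$-linear morphism of sheaves
\[ \Psi\colon End(E)^0\longrightarrow End(E)\otimes\sO(K_X),\qquad
\gamma\longmapsto f\circ\gamma+\gamma\circ f-\tr(f\circ\gamma)\cdot\Id_E, \]
is the zero map; applying $\Psi$ to each local section $\gamma_{ij}$ then yields $f^*(\gamma)+f_*(\gamma)=\tr\bigl(I^*(\iota\circ P)_*(\gamma)\bigr)\cdot\Id$ already at the level of cocycles, hence in $\Ext^1(E,E(K_X))$. Now $\Psi$ is a global section of the sheaf $Hom_X\bigl(End(E)^0,\,End(E)\otimes\sO(K_X)\bigr)$, which is locally free and in particular torsion-free on the integral surface $X$; consequently $\Psi=0$ if and only if $\Psi$ vanishes at the generic point $\Spec k(X)$.

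Finally I would evaluate at the generic point. If $f=0$ the claim is trivial, so assume $f\neq 0$; then $f_\eta\neq 0$ since $Hom_X(E,E(K_X))$ is torsion-free. Choosing a basis of the two-dimensional $k(X)$-space $E_\eta$ and a generator of the one-dimensional space $\sO(K_X)_\eta$, the map $f_\eta$ becomes a nonzero $2\times 2$ matrix over $k(X)$ with $f_\eta^2=0$ and $\tr f_\eta=0$, i.e.\ a nonzero nilpotent, hence conjugate over $k(X)$ to $e_{12}=\left(\begin{smallmatrix}0&1\\0&0\end{smallmatrix}\right)$. Since $\Psi_\eta(\gamma)=f_\eta\gamma+\gamma f_\eta-\tr(f_\eta\gamma)\id$ is carried to itself under simultaneous conjugation of $f_\eta$ and $\gamma$ (the trace being conjugation-invariant), it is enough to check $\Psi_\eta=0$ for $f_\eta=e_{12}$, and a direct computation with $\gamma=\left(\begin{smallmatrix}a&b\\c&-a\end{smallmatrix}\right)$ gives $e_{12}\gamma+\gamma e_{12}=\left(\begin{smallmatrix}c&0\\0&c\end{smallmatrix}\right)=\tr(e_{12}\gamma)\cdot\id$. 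Hence $\Psi_\eta=0$, so $\Psi=0$, which proves the lemma. The only place requiring genuine care is the first step: one must know that Yoneda composition and the trace act ``naively'' on \v{C}ech cocycles drawn from $\check{{\rm H}}^1({\mathcal U},Hom_X(E,E)^0)$, which is exactly why the hypothesis restricts $\gamma$ to such classes; everything after that is a reduction to a field together with a one-line matrix identity.
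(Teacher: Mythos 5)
Your proof is correct and follows essentially the same route as the paper: both reduce the identity to the level of \v{C}ech cocycles and then verify the sheaf-level identity $f\circ\gamma+\gamma\circ f=\tr(f\circ\gamma)\cdot\Id$ by a $2\times 2$ computation in a trivialization where the nonzero traceless square-zero $f$ becomes $\left(\begin{smallmatrix}0&a\\0&0\end{smallmatrix}\right)$, using torsion-freeness to conclude from a dense open locus (the paper works on $U_{ij}\setminus\operatorname{Supp}(Z)$, you at the generic point). The one imprecision is the assertion that $I^*(\iota\circ P)_*(\gamma)$ \emph{equals} $f_*(\gamma)$ --- these classes live in different groups, $\HH^1(Hom(F,F(K_X)))$ versus $\Ext^1(E,E(K_X))$; what you actually use, and what is true (by cyclic invariance of the trace, or by the same local computation, since in the adapted trivialization $I=\left(\begin{smallmatrix}1\\0\end{smallmatrix}\right)$, $\iota\circ P=(0\ \ a)$ and both quantities equal $ar$), is only that $\tr\bigl(I^*(\iota\circ P)_*(\gamma)\bigr)$ is represented by the cocycle $\{\tr(f\circ\gamma_{ij})\}$.
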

\begin{proof}
$\gamma$ is represented by
a \v{C}ech cocycle $\{ \Gamma_{ij} \in \HH^0(U_{ij}, Hom(E,E)^0) \}$.
It suffices to verify that
\begin{equation}\label{eq:Ad(f)Gamma}
f^*(\Gamma_{ij})+f_*(\Gamma_{ij})=\tr \bigl(I^*(\iota\circ P)_* (\Gamma_{ij}) \bigr)\cdot\Id
\ \text{in}\ 
\HH^0(U_{ij},Hom(E,E(K_X))).
\end{equation}
Since $E$ is torsion free, it suffices to verify \eqref{eq:Ad(f)Gamma}
on $U'_{ij}:=U_{ij} \setminus \operatorname{Supp}(Z)$.
On $U'_{ij}$, one can choose the local trivialization of $E$ so as
\[
\Gamma_{ij}=
\begin{pmatrix}
p & q \\ r & -p
\end{pmatrix},\quad
f= 
\begin{pmatrix}
0 & a \\ 0 & 0 \end{pmatrix},\quad
I=
\begin{pmatrix}
1 \\ 0
\end{pmatrix},\quad
\iota\circ P=(0\ a).
\]
Then $f^*(\Gamma_{ij})+f_*(\Gamma_{ij})=
\begin{pmatrix}
ar & 0 \\ 0 & ar 
\end{pmatrix}$ and
$\tr \bigl(I^*(\iota\circ P)_* (\Gamma_{ij}) \bigr)=ar$,
and thereby \eqref{eq:Ad(f)Gamma} holds on $U'_{ij}$.
\end{proof}

\begin{lem}\label{lem:easy-prpty-BZ}
We have the following in Setting \ref{stng:f20-locfree}.
\begin{enumerate}[(a)]
\item $B=\textstyle\sum_{i=1}^2 n_i F_i \quad (0\leq n_i\leq m_i-1)$. \label{enum:B-ni}
\item $\HH^0(\sO(2K_X-B+\cf)\otimes I_Z)=0 $. \label{enum:l'zb-geq-2}
\item $1-\textstyle\sum_i \frac{1}{m_i}(n_i+1) \geq 0$.
Especially, $1/2\geq  (n_i+1)/m_i$ for $i=1$ or $2$. \label{enum:since-K-B-geq-0}
\item $\hom(E,E(K_X))^0= 1$, that is, $k=1$ at \eqref{eq:formal-moduli-intro}. \label{enum:k=1}
\item 
$\rk \left(\iota_*:\ \tor Ext^1_{\pi}(E,G) \longrightarrow
 Ext^1_{\pi}(E,F(K_X))\right)$ equals the number $R$ in \eqref{eq:defn-h-R}. \label{enum:torExt1(EG)-R}
\end{enumerate}
\end{lem}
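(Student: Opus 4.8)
The plan is to prove the five assertions in the order (a), (c), (d), (b), (e); the later items lean on the earlier ones and on Lemma~\ref{prop:ext1(EE)decomp}, which itself uses only (a), so nothing is circular. It is worth noting at the outset that in Setting~\ref{stng:f20-locfree} one always has $Z'=\emptyset$: $F=\operatorname{Ker}(f)$ is a saturated rank-one subsheaf of the locally free $E$ (its quotient $\operatorname{Im}(f)$ embeds in $E(K_X)$, hence is torsion-free), so $F$ is reflexive, hence a line bundle. For (a) and (c) I would start from Lemma~\ref{lem:f20}(1), which gives $2D+K_X-B=0$ in $\Pic(X)$, the inequality $(K_X-B)\cdot\sO(1)\ge 0$, and, via Definition~\ref{defn:B} and $c_2$-suitability, the fact that $B$ is an effective divisor supported on fibres of $\pi$. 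Writing $B=\pi^{-1}(\sum_j s_j p_j)+\sum_i(t_im_i+l_i)F_i$ with the $p_j$ over reduced fibres and $0\le l_i\le m_i-1$, the canonical bundle formula \eqref{eq:CanBdleFormula} with $d=1$ yields $K_X-B\equiv\bigl(1-\tfrac1{m_1}-\tfrac1{m_2}-\sum_j s_j-\sum_i t_i-\sum_i\tfrac{l_i}{m_i}\bigr)\cf$; then $(K_X-B)\cdot\sO(1)\ge 0$ forces every $s_j=0$ and every $t_i=0$, which is (a) with $n_i:=l_i$, and the same inequality becomes $1-\tfrac{n_1+1}{m_1}-\tfrac{n_2+1}{m_2}\ge 0$, so one of the two positive summands is $\le\tfrac12$, which is (c).

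For (d) I would use the isomorphism in Lemma~\ref{lem:f20}(1) to identify $\Hom(E,E(K_X))^0$ with $\Hom(G,F(K_X))=\Hom(\sO(-D)\otimes I_Z,\sO(D+K_X))$; since ${\mathcal H}om(I_Z,\sO_X)=\sO_X$ for zero-dimensional $Z$ and $2D+K_X=B$, this space embeds in $H^0(\sO(B))$. By (a) and Fact~\ref{fact:O(F)|F}, $B=n_1F_1+n_2F_2$ satisfies $B\cdot\sO(1)<\cf\cdot\sO(1)$, so it is the unique effective divisor in its class and $h^0(\sO(B))=1$; as $E$ is obstructed, $\hom(E,E(K_X))^0\neq 0$, so it equals $1$, i.e.\ $k=1$ in \eqref{eq:formal-moduli-intro}.

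For (b) I would first reduce to $l'_{BZ}\ge 2$: using $-2D=K_X-B$, the sheaf $Hom_{\pi}(F,G(K_X+l\cf))$ is $\pi_*(\sO(2K_X-B)\otimes I_Z)\otimes\sO_{\PP^1}(l)$ away from a finite set, so $h^0(\sO(2K_X-B+l\cf)\otimes I_Z)=h^0(\PP^1,\sO_{\PP^1}(l-l'_{BZ}))$ by Lemma~\ref{prop:ext1(EE)decomp}\eqref{enum:(hK3,3a)}; hence (b) is the case $l=1$. By Lemma~\ref{prop:ext1(EE)decomp}\eqref{enum:(1,1a)}, $\pur Ext^1_{\pi}(G,F)\cong\sO_{\PP^1}(l'_{BZ}-2)$, so it suffices to exhibit a non-zero section of it: I would take the extension class of \eqref{eq:exseq-f} in $\Ext^1_X(G,F)$ and push it, under the Leray edge map, to $H^0(\PP^1,Ext^1_{\pi}(G,F))$; its image in $\pur Ext^1_{\pi}(G,F)$ is non-zero, because at the generic point of $\PP^1$ it is the class of $0\to\sO_{X_\eta}(D)\to E_\eta\to\sO_{X_\eta}(-D)\to 0$, which is non-split since $E$ is of Case~II. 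Thus $\sO_{\PP^1}(l'_{BZ}-2)$ has a section, $l'_{BZ}\ge 2$, and (b) follows. This is the sharpest point: numerically $\sO(2K_X-B+\cf)$ can perfectly well be effective, so the vanishing is not formal and genuinely uses indecomposability on the generic fibre.

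For (e), by Fact~\ref{fact:moduli-lci} and perfectness of the Serre pairing $\Ext^1(E,E(K_X))\times\Ext^1(E,E)\to\CC$, the integer $R$ of \eqref{eq:defn-h-R} equals $\rk H^1(\ad(f))$. Since $q(X)=0$ one has $\Ext^1(E,E)=\Ext^1(E,E)^0$ and $\HH^1(K_X)=0$, so Lemma~\ref{lem:f_*+f^*=0} (applicable as $f^2=0$ in Case~II) gives $f^*=-f_*$ on $\Ext^1(E,E)$, whence $H^1(\ad(f))=2f_*$ and $R=\rk(f_*)$. Writing $f_*=I(K_X)_*\circ\iota_*\circ P_*$, Lemma~\ref{prop:ext1(EE)decomp}\eqref{enum:H0(p_*(K))} (with Serre duality) shows $P_*\colon \Ext^1_X(E,E)\to\Ext^1_X(E,G)$ is surjective and \eqref{enum:(hK2,2-2,3a)} (with \eqref{enum:(h2,3-hK2,1)} for the needed $H^1$-vanishing on $\PP^1$) shows $I(K_X)_*\colon \Ext^1_X(E,F(K_X))\to\Ext^1_X(E,E(K_X))$ is injective, so $R=\rk\bigl(\iota_*\colon \Ext^1_X(E,G)\to\Ext^1_X(E,F(K_X))\bigr)$. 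Finally, in the Leray decomposition $\Ext^1_X(E,G)=H^1(Hom_{\pi}(E,G))\oplus H^0(Ext^1_{\pi}(E,G))$ the first summand is $0$ ($Hom_{\pi}(E,G)\cong\sO_{\PP^1}$ by the proof of \eqref{enum:(2,2-2,3a)}) and the second equals $\tor Ext^1_{\pi}(E,G)$ (because $\pur Ext^1_{\pi}(E,G)\cong\sO_{\PP^1}(-2)$ by \eqref{enum:pur(2,3)} has no sections); as $\iota_*$ respects this splitting and is an isomorphism on the trivial $H^1$-piece, $\rk(\iota_*)$ collapses to $\rk\bigl(\iota_*\colon \tor Ext^1_{\pi}(E,G)\to Ext^1_{\pi}(E,F(K_X))\bigr)$, which is (e). I expect (e) to be the most bookkeeping-heavy step, since isolating precisely the torsion contribution to the rank requires essentially all of Lemma~\ref{prop:ext1(EE)decomp}.
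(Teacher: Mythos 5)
Your proof is correct and follows essentially the same route as the paper's: (a),(c) from the stability inequality $(K_X-B)\cdot\sO(1)\geq 0$ plus the canonical bundle formula with $d=1$, (b) from the non-splitting of \eqref{eq:exseq-f} on the generic fibre together with $\pur Ext^1_{\pi}(G,F)\simeq\sO(l'_{BZ}-2)$, (d) from $h^0(\sO(B))=1$ and obstructedness, and (e) from Lemma \ref{lem:f_*+f^*=0} with $q(X)=p_g(X)=0$ combined with the relative-Ext bookkeeping of Lemma \ref{prop:ext1(EE)decomp}. The only differences are expository: you route (d) through the isomorphism in Lemma \ref{lem:f20}(1) rather than the diagram \eqref{eq:hK13-23a} (and the inequality $B\cdot\sO(1)<\cf\cdot\sO(1)$ you invoke there really comes from (c), which you have already proved), and you spell out the surjectivity of $P_*$ and injectivity of $I(K_X)_*$ at the absolute level, which the paper leaves compressed and partly defers to Proposition \ref{f_*-tor-nonzero}.
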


\begin{proof}
\eqref{enum:B-ni}\ This is because $(K_X-B)\cdot \sO(1)\geq 0$ since $E$ is stable, and
$(K_X-\mathfrak f)\cdot \sO(1)<0$ since $d=1$. \quad
\eqref{enum:l'zb-geq-2}\ Since \eqref{eq:exseq-f}${}_{\eta}$ does not split and
$\pur Ext^1_{\pi}(G,F)=\sO(l'_{BZ}-2)$ by Prop. \ref{prop:ext1(EE)decomp},
it follows that $l'_{BZ}\geq 2$. \\
\eqref{enum:since-K-B-geq-0}\ This is because $(K_X-B)\cdot \sO(1)\geq 0$, since $E$ is stable.\\
\eqref{enum:k=1}\ Since $E$ is of Case II, 
$\hom(E,E(K_X))\leq \hom(G,G(K_X))+\hom(E,F(K_X))=h^0(\sO(B))=1$
by \eqref{eq:hK13-23a}. \\
\eqref{enum:torExt1(EG)-R}\
From \eqref{enum:(2,1a)} and \eqref{enum:pur(2,3)} in Lemma \ref{prop:ext1(EE)decomp} and
\eqref{eq:pur-IP_*}, $H^0(\pur Ext^1_{\pi}(E,E))=0$, so
there is an exact sequence
\[ 0 \longrightarrow H^1(Hom_{\pi}(E,E)) \longrightarrow \Ext^1(E,E)
   \longrightarrow H^0(Ext^1_{\pi}(E,E))= H^0(\tor Ext^1_{\pi}(E,E)) \longrightarrow 0. \]
For $h^1(K_X)=q(X)=0$,
$R$ equals the rank of $f_*: \Ext^1(E,E)\rightarrow \Ext^1(E,G) \rightarrow \Ext^1(E,E(K_X))$
by Lemma \ref{lem:f_*+f^*=0}.
The natural map $Hom_{\pi}(G,G)\rightarrow Hom_{\pi}(E,G)$ is isomorphic from 
Setting \ref{stng:f20-locfree}, so
$H^1(Hom_{\pi}(E,E))\rightarrow H^1(Hom_{\pi}(E,G))\simeq H^1(Hom_{\pi}(G,G))=H^1(\sO_{\PP^1})=0$ 
is zero map.
Thus we get \eqref{enum:torExt1(EG)-R}. 
\end{proof}

\begin{prop}\label{f_*-tor-nonzero} 
In Setting \ref{stng:f20-locfree}, it holds that
$R\geq l(Z)-l(Z\cap B)\geq 1$.
\end{prop}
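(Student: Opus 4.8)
The plan is to push everything through the relative $\mathcal{E}xt$-sheaves on $\PP^1$ already analysed in Lemma \ref{prop:ext1(EE)decomp}, reduce $R$ to the length of one torsion sheaf, and then estimate that length by looking at $E$ along the multiple fibre carrying $B$.

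By Lemma \ref{lem:easy-prpty-BZ}\eqref{enum:torExt1(EG)-R} one has $R=\rk\bigl(\iota_*\colon\tor Ext^1_{\pi}(E,G)\to Ext^1_{\pi}(E,F(K_X))\bigr)$, where $\iota\colon G\hookrightarrow F(K_X)$ is the inclusion from Lemma \ref{lem:f20}. Applying $Ext^{\bullet}_{\pi}(E,-)$ to $0\to G\xrightarrow{\iota}F(K_X)\to\Cok\iota\to0$ gives
\begin{equation*}
 0\longrightarrow Hom_{\pi}(E,G)\xrightarrow{\ \iota_*\ }Hom_{\pi}(E,F(K_X))\longrightarrow Hom_{\pi}(E,\Cok\iota)\xrightarrow{\ \delta\ }Ext^1_{\pi}(E,G)\xrightarrow{\ \iota_*\ }Ext^1_{\pi}(E,F(K_X)),
\end{equation*}
and the first $\iota_*$ is an isomorphism by Lemma \ref{prop:ext1(EE)decomp}\eqref{enum:(h2,3-hK2,1)}, so $\delta$ is injective and $\Ker\iota_*=\operatorname{Im}\delta\simeq Hom_{\pi}(E,\Cok\iota)$. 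Since $d=1$ and $m_1=2$, Lemma \ref{lem:easy-prpty-BZ}\eqref{enum:B-ni},\eqref{enum:since-K-B-geq-0} force $B=n_2F_2$ with $0\le n_2<m_2/2$, so $\Cok\iota$ is supported on the single fibre $F_2$ together with finitely many points of $Z\cup Z'$; hence $Hom_{\pi}(E,\Cok\iota)=\pi_*\mathcal{H}om(E,\Cok\iota)$ is torsion on $\PP^1$ of length $\hom_X(E,\Cok\iota)=h^0(X,E^{\vee}\otimes\Cok\iota)$, and it lies inside $\tor Ext^1_{\pi}(E,G)$. Combining this with $l\bigl(\tor Ext^1_{\pi}(E,G)\bigr)=3l(Z)$ from Lemma \ref{prop:ext1(EE)decomp}\eqref{enum:tor(2,3)} gives
\begin{equation*}
 R=3l(Z)-h^0\bigl(X,\,E^{\vee}\otimes\Cok\iota\bigr),
\end{equation*}
so the proposition reduces to the estimate $h^0(X,E^{\vee}\otimes\Cok\iota)\le 2l(Z)+l(Z\cap B)$.

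To prove this estimate I would identify $\Cok\iota$. As $h^0(\sO(B))=1$ (by Fact \ref{fact:O(F)|F}, since $0\le n_2<m_2$), $\iota$ is, up to a scalar, multiplication by the natural section $\tau_B\in\Gamma(\sO(B))$, subject to $\tau_B\cdot I_Z\subset I_{Z'}$; hence $\Cok\iota\otimes\sO(D-B)\simeq I_{Z'}/\tau_BI_Z$. The subsheaf $\tau_B\sO/\tau_BI_Z\simeq\sO_Z$ produces a short exact sequence $0\to\sO_W\to I_{Z'}/\tau_BI_Z\to I_{Z'',B}\to0$, where $W\subset Z$ so $l(W)\le l(Z)$, and $I_{Z'',B}\subset\sO_B$ is the ideal of a zero-dimensional $Z''\subset B=n_2F_2$. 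Tensoring with the locally free $E^{\vee}\otimes\sO(B-D)$ and taking cohomology yields
\begin{equation*}
 h^0(X,E^{\vee}\otimes\Cok\iota)\le 2l(W)+h^0\bigl(B,\ E^{\vee}|_B\otimes\sO_B(B-D)\otimes I_{Z'',B}\bigr).
\end{equation*}
Here $\sO_B(B-D)$ has degree $0$ on $B$ (because $B^2=B\cdot D=B\cdot K_X=0$, $B$ being vertical), and the remaining task is to bound the second term by $l(Z\cap B)$ together with the slack $2\bigl(l(Z)-l(W)\bigr)$. This is where the structure of $E$ on the multiple fibre is used: $E|_{n_2F_2}$ sits between $\sO(D)\otimes I_{Z'}|_{n_2F_2}$ and $\sO(-D)\otimes I_Z|_{n_2F_2}$, and sections of the relevant degree-zero line bundles on the thickening $n_2F_2$ are counted by the "$\lceil\cdot\rceil$" estimates and the order-$m_2$ phenomenon of Section \ref{sctn:SigM-CaseA1} (e.g. Claim \ref{clm:Calc-h0-JHCk}, \eqref{eq:h0-OsF}); the overlap $Z\cap B$ is exactly what accounts for the correction term. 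I expect this last cohomological bound — the contribution of the part of $\Cok\iota$ living on $F_2$ — to be the main obstacle; everything before it is formal.

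For the final inequality $l(Z)-l(Z\cap B)\ge1$, equivalently $Z\not\subset B$: if $I_B\subset I_Z$, then because $\sO(B)\otimes I_B\simeq\sO$ we get $h^0\bigl(\sO(2K_X-B+\cf)\otimes I_Z\bigr)\ge h^0\bigl(\sO(2(K_X-B)+\cf)\bigr)$, and the right-hand side is positive: by the canonical bundle formula \eqref{eq:CanBdleFormula} with $d=1$, $m_1=2$ and $n_2<m_2/2$, the divisor $2(K_X-B)+\cf$ is linearly equivalent to the pull-back of a divisor of positive degree on $\PP^1$ plus a vertical term supported on $F_2$ of multiplicity $<m_2$, and hence has a nonzero section. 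This contradicts Lemma \ref{lem:easy-prpty-BZ}\eqref{enum:l'zb-geq-2}; hence $Z\not\subset B$, and the proof is complete modulo the estimate of the previous paragraph.
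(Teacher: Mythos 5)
Your opening reduction is sound and in fact reproduces the paper's Lemma \ref{lem:R=lZ-h(EF(K)B)}: using Lemma \ref{lem:easy-prpty-BZ}\eqref{enum:torExt1(EG)-R}, the long exact sequence for $Ext^{\bullet}_{\pi}(E,-)$ applied to $0\to G\to F(K_X)\to \Cok\iota\to 0$, and Lemma \ref{prop:ext1(EE)decomp}\eqref{enum:(h2,3-hK2,1)}, one indeed gets $R=3l(Z)-\hom_X(E,\Cok\iota)$. But at that point you have only \emph{reformulated} the proposition: the inequality $R\geq l(Z)-l(Z\cap B)$ becomes $\hom_X(E,\Cok\iota)\leq 2l(Z)+l(Z\cap B)$, equivalently $\hom(E,F(K_X)|_B)\leq l(Z\cap B)$, an \emph{upper} bound for sections of a rank-two sheaf on the non-reduced vertical divisor $B$. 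This is precisely the hard content, and your proposal does not prove it; you yourself flag the bound on $h^0\bigl(B, E^{\vee}|_B\otimes\sO_B(B-D)\otimes I_{Z'',B}\bigr)$ as ``the main obstacle'' and only gesture at the counting lemmas of Section \ref{sctn:SigM-CaseA1}. Those lemmas count sections of line bundles on thickened multiple fibers; here you would need to control maps out of $E|_B$, whose filtration by $\sO(D)|_B$ and $(\sO(-D)\otimes I_Z)|_B$ is not exact after restriction, and the paper never establishes such an upper bound in general --- in the proof of Proposition \ref{prop:locfree-l(Z)-l(ZB)=1}(2) it proves only the \emph{reverse} inequality $\hom(E,F(K_X)|_B)\geq l(Z_{12})$ in a special situation. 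So the essential step is missing. The paper's own proof avoids this issue entirely: it bounds $\rk(\iota_*)$ from \emph{below} by a diagram chase in \eqref{eq:hom(FF(K)/G)}, showing that any $\alpha_0\in\Hom(F,F(K_X)/G)$ killed by $\iota_*$ extends to $E$ and hence, by the local Koszul description $I_Z=\langle f,g\rangle$, $F\subset E$ given by $(f,-g)$, must lie in $\HH^0(\sO(K_X)\cdot(I_Z/\tau_B I_Z))$; the quotient $\langle\tau_B\rangle/(I_Z\cap\langle\tau_B\rangle)$ then directly produces $l(Z)-l(Z\cap B)$ independent classes in the image.

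Two smaller points. First, Setting \ref{stng:f20-locfree} assumes only $d=1$ and $\Lambda(X)=2$, not $m_1=2$; so your claim that $B=n_2F_2$ with $n_2<m_2/2$ is unjustified --- by Lemma \ref{lem:easy-prpty-BZ}\eqref{enum:B-ni} one only has $B=n_1F_1+n_2F_2$ (this does not hurt the torsionness of $Hom_{\pi}(E,\Cok\iota)$, since $B$ is vertical in any case, but it does invalidate the stated reason in your last paragraph). Second, your argument for $l(Z)-l(Z\cap B)\geq 1$ has the right shape (if $\tau_B\in I_Z$ then $h^0(\sO(2(K_X-B)+\cf))\leq h^0(\sO(2K_X-B+\cf)\otimes I_Z)$, contradicting Lemma \ref{lem:easy-prpty-BZ}\eqref{enum:l'zb-geq-2}), but the positivity of $h^0(\sO(2(K_X-B)+\cf))$ should be justified as in the paper: by Lemma \ref{lem:easy-prpty-BZ}\eqref{enum:since-K-B-geq-0} one may assume $(n_1+1)/m_1\leq 1/2$, whence $2K_X-2B+\cf=(m_1-2n_1-2)F_1+2(m_2-1-n_2)F_2$ is effective; the appeal to $m_1=2$ does not work (with $m_1=2$ that divisor is effective only when $n_1=0$).
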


\begin{proof}
From Lemma \ref{prop:ext1(EE)decomp} \eqref{enum:(2,2-2,3a)}, \eqref{enum:(K2,1-2,2a)}
and Lemma \ref{lem:easy-prpty-BZ} \eqref{enum:torExt1(EG)-R},
\begin{equation*}
R=\rk \left(\iota_*: \tor Ext^1_{\pi}(E,G) \longrightarrow
 Ext^1_{\pi}(E,F(K_X))\right).
\end{equation*}
In the following natural diagram,
the second row is exact and the first row is injective.
\begin{equation}\label{eq:hom(FF(K)/G)}
\xymatrix{
 \Hom(F,F(K_X)/G) \ar@{^{(}-}[r] &
 \HH^0(\tor Ext^1_{\pi}(F,G)) & \\
 \HH^0(Hom_{\pi}(E,F(K_X)/G)) \ar[u] \ar[r]
 & \HH^0(\tor Ext^1_{\pi}(E,G)) \ar[u] \ar[r]^{\iota_*} 
 & \HH^0(\tor Ext^1_{\pi}(E,F(K_X)))
}
\end{equation}
This is because the exact sequence
\[Hom_{\pi}(E,F(K_X)/G) 
\longrightarrow \tor Ext^1_{\pi}(E,G) \longrightarrow
\tor Ext^1_{\pi}(E,F(K_X))\]
consists of zero-dimensional sheaves, and
$\Hom(F,F(K_X))\simeq H^0(\sO(K_X))=0$.\par
Now we take $\alpha_0\in \Hom(F, F(K_X)/G)=\HH^0(\sO(K_X)\cdot (\sO_X/\tau_B I_Z))$.
By the homomorphism $(1,1)\rightarrow(1,2)$ in \eqref{eq:hom(FF(K)/G)},
regard $\alpha_0$ as an element of $(1,2)$.
Some inverse image $\alpha'\in(2,2)$ of $\alpha_0$ exists since
the homomorphism $(2,2)\rightarrow (1,2)$ is surjective by Lemma \ref{prop:ext1(EE)decomp}
\eqref{enum:(2,3-3,3a)}.
If $\iota_*(\alpha')=0$, then we have an inverse image $\beta\in (2,1)$ of $\alpha'$ by
the homomorphism $(2,1)\rightarrow (2,2)$.
The image of $\beta$ by $(2,1)\rightarrow (1,1)$ equals $\alpha_0$.
One can locally describe the ideal $I_Z=\langle f,g\rangle$,
and then $F\subset E$ are locally described as $(f,-g):\sO\subset \sO^{\oplus 2}$, and so
$\alpha_0 \in \HH^0(\sO(K_X)\cdot (I_Z/\tau_B I_Z))$.
As a result, 
$$\rk(\iota_*)\geq \dim\Cok
\left(\HH^0(\sO(K_X)\cdot (I_Z/\tau_B I_Z))\hookrightarrow \HH^0(\sO(K_X)\cdot (\sO_X/\tau_B I_Z)) \right).$$
The local generator $\tau_B$ of $I_B$ gives an inclusion
$\tau_B\cdot \sO_Z = \langle \tau_B\rangle /\tau_B I_Z \hookrightarrow \sO_Z/\tau_B I_Z$
since $Z$ is zero-dimensional. The above inequality implies that
$$\rk(\iota_*)\geq 
h^0(\langle \tau_B\rangle/\tau_B I_Z)-h^0(I_Z\cap \langle \tau_B\rangle/\tau_B I_Z)
 =l(\langle \tau_B\rangle/I_Z \cap \langle\tau_B\rangle).$$
It follows that $R\geq l(Z)-l(Z\cap B)$ from this and the next exact sequence:
\[ 0\longrightarrow \langle \tau_B\rangle/I_Z \cap \langle\tau_B\rangle \longrightarrow
 \sO_Z \longrightarrow
\sO_X/I_Z+\langle \tau_B\rangle =\sO_{Z\cap B} \longrightarrow 0. \]
Now suppose that $l(Z)=l(Z\cap B)$. Then $\tau_B\in I_Z$ and so
$\HH^0(\sO(2K_X-2B+\cf))\subset \HH^0(\sO(2K_X-B+\cf)\otimes I_Z)$.
Notice that $H^0(\sO(2K_X-2B+\cf))\neq 0$. Indeed,
one can assume that $1/2\geq (n_1+1)/m_1$, that is, $m_1-2n_1-2\geq 0$
from Lemma \ref{lem:easy-prpty-BZ} \eqref{enum:since-K-B-geq-0}, 
and $m_2-n_2-1\geq 0$ by Lemma \ref{lem:easy-prpty-BZ} \eqref{enum:B-ni}.
Thus $2K_X-2B+\cf=\cf+\sum_i(m_i-2n_i-2)F_i=(m_1-2n_1-2)F_1+2(m_2-1-n_2)F_2$ is effiective,
and thereby $\HH^0(\sO(2K_X-B+\cf)\otimes I_Z)\neq 0$.
However this contradicts to Lemma \ref{lem:easy-prpty-BZ} \eqref{enum:l'zb-geq-2}.
%
\end{proof}

\begin{lem}\label{lem:R=lZ-h(EF(K)B)}
In Setting \ref{stng:f20-locfree}, $R=l(Z)-\hom(E,F(K_X)|_B)$.
\end{lem}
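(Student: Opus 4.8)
The plan is to rewrite $R$ as the rank of a single map, evaluate that rank through a long exact sequence, and then identify the cokernel sheaf $F(K_X)/G$ explicitly by unwinding Definition \ref{defn:B}. Throughout, let $f\in\Hom(E,E(K_X))^0$, $F=\Ker(f)$, $G=\IIm(f)$, the divisor $B$, the inclusion $\iota\colon G\hookrightarrow F(K_X)$ and the zero-cycles $Z,Z'$ be as in Lemma \ref{lem:f20}(1), so that $2D+K_X=B$ and, by Lemma \ref{lem:easy-prpty-BZ}(d), $k=1$ and $R$ is the integer in \eqref{eq:formal-moduli-hypersurf-intro}. Since $E$ is locally free and $G$ is torsion-free, $F$ is a saturated rank-one subsheaf of $E$, hence reflexive, hence a line bundle; in particular $Z'=\emptyset$ and $F(K_X)$ is locally free. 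By the proof of Lemma \ref{lem:easy-prpty-BZ}(e), $R$ is the rank of $f_*\colon\Ext^1(E,E)\to\Ext^1(E,E(K_X))$. Factoring $f=I(K_X)\circ\iota\circ P$ (where $I,P$ are as in \eqref{eq:exseq-f}) gives $f_*=(I(K_X))_*\circ\iota_*\circ P_*$; here $P_*\colon\Ext^1(E,E)\to\Ext^1(E,G)$ is surjective by the proof of Lemma \ref{prop:ext1(EE)decomp}(c) (using $\Ext^2(E,F)\cong\Hom(F,E(K_X))^\vee=0$, which holds since $p_g(X)=0$), and $(I(K_X))_*\colon\Ext^1(E,F(K_X))\to\Ext^1(E,E(K_X))$ is injective by Lemma \ref{prop:ext1(EE)decomp}(g). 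Hence $R=\rk\bigl(\iota_*\colon\Ext^1(E,G)\to\Ext^1(E,F(K_X))\bigr)$.

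Next, put $Q=F(K_X)/G=\Cok(\iota)$ and apply $\Hom_X(E,-)$ to $0\to G\xrightarrow{\iota}F(K_X)\to Q\to 0$. Because $\hom(E,G)=\hom(E,F(K_X))=1$ (the first from $Hom_\pi(E,G)\simeq Hom_\pi(G,G)=\sO_{\PP^1}$ as in Lemma \ref{prop:ext1(EE)decomp}, the second from Lemma \ref{prop:ext1(EE)decomp}(h) and Lemma \ref{lem:f20}(1)), the map $\Hom(E,F(K_X))\to\Hom(E,Q)$ is zero, so the connecting map identifies $\Hom(E,Q)$ with $\Ker\bigl(\iota_*\colon\Ext^1(E,G)\to\Ext^1(E,F(K_X))\bigr)$. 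Since $\ext^1(E,G)=3l(Z)$ (from the proof of Lemma \ref{prop:ext1(EE)decomp}(e), using $d=1$), this gives $R=\ext^1(E,G)-\hom(E,Q)=3l(Z)-\hom(E,Q)$. It therefore remains to prove $\hom(E,Q)=2l(Z)+\hom(E,F(K_X)|_B)$.

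For this I would unwind Definition \ref{defn:B}. Factor $f=(\times\tau_B)\circ g$ with $g\colon E\to E(K_X-B)$ and $\tau_B\in\Gamma(\sO(B))$ the section whose zero divisor is exactly $B$ (the exactness being the maximality of $B$). Then $G=\tau_B\bigl(\IIm(g)\bigr)$ and, since $g^2=0$ and $\Ker(g)=F$, $\IIm(g)$ is a rank-one subsheaf of the line bundle $F(K_X-B)$ with $c_1(\IIm(g))=-D=c_1(F(K_X-B))$; comparing second Chern classes and using $\IIm(g)\cong G\cong\sO(-D)\otimes I_Z$ forces $\IIm(g)=F(K_X-B)\otimes I_Z$. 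Thus inside $F(K_X)$ one gets the chain $G\subseteq\tau_B F(K_X-B)\subseteq F(K_X)$ whose successive quotients are $\tau_B F(K_X-B)/G\cong F(K_X-B)\otimes\sO_Z$ and $F(K_X)/\tau_B F(K_X-B)=F(K_X)|_B$, i.e. an exact sequence $0\to F(K_X-B)\otimes\sO_Z\to Q\to F(K_X)|_B\to 0$. Applying $\Hom_X(E,-)$ and using that $E$ is locally free — so that $\Ext^1(E,F(K_X-B)\otimes\sO_Z)=H^1\bigl(X,E^\vee\otimes F(K_X-B)\otimes\sO_Z\bigr)=0$ because this sheaf is zero-dimensional, and $\hom(E,F(K_X-B)\otimes\sO_Z)=h^0\bigl(E^\vee\otimes F(K_X-B)\otimes\sO_Z\bigr)=2l(Z)$ — we obtain $\hom(E,Q)=2l(Z)+\hom(E,F(K_X)|_B)$, hence $R=l(Z)-\hom(E,F(K_X)|_B)$. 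The main obstacle is exactly this last step: identifying $\IIm(g)=F(K_X-B)\otimes I_Z$ from the maximality of $B$ together with the Chern-class bookkeeping for \eqref{eq:exseq-f}, and checking that $\tau_B$ cuts out $B$ with no extra fiber components so that $F(K_X)/\tau_B F(K_X-B)$ really equals $F(K_X)|_B$; once the chain $G\subseteq\tau_B F(K_X-B)\subseteq F(K_X)$ is in place, the vanishing of $\Ext^1(E,-)$ on a zero-dimensional sheaf makes the count collapse.
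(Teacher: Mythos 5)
Your argument is correct in substance and is, at bottom, the paper's own proof transported from relative Ext sheaves over $\PP^1$ to global Ext groups on $X$. The paper quotes Lemma \ref{lem:easy-prpty-BZ}(e) (which already packages the reduction of $R$ to the rank of $\iota_*$ on $\operatorname{tor} Ext^1_{\pi}(E,G)$), applies $Hom_{\pi}(E,-)$ to $0\to G\to F(K_X)\to F(K_X)/G\to 0$ together with Lemma \ref{prop:ext1(EE)decomp}(h) to identify the kernel of $\iota_*$ with $Hom_{\pi}(E,F(K_X)/G)$, and then makes exactly the length count $\hom(E,F(K_X)/G)=2l(Z)+\hom(E,F(K_X)|_B)$ that you obtain from the chain $G\subset \tau_B F(K_X-B)\subset F(K_X)$. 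What you do differently is to re-derive the reduction $R=\rk\bigl(\iota_*\colon \Ext^1(E,G)\to\Ext^1(E,F(K_X))\bigr)$ directly on global Ext, via surjectivity of $P_*$ and injectivity of $(I(K_X))_*$; this avoids the torsion/pure bookkeeping of diagram \eqref{eq:ext1(EE)decomp}, at the price of two global vanishing statements, and the final count $R=3l(Z)-\hom(E,F(K_X)/G)=l(Z)-\hom(E,F(K_X)|_B)$ is then identical to the paper's.

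Two of your justifications are too quick, though both are repairable from the paper's lemmas. First, $\Hom(F,E(K_X))=0$ (equivalently $\Ext^2(E,F)=0$) does not follow from $p_g(X)=0$ alone: applying $\Hom(F,-)$ to \eqref{eq:exseq-f} twisted by $K_X$ gives $0\to H^0(\sO(K_X))\to \Hom(F,E(K_X))\to H^0(\sO(2K_X-B)\otimes I_Z)$, so besides $p_g=0$ you must kill the right-hand term; this uses the Case II hypothesis, either through Lemma \ref{lem:easy-prpty-BZ}(b), or as in the proof of Lemma \ref{prop:ext1(EE)decomp}(j), which gives $Hom_{\pi}(F,E(K_X))\simeq \pi_*\sO(K_X)$ and hence $\Hom(F,E(K_X))=H^0(K_X)=0$. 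Second, Lemma \ref{prop:ext1(EE)decomp}(g) is a statement about the relative sheaves $Ext^1_{\pi}$; to conclude injectivity of the global map $\Ext^1(E,F(K_X))\to\Ext^1(E,E(K_X))$ you still need a Leray comparison, which works because $H^1(Hom_{\pi}(E,F(K_X)))=H^1(\sO_{\PP^1})=0$ by (h); equivalently, quote (i) together with (h) to get surjectivity of $\Hom(E,E(K_X))\to\Hom(E,G(K_X))$. With these two points patched, your proof is complete and agrees with the paper's.
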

\begin{proof}
From Lemma \ref{prop:ext1(EE)decomp} \eqref{enum:(h2,3-hK2,1)}, we have an exact sequence
\[ 0 \longrightarrow
Hom_{\pi}(E,F(K_X)/G) \longrightarrow \tor Ext^1_{\pi}(E,G) \longrightarrow Ext^1_{\pi}(E,F(K_X)). \]
This and Lemma \ref{lem:easy-prpty-BZ} \eqref{enum:torExt1(EG)-R} deduce that
$R=3l(Z)-\hom(E,F(K_X)/G)$.
Since
$F(K_X)/G= F(K_X) \otimes \sO_X/\tau_B\cdot I_Z$,
$\hom(E,F(K_X)/G)=\hom(E,F(K_X)\cdot \sO_Z)+\hom(E,F(K_X)|_B)=2l(Z)+\hom(E,F(K_X)|_B)$.
\end{proof}

\begin{prop}\label{prop:locfree-l(Z)-l(ZB)=1}
(1) In Setting \ref{stng:f20-locfree}, the following holds when $l(Z)-l(Z\cap B)=1$: \par
We decompose a l.c.i. scheme $Z$ as $Z=Z_0\ \sqcup Z_1 \sqcup Z_2$, where $Z_0\cap F_i=\emptyset$,\ and
$Z_i \subset F_i$ for $i=1,2$ as sets. Then
$Z_1\cup Z_2 \subset B$ as schemes and $l(Z_0)=1$. Moreover, $\HH^0(\sO(2K_X-B)\otimes I_{Z_1\cup Z_2})=0$, 
$K_X-B \in 2\Pic(X)$,\ and $(K_X-B)\cdot \sO(1)> 0$.\\
(2) Conversely, suppose $Z$ and $B$ satisfies all conditions in (1) and especially that $l(Z)-l(Z\cap B)=1$.
Then there is a locally free sheaf $E$ accompanied with an exact sequence \eqref{eq:exseq-f}
that does not split on the generic fiber $X_{\eta}$.
Such a sheaf $E$ satisfies $\Ext^2(E,E)^0\neq 0$,
applies to Case II in Fact \ref{fact:RestrGenericFib}, and $R=1$.
\end{prop}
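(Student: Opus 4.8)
The plan is to establish parts~(1) and~(2) in turn, using Lemma~\ref{lem:f20}, Lemma~\ref{lem:easy-prpty-BZ}, Lemma~\ref{lem:R=lZ-h(EF(K)B)} and Proposition~\ref{f_*-tor-nonzero} throughout.

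For~(1), the membership $K_X-B\in 2\Pic(X)$ is immediate from the relation $2D+K_X-B=0$ in $\Pic(X)$ of Lemma~\ref{lem:f20}(1), which gives $\sO(K_X-B)\cong\sO(-2D)$. For the decomposition, recall from Lemma~\ref{lem:easy-prpty-BZ}(a) that $B=n_1F_1+n_2F_2$ with $F_1\cap F_2=\emptyset$; hence a zero-dimensional $Z$ splits uniquely as $Z=Z_0\sqcup Z_1\sqcup Z_2$ with $Z_i$ supported on $F_i$ for $i=1,2$ and $Z_0$ disjoint from $F_1\cup F_2$, and
\[
1=l(Z)-l(Z\cap B)=l(Z_0)+\bigl(l(Z_1)-l(Z_1\cap n_1F_1)\bigr)+\bigl(l(Z_2)-l(Z_2\cap n_2F_2)\bigr)
\]
is a sum of three nonnegative integers, exactly one of which is $1$. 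I would show this term is $l(Z_0)$: if $l(Z_0)=0$, then $Z$ is supported over the two points of $\PP^1$ beneath $F_1,F_2$ and one $Z_i$ does not lie scheme-theoretically in $B$; using $\cf\sim m_1F_1\sim m_2F_2$ together with the inequality $(n_i+1)/m_i\le 1/2$ (for one $i$) of Lemma~\ref{lem:easy-prpty-BZ}(c), which makes $2K_X-B+\cf$ linearly equivalent to an effective divisor on $F_1\cup F_2$ positive enough along $F_1$ and $F_2$ to absorb $Z_1$ and $Z_2$, I would exhibit a nonzero section of $\sO(2K_X-B+\cf)\otimes I_Z$, contradicting Lemma~\ref{lem:easy-prpty-BZ}(b). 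Hence $l(Z_0)=1$ and the other two summands vanish, i.e.\ $Z_1\cup Z_2\subset B$ as schemes. Granting this, $\HH^0(\sO(2K_X-B)\otimes I_{Z_1\cup Z_2})=0$: a nonzero section $s$ there, multiplied by $\pi^*\ell$ for $\ell\in\HH^0(\PP^1,\sO(1))$ vanishing at the reduced point $\pi(Z_0)$, is a nonzero section of $\sO(2K_X-B+\cf)\otimes I_Z$ (it kills $Z_0$ since $\pi^*\ell$ vanishes along the whole fibre through $Z_0$, and $Z_1\cup Z_2$ since $s$ does), contradicting Lemma~\ref{lem:easy-prpty-BZ}(b) once more. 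Finally $(K_X-B)\cdot\sO(1)>0$: by Lemma~\ref{lem:f20}(1) the only remaining case is $(K_X-B)\cdot\sO(1)=0$ with $l(Z')>l(Z)$, and as $K_X-B$ is vertical and $\sO(1)$ is $c_2$-suitable this forces $K_X-B\equiv 0$, so $\sum_i(n_i+1)/m_i=1$; I would exclude this by noting that then the canonical inclusion $G=\sO(-D)\otimes I_Z\hookrightarrow F(K_X)$, together with $\hom(E,E(K_X))^0=1$ (Lemma~\ref{lem:easy-prpty-BZ}(d)) and $h^0(\sO(B))=1$ (valid since $0\le n_i<m_i$), forces $I_BI_Z\subseteq I_{Z'}$, which is incompatible with $l(Z')>l(Z)$ when $l(Z)-l(Z\cap B)=1$.

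For~(2), given $Z$ and $B$ satisfying the conditions of~(1), I would construct $E$ as an extension. Choose a square root $D$ of $\sO(B-K_X)$ (possible since $K_X-B\in2\Pic(X)$) for which $\sO(D)|_{X_\eta}$ has order exactly $2$; this --- compatible with $K_X-B$ being vertical --- is what makes $E$ fall into Case~II rather than Case~I or~III. Pick a zero-dimensional $Z'$ with $l(Z')=c_2-l(Z)$ (note $D^2=0$, so $c_2=l(Z)+l(Z')$) satisfying $I_BI_Z\subseteq I_{Z'}$, which is possible because the one-dimensional scheme $V(I_BI_Z)\supseteq B$ carries zero-dimensional subschemes of every length, and which ensures $\Hom(G,F(K_X))\neq 0$. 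Form the extension
\[
0\longrightarrow F=\sO(D)\otimes I_{Z'}\longrightarrow E\longrightarrow G=\sO(-D)\otimes I_Z\longrightarrow 0
\]
given by a general class of $Ext^1_{\pi}(G,F)$ with nonzero restriction to $X_\eta$; such a class exists because $\pur Ext^1_{\pi}(G,F)=\sO(l'_{BZ}-2)$ (Lemma~\ref{prop:ext1(EE)decomp}(a)) has positive rank, $l'_{BZ}\ge 2$ being guaranteed as in~(1) by $\HH^0(\sO(2K_X-B)\otimes I_{Z_1\cup Z_2})=0$ and the shape of $2K_X-B+\cf$. A standard Cayley--Bacharach type argument (as in the proof of Proposition~\ref{prop:Ex-Sing}) shows that for general such data $E$ is locally free; by Lemma~\ref{lem:f20}(2) it is then stable for every $c_2$-suitable polarization, and it applies to Case~II because the extension does not split on $X_\eta$. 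Moreover $\Ext^2(E,E)^0\neq 0$: the composition $f\colon E\twoheadrightarrow G\hookrightarrow F(K_X)\hookrightarrow E(K_X)$ is a nonzero homomorphism with $f^2=0$, hence traceless, and by Lemma~\ref{lem:f20}(1) $\Hom(G,F(K_X))$ maps isomorphically onto $\Hom(E,E(K_X))^0$. Finally $R=1$: by Lemma~\ref{lem:R=lZ-h(EF(K)B)}, $R=l(Z)-\hom(E,F(K_X)|_B)$, and a local computation on $B=n_1F_1+n_2F_2$ --- using that $Z_0$ is disjoint from $B$, $Z_1\cup Z_2\subset B$, $l(Z_0)=1$ and $\HH^0(\sO(2K_X-B)\otimes I_{Z_1\cup Z_2})=0$ --- gives $\hom(E,F(K_X)|_B)=l(Z\cap B)=l(Z)-1$, whence $R=1$; the inequality $R\ge 1$ is Proposition~\ref{f_*-tor-nonzero}.

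The main obstacle is the combinatorial step in~(1) that pins the extra length onto $Z_0$ rather than $Z_1$ or $Z_2$, which needs a careful analysis of sections of divisors supported on the elliptic curves $F_1,F_2$ via Fact~\ref{fact:O(F)|F}; dually, in~(2) the delicate points are arranging simultaneously that $\sO(D)|_{X_\eta}$ has order $2$, that a suitable non-split extension exists, and that the resulting $E$ is locally free, together with the precise identification $\hom(E,F(K_X)|_B)=l(Z\cap B)$ that makes $R=1$.
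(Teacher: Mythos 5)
Your overall skeleton does match the paper's (contradict Lemma \ref{lem:easy-prpty-BZ}(b) by exhibiting a section of $\sO(2K_X-B+\cf)\otimes I_Z$; then Serre construction, stability via Lemma \ref{lem:f20}(2), $R\le 1$ via Lemma \ref{lem:R=lZ-h(EF(K)B)} and $R\ge 1$ via Proposition \ref{f_*-tor-nonzero}), but the decisive steps are missing or wrong. In (1), the claim that $2K_X-B+\cf$ is ``positive enough along $F_1$ and $F_2$ to absorb $Z_1$ and $Z_2$'' is not true for a fixed member of the linear system: writing $2K_X-B+\cf\sim(m_1-2-n_1)F_1+(2m_2-2-n_2)F_2\sim(2m_1-2-n_1)F_1+(m_2-2-n_2)F_2$, in the borderline case $(n_1+1)/m_1=1/2$ the first representative has multiplicity only $n_1$ along $F_1$, which need not contain a $Z_1$ that sticks out of $B$ by length one. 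The actual argument (as in the paper) first notes that $I_{Z\cap B}/I_Z$ is a length-one cyclic module, hence killed by $\mathfrak{m}_q$, so the offending $Z_i$ is at least contained in $(n_i+1)F_i$ and the borderline equality $m_1=2n_1+2$ is forced; one then switches to the other representative and uses $(n_1+1)/m_1+(n_2+1)/m_2\le 1$ to see that its multiplicity $m_2-2-n_2\ge n_2$ suffices along $F_2$. You explicitly flag this case analysis as ``the main obstacle'' and do not carry it out, so part (1) is not proved. Moreover your argument for $(K_X-B)\cdot\sO(1)>0$ fails: $\tau_B I_Z\subseteq I_{Z'}$ is perfectly compatible with $l(Z')>l(Z)$ (take $Z'$ inside $B$), so no contradiction follows; the correct and much shorter point is that in Setting \ref{stng:f20-locfree} $E$ is locally free, so $F=\Ker(f)$ is a saturated rank-one subsheaf of $E$, hence a line bundle, i.e.\ $Z'=\emptyset$, which kills the second alternative of Lemma \ref{lem:f20}(1).

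In (2), taking $l(Z')=c_2-l(Z)>0$ in $F=\sO(D)\otimes I_{Z'}$ can never yield a locally free $E$: near a point of $Z'$ away from $Z$ the quotient $\sO(-D)\otimes I_Z$ is locally free, so the extension splits locally and $E\simeq I_{Z'}\oplus\sO$ there, which is not free. Local freeness forces $Z'=\emptyset$; this is what the paper does, and the prescribed second Chern class is obtained later (Proposition \ref{prop:example-Case2-R1}) by enlarging $Z$ itself, not by padding with $Z'$. Two further points are asserted rather than proved: the Cayley--Bacharach condition you need is $\HH^0(\sO(2K_X-B)\otimes I_{Z''})=0$ for every colength-one $Z''\subset Z$, which requires the short observation that the unique section of $\sO(2K_X-B)$ is vertical and does not vanish at $Z_0$; and your ``local computation'' $\hom(E,F(K_X)|_B)=l(Z\cap B)$ is not local at all -- what is needed (and what the paper proves, via the surjection $E|_B\rightarrow\sO(-D)\otimes\calCN_{Z_{12}/B}$, duality on the non-reduced curve $B$ and Proposition \ref{prop:order-O(F)F}) is the inequality $\hom(E,F(K_X)|_B)\ge l(Z\cap B)$, giving $R\le 1$, with $R\ge 1$ supplied by Proposition \ref{f_*-tor-nonzero}. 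Finally, insisting that $\sO(D)|_{X_\eta}$ have order exactly $2$ is unnecessary (and not justified): non-splitness of the extension on $X_\eta$ already makes $E_\eta$ indecomposable with a degree-zero sub line bundle, i.e.\ Case II.
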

\begin{proof}
(1) Denote $Z\cap B$ by $Z'=Z'_1\sqcup Z'_2$. 
Set-theoretically, we have $\operatorname{Supp}(I_{Z'}/I_Z) =\{q \}$ with some point $q$.
Remark that
\begin{equation}\label{eq:mqIZ'/IZ}
{\mathfrak m}_q \cdot (I_{Z'}/I_Z)= 0.
\end{equation}
Indeed, $Z$ is an Artinian scheme, so ${\mathfrak m}^L_q\cdot (I_{Z'}/I_Z)=0$ for some natural number $L$,
but this can not hold if ${\mathfrak m}_q \cdot (I_{Z'}/I_Z)\neq 0$
since $l(I_{Z'}/I_Z)=1$. \par
Suppose that $Z_0=\emptyset$.
From Lemma \ref{lem:easy-prpty-BZ} \eqref{enum:since-K-B-geq-0},
$1/2\geq (n_i+1)/m_i$ for $i=1$ or $2$.
We assume that it holds for $i=1$, so
$m_1-2-n_1\geq n_1$ and $2m_2-2-n_2\geq n_2$, and then
\begin{equation}\label{eq:m1-2n1F1-contain-Z'1}
 \tau_{F_1}^{m_1-2-n_1} \in \langle \tau_{F_1}^{n_1}\rangle  \subset I_{Z'_1} \quad \text{and}\quad
 \tau_{F_2}^{2m_2-2-n_2 }\in \langle \tau_{F_2}^{n_2}\rangle \subset I_{Z'_2}.
\end{equation}
Because $2K_X-B+\cf=(m_1-2-n_1)F_1+(2m_2-2-n_2)F_2$,
\[\tau_{F_1}^{m_1-2-n_1}\cdot \tau_{F_2}^{2m_2-2-n_2} \in
\HH^0 \left( \sO(2K_X-B+\cf)\otimes I_{Z'} \right). \]
From Lemma \ref{lem:easy-prpty-BZ} \eqref{enum:l'zb-geq-2}, it should holds 
\begin{equation}\label{eq:(m1-2-n1)F1-notcontain-Z1-or}
\tau_{F_1}^{m_1-2-n_1} \not\in I_{Z_1}  \quad \text{and}\quad
\tau_{F_2}^{2m_2-2-n_2 }\not\in I_{Z_2}.
\end{equation}
First suppose that $\tau_{F_1}^{m_1-2-n_1} \not\in I_{Z_1}$.
From \eqref{eq:mqIZ'/IZ} and \eqref{eq:m1-2n1F1-contain-Z'1},
it holds that $q\in F_1$, $Z'_1\neq Z_1$, $m_1-2-n_1=n_1$ and $Z'_2=Z_2$.
Then 
\begin{equation}
 \tau_{F_1}^{2m_1-2-n_1} \in I_{Z_1}  \quad \text{and} \quad
 \tau_{F_2}^{m_2-2-n_2} \in I_{Z'_2}=I_{Z_2}.
\end{equation}
Indeed, one can check that $2m_1-3-n_1\geq n_1$, and so 
$\tau_{F_1}^{2m_1-2-n_1}\in \langle \tau_{F_1}^{n_1+1} \rangle \subset \tau_{F_1}\cdot I_{Z'_1} \subset I_{Z_1}$
from \eqref{eq:mqIZ'/IZ}.
One can verify the right equation similarly to \eqref{eq:m1-2n1F1-contain-Z'1},
since $(n_2+1)/m_2 \geq 1/2$ by Lemma \ref{lem:easy-prpty-BZ}.
Because $2K_X-B+\cf=(2m_1-2-n_1)F_1+(m_2-2-n_2)F_2$,
\[\tau_{F_1}^{2m_1-2-n_1}\cdot \tau_{F_2}^{m_2-2-n_2} \in
\HH^0 \left( \sO(2K_X-B+\cf)\otimes I_{Z} \right). \]
This contradicts to  Lemma \ref{lem:easy-prpty-BZ} \eqref{enum:l'zb-geq-2}.
By \eqref{eq:(m1-2-n1)F1-notcontain-Z1-or},
$\tau_{F_2}^{2m_2-2-n_2 }\not\in I_{Z_2}$.
Then it should holds that $2m_2-2-n_2=n_2$ from \eqref{eq:m1-2n1F1-contain-Z'1},
but this contradicts to Lemma \ref{lem:easy-prpty-BZ} \eqref{enum:since-K-B-geq-0}.
Therefore we conclude that $Z_0\neq \emptyset$.
At the end, we remark that
$\HH^0(\sO(2K_X-B)\otimes I_{Z_1\cup Z_2})= \HH^0(\sO(2K_X-B+\cf)\otimes I_Z)=0$ 
by Lemma \ref{lem:easy-prpty-BZ} \eqref{enum:l'zb-geq-2}.\\
(2) It holds that $\HH^0(\sO(2K_X-B+\cf)\otimes I_{Z})=0$, and so
there is a sheaf with extension \eqref{eq:exseq-f}
such that \eqref{eq:exseq-f}${}_{\eta}$ does not split by Lemma \ref{prop:ext1(EE)decomp} \eqref{enum:(1,1a)}.
$\HH^0(\sO(2K_X-B)\otimes I_{Z'})=0$ for any subscheme $Z'\subset Z$ with $l(Z')=l(Z)-1$, and so
there is a locally free sheaf 
accompanied with an exact sequence \eqref{eq:exseq-f} by the Serre correspondence \cite[Thm. 5.1.1]{HL:text}.
Both of these conditions are open
in the family of sheaves with exact sequences \eqref{eq:exseq-f}, so
we can find a sheaf $E$ satisfing both conditions.\par
Now we shall show that $R=1$ for such $E$.
Since $Z_{12}\subset B$ as schemes, we have a natural exact sequence
\[ 0 \longrightarrow \calCN_{Z_{12}/B}:= I_{Z_{12}}/I_B \longrightarrow \sO_B \longrightarrow
 \sO_{Z_{12}} \longrightarrow 0,\qquad \text{which induces}   \]
\begin{multline}\label{eq:hom(OZ12,OB)}
0 \longrightarrow \Hom_B(\sO_B, \sO(B)|_B ) \longrightarrow \Hom_B(\calCN_{Z_{12}/B}, \sO(B)|_B ) \\
\longrightarrow \Ext^1_B(\sO_{Z_{12}}, \sO(B)|_B) \longrightarrow \Ext^1_B(\sO_B, \sO(B)|_B).
\end{multline}
By duality theorem \cite[Sect. 3.11.]{Ha:residue},
$\ext^1_B(\sO_{Z_{12}},\sO(B)|_B)=h^0(\sO_{Z_{12}}\otimes \sO(K_X)|_B)=l(Z_{12})$.
From Fact \ref{prop:order-O(F)F} $\hom(\sO_B, \sO(B)|_B)=0$.
It follows that $\ext^1_B(\sO_B,\sO(B)|_B)=0$, since
$\chi(B,\sO(B)|_B)=\chi(X,\sO(B)|_B)=0$.
Therefore we get $ \hom_B(\calCN_{Z_{12}/B},\sO(B)|_B)=l(Z_{12})$ from \eqref{eq:hom(OZ12,OB)}.
Since $Z_{12}\subset B$ as schemes, 
a surjection $E \rightarrow G=\sO_X(-D)\otimes I_Z$ induces
a surjection $E|_B \rightarrow \sO(-D)\otimes \calCN_{Z_{12}/B}$, and thereby
\begin{multline*}
 \hom(E,F(K_X)|_B) \geq \hom(\sO(-D)\otimes \calCN_{Z_{12}/B},F(K_X)|_B) \\
=\hom(\calCN_{Z_{12}/B}, \sO(B)|_B)=l(Z_{12}). 
\end{multline*}
From this and Lemma \ref{lem:R=lZ-h(EF(K)B)}, $R\leq l(Z)-l(Z_{12})=1$,
and from Proposition \ref{f_*-tor-nonzero}, $R\geq 1$.
We get $R=1$ consequently.
\end{proof}

\begin{prop}\label{prop:example-Case2-R1}
For fixed integer $c$,
there actually exist $Z$ and $B$ such that $l(Z)=c$ and that all conditions hold in 
Proposition \ref{prop:locfree-l(Z)-l(ZB)=1} (1) in the following cases.
As a result, there actually exist stable sheaves $E$ such that $c_2(E)=c$ and all conditions
in Proposition \ref{prop:locfree-l(Z)-l(ZB)=1} (2) hold.\par
(1) Both $m_1$ and $m_2$ are odd, $(m_1,m_2)\neq (3,3)$ and $c\geq (m_1+1)/2$. \par
(2) $m_1$ is odd, $m_2\neq 2$ is even, and $c\geq (m_1+1)/2$. \par
(3) $m_1\geq 6$ is even, $m_2 \neq 3$ is odd, and $c\geq 2\lfloor m_1/4\rfloor +2$.\par
(4) Both $m_1$ and $m_2$ are even, $(m_1,m_2)\neq (4,4),\ (6,6)$ and $c\geq (m_1/2)+2$. 
\end{prop}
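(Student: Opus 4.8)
The plan is to prove Proposition \ref{prop:example-Case2-R1} by an explicit, case-by-case construction of the pair $(Z,B)$, after first stripping off the conditions of Proposition \ref{prop:locfree-l(Z)-l(ZB)=1}(1) that hold automatically for any configuration of the shape we build. Write $B=n_1F_1+n_2F_2$ with $0\le n_i\le m_i-1$ (forced by Lemma \ref{lem:easy-prpty-BZ}), take $Z_0$ a single reduced point lying over a smooth non-multiple fiber, and take $Z_i$ a zero-dimensional subscheme of the thickened curve $n_iF_i$ supported set-theoretically on $F_i$ for $i=1,2$; set $Z=Z_0\sqcup Z_1\sqcup Z_2$. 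Then $Z_0$ is disjoint from $B$ while $Z_1\cup Z_2\subset B$ as schemes, so $Z\cap B=Z_1\cup Z_2$ and $l(Z)-l(Z\cap B)=l(Z_0)=1$ automatically; likewise $l(Z_0)=1$ and the set-theoretic requirements on $Z_1,Z_2$ are built in. Hence only three conditions of Proposition \ref{prop:locfree-l(Z)-l(ZB)=1}(1) remain to be arranged, namely $(K_X-B)\cdot\sO(1)>0$, $K_X-B\in 2\Pic(X)$, and $\HH^0(\sO(2K_X-B)\otimes I_{Z_1\cup Z_2})=0$, together with the numerical target $l(Z_1)+l(Z_2)=c-1$.

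Next I would translate these into arithmetic. By the canonical bundle formula \eqref{eq:CanBdleFormula} with $d=1$ one has $K_X\sim -\cf+(m_1-1)F_1+(m_2-1)F_2$ and $2K_X\sim(m_1-2)F_1+(m_2-2)F_2$, and using $m_iF_i\sim\cf$ the class $K_X-B$ is numerically $\bigl(1-\tfrac{n_1+1}{m_1}-\tfrac{n_2+1}{m_2}\bigr)\cf$; so the positivity condition is exactly $\tfrac{n_1+1}{m_1}+\tfrac{n_2+1}{m_2}<1$. Divisibility of $K_X-B$ by $2$ in $\Pic(X)$ is tested using the relations $m_iF_i\sim\cf$ together with the fact (Proposition \ref{prop:order-O(F)F}, Fact \ref{fact:O(F)|F}) that $\sO(F_i)|_{F_i}$ has order exactly $m_i$; this reduces $K_X-B\in 2\Pic(X)$ to congruences on $n_1,n_2$ modulo $m_1,m_2$, and it is here that the parities of the $m_i$ enter and the four cases split apart. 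Finally, since $\sO(2K_X-B)$ is supported on $F_1\cup F_2$ and $h^0\le 1$, its nonzero sections (if any) being spanned by a monomial $\tau_{F_1}^{m_1-2-n_1}\tau_{F_2}^{m_2-2-n_2}$, the vanishing $\HH^0(\sO(2K_X-B)\otimes I_{Z_1\cup Z_2})=0$ holds as soon as $Z_1$ (or $Z_2$) is taken ``thick enough'' along $F_1$ not to be contained in $(m_1-2-n_1)F_1$; this imposes a minimum on $l(Z_1)+l(Z_2)$ and is the origin of the lower bounds on $c$ in the four cases.

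I would then dispatch the cases by exhibiting explicit $(n_1,n_2)$: in (1) ($m_1,m_2$ odd, $(m_1,m_2)\ne(3,3)$) and (2) ($m_1$ odd, $m_2\ne2$ even) take $n_1=(m_1-1)/2$ and a suitable $n_2$ making $K_X-B$ even while keeping $\tfrac{n_1+1}{m_1}+\tfrac{n_2+1}{m_2}<1$; in (3) ($m_1\ge6$ even, $m_2\ne3$ odd) and (4) ($m_1,m_2$ even, $(m_1,m_2)\notin\{(4,4),(6,6)\}$) take $n_1$ close to $m_1/2$ (which is precisely why $m_1\ge6$ is needed in (3) and the small squares are excluded in (4)) and $n_2$ of the complementary type. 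In each case choose $Z_1\subset n_1F_1$ of the minimal length that kills $\HH^0(\sO(2K_X-B)\otimes I_{Z_1})$, then enlarge $Z_1$ or adjoin $Z_2\subset n_2F_2$ until $l(Z_1)+l(Z_2)=c-1$, using that subschemes of the non-reduced curve $n_iF_i$ of arbitrarily large length exist. Having produced $(Z,B)$ meeting Proposition \ref{prop:locfree-l(Z)-l(ZB)=1}(1), part (2) of that proposition yields a locally free $E$ with $c_2(E)=c$ fitting into \eqref{eq:exseq-f} non-split over $X_\eta$; by Lemma \ref{lem:f20}(2) (equivalently Fact \ref{fact:XeStable}) such $E$ is stable, it applies to Case II, has $\Ext^2(E,E)^0\ne0$, and satisfies $R=1$, which is exactly the assertion.

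The step I expect to be the main obstacle is reconciling $(K_X-B)\cdot\sO(1)>0$, $K_X-B\in 2\Pic(X)$, and the demand that $2K_X-B$ retain enough sections to be annihilated by a length-$(c-1)$ subscheme of $n_1F_1\cup n_2F_2$: these constraints push $n_1,n_2$ in opposite directions, and it is precisely their incompatibility that accounts for the excluded pairs $(3,3)$, $(4,4)$, $(6,6)$ and for the threshold $m_1\ge6$ in case (3). Checking that outside these exceptions a valid $(n_1,n_2)$ always exists, and carrying out the elementary but fussy bookkeeping for the four parity patterns, is the real content of the argument; everything downstream of producing $(Z,B)$ is immediate from Proposition \ref{prop:locfree-l(Z)-l(ZB)=1}.
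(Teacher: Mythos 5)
Your plan is correct and is essentially the paper's own proof: the paper likewise reduces Proposition \ref{prop:locfree-l(Z)-l(ZB)=1}(1) to arithmetic conditions on $B=n_1F_1+n_2F_2$ (the strict inequality $\sum_i(n_i+1)/m_i<1$, the parity condition $K_X-B\in2\Pic(X)$, and the requirement $Z_1\not\subset(m_1-2-n_1)F_1$ or $Z_2\not\subset(m_2-2-n_2)F_2$), then takes $n_1=(m_1-1)/2$ with $n_2\in\{0,1\}$ in cases (1)--(2) and $n_1$ the appropriate integer near $m_1/2$ with $n_2\in\{0,1\}$ in cases (3)--(4), builds a minimal subscheme $Z'_1\subset n_1F_1$ of length $m_1-1-n_1$ not contained in $(m_1-2-n_1)F_1$, pads to length $c$, and invokes part (2). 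The only difference is that you defer the explicit choice of $n_2$ and the parity check per case, which is exactly the short verification the paper carries out.
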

\begin{proof}
$B$ and $Z_1\cup Z_2\subset B$ satisfies "Moreover" paragraphs in Proposition \ref{prop:locfree-l(Z)-l(ZB)=1} (1)
if and only if the following (a)--(e) are valid:\par
(a) $1> \sum_i (n_i+1)/m_i$. \qquad 
(b) For some $i$, $m_i<2(n_i+1)$, that is, $n_i \geq \lfloor m_i/2 \rfloor$ for some $i$. \par
(c) $K_X-B \in 2\Pic(X)$. \qquad
(d) $Z_1 \subset n_1F_1$ and $Z_2 \subset n_2F_2 $ as subschemes. \par
(e) $Z_1 \not\subset (m_1-2-n_1)F_1$ or $Z_2 \not\subset (m_2-2-n_2)F_2$.  \\
Here (b) is equivalent to saying that $h^0(\sO(2(K_X-B)))=0$, that is valid
if $Z_1\subset Z_2\subset B$ and $\HH^0(\sO(2K_X-B)\otimes I_{Z_1\cup Z_2})=0$.\\
(1) We can assume that $m_1\leq m_2$.
We put $n_1=\lfloor m_1/2 \rfloor=(m_1-1)/2$, 
and put $n_2=1$ if $m_1 \equiv 1 \pmod{4}$, and $n_2=0$ if $m_1 \equiv 3 \pmod{4}$.
Then one can check $B$ satisfies (a), (b), (c) if $(m_1,m_2)\neq(3,3)$.
Since $m_1-2-n_1<n_1$,
there is a l.c.i. subscheme $Z'_1\subset n_1 F_1$ such that $l(Z'_1)=m_1-1-n_1$ and that
$Z'_1 \not\subset (m_1-2-n_1)F_1$.
Using it, one can readily find $B$ and $Z\supset Z'_1$ such that $l(Z)=c$ 
satisfing all conditions in Proposition \ref{prop:locfree-l(Z)-l(ZB)=1} (1)
if $c\geq m_1-n_1=(m_1+1)/2$. \par
One can verify (2)--(4) similarly to (1).
In case of (2), we can put $n_1=\lfloor m_1/2 \rfloor=(m_1-1)/2$ and put $n_2=1$.
In case of (3), we can put $n_2=0$ and
let $n_1$ be the smallest odd integer such that $n_2\geq \lfloor m_1/2\rfloor$,
that is, $2\lfloor m_1/4\rfloor +1$.
In case of (4), we can put $n_1=m_1/2$, $n_2=1$ if $4\not |m_1$ and $n_2=0$ if $4|m_1$.
\end{proof}

\end{document}